\def\thm@space@setup{%
  \thm@preskip=\parskip \thm@postskip=0pt
}
\begin{document}

\theoremstyle{plain}

\newtheorem{thm}{Theorem}[section]

\newtheorem{lem}[thm]{Lemma}
\newtheorem{Problem B}[thm]{Problem B}

\newtheorem{pro}[thm]{Proposition}
\newtheorem{conj}[thm]{Conjecture}
\newtheorem{cor}[thm]{Corollary}
\newtheorem{que}[thm]{Question}

\theoremstyle{definition}
\newtheorem{rem}[thm]{Remark}
\newtheorem{defi}[thm]{Definition}
\newtheorem{hyp}[thm]{Hypothesis}

\theoremstyle{plain}
\newtheorem*{thmA}{Theorem A}
\newtheorem*{thmB}{Theorem B}
\newtheorem*{corB}{Corollary B}
\newtheorem*{thmC}{Theorem C}
\newtheorem*{thmD}{Theorem D}
\newtheorem*{thmE}{Theorem E}
 
\newtheorem*{thmAcl}{Main Theorem$^{*}$}
\newtheorem*{thmBcl}{Theorem B$^{*}$}
\newcommand{\dd}{\mathrm{d}}

\theoremstyle{plain}
\newtheorem{theoA}{Theorem}

\theoremstyle{plain}
\newtheorem{conjA}[theoA]{Conjecture}

\theoremstyle{plain}
\newtheorem{condA}[theoA]{Condition}

\theoremstyle{plain}
\newtheorem{paraA}[theoA]{Parametrisation}

\theoremstyle{plain}
\newtheorem{corA}[theoA]{Corollary}

\renewcommand{\thetheoA}{\Alph{theoA}}

\renewcommand{\thecorA}{\Alph{corA}}

\newcommand{\wh}[1]{\widehat{#1}} 
\newcommand{\miquelcomment}{\textcolor{blue}}
\newcommand{\ncomment}{\textcolor{magenta}}

\newcommand{\Maxn}{\operatorname{Max_{\textbf{N}}}}
\newcommand{\Syl}{\operatorname{Syl}}
\newcommand{\Lin}{\operatorname{Lin}}
\newcommand{\U}{\mathbf{U}}
\newcommand{\nav}{\mathrm{Nav}}
\newcommand{\R}{\mathbf{R}}
\newcommand{\dl}{\operatorname{dl}}
\newcommand{\Con}{\operatorname{Con}}
\newcommand{\rdz}{\operatorname{rdz}}
\newcommand{\rdzo}{\operatorname{rdz}^{\circ}}
\newcommand{\cl}{\operatorname{cl}}
\newcommand{\Stab}{\operatorname{Stab}}
\newcommand{\Aut}{\operatorname{Aut}}
\newcommand{\Ker}{\operatorname{Ker}}
\newcommand{\InnDiag}{\operatorname{InnDiag}}
\newcommand{\fl}{\operatorname{fl}}
\newcommand{\Irr}{\operatorname{Irr}}
\newcommand{\FF}{\mathbb{F}}
\newcommand{\CL}{\mathfrak{Cl}}
\newcommand{\EE}{\mathbb{E}}
\newcommand{\Alp}{\mathrm{Alp}}
\newcommand{\Alpr}{\mathrm{Alp_r}}
\newcommand{\normal}{\trianglelefteq}
\newcommand{\sn}{\normal\normal}
\newcommand{\Bl}{\mathrm{Bl}}
\newcommand{\NN}{\mathbb{N}}
\newcommand{\N}{\mathbf{N}}
\newcommand{\bfC}{\mathbf{C}}
\newcommand{\bfO}{\mathbf{O}}
\newcommand{\bfF}{\mathbf{F}}
\def\GGG{{\mathcal G}}
\def\HHH{{\mathcal H}}
\def\HH{{\mathcal H}}
\def\irra#1#2{{\rm Irr}_{#1}(#2)}

\renewcommand{\labelenumi}{\upshape (\roman{enumi})}

\newcommand{\PSL}{\operatorname{PSL}}
\newcommand{\PSU}{\operatorname{PSU}}
\newcommand{\alt}{\operatorname{Alt}}

\providecommand{\V}{\mathrm{V}}
\providecommand{\E}{\mathrm{E}}
\providecommand{\ir}{\mathrm{Irm_{rv}}}
\providecommand{\Irrr}{\mathrm{Irm_{rv}}}
\providecommand{\re}{\mathrm{Re}}

\numberwithin{equation}{section}
\def\irrp#1{{\rm Irr}_{p'}(#1)}

\def\ibrrp#1{{\rm IBr}_{\Bbb R, p'}(#1)}
\def\C{{\mathbb C}}

\def\isoc{{\succeq_c}}

\def\isob{{\succeq_b}}

\newcommand{\wt}[1]{\widetilde{#1}} 

\def\Rad{{\rm Rad}}
\def\Rado{{\rm Rad}^{\circ}}

\def\o{{\bf O}}
\def\c{{\bf C}}
\def\n{{\bf N}}
\def\z{{\bf Z}}
\def\F{{\bf F}}
\def\P{{\mathcal{P}}}
\def\Q{{\mathcal{Q}}}
\def\R{{\mathcal{R}}}
\def\W{{\mathcal{W}}}
\def\D{{\mathcal{D}}}
\def\Wr{{\mathcal{W}_{\rm r}}}

\def\irr#1{{\rm Irr}(#1)}
\def\irrp#1{{\rm Irr}_{p^\prime}(#1)}
\def\irrq#1{{\rm Irr}_{q^\prime}(#1)}
\def \aut#1{{\rm Aut}(#1)}
\def\cent#1#2{{\bf C}_{#1}(#2)}
\def\norm#1#2{{\bf N}_{#1}(#2)}
\def\zent#1{{\bf Z}(#1)}
\def\syl#1#2{{\rm Syl}_#1(#2)}
\def\normal{\triangleleft\,}
\def\oh#1#2{{\bf O}_{#1}(#2)}
\def\Oh#1#2{{\bf O}^{#1}(#2)}
\def\det#1{{\rm det}(#1)}
\def\gal#1{{\rm Gal}(#1)}
\def\ker#1{{\rm ker}(#1)}
\def\normalm#1#2{{\bf N}_{#1}(#2)}
\def\alt#1{{\rm Alt}(#1)}
\def\iitem#1{\goodbreak\par\noindent{\bf #1}}
   \def \mod#1{\, {\rm mod} \, #1 \, }
\def\sbs{\subseteq}

\def\gc{{\bf GC}}
\def\ngc{{non-{\bf GC}}}
\def\ngcs{{non-{\bf GC}$^*$}}
\newcommand{\notd}{{\!\not{|}}}

\newcommand{\Z}{\mathbf{Z}}

\newcommand{\Out}{{\mathrm {Out}}}
\newcommand{\Mult}{{\mathrm {Mult}}}
\newcommand{\Inn}{{\mathrm {Inn}}}
\newcommand{\Fong}{{\mathrm{Fong}}}
\newcommand{\IBR}{{\mathrm {IBr}}}
\newcommand{\IBRL}{{\mathrm {IBr}}_{\ell}}
\newcommand{\IBRP}{{\mathrm {IBr}}_{p}}
\newcommand{\bl}{{\mathrm{bl}}}
\newcommand{\cd}{\mathrm{cd}}
\newcommand{\ord}{{\mathrm {ord}}}
\def\id{\mathop{\mathrm{ id}}\nolimits}
\renewcommand{\Im}{{\mathrm {Im}}}
\newcommand{\Ind}{{\mathrm {Ind}}}
\newcommand{\diag}{{\mathrm {diag}}}
\newcommand{\soc}{{\mathrm {soc}}}
\newcommand{\End}{{\mathrm {End}}}
\newcommand{\sol}{{\mathrm {sol}}}
\newcommand{\Hom}{{\mathrm {Hom}}}
\newcommand{\Mor}{{\mathrm {Mor}}}
\newcommand{\Mat}{{\mathrm {Mat}}}
\def\rank{\mathop{\mathrm{ rank}}\nolimits}
\newcommand{\Tr}{{\mathrm {Tr}}}
\newcommand{\tr}{{\mathrm {tr}}}
\newcommand{\Gal}{{\rm Gal}}
\newcommand{\Spec}{{\mathrm {Spec}}}
\newcommand{\ad}{{\mathrm {ad}}}
\newcommand{\Sym}{{\mathrm {Sym}}}
\newcommand{\Char}{{\mathrm {Char}}}
\newcommand{\pr}{{\mathrm {pr}}}
\newcommand{\rad}{{\mathrm {rad}}}
\newcommand{\abel}{{\mathrm {abel}}}
\newcommand{\PGL}{{\mathrm {PGL}}}
\newcommand{\PCSp}{{\mathrm {PCSp}}}
\newcommand{\PGU}{{\mathrm {PGU}}}
\newcommand{\codim}{{\mathrm {codim}}}
\newcommand{\ind}{{\mathrm {ind}}}
\newcommand{\Res}{{\mathrm {Res}}}
\newcommand{\Lie}{{\mathrm {Lie}}}
\newcommand{\Ext}{{\mathrm {Ext}}}
\newcommand{\EBr}{{\mathrm {EBr}}}
\newcommand{\Alt}{{\mathrm {Alt}}}
\newcommand{\AAA}{{\sf A}}
\newcommand{\SSS}{{\sf S}}
\newcommand{\DDD}{{\sf D}}
\newcommand{\QQQ}{{\sf Q}}
\newcommand{\CCC}{{\sf C}}
\newcommand{\SL}{{\mathrm {SL}}}
\newcommand{\Sp}{{\mathrm {Sp}}}
\newcommand{\PSp}{{\mathrm {PSp}}}
\newcommand{\SU}{{\mathrm {SU}}}
\newcommand{\GL}{{\mathrm {GL}}}
\newcommand{\GU}{{\mathrm {GU}}}
\newcommand{\Br}{{\mathrm{Br}}}
\newcommand{\Spin}{{\mathrm {Spin}}}
\newcommand{\CC}{{\mathbb C}}
\newcommand{\CB}{{\mathbf C}}
\newcommand{\RR}{{\mathbb R}}
\newcommand{\QQ}{{\mathbb Q}}
\newcommand{\ZZ}{{\mathbb Z}}
\newcommand{\bfN}{{\mathbf N}}
\newcommand{\bfZ}{{\mathbf Z}}
\newcommand{\PP}{{\mathbb P}}
\newcommand{\cG}{{\mathcal G}}
\newcommand{\cH}{{\mathcal H}}
\newcommand{\cQ}{{\mathcal Q}}
\newcommand{\GA}{{\mathfrak G}}
\newcommand{\cT}{{\mathcal T}}
\newcommand{\cL}{{\mathcal L}}
\newcommand{\IBr}{\mathrm{IBr}}
\newcommand{\cS}{{\mathcal S}}
\newcommand{\cR}{{\mathcal R}}
\newcommand{\GCD}{\GC^{*}}
\newcommand{\TCD}{\TC^{*}}
\newcommand{\FD}{F^{*}}
\newcommand{\GD}{G^{*}}
\newcommand{\HD}{H^{*}}
\newcommand{\GCF}{\GC^{F}}
\newcommand{\TCF}{\TC^{F}}
\newcommand{\PCF}{\PC^{F}}
\newcommand{\GCDF}{(\GC^{*})^{F^{*}}}
\newcommand{\RGTT}{R^{\GC}_{\TC}(\theta)}
\newcommand{\RGTA}{R^{\GC}_{\TC}(1)}
\newcommand{\Om}{\Omega}
\newcommand{\eps}{\epsilon}
\newcommand{\varep}{\varepsilon}
\newcommand{\dz}{\mathrm{dz}}
\newcommand{\dzo}{\mathrm{dz}^\circ}
\newcommand{\Co}{\mathcal{C}^\circ}
\newcommand{\al}{\alpha}

\newcommand{\chis}{\chi_{s}}
\newcommand{\sigmad}{\sigma^{*}}
\newcommand{\PA}{\boldsymbol{\alpha}}
\newcommand{\gam}{\gamma}
\newcommand{\lam}{\lambda}
\newcommand{\la}{\langle}
\newcommand{\genf}{F^*}
\newcommand{\ra}{\rangle}
\newcommand{\hs}{\hat{s}}
\newcommand{\htt}{\hat{t}}
\newcommand{\tG}{\hat G}
\newcommand{\St}{\mathsf {St}}
\newcommand{\bfs}{\boldsymbol{s}}
\newcommand{\bfl}{\boldsymbol{\lambda}}
\newcommand{\tn}{\hspace{0.5mm}^{t}\hspace*{-0.2mm}}
\newcommand{\ta}{\hspace{0.5mm}^{2}\hspace*{-0.2mm}}
\newcommand{\tb}{\hspace{0.5mm}^{3}\hspace*{-0.2mm}}
\def\skipa{\vspace{-1.5mm} & \vspace{-1.5mm} & \vspace{-1.5mm}\\}
\newcommand{\tw}[1]{{}^#1\!}
\renewcommand{\mod}{\bmod \,}

\marginparsep-0.5cm

\newcommand{\blocktheorem}[1]{%
  \csletcs{old#1}{#1}
  \csletcs{endold#1}{end#1}
  \RenewDocumentEnvironment{#1}{o}
    {\par\addvspace{1.5ex}
     \noindent\begin{minipage}{\textwidth}
     \IfNoValueTF{##1}
       {\csuse{old#1}}
       {\csuse{old#1}[##1]}}
    {\csuse{endold#1}
     \end{minipage}
     \par\addvspace{1.5ex}}
}

\blocktheorem{theoA}
\blocktheorem{conjA}

\makeatletter
\def\blfootnote{\gdef\@thefnmark{}\@footnotetext}
\makeatother

\title{{\bf{\huge The Alperin Weight Conjecture and the Glauberman correspondence via character triples}}
\author{J. Miquel Mart\'inez, Noelia Rizo, and Damiano Rossi}
\blfootnote{\emph{$2010$ Mathematical Subject Classification:} $20$C$20$ ($20$C$15$)
\\
\emph{Key words and phrases:} Alperin Weight Conjecture, Glauberman correspondence, block theory, isomorphisms of character triples
\\
\date{}
The first and second author are partially supported by the Spanish Ministerio de Ciencia e Innovaci\'on Grant PID2022-137612NB-I00 (funded by MCIN/AEI/ 10.13039/501100011033 and “ERDF A way of making Europe”) and by Generalitat Valenciana CIAICO/2021/163. The first author is also supported by a fellowship UV-INV-PREDOC20-1356056 from Universitat de Val\`encia and by the national project PRIN 2022- 2022PSTWLB - Group Theory and Applications - CUP B53D23009410006. The second author is supported by a CDEIGENT grant CIDEIG/2022/29 funded by Generalitat Valenciana. The third author is funded by the EPSRC grant EP/W$028794$/$1$ and by a Leibniz Fellowship of the Mathematisches Forschungsinstitut Oberwolfach. The authors also acknowledge support of the Institut Henri Poincar\'e (UAR 839 CNRS-Sorbonne Universit\'e), and LabEx CARMIN (ANR-10-LABX-59-01).
}}

\maketitle

\begin{abstract}
Recently, G. Navarro introduced a new conjecture that unifies the Alperin Weight Conjecture and the Glauberman correspondence into a single statement. In this paper, we reduce this problem to simple groups and prove it for several classes of groups and blocks. Our reduction can be divided into two steps. First, we show that by assuming the so-called \textit{Inductive (Blockwise) Alperin Weight Condition} for finite simple groups, we obtain an analogous statement for arbitrary finite groups, that is, an automorphism-equivariant version of the Alperin Weight Conjecture inducing isomorphisms of modular character triples. Then, we show that the latter implies Navarro's conjecture for each finite group. 
\end{abstract}

\section{Introduction}

The Alperin Weight Conjecture, introduced in \cite{Alp87}, provides a way to determine the number of isomorphism classes of irreducible modular representations of a finite group $G$ in terms of local data called weights. This is part of a series of statements known as the local-global counting conjectures that constitute some of the main research questions in representation theory of finite groups. More precisely, for a prime number $p$, we define a $p$-weight to be a pair $(Q,\psi)$ where $Q$ is a radical $p$-subgroup of $G$ and $\psi$ is a $p$-Brauer character of $\n_G(Q)$ whose deflation to $\n_G(Q)/Q$ belongs to a $p$-block of defect zero. Furthermore, given a $p$-block $B$ of $G$, we say that $(Q,\psi)$ is a $B$-weight if $\bl(\psi)^G=B$ and where $\bl(\psi)$ denotes the unique $p$-block of $\n_G(Q)$ to which $\psi$ belongs. The set $\Alp(B)$ of $B$-weights is equipped with an action of $G$ by conjugation whose corresponding set of orbits is denoted by $\Alp(B)/G$. Then, the blockwise version of the Alperin Weight Conjecture posits that
\[\left|\IBr(B)\right|=\left|\Alp(B)/G\right|\]
where $\IBr(B)$ is the set of irreducible $p$-Brauer characters belonging to the $p$-block $B$.

The Alperin Weight Conjecture, as well as the other local-global counting conjectures, is intimately connected with the existence of natural correspondences of characters and blocks. One of the most useful such statements is the Glauberman correspondence and its blockwise version, known as the Dade--Glauberman--Nagao correspondence. In its most basic form, this asserts that whenever a $p$-group $A$ acts via automorphisms on a group $G$ of order prime to $p$ there exists a canonical bijection
\[\Irr_A(G)\to \Irr(\c_G(A))\]
where we denote by $\Irr_A(G)$ the set of $A$-invariant irreducible characters of $G$. Furthermore, observe that in this case the above bijection is equivalent to the Brauer--Glauberman correspondence introduced in \cite[Conjecture A]{Nav-Spa-Tie}. We refer the reader to Section \ref{sec:DGN} and \cite[Section 4]{Nav-Tie11} for further information on the Dade--Glauberman--Nagao correspondence.

Recently, Navarro suggested in \cite{Nav17} a new surprising statement that unifies the Alperin Weight Conjecture and the Dade--Glauberman--Nagao correspondence into a single statement. Let $G\unlhd \Gamma$ be finite groups and consider a $p$-block $B$ of $G$. For every radical $p$-subgroup $Q$ of $\Gamma$, we denote by $\dz(\n_\Gamma(Q)/Q\mid B)$ the set of irreducible characters $\overline{\vartheta}$ of $\n_\Gamma(Q)/Q$ of $p$-defect zero such that $\bl(\vartheta)^\Gamma$ covers $B$ and where $\vartheta\in\irr{\n_G(Q)}$ corresponds to $\overline{\vartheta}$ via inflation of characters. Navarro's conjecture can then be stated as follows.

\begin{conjA}[Navarro]
\label{conj:Main, Gabriel Conjecture}
Let $G\unlhd \Gamma$ be finite groups and consider a $p$-block $B$ of $G$. If $\Gamma/G$ is a $p$-group and $B$ is $\Gamma$-invariant, then
\[\left|\IBr_\Gamma(B)\right|=\sum\limits_{Q}\left|\dz(\n_\Gamma(Q)/Q\mid B)\right|\]
where $Q$ runs over a set of representatives for the action of $\Gamma$ on the set of radical $p$-subgroups of $\Gamma$ such that $\Gamma=GQ$ and $Q\cap G$ is contained in some defect group of the $p$-block $B$.
\end{conjA}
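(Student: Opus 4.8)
The plan is not to prove Conjecture A unconditionally, but to reduce it to the \emph{Inductive Blockwise Alperin Weight Condition} (iBAW) for the finite simple groups, in two steps. The first step isolates and proves an auxiliary statement — a strong, automorphism-equivariant form of the blockwise Alperin Weight Conjecture — which asserts that, granting iBAW for all the relevant simple groups, for every pair $G\unlhd\Gamma$ with $\Gamma$ acting on $G$ by automorphisms and every $\Gamma$-invariant $p$-block $B$ of $G$ there is a bijection $\Omega_B\colon\IBr(B)\to\Alp(B)/G$ that is $\Gamma$-equivariant and lifts to $\isob$-type isomorphisms of modular character triples matching central characters: if $\Omega_B(\chi)$ is the $G$-orbit of a weight $(R,\psi)$, then $(\Gamma_\chi,G,\chi)$ and $(\Gamma_{(R,\psi)},\n_G(R),\psi)$ are $\isob$-related. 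Insisting on the character-triple property, rather than a bare equivariant bijection, is what makes the statement survive passage to central extensions, and hence usable as an induction hypothesis.

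First I would prove this auxiliary statement by induction on $|G|$. If $G$ is simple, then after passing to a quasisimple cover and factoring out $\oh{p'}{\zent{G}}$ the statement is essentially the iBAW hypothesis. Otherwise fix a minimal normal subgroup $N$ of $\Gamma$ contained in $G$ and split into the usual cases according to whether $N$ is a $p$-group, a $p'$-group, or a direct product of nonabelian simple groups. In each case one decomposes $\IBr(B)$ by Clifford theory over the blocks of $N$ covered by $B$, passes to the stabilizer of such a block and then to that of an underlying Brauer character, applies the inductive hypothesis in the normal and the quotient layers, and reassembles the pieces using the transitivity and compatibility properties of $\isob$. In parallel one must decompose $\Alp(B)$: relate the radical $p$-subgroups of $G$ and their normalizers to radical subgroups of $N$, of $G/N$ and of intermediate sections, and verify that the induced partition of $\Alp(B)/G$ matches that of $\IBr(B)$ compatibly with $\Gamma$ and with central characters. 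I expect this weight-side bookkeeping — transporting radical subgroups, normalizers, and deflation maps through a possibly non-split extension while keeping $\Gamma$-equivariance and central data under control, and threading the Dade--Glauberman--Nagao correspondence through each step — to be the main obstacle, since radical $p$-subgroups behave far less transparently under extensions than $p'$-characters.

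For the second step I would deduce Conjecture A. Let $G\unlhd\Gamma$ with $\Gamma/G$ a $p$-group and $B$ a $\Gamma$-invariant $p$-block of $G$. Applying the auxiliary statement to $G\unlhd\Gamma$ gives a $\Gamma$-equivariant bijection $\Omega_B\colon\IBr(B)\to\Alp(B)/G$, so counting fixed points yields $|\IBr_\Gamma(B)|=|(\Alp(B)/G)^{\Gamma}|$, the latter being the number of $\Gamma$-stable $G$-orbits of $B$-weights. It then remains to biject these orbits with the pairs $(Q,\overline\vartheta)$ indexing the right-hand side of Conjecture A. Given a $\Gamma$-stable $G$-orbit of a $B$-weight $(R,\psi)$, one may assume $\Gamma=G\,\n_\Gamma(R)$; using that a $\Gamma$-invariant $p$-defect-zero character extends through a $p$-group quotient, one produces a radical $p$-subgroup $Q$ of $\Gamma$ with $GQ=\Gamma$ and $R=Q\cap G$ inside a defect group of $B$, with $\n_\Gamma(Q)/Q\cong\c_{\n_G(R)/R}(Q/R)$; the Dade--Glauberman--Nagao correspondence for the $p$-group $Q/R$ acting on $\n_G(R)/R$ then carries the defect-zero character of $\n_G(R)/R$ afforded by $\psi$ to a member of $\dz(\n_\Gamma(Q)/Q\mid B)$, the block condition being inherited from $\bl(\psi)^G=B$, and this assignment is reversible up to $\Gamma$-conjugacy. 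Summing over $\Gamma$-orbits of such $Q$ produces $\sum_Q|\dz(\n_\Gamma(Q)/Q\mid B)|$, as required. The use of the Dade--Glauberman--Nagao correspondence at this last step is precisely what makes Conjecture A interpolate between the blockwise Alperin Weight Conjecture (the case $\Gamma=G$) and the Glauberman correspondence (essentially the case $p\nmid|G|$).
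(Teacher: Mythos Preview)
Your two-step plan --- first reduce the iBAW condition from arbitrary finite groups to quasi-simple ones via Clifford-theoretic descent along a normal subgroup, then deduce Conjecture~A from iBAW by counting $\Gamma$-fixed $G$-orbits of weights and invoking the Dade--Glauberman--Nagao correspondence to match them with the pairs $(Q,\overline\vartheta)$ --- is exactly the architecture of the paper (Theorems~C and~D, carried out in Sections~5--7). The only notable divergence is that the paper minimizes $|G:\zent G|$ rather than $|G|$, precisely so that the central extensions needed to linearize projective representations do not break the induction, and organizes the descent around a characteristic perfect component of $\F^*(G)$ rather than a minimal normal subgroup; but these are implementation choices within the same strategy.
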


The blockwise version of the Alperin Weight Conjecture can be recovered from Conjecture \ref{conj:Main, Gabriel Conjecture} by choosing $\Gamma=G$. Furthermore, from Conjecture \ref{conj:Main, Gabriel Conjecture} we also recover the Glauberman correspondence, by considering the case where $G$ has order prime to $p$, and more generally the Dade--Glauberman--Nagao correspondence (see Lemma \ref{lem:Navarro implies DGN}). We point out that the above statement admits a more general version in which the quotient $\Gamma/G$ need not be a $p$-group (see Conjecture \ref{conj:Blockwise Conjecture E, extended}).

At the end of \cite{Nav17} it was asked whether Conjecture \ref{conj:Main, Gabriel Conjecture} could be obtained as a consequence of the so-called Inductive (Blockwise) Alperin Weight Condition introduced in \cite{Spa13I} to reduce the (blockwise) Alperin Weight Conjecture to simple groups (see also \cite{Nav-Tie11} for the original reduction of the block-free version of Alperin's conjecture). In this paper, we show that this is indeed the case and therefore obtain a reduction of Conjecture \ref{conj:Main, Gabriel Conjecture} to simple groups.

Before stating our reduction theorem for Conjecture \ref{conj:Main, Gabriel Conjecture}, we remind the reader of a (perhaps not so well-known) phenomenon that has been observed in relation to the reduction theorems for the local-global counting conjectures. Originally, going back to the work done by E.C. Dade in \cite{Dad92}, \cite{Dad94}, and \cite{Dad97}, it was expected that for each of the local-global conjectures there would be a refinement of such a statement that would be strong enough to hold for every finite group if proved for all non-abelian finite simple groups. Dade's project remained open long after its formulation and no such reduction was found for several years. The first breakthrough in this direction was achieved by Isaacs, Malle, and Navarro in \cite{Isa-Mal-Nav07} where a reduction for the McKay Conjecture was proved. This seminal work was then followed by several other reduction theorems \cite{Nav-Tie11}, \cite{Spa13II}, \cite{Spa13I}, \cite{Spa17}. Contrary to what Dade expected, all these theorems reduce a given local-global conjecture to a much stronger statement usually referred to as its \textit{inductive condition}. However, the full strength of such inductive conditions was at first not recovered for all finite groups. This was first accomplished in \cite{Nav-Spa14I} where it was shown that assuming the inductive Alperin--McKay condition for all finite simple groups then, not only would the Alperin--McKay conjecture hold for every finite group, but even a refinement analogous to its inductive condition, as it was (ideologically) expected by Dade. Following \cite{Nav-Spa14I}, an analogous result was obtained for the McKay Conjecture in \cite{Ros-iMcK}.

In this paper, we prove a similar reduction theorem in the context of the (blockwise) Alperin Weight Conjecture. First, we state a version of the inductive condition for arbitrary finite groups by using the notion of block isomorphism of modular character triples, denoted by $\isob$, as defined in Section \ref{sec:Isomorphisms of modular character triples}.

\begin{conjA}[Inductive Blockwise Alperin Weight Condition]
\label{conj:Main, iBAWC}
Let $G\unlhd A$ be finite groups and consider a $p$-block $B$ of $G$. If $A_B$ denotes the stabiliser of $B$ in $A$, then there exists an $A_B$-equivariant bijection
\[\Omega:\IBr(B)\to\Alp(B)/G\]
such that
\[\left(A_\vartheta,G,\vartheta\right)\isob\left(\n_A(Q)_\psi,\n_G(Q),\psi\right)\]
for every $\vartheta\in\IBr(B)$ and $(Q,\psi)\in\Omega(\vartheta)$.\end{conjA}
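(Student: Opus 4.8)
The statement cannot be proved unconditionally, as it subsumes the blockwise Alperin Weight Conjecture, so the plan is to deduce it from the Inductive Blockwise Alperin Weight Condition for non-abelian finite simple groups in the form of \cite{Spa13I}, arguing by induction on $|G|$. This is the blockwise--weights analogue of the ``recovery'' arguments of \cite{Nav-Spa14I} for the Alperin--McKay conjecture and of \cite{Ros-iMcK} for the McKay conjecture. Since $\Alp(B)$, the groups $\n_A(Q)_\psi$, and the relation $\isob$ only involve subgroups lying over $B$, one checks $\n_A(Q)_\psi=\n_{A_B}(Q)_\psi$ and may assume from the outset that $B$ is $A$-invariant, i.e.\ $A=A_B$. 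If $G=1$ there is nothing to prove; otherwise fix a minimal normal subgroup $N$ of $A$ contained in $G$, and distinguish three cases according to the structure of $N$: (a) $N$ is a $p$-group; (b) $N$ is a $p'$-group; (c) $N$ is non-abelian.

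In case (a), $N\le\oh{p}{G}$ lies in every radical $p$-subgroup of $G$ and in the kernel of every irreducible Brauer character of $G$; hence inflation identifies $\IBr(B)$ with $\IBr(\overline B)$ and, via $Q\mapsto Q/N$ together with deflation, $\Alp(B)$ with $\Alp(\overline B)$, where $\overline B$ is the block of $\overline G=G/N$ dominated by $B$, and moreover $\n_A(Q)/Q\cong\n_{A/N}(Q/N)/(Q/N)$. The inductive hypothesis applied to $\overline G\unlhd A/N$ yields the desired bijection for $\overline B$, and pulling it back along the quotient maps yields one for $B$: $A_B$-equivariance is immediate and $\isob$ is preserved because it is compatible with this quotient situation, by the results of Section \ref{sec:Isomorphisms of modular character triples}. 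In case (b), write $\HH=\IBr(N)=\Irr(N)$. Clifford theory with respect to $N$, together with its weight-theoretic counterpart from the reduction of the Alperin Weight Conjecture to simple groups, partitions $\IBr(B)$ and $\Alp(B)$ into compatible families indexed by the $A$-orbits on $\HH$ (equivalently, on the blocks of $N$); here one uses that $Q\cap N=1$ and $N\le\n_G(Q)$ for every radical $p$-subgroup $Q$ of $G$, so that $N$ embeds in $\n_G(Q)/Q$ and the blocks of $\n_G(Q)$ over a block of $N$ are governed by the Dade--Glauberman--Nagao correspondence (Section \ref{sec:DGN}). For an orbit representative $\theta$ that is not $G$-invariant, $G_\theta<G$ and the inductive hypothesis applies to the Fong--Reynolds correspondent $(A_\theta,G_\theta,\theta)$, a proper section of the original datum; the resulting character-triple isomorphisms are transported back using transitivity of $\isob$ and its behaviour under Clifford correspondence. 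The subcase in which all of $\HH$ is $G$-invariant, so $N\le\Z(G)$, is reduced — using (central) isomorphisms of character triples — either to a smaller group or ultimately to case (c).

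Case (c), where $N=S_1\times\cdots\times S_k$ with the $S_i$ isomorphic to a non-abelian simple group $S$, is the heart of the argument and the step I expect to be the main obstacle. When $N=G$ is simple, this is exactly the Inductive Blockwise Alperin Weight Condition for $S=G$ of \cite{Spa13I} (together with the standard passage from the universal covering group to arbitrary $S\unlhd A$); the general situation is reduced to it by passing to $\n_A(S_1)$, using the Clifford theory of the extension $N\unlhd G$, and using that the inductive condition is inherited by direct powers (so the bijection for $S$ assembles into an $\Aut(N)$-equivariant bijection for $N$). The difficulties are: (i) choosing the bijection for $N$ compatibly with the projective action of $G/N\le\Out(N)$ on weights, so that it extends to an $A_B$-equivariant bijection for $G$ — the weight analogue of the extension/projective-representation arguments used for ordinary characters; (ii) proving that $\isob$ is preserved under the passage $(A_\vartheta,G,\vartheta)\mapsto(\n_A(Q)_\psi,\n_G(Q),\psi)$ and under composition with all the correspondences above — this compatibility lemma, the counterpart of the key lemmas of \cite{Nav-Spa14I} and \cite{Ros-iMcK}, is where the block-isomorphic (rather than merely central-isomorphic) refinement genuinely bites; and (iii) the precise bookkeeping of the normalisers $\n_G(Q)$ relevant to $B$ in terms of the radical $p$-subgroups of $N$ and of complements to $N$. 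Once these compatibility statements are available, gluing the pieces and verifying $A_B$-equivariance is routine, and Navarro's Conjecture \ref{conj:Main, Gabriel Conjecture} then follows by a short separate argument (restricting $\Omega$ to $\Gamma$-fixed data, using the character-triple isomorphisms to pass to defect-zero characters of the groups $\n_\Gamma(Q)/Q$, and counting orbits).
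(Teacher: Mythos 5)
Your framing is right — the statement is a conjecture, and the only thing one can hope to prove is the reduction to simple groups (the paper's Theorem \ref{thm:Main, Reduction}, assuming Conjecture \ref{conj:iBAWC 2} for covering groups of the simple groups involved) — but as written your argument has genuine gaps, and the largest one is that your case (c), which you yourself call the heart of the matter, is left as a list of expected difficulties (i)--(iii) rather than an argument. Those three points are exactly where the paper's work lies: (i) is resolved by the compatibility of $\isob$ with direct and wreath products plus the Butterfly theorem (Corollary \ref{cor:iBAW for embedded perfect group}); the passage from a bijection for the normal subgroup $K$ to one for $\IBr(G)$ carrying the $\isob$-isomorphisms is Theorem \ref{thm:Lifting bijection for weights} (Clifford correspondence combined with Lemmas \ref{lem:Irreducible induction} and \ref{lem:Bijections induced by isomorphisms are compatible with isomorphisms}); and, crucially for your (ii)--(iii), turning the resulting ``relative'' weights over $K$ into genuine weights of $G$ requires a Dade--Glauberman--Nagao correspondence for \emph{Brauer} characters inducing \emph{block} isomorphisms of modular character triples — this is Theorem \ref{thm:Above modular DGN}, a modular analogue of \cite[Theorem 5.13]{Nav-Spa14I} and the paper's main new technical ingredient, used in Theorems \ref{thm:Reduction, inductive step} and \ref{thm:Reduction, relative weights to weights}. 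No such $\isob$-compatible modular DGN statement existed before, so step (ii) cannot be closed by citing known compatibility lemmas; without it your plan is a roadmap, not a proof.

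Two further concrete problems. First, induction on $|G|$ is the wrong parameter: whenever the relevant character of the normal subgroup fails to extend, one replaces the triple by one over a central extension $\wh{G}$ (Lemma \ref{lem:4.1}) and must apply the inductive hypothesis to $\wh{G}/K_0$, a group that need not be smaller than $G$; the paper therefore minimises with respect to $|G:\z(G)|$ first and $|A|$ second (Hypothesis \ref{hyp:Inductive hypothesis}), the inequality driving the induction being $|G:K\z(G)|<|G:\z(G)|$ in Lemma \ref{lem:Reduction, above a character in dzo}. Your subcase ``all of $\HH$ is $G$-invariant, so $N\leq\z(G)$'' hits exactly this point: the phrase ``reduced \dots either to a smaller group or ultimately to case (c)'' conceals a step your induction on $|G|$ cannot perform. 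Second, in case (b) the claim that $N\leq \n_G(Q)$ for every radical $p$-subgroup $Q$, so that $N$ embeds in $\n_G(Q)/Q$, is false (take $G=S_3$, $N=A_3$, $p=2$, $Q$ a Sylow $2$-subgroup); what is true is $\n_N(Q)=\c_N(Q)$, and it is through $\c_N(Q)$ and the DGN correspondence that the blocks of $\n_G(Q)$ lying over $N$ are controlled. Note finally that the paper avoids your minimal-normal-subgroup trichotomy altogether: it isolates one normal subgroup $K\unlhd A$ (either $\o_{p'}(G)$ or a perfect layer-type subgroup, Proposition \ref{prop:Structure of a minimal counterexample}) for which the conjecture is known, and then performs the three-step gluing described above; your trichotomy could likely be made to work, but only after importing the same machinery.
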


In what follows, we say that Conjecture \ref{conj:Main, iBAWC} holds for $G$ at the prime $p$ if it holds for every $p$-block $B$ of $G$ and every choice of $G\unlhd A$. Recall, furthermore that a simple group $S$ is involved in $G$ if there exists $K\unlhd H\leq G$ such that $S\simeq H/K$. We can now state our first main result.

\begin{theoA}
\label{thm:Main, Reduction}
Let $G$ be a finite group and $p$ a prime number. If Conjecture \ref{conj:Main, iBAWC} holds at the prime $p$ for every covering group of any non-abelian finite simple group involved in $G$, then Conjecture \ref{conj:Main, iBAWC} holds for $G$.
\end{theoA}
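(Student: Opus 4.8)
The plan is to proceed by induction on $|G|$, following the standard template for reductions of local-global counting conjectures to simple groups, but keeping track of the block isomorphism relation $\isob$ throughout. Fix $G\unlhd A$ and a $p$-block $B$ of $G$, and assume the theorem holds for all groups of smaller order. If $G$ is itself simple (or more generally quasi-simple), then after passing to a suitable covering group we are in the hypothesis of the theorem and there is nothing to prove; so we may assume $G$ has a proper nontrivial normal subgroup that is characteristic, and in fact we may take a characteristic subgroup $N$ with $1\neq N\neq G$. The idea is to use $N$ to split the bijection $\IBr(B)\to\Alp(B)/G$ into pieces indexed by $A$-orbits of Brauer characters of $N$ (via Clifford theory) and, on each piece, invoke the inductive hypothesis applied to smaller groups together with the transitivity and ``going-up'' properties of $\isob$.

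The key steps, in order, are as follows. First, I would set up the Clifford-theoretic correspondence: for $\theta\in\IBr(N)$ lying under $\IBr(B)$ with $T=A_\theta$ the stabiliser, the characters in $\IBr(B)$ lying over the $A$-orbit of $\theta$ are in $A$-equivariant bijection with characters in an appropriate block of $G_\theta=G\cap T$ lying over $\theta$, and this bijection is compatible with $\isob$ by the usual character-triple machinery (this is exactly the kind of statement developed in the section on isomorphisms of modular character triples, cf. the $\isob$ relation). Second, on the weight side I would establish the analogous decomposition of $\Alp(B)/G$: weights $(Q,\psi)$ of $B$ are sorted according to the $N_G(Q)$-orbit (equivalently $\bl(\psi)$-data) of the Brauer characters of $N_N(Q)=N\cap N_G(Q)$ lying under $\psi$, and one relates weights of $G$ to weights of the relevant $\theta$-stabilising subgroups — this is where the Dade--Glauberman--Nagao correspondence and the behaviour of weights under normal subgroups enters, and where one must be careful that $\isob$ for the weight triple $(N_A(Q)_\psi,N_G(Q),\psi)$ is preserved. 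Third, I would treat the case $C_G(N)\leq N$ (so $N$ is, in the relevant sense, ``self-centralising'' and the outer action on $N$ controls everything) separately from the general case: in the general case one reduces further, and the genuinely new input is needed only when $N$ is a direct product of copies of a simple group, i.e. the quasi-simple/simple situation where the hypothesis is assumed.

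The main obstacle I expect is verifying that the $\isob$ relation is preserved under all of these operations simultaneously — this is substantially more delicate than merely counting. Concretely, the hard part will be the ``gluing'' step: one has $\isob$-isomorphisms of character triples coming from (a) the inductive hypothesis applied to $G_\theta<G$ (when $G_\theta\neq G$), (b) the Clifford correspondence between $(A_\vartheta,G,\vartheta)$ and a triple over $G_\theta$, and (c) the corresponding statements on the weight side, and one must compose them using transitivity of $\isob$ while checking that all the equivariance conditions (with respect to $A_B$, $A_\theta$, and the normalisers $N_A(Q)$) are mutually compatible. Handling the ``central'' case $N\leq Z(G)$ — where $\theta$ is linear and one must pass to a proper covering group of $G/N$ to apply induction, then pull back the resulting bijection and $\isob$-isomorphisms along the quotient — is a second technical point that requires care, since one needs the block isomorphisms to descend correctly through the central quotient and to recombine with the contribution of $Z(G)$; but this follows the now-standard pattern and should be routine given the properties of $\isob$ recorded earlier.
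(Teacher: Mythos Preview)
Your outline has the right flavour but misses two structural ingredients that the paper relies on, and without them the induction does not close.

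First, the induction variable. You propose induction on $|G|$, but the crucial step in the paper is Lemma~\ref{lem:Reduction, above a character in dzo}: given an $A$-invariant $\vartheta\in\dzo(K)$ one passes to the central extension $\wh{A}$ of Lemma~\ref{lem:4.1}, in which $\vartheta_0$ extends, and then applies the inductive hypothesis to $\overline{G}=\wh{G}/K_0$. The point is that $|\wh{G}|$ may be larger than $|G|$, so induction on order fails; what decreases is $|\overline{G}:\zent{\overline{G}}|\le |G:K\zent{G}|<|G:\zent{G}|$. This is why the paper minimises $|G:\zent{G}|$ first and then $|A|$ (Hypothesis~\ref{hyp:Inductive hypothesis}). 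Your Clifford reduction to $G_\theta$ handles only the case $G_\theta<G$; when $\theta$ is $G$-invariant you have no smaller group to induct on, and your proposed remedy in the ``central'' paragraph (``pass to a proper covering group of $G/N$'') points in the wrong direction, since such a covering group need not be smaller than $G$.

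Second, the weight side. You say one ``sorts weights according to the $\n_G(Q)$-orbit of Brauer characters of $N\cap \n_G(Q)$'' and that DGN enters, but this is where most of the technical work lies and your description does not match what actually has to be done. The paper does \emph{not} decompose $\Alp(B)/G$ directly via Clifford theory over an arbitrary $N$; instead it chooses $K$ so that Conjecture~\ref{conj:iBAWC 2} already holds for $K\unlhd A$ (Proposition~\ref{prop:Structure of a minimal counterexample}), lifts this via Theorem~\ref{thm:Lifting bijection for weights} to a bijection $\IBr(G)\to\Alp(G\mid K)/G$, and then builds a two-stage bijection
\[
\Alp(G\mid K)/G \;\longrightarrow\; \Wr(G\mid K)/G \;\longrightarrow\; \Alp(G)/G
\]
through the auxiliary set of ``relative weight triples'' (Definition~\ref{def:Relative weights triples}). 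The first arrow (Theorem~\ref{thm:Reduction, inductive step}) is where the inductive hypothesis is used, via the central-extension argument above; the second (Theorem~\ref{thm:Reduction, relative weights to weights}) is where the modular DGN correspondence of Theorem~\ref{thm:Above modular DGN} is applied, and it requires the new notion of relative defect zero Brauer characters (Definition~\ref{def:rdzo}) together with the $\isob$-compatibility proved in Section~\ref{sec:DGN}. None of this intermediate structure is visible in your plan, and it is not a matter of bookkeeping: getting the DGN step to respect $\isob$ is one of the main contributions of the paper.
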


These enhanced reduction theorems have been shown to have important implications. For instance, the reduction of the (inductive) Alperin--McKay Conjecture obtained in \cite{Nav-Spa14I} was used to deduce a reduction theorem for Brauer's Height Zero Conjecture, which led ultimately to a final solution of Brauer's conjecture for the prime $p=2$ thanks to work of Ruhstorfer \cite{Ruh22AM}. The latter was in turn used in the final proof recently obtained by Malle, Navarro, Schaeffer-Fry, and Tiep in \cite{MNSFT} while relying on a different argument for odd primes. On the other hand, the reduction of the (inductive) McKay Conjecture from \cite{Ros-iMcK} is used in the verification of the inductive McKay condition for finite simple groups of Lie type (in type D) in the work of Cabanes and Sp\"ath \cite{Cab-Spa23} and ultimately contributes to the final proof of the McKay Conjecture itself. Similarly, the inductive condition for Dade's Conjecture, also known as the Character Triple Conjecture (see \cite[Conjecture 6.3]{Spa17}), has been shown to impact the construction of certain character bijections needed to even verify the original version of Dade's Conjecture (see \cite[Section 6]{Ros-Generalized_HC_theory_for_Dade} and \cite[Section 4.2]{Ros-Unip}).

Following the path described in the above paragraph, we prove yet another application of these stronger reduction theorems. In fact, we use Theorem \ref{thm:Main, Reduction} to obtain a reduction theorem for Conjecture \ref{conj:Main, Gabriel Conjecture}. This will follow as a consequence of the following result.

\begin{theoA}
\label{thm:Main, iBAWC implies Gabriel Conjecture}
Let $G\unlhd \Gamma$ be finite groups with $\Gamma/G$ a $p$-group and consider a $p$-block $B$ of $G$. If Conjecture \ref{conj:Main, iBAWC} holds for the $p$-block $B$ with respect to $G\unlhd \Gamma$, then Conjecture \ref{conj:Main, Gabriel Conjecture} holds for the $p$-block $B$ with respect to $G\unlhd \Gamma$.
\end{theoA}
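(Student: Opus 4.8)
The statement to prove is that Conjecture \ref{conj:Main, iBAWC} for the block $B$ with respect to $G \unlhd \Gamma$ (with $\Gamma/G$ a $p$-group, $B$ being $\Gamma$-invariant) implies Navarro's equality $|\IBr_\Gamma(B)| = \sum_Q |\dz(\n_\Gamma(Q)/Q \mid B)|$. The fundamental idea is that the iBAWC bijection $\Omega \colon \IBr(B) \to \Alp(B)/G$ is $\Gamma$-equivariant (since $\Gamma/G$ is a $p$-group, hence $\Gamma \leq A$ in the sense of the inductive condition, taking $A = \Gamma$), and moreover comes equipped with block isomorphisms of modular character triples. The two sides of Navarro's equality are then obtained from the two sides of $\Omega$ by applying a "Clifford theory above a $p$-group" argument: counting $\Gamma$-orbits on both $\IBr(B)$ and $\Alp(B)/G$ and then using the character-triple isomorphisms to match the local contributions with the $\dz$-sets.

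First I would set $A = \Gamma$ in Conjecture \ref{conj:Main, iBAWC}, so that $A_B = \Gamma$ (as $B$ is $\Gamma$-invariant) and we obtain a $\Gamma$-equivariant bijection $\Omega \colon \IBr(B) \to \Alp(B)/G$ with $(\Gamma_\vartheta, G, \vartheta) \isob (\n_\Gamma(Q)_\psi, \n_G(Q), \psi)$ for all $\vartheta$ and $(Q,\psi) \in \Omega(\vartheta)$. The left-hand side of Navarro's equality: because $\Gamma/G$ is a $p$-group, for $\vartheta \in \IBr(B)$ the number of irreducible Brauer characters of $\Gamma$ lying over $\vartheta$ that belong to $B$ (equivalently, over the $\Gamma$-orbit of $\vartheta$) is governed by the projective representation / cohomological obstruction attached to the triple $(\Gamma_\vartheta, G, \vartheta)$; in fact $|\IBr_\Gamma(B)| = \sum_\vartheta |\IBr(\Gamma_\vartheta \mid \vartheta)|/|\Gamma : \Gamma_\vartheta|$ summed over $\vartheta \in \IBr(B)$, which is the count of $\IBr$ over a transversal of $\Gamma$-orbits, each weighted by the number of Brauer constituents of the induced character — and this latter number is a block-theoretic invariant of $(\Gamma_\vartheta, G, \vartheta)$ that is preserved under $\isob$. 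On the right-hand side, a radical $p$-subgroup $Q$ of $\Gamma$ with $\Gamma = GQ$ and $Q \cap G$ contained in a defect group of $B$ gives, after identifying $\n_G(Q)$-characters $\vartheta$ with their inflations $\overline\vartheta$ from $\n_G(Q)/(Q\cap G)$, exactly the weights $(Q\cap G, \vartheta)$ of $G$ in $B$ up to the action; and $\dz(\n_\Gamma(Q)/Q \mid B)$ is then the set of $\IBr$ of $\n_\Gamma(Q)$ over such $\vartheta$ lying in blocks covering $B$ — again a quantity controlled by the triple $(\n_\Gamma(Q)_\psi, \n_G(Q), \psi)$.

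The technical heart of the argument is a \emph{counting lemma} of the following shape: if $(\hat H, N, \xi) \isob (\hat K, M, \eta)$ are block-isomorphic modular character triples with $\hat H / N$ and $\hat K / M$ isomorphic $p$-groups (the isomorphism being part of the data of $\isob$), then the number of $\phi \in \IBr(\hat H)$ over $\xi$ lying in a block mapping to a prescribed block equals the corresponding number for $(\hat K, M, \eta)$ — and, crucially, when $\xi$ (resp. $\eta$) is of $p$-defect zero this number equals $|\dz(\cdot)|$-type contributions, matching $p$-defect-zero characters on the deflated quotient. Applying this with $(\hat H, N, \xi) = (\Gamma_\vartheta, G, \vartheta)$ and $(\hat K, M, \eta) = (\n_\Gamma(Q)_\psi, \n_G(Q), \psi)$ for each orbit-representative pair $(\vartheta, (Q,\psi))$ under $\Omega$, summing over a $\Gamma$-transversal of $\IBr(B)$, and using that $\Omega$ is an equivariant bijection (so that $\Gamma$-orbits on $\Alp(B)/G$ correspond to $\Gamma$-orbits on $\IBr(B)$, with matching stabilisers up to the relevant isomorphism), both sides of Navarro's equality collapse to the same sum $\sum_\vartheta$ (over a transversal) of these common local invariants. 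I would also need the standard observation (implicit in the definition of $\Alp(B)$ and of $\dz(\n_\Gamma(Q)/Q\mid B)$) that a radical $p$-subgroup $Q$ of $\Gamma$ with $\Gamma = GQ$ has $Q \cap G$ radical in $G$, and that conjugation by $\Gamma$ on the $Q$'s is compatible, via $Q \mapsto Q\cap G$, with $\Gamma$-conjugation on $G$-weights, so that the outer sum over $Q$ in Navarro's statement and the orbit sum coming from $\Alp(B)/G$ are genuinely the same indexing set.

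\textbf{Main obstacle.} The delicate point is the precise bookkeeping of \emph{block covering} and \emph{defect-zero on the deflated quotient} through the character-triple isomorphism $\isob$. The relation $\isob$ is designed to transport block-theoretic data (Brauer correspondence, Clifford theory of blocks) between the triples, but one must check carefully that: (i) the correspondence of $\IBr$ of the over-groups induced by $\isob$ maps Brauer characters of $B$-covering blocks to Brauer characters of the corresponding blocks on the local side (this is exactly what "block isomorphism" is built to do, but it requires tracing through how $\isob$ interacts with block induction $\bl(\cdot)^\Gamma$); and (ii) the identification of $\psi \in \Alp(B)$ — a priori a Brauer character of $\n_G(Q)$ whose deflation to $\n_G(Q)/Q$ has defect zero, equivalently is an ordinary character of that quotient — with an ordinary character $\vartheta$ of $\n_G(Q)/(Q\cap G)$ appearing in Navarro's $\dz$-set, is faithfully recorded so that "$\IBr$ over $\psi$ in a covering block" on the $\Gamma$-side becomes "$\dz(\n_\Gamma(Q)/Q \mid B)$". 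Once these compatibilities are established — largely by invoking the properties of $\isob$ proved in Section \ref{sec:Isomorphisms of modular character triples} together with the elementary Clifford theory of $p$-extensions — the equality follows by summation, so I expect roughly a page of careful but routine verification after the counting lemma is in place.
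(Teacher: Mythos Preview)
Your overall strategy --- apply the iBAWC bijection with $A=\Gamma$, use $\Gamma$-equivariance to reduce to $\Gamma$-fixed points, and transfer the relevant Clifford-theoretic count through the relation $\isob$ --- matches the first half of the paper's argument (Proposition~\ref{prop:iBAWC implies Gabriel extended}). In particular your ``counting lemma'' is exactly the observation that the bijection $\sigma_\Gamma:\IBr(\Gamma\mid\vartheta)\to\IBr(\n_\Gamma(Q_0)\mid\psi)$ coming from $\isob$ preserves the properties ``restricts irreducibly to the normal subgroup'' and ``lies in a block inducing to one covering $B$''. Since $\Gamma/G$ is a $p$-group, both sides contribute exactly $1$ for each $\Gamma$-fixed orbit, so this step is fine.

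The genuine gap is in your claim that ``the outer sum over $Q$ in Navarro's statement and the orbit sum coming from $\Alp(B)/G$ are genuinely the same indexing set'' via $Q\mapsto Q\cap G$. You are conflating two different objects called $Q$: in $\Alp(B)$ the first coordinate $Q_0$ is a radical $p$-subgroup of $G$, whereas in Conjecture~\ref{conj:Main, Gabriel Conjecture} the summation index $D$ is a radical $p$-subgroup of $\Gamma$ with $\Gamma=GD$. The map $D\mapsto D\cap G$ does land in $\Rad(G)$, but the matching of the character sets is not a matter of bookkeeping. Concretely: on the weight side you have the unique extension $\tau\in\IBr(\n_\Gamma(Q_0))$ of $\psi$, and you must show that $\{\tau\}$ is in bijection with $\dz(\n_\Gamma(D)/D\mid B)$ for an appropriate $D$ with $D\cap G=Q_0$. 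The paper produces $D$ by taking $D/Q_0$ to be a defect group of $\bl(\overline{\tau})$ in $\n_\Gamma(Q_0)/Q_0$, and then establishes the bijection of character sets using the Dade--Glauberman--Nagao correspondence --- this is Lemma~\ref{lem:Navarro extended implies Navarro}(ii)--(iii), which in turn rests on Theorem~\ref{thm:Above modular DGN}. That theorem (the whole of Section~\ref{sec:DGN}) is a substantial piece of work: it builds a bijection $\rdzo(\n_\Gamma(Q_0)\mid\n_\Gamma(Q_0),\psi)\to\dzo(\n_\Gamma(D)\mid\pi_D(\psi))$ compatible with $\isob$, and this is precisely what converts ``extensions of the weight character $\psi$'' into ``defect-zero characters of $\n_\Gamma(D)/D$''. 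Your proposal does not mention the DGN correspondence at all, and the sentence identifying $\dz(\n_\Gamma(Q)/Q\mid B)$ with ``$\IBr$ of $\n_\Gamma(Q)$ over such $\vartheta$'' tacitly assumes $\n_\Gamma(D)=\n_\Gamma(Q_0)$, which is false in general. So the step you flag as ``routine verification'' is in fact the non-trivial half of the proof.
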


Combining Theorem \ref{thm:Main, iBAWC implies Gabriel Conjecture} and Theorem \ref{thm:Main, Reduction} we finally obtain the following reduction to finite simple groups for Conjecture \ref{conj:Main, Gabriel Conjecture}.

\begin{corA}
\label{cor:Main, Reduction for Gabriel Conjecture}
Let $G$ be a finite group and $p$ a prime number. If Conjecture \ref{conj:Main, iBAWC} holds at the prime $p$ for every covering group of any non-abelian finite simple group involved in $G$, then Conjecture \ref{conj:Main, Gabriel Conjecture} holds for every $p$-block of $G$.
\end{corA}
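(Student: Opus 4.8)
The plan is to derive the statement directly by combining the two main results already established, namely Theorem~\ref{thm:Main, Reduction} and Theorem~\ref{thm:Main, iBAWC implies Gabriel Conjecture}; no new ingredients are required, and the only task is to check that the hypotheses match up.

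First I would fix a $p$-block $B$ of $G$ together with a finite group $\Gamma$ such that $G\unlhd\Gamma$, the quotient $\Gamma/G$ is a $p$-group, and $B$ is $\Gamma$-invariant. This is precisely the configuration in which the conclusion of Conjecture~\ref{conj:Main, Gabriel Conjecture} has to be verified, so it suffices to show that Conjecture~\ref{conj:Main, Gabriel Conjecture} holds for this $B$ with respect to $G\unlhd\Gamma$. Next, using the hypothesis that Conjecture~\ref{conj:Main, iBAWC} holds at the prime $p$ for every covering group of every non-abelian finite simple group involved in $G$, I would invoke Theorem~\ref{thm:Main, Reduction} to deduce that Conjecture~\ref{conj:Main, iBAWC} holds for $G$ at the prime $p$. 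By the convention recorded right after the statement of Conjecture~\ref{conj:Main, iBAWC}, this means that the conjecture holds for every $p$-block of $G$ and for every group $A$ with $G\unlhd A$; specialising to $A=\Gamma$ and to the chosen block $B$, we obtain that Conjecture~\ref{conj:Main, iBAWC} holds for the $p$-block $B$ with respect to $G\unlhd\Gamma$. Finally I would apply Theorem~\ref{thm:Main, iBAWC implies Gabriel Conjecture} to conclude that Conjecture~\ref{conj:Main, Gabriel Conjecture} holds for $B$ with respect to $G\unlhd\Gamma$, and since $B$ and $\Gamma$ were arbitrary subject to the above constraints, the statement follows.

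There is essentially no obstacle beyond the two cited theorems; the one point deserving a moment's attention is that the intrinsic assertion ``Conjecture~\ref{conj:Main, iBAWC} holds for $G$'' delivered by Theorem~\ref{thm:Main, Reduction} already quantifies over all ambient groups $G\unlhd A$, so that feeding Theorem~\ref{thm:Main, iBAWC implies Gabriel Conjecture} requires no hypothesis on the simple groups involved in $\Gamma$ that are not already involved in $G$. Thus the chain of implications is complete exactly as stated.
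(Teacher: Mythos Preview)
Your proposal is correct and follows essentially the same approach as the paper: fix $G\unlhd\Gamma$ with $\Gamma/G$ a $p$-group and a block $B$, apply Theorem~\ref{thm:Main, Reduction} (equivalently Theorem~\ref{thm:Reduction for iBAW}) to obtain Conjecture~\ref{conj:Main, iBAWC} for $G\unlhd\Gamma$, and then invoke Theorem~\ref{thm:Main, iBAWC implies Gabriel Conjecture}. Your closing remark that no hypothesis on simple groups involved in $\Gamma$ is needed is a correct observation, though the paper does not comment on it explicitly.
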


As mentioned above, Conjecture \ref{conj:Main, Gabriel Conjecture} can be extended to arbitrary quotients $\Gamma/G$ (see Conjecture \ref{conj:Blockwise Conjecture E, extended}). In Section \ref{sec:Navarro's conjecture}, we show that this more general statement is also a consequence of Conjecture \ref{conj:Main, iBAWC} and can therefore be reduced to finite simple groups (see Proposition \ref{prop:iBAWC implies Gabriel extended}). Furthermore, we obtain block-free versions of these results (see Section \ref{sec:Block-free Navarro Conjecture}).

As an application of our reduction theorems, and using the fact that Conjecture \ref{conj:Main, iBAWC} has been verified for several classes of finite simple groups (see, for instance, \cite{Fen-Zha22} and the references therein), we show that Conjecture \ref{conj:Main, Gabriel Conjecture} and Conjecture \ref{conj:Main, iBAWC} hold for several classes of groups and blocks, including groups with abelian Sylow $2$-subgroups or abelian Sylow $3$-subgroups, groups with odd Sylow automiser, $p$-blocks with cyclic defect groups, nilpotent $p$-blocks, and $2$-blocks with abelian defect groups.

The paper is organised as follows: after introducing the relevant notation and preliminary results on block isomorphisms of modular character triples in Section \ref{sec:Notations} and Section \ref{sec:Isomorphisms of modular character triples}, we prove certain consequences of the Inductive Blockwise Alperin Weight Condition in Section \ref{sec:iBAW}. Next, in Section \ref{sec:DGN} we prove a modular version of the Dade--Glauberman--Nagao correspondence compatible with block isomorphisms of modular character triples (see Theorem \ref{thm:Above modular DGN}). Using this result, we then prove Theorem \ref{thm:Main, Reduction} in Section \ref{sec:Reduction} (see also Section \ref{sec:Reduction for block-free version} for a block-free version of the reduction). Section \ref{sec:Navarro's conjecture} is devoted to Navarro's conjecture and to the proofs of Theorem \ref{thm:Main, iBAWC implies Gabriel Conjecture} and Corollary \ref{cor:Main, Reduction for Gabriel Conjecture}, as well as analogous block-free results (see Section \ref{sec:Block-free Navarro Conjecture}). In Section \ref{sec:Navarro with isomorphisms}, we use one further result from Section \ref{sec:DGN} (see Corollary \ref{cor:DGN modular iso}) to obtain a version of Conjecture \ref{conj:Main, Gabriel Conjecture} compatible with isomorphisms of modular character triples that extends the main result of \cite{Tur08I}. Finally, in Section \ref{sec:Results} we prove Conjecture \ref{conj:Main, Gabriel Conjecture} and Conjecture \ref{conj:Main, iBAWC} for the above-mentioned classes of groups and blocks.

\subsection*{Acknowledgments}

We thank Gabriel Navarro for an insightful conversation on the content of Section \ref{sec:DGN} during the workshop 2316 Representations of Finite groups at Mathematisches Forschungsinstitut Oberwolfach, where part of this research was conducted. We are thankful to Marc Cabanes and Gunter Malle for providing comments and corrections to an earlier version of this paper. The third author would also like to thank Andrei Marcus for a discussion about the results of \cite{Mar-Min21} (in particular for pointing out at the validity of a modular version of \cite[Proposition 5.6]{Mar-Min21}), Yuanyang Zhou for providing us with a copy of the preprint \cite{Hu-Zho}, and Julian Brough for helpful conversations about the Inductive Alperin Weight Condition during this author's doctoral studies at the Bergische Universit\"at Wuppertal. We extend our gratitude to the Institut Henri Poincar\'e for the hospitality during our visit as part of the Research in Paris program.

\section{Notation and preliminary results}
\label{sec:Notations}

Throughout the paper we use standard notation from ordinary and modular character theory. We refer the reader to \cite{Nav18} and \cite{Nav98} for a detailed introduction to the subject.

Let $p$ be a prime, $\mathbf{R}$ the ring of algebraic integers in $\overline{\mathbb{Q}}$, and fix a maximal ideal $\mathbf{M}$ of $\mathbf{R}$ containing $p$. Then the quotient $\FF=\mathbf{R}/\mathbf{M}$ is an algebraically closed field of characteristic $p$. Furthermore, if $\mathbf{S}$ is the localization of $\mathbf{R}$ at $\mathbf{M}$ then we denote by 
\[^*:\mathbf{S}\to \mathbb{F}\]
the epimorphism from \cite[Chapter 2]{Nav98}.

We denote by $\Bl(G)$ the set of $p$-blocks (or simply blocks) of a finite group $G$ and by $\lambda_{B}:\zent{\mathbb{F}G}\to \mathbb{F}$ the central function associated to each $B\in\Bl(G)$. Whenever $\chi\in\Irr(G)\cup\IBr(G)$, the central function $\lambda_\chi:\z(\FF G)\to \FF$ coincides with $\lambda_B$ if and only if the block $\bl(\chi)$ of $G$ containing $\chi$ coincides with $B$. In this case, we write $\chi\in\Irr(B)\cup\IBr(B)$. If $H\leq G$ and $b\in\Bl(H)$, then the induced block $b^G$ is defined if the linear map $\lambda_b^G$ defined in \cite[p.87]{Nav98} is an algebra homomorphism. In this case, there is a unique $B\in\Bl(G)$ such that $\lambda_b^G=\lambda_B$ and we write $b^G=B$. The central functions considered here are determined by their values on a basis of $\z(\FF G)$. One such basis is provided by the conjugacy class sums
\[\CL_G(x)^+=\sum_{y\in\CL_G(x)}y\]
considered as an element of the group algebra of $G$ and where $\CL_G(x)$ denotes the conjugacy class of $x$ in $G$.

Consider now an ordinary character $\chi\in\Irr(G)$ and let $G^0$ be the set of $p$-regular elements of $G$, that is, the set of elements of $G$ whose order is prime to $p$. Then, the restriction $\chi^0$ of $\chi$ to $G^0$ is a Brauer character that decomposes as
\[\chi^0=\sum_{\varphi\in\IBr(G)}d_{\chi,\varphi}\varphi\]
for some integers $d_{\chi,\varphi}$ called decomposition numbers. Using decomposition numbers, we can then define projective indecomposable characters. More precisely, for any Brauer character $\varphi\in\IBr(G)$, the projective indecomposable character associated to $\varphi$ is the ordinary character of $G$ defined by
\[\Phi_{\varphi}:=\sum_{\chi\in\Irr(G)}d_{\chi,\varphi}\chi.\]
 
We finally introduce the notion of character triple that will be fundamental in the rest of this work. If $N\normal G$ and $\vartheta\in\Irr(N)\cup\IBr(N)$ is $G$-invariant then we say $(G, N,\vartheta)$ is a \textit{character triple}. We say that $(G,N,\vartheta)$ is an \textit{ordinary} or \textit{modular} character triple if $\vartheta\in\Irr(N)$ or $\vartheta\in\IBr(N)$ respectively. Moreover, in the particular situation where $\vartheta\in\Irr(N)$ and $\vartheta^0\in\IBr(N)$, we say that $(G, N,\vartheta)$ is an \textit{ordinary-modular} character triple. This is the case, for instance, when $\vartheta\in\Irr(N)$ has \textit{$p$-defect zero}, that is, when $\vartheta$ satisfies $\vartheta(1)_p=|N|_p$ (see \cite[Theorem 3.18]{Nav98}). The set of irreducible ordinary characters of defect zero of a finite group $G$ is denoted by $\dz(G)$.

\begin{lem}
\label{lem:ordinary-modular extension}
Let $(G, N,\vartheta)$ be an ordinary-modular character triple with $\vartheta\in\dz(N)$. Then $\vartheta$ extends to $G$ if and only if $\vartheta^0$ extends to $G$. Furthermore, restriction to $p$-regular elements is a surjection from the set of extensions of $\vartheta$ in $\Irr(G)$ onto the set of extensions of $\vartheta^0$ in $\IBr(G)$.
\end{lem}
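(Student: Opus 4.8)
The plan is to prove the two directions of the equivalence separately. The forward implication is immediate and uses only that $(G,N,\vartheta)$ is ordinary--modular; the converse is the substantive part, and it will simultaneously yield the surjectivity claimed at the end (since, by the forward direction, restriction to $p$-regular elements always carries extensions of $\vartheta$ to extensions of $\vartheta^0$).

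\textbf{Forward direction.} Suppose $\vartheta$ extends to $\chi\in\Irr(G)$. As passing to $p$-regular elements commutes with restriction to $N$, one has $(\chi^0)_N=(\chi_N)^0=\vartheta^0$; since $\vartheta^0\in\IBr(N)$ is irreducible and $\chi^0$ is a Brauer character of $G$ restricting to $\vartheta^0$, it follows that $\chi^0\in\IBr(G)$ and extends $\vartheta^0$. This settles the ``into'' part of the surjectivity claim for free.

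\textbf{Converse.} Assume $\vartheta^0$ extends to some $\beta\in\IBr(G)$, so $\beta$ lies over $\vartheta^0$. The block $b:=\bl(\vartheta)$ of $N$ has trivial defect group because $\vartheta\in\dz(N)$, so $\mathbf SNb\cong\Mat_{\vartheta(1)}(\mathbf S)$ and Clifford theory of blocks produces a $2$-cocycle $\gamma$ of $G/N$ with $\overline{\mathbb Q}Gb\cong\Mat_{\vartheta(1)}\!\big(\overline{\mathbb Q}_\gamma[G/N]\big)$ and $\FF G\bar b\cong\Mat_{\vartheta(1)}\!\big(\FF_{\bar\gamma}[G/N]\big)$ (twisted group algebras), in such a way that $\vartheta$ extends to $G$ exactly when $[\gamma]=1$ in $H^2(G/N,\overline{\mathbb Q}^{\times})$. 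Since $\beta_N=\vartheta^0$ forces $\beta(1)=\vartheta(1)$, the simple module of $\FF_{\bar\gamma}[G/N]$ corresponding to $\beta$ is one-dimensional, so $[\bar\gamma]=1$ in $H^2(G/N,\FF^{\times})$; after multiplying $\gamma$ by the coboundary of a $p'$-valued lift to $\mathbf S^{\times}$ of a trivializing cochain of $\bar\gamma$, we may assume $\gamma$ takes values in the group $1+\mathbf M\mathbf S$ of principal units. This group is uniquely $q$-divisible for every prime $q\neq p$, hence $[\gamma]$ has $p$-power order; so $[\gamma]=1$ as soon as $\Res^{G/N}_{P_0/N}[\gamma]=1$, that is, as soon as $\vartheta$ extends to $P_0$, where $N\leq P_0\leq G$ and $P_0/N\in\Syl_p(G/N)$. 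Now $P_0/N$ is a $p$-group and $b$ is nilpotent (having trivial defect group), so the unique block $B_0$ of $P_0$ covering $b$ is nilpotent by the theory of extensions of nilpotent blocks; by Brou\'e and Puig the decomposition matrix of $B_0$ coincides with that of $\mathbf SD_0$ for a defect group $D_0$ of $B_0$, so the row indexed by the trivial character of $D_0$ provides $\chi_0\in\Irr(B_0)$ with $d_{\chi_0,\beta_0}=1$ for the unique $\beta_0\in\IBr(B_0)$. Then $\chi_0^0=\beta_0$, and comparing degrees with $(\beta_0)_N=\vartheta^0$ gives $(\chi_0)_N=\vartheta$; thus $\vartheta$ extends to $P_0$, hence to $G$.

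\textbf{Surjectivity.} Given $\beta\in\IBr(G)$ extending $\vartheta^0$, fix an extension $\hat\vartheta\in\Irr(G)$ of $\vartheta$ (which exists by the converse). By the Brauer-character analogue of Gallagher's theorem, $\beta=\hat\vartheta^0\cdot\tau$ for a linear $\tau\in\IBr(G/N)$; lifting $\tau$ to a linear character $\hat\tau\in\Irr(G/N)$ of order prime to $p$ (so $\hat\tau^0=\tau$) and setting $\chi:=\hat\vartheta\cdot\hat\tau$ gives, by Gallagher's theorem, $\chi\in\Irr(G)$ with $\chi_N=\vartheta$ and $\chi^0=\beta$, as required. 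I expect the main obstacle to be the step showing that a $G$-invariant defect-zero character extends over a $p$-group quotient: a naive cohomological argument fails there because $H^2(G/N,\,1+\mathbf M\mathbf S)$ is in general nonzero, and one genuinely needs the nilpotency of the covering block --- ultimately Brou\'e and Puig's structure theorem for nilpotent blocks --- to control its decomposition matrix.
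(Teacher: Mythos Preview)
Your argument is correct but takes a substantially heavier route than the paper's. The paper first invokes \cite[Problem 8.13]{Nav98} to replace $(G,N,\vartheta)$ by an isomorphic ordinary--modular triple in which $N$ is a $p'$-group, and then verifies both implications Sylow-by-Sylow over $G/N$: for a Sylow $q$-overgroup with $q\neq p$ one has a $p'$-group, so $\IBr=\Irr$ there; for $q=p$ one has coprime indices and hence extension by \cite[Theorem~6.2]{Nav18}; finally \cite[Theorem~5.10]{Nav18} (respectively \cite[Theorems~8.11 and~8.29]{Nav98}) assembles the pieces. No cohomology, no block theory. You instead exploit the defect-zero Morita equivalence with a twisted group algebra, argue cohomologically that the obstruction class has $p$-power order (your unique $q$-divisibility of $1+\mathbf{M}\mathbf{S}$ is correct: integral closure of $\mathbf{S}$ in $\overline{\mathbb{Q}}$ gives existence of a $q$-th root, and exactly one of the $q$ roots reduces to~$1$), and reduce to the Sylow $p$-overgroup $P_0$ via restriction--corestriction. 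What your approach buys is a proof that works directly for $\vartheta\in\dz(N)$ without passing to an auxiliary triple; what the paper's approach buys is that it needs only elementary character theory.

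Two remarks. First, your appeal to ``the theory of extensions of nilpotent blocks'' for the nilpotency of $B_0$ is imprecise; the result you need (a $p$-extension of a nilpotent block is nilpotent) is true but deserves a sharper reference, and in the defect-zero case there is a much lighter alternative: any height-zero character $\chi_0\in\Irr(B_0)$ has $\chi_0(1)_p=\vartheta(1)_p$, and since $\chi_0(1)/\vartheta(1)$ divides the $p$-power $|P_0{:}N|$, this forces $\chi_0(1)=\vartheta(1)$, so $\chi_0$ extends $\vartheta$. This is essentially \cite[Theorem~2.4]{Nav-Tie11}, already used elsewhere in the paper, and it bypasses Brou\'e--Puig entirely. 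Second, you prove the surjectivity assertion explicitly (via modular Gallagher and a $p'$-lift of the linear Brauer character), whereas the paper's written proof establishes only the equivalence and leaves the ``furthermore'' clause to the reader; so on that point your write-up is more complete.
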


\begin{proof}
To start, recall that by \cite[Problem 8.13]{Nav98} there exists an ordinary-modular character triple $(G^*, N^*, \vartheta^*)$ that is isomorphic, as ordinary-modular triple, to $(G, N,\vartheta)$ and where $N^*$ is a $p'$-groups. Therefore, it is no loss of generality to assume that $N$ has order prime to $p$. Suppose now that $\vartheta^0$ extends to $G$ and let $Q/N$ be a Sylow $q$-subgroup of $G/N$ for a prime $q$. Suppose first that $q\neq p$. My assumption, we know that $\vartheta^0$ has an extension $\psi\in\IBr(G)$ and therefore $\psi_Q\in\IBr(Q)$ is an extension of $\vartheta^0$. However, since $q\neq p$ and $p$ does not divide the order of $N$, we conclude that $\psi_Q\in\IBr(Q)=\Irr(Q)$ is an ordinary extension of $\vartheta$. On the other hand, if $q=p$, then $|Q:N|$ and $|N|$ are coprime and so $\vartheta$ extends to an irreducible character in $\Irr(Q)$ according to \cite[Theorem 6.2]{Nav18}. Then \cite[Theorem 5.10]{Nav18} implies that $\vartheta$ extends to an irreducible ordinary character in $\Irr(G)$. Assume then that $\vartheta$ extends to $G$. Then, we deduce that $\vartheta^0$ extends to an irreducible Brauer character of $G$ by arguing as before but using \cite[Theorems 8.11 and 8.29]{Nav98}.
\end{proof}

Recall that the set of characters $\Irr(G/N)$ can be identified with the set of irreducible characters of $G$ containing $N$ in their kernel. A similar remark holds for Brauer characters. This identification is often referred to as \textit{inflation} of characters.

\begin{lem}
\label{lem:gallagher-lift}
Let $N\normal G$ and let $\vartheta\in\Irr(G)$ with $\vartheta^0\in{\rm IBr}(N)$. Suppose that $\vartheta$ extends to $\wt{\vartheta}\in\Irr(G)$ and that $\wt{\vartheta}^0\in\IBr(G)$. Consider $\chi\in\Irr(G\mid\vartheta)$ and set $\chi:=\eta\wt{\vartheta}$ for some $\eta\in\Irr(G/N)$. Then $\chi^0\in\IBr(G)$ if and only if $\eta^0\in\IBr(G/N)$.
\end{lem}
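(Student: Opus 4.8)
The plan is to deduce the claim from the Brauer‑character version of Gallagher's theorem applied to the irreducible modular extension $\widetilde{\vartheta}^0$, followed by a short decomposition‑number computation (here $\vartheta$ is of course read as an element of $\Irr(N)$). Since $\vartheta$ extends to $G$ it is $G$-invariant, hence so is $\vartheta^0\in\IBr(N)$. As restriction to $p$-regular elements commutes with restriction of characters, $\widetilde{\vartheta}^0$ restricts to $\vartheta^0$ on $N$, so by hypothesis $\widetilde{\vartheta}^0\in\IBr(G)$ is an irreducible Brauer extension of $\vartheta^0$. By the Brauer‑character analogue of Gallagher's theorem (see \cite[Chapter~8]{Nav98}), the assignment $\beta\mapsto\beta\,\widetilde{\vartheta}^0$ is a bijection from $\IBr(G/N)$ onto $\IBr(G\mid\vartheta^0)$; in particular the Brauer characters $\beta\,\widetilde{\vartheta}^0$ with $\beta\in\IBr(G/N)$ are pairwise distinct and irreducible. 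Throughout we use the standard identification of $\IBr(G/N)$ with $\IBr(G\mid 1_N)$, so that $\beta\,\widetilde{\vartheta}^0$ and the decomposition numbers below are unambiguous data attached to $G$.

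Next I would expand $\chi^0$ in terms of this basis. Every irreducible constituent of $\eta^0$ lies over $1_N$ and hence in $\IBr(G/N)$, so we may write $\eta^0=\sum_{\beta\in\IBr(G/N)}d_{\eta,\beta}\,\beta$ with $d_{\eta,\beta}\in\ZZ_{\geq 0}$. Restricting $\chi=\eta\widetilde{\vartheta}$ to $p$-regular elements then gives
\[\chi^0=\eta^0\,\widetilde{\vartheta}^0=\sum_{\beta\in\IBr(G/N)}d_{\eta,\beta}\,\bigl(\beta\,\widetilde{\vartheta}^0\bigr),\]
which is a non‑negative integral combination of pairwise distinct irreducible Brauer characters of $G$, and is nonzero because $\chi^0(1)=\chi(1)\neq 0$. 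Therefore $\chi^0\in\IBr(G)$ if and only if $\sum_{\beta}d_{\eta,\beta}=1$, which holds exactly when $\eta^0$ equals a single irreducible Brauer character, i.e.\ when $\eta^0\in\IBr(G/N)$. This proves both implications at once.

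I do not anticipate a real obstacle: once the modular Gallagher correspondence is available the argument is essentially bookkeeping. The only point that deserves care is the fact that the products $\beta\,\widetilde{\vartheta}^0$ stay irreducible and pairwise distinct — this is exactly where the irreducibility of $\vartheta^0$ and of the chosen extension $\widetilde{\vartheta}^0$ enters, and it is supplied by the cited Clifford theory of Brauer characters.
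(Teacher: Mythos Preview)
Your proof is correct and follows essentially the same approach as the paper: both use the modular Gallagher correspondence (\cite[Corollary 8.20]{Nav98}) to know that each $\beta\,\widetilde{\vartheta}^0$ with $\beta\in\IBr(G/N)$ is irreducible (and distinct), then decompose $\eta^0$ into Brauer characters of $G/N$ and read off the equivalence from the resulting expression $\chi^0=\sum_\beta d_{\eta,\beta}\,\beta\,\widetilde{\vartheta}^0$. The paper treats the two implications separately while you handle them simultaneously, but the content is the same.
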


\begin{proof}
By Lemma \ref{lem:ordinary-modular extension} we know that $\wt{\vartheta}^\circ\in\IBr(G)$ is an extension of $\vartheta^0$. Now, if $\eta^0\in\IBr(G/N)$, we deduce from \cite[Corollary 8.20]{Nav98} that $\chi^0=\eta^0\wt{\vartheta}^0\in\IBr(G)$. Assume conversely that $\chi^0\in\IBr(G)$ and write
\[\eta^0=\sum_{\varphi\in\IBr(G/N)}d_{\eta,\varphi}\varphi.\]
Multiplying the above equality by $\wt{\vartheta}^0$, and recalling that $\eta^0\wt{\vartheta}^0=\chi^0\in\IBr(G)$ and that $\varphi\wt{\vartheta}^0\in\IBr(G)$ for every $\varphi\in\IBr(G/N)$ (again by using \cite[Corollary 8.20]{Nav98}), we deduce that there exists a unique $\varphi\in\IBr(G/N)$ such that $d_{\eta,\varphi}=1$ and that $d_{\eta,\varphi'}=0$ for all $\varphi'\neq \varphi$. Hence $\eta^0=\varphi\in\IBr(G/N)$ and we are done.
\end{proof}

\begin{lem}
\label{lem:relative-blocks}
Let $(G, N,\vartheta)$ be an ordinary-modular character triple and assume there is an extension $\wt{\vartheta}\in\Irr(G)$ of $\vartheta$ such that $\wt{\vartheta}^0\in\IBr(G)$. Let $\eta\in\Irr(G/N)$ and $\varphi\in\IBr(G/N)$. If $\bl(\eta)=\bl(\varphi)$, then $\bl(\eta\wt{\vartheta})=\bl(\varphi\wt{\vartheta}^0)$.
\end{lem}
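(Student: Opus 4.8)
The plan is to deduce the statement about $p$-blocks from one about decomposition numbers, combining the compatibility of Gallagher's theorem (in both its ordinary and its modular form) with decomposition numbers, together with the standard description of a $p$-block via connectivity of its decomposition matrix.

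First I would fix notation. Write $\wt\vartheta^0:=(\wt\vartheta)^0\in\IBr(G)$; since $\wt\vartheta$ extends $\vartheta$, its restriction $\wt\vartheta^0$ to $p$-regular elements extends $\vartheta^0\in\IBr(N)$, which is $G$-invariant because $\vartheta$ is. Hence the modular Gallagher theorem \cite[Corollary 8.20]{Nav98} applies: $\varphi'\mapsto\varphi'\wt\vartheta^0$ is a bijection $\IBr(G/N)\to\IBr(G\mid\vartheta^0)$, so in particular $\varphi'\wt\vartheta^0\in\IBr(G)$ for every $\varphi'\in\IBr(G/N)$ and distinct $\varphi'$ yield distinct products. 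Likewise, by the ordinary Gallagher theorem \cite[Corollary 6.17]{Nav18}, $\xi\mapsto\xi\wt\vartheta$ is a bijection $\Irr(G/N)\to\Irr(G\mid\vartheta)$.

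The key step is the identity
\[d_{\xi\wt\vartheta,\,\varphi'\wt\vartheta^0}=d_{\xi,\varphi'}\qquad\text{for all }\xi\in\Irr(G/N),\ \varphi'\in\IBr(G/N),\]
where on the left the decomposition numbers are taken in $G$ and on the right in $G/N$ (with $\xi$ and $\varphi'$ viewed as characters of $G$ via inflation, which preserves decomposition numbers). Indeed, $(\xi\wt\vartheta)^0=\xi^0\wt\vartheta^0$, and substituting the decomposition $\xi^0=\sum_{\varphi'}d_{\xi,\varphi'}\varphi'$ gives $(\xi\wt\vartheta)^0=\sum_{\varphi'}d_{\xi,\varphi'}\bigl(\varphi'\wt\vartheta^0\bigr)$; as the $\varphi'\wt\vartheta^0$ are pairwise distinct elements of $\IBr(G)$, this is precisely the decomposition of $(\xi\wt\vartheta)^0$ into irreducible Brauer characters, and the identity follows. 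In particular, if $d_{\xi,\varphi'}\neq0$ then $d_{\xi\wt\vartheta,\varphi'\wt\vartheta^0}\neq0$, whence $\bl(\xi\wt\vartheta)=\bl(\varphi'\wt\vartheta^0)$.

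To finish, I would use the hypothesis $\bl(\eta)=\bl(\varphi)$ in $G/N$: since $\eta$ and $\varphi$ lie in the same $p$-block of $G/N$, there is a chain $\eta=\xi_0,\varphi_1,\xi_1,\dots,\xi_{m-1},\varphi_m=\varphi$ with $\xi_i\in\Irr(G/N)$, $\varphi_i\in\IBr(G/N)$ and $d_{\xi_{i-1},\varphi_i}\neq0\neq d_{\xi_i,\varphi_i}$ for all $i$ (the standard characterization of a block in terms of its decomposition matrix, see \cite{Nav98}). Applying the previous step to each link gives
\[\bl(\eta\wt\vartheta)=\bl(\xi_0\wt\vartheta)=\bl(\varphi_1\wt\vartheta^0)=\bl(\xi_1\wt\vartheta)=\cdots=\bl(\varphi_m\wt\vartheta^0)=\bl(\varphi\wt\vartheta^0),\]
as wanted. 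The crux of the argument is the identification of the irreducible Brauer constituents of $(\xi\wt\vartheta)^0$ through \cite[Corollary 8.20]{Nav98}; everything else is bookkeeping. I would also note that a seemingly more direct approach via central characters runs into trouble: the central character of $\eta\wt\vartheta$ evaluated at a class sum $\CL_G(y)^+$ carries a factor $\wt\vartheta(y)/\wt\vartheta(1)$, which need not lie in $\mathbf{S}$ and hence cannot be reduced modulo $\mathbf{M}$ on its own.
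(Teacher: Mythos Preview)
Your proof is correct and follows a genuinely different route from the paper. You transport decomposition numbers from $G/N$ to $G$ via the Gallagher bijections, observing that $(\xi\wt\vartheta)^0=\sum_{\varphi'}d_{\xi,\varphi'}(\varphi'\wt\vartheta^0)$ with the summands distinct irreducible Brauer characters, and then exploit the connectivity characterisation of a block through its decomposition matrix. The paper instead works directly with central characters: it invokes \cite[Lemma 2.5]{Spa13I}, which provides the factorisation
\[
\lambda_{\eta\wt\vartheta}\bigl(\CL_G(x)^+\bigr)=\lambda_\eta\bigl(\CL_{G/N}(xN)^+\bigr)\,\lambda_{\wt\vartheta_L}\bigl(\CL_L(x)^+\bigr),
\]
where $L/N=\c_{G/N}(xN)$; the same factorisation holds for $\varphi\wt\vartheta^0$, and since $\bl(\eta)=\bl(\varphi)$ forces $\lambda_\eta=\lambda_\varphi$ on $\z(\FF(G/N))$, the two central functions agree. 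Your approach is more elementary in that it avoids the cited lemma, but it is longer and requires threading a chain through the block; the paper's argument is a one-line calculation once the factorisation is in hand. Note that your closing remark is slightly misleading: the central-character route does succeed, precisely because \cite[Lemma 2.5]{Spa13I} replaces the naive factor $\wt\vartheta(x)/\wt\vartheta(1)$ by the well-defined quantity $\lambda_{\wt\vartheta_L}(\CL_L(x)^+)$ in $\FF$.
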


\begin{proof}
We need to prove that $\lambda_{\eta\wt{\vartheta}}(\CL_G(x)^+)=\lambda_{\varphi\wt{\vartheta}^0}(\CL_G(x)^+)$ for all $x\in G$. By applying \cite[Lemma 2.5]{Spa13I}, we get
\begin{align*}
\lambda_{\eta\wt{\vartheta}}(\CL_G(x)^+)&=\lambda_{\eta}(\CL_{G/N}(xN)^+)\lambda_{\wt{\vartheta}_L}({\CL_L(x)^+})
\\
&=\lambda_{\varphi}(\CL_{G/N}(xN)^+)\lambda_{\wt{\vartheta}^0_L}({\CL_L(x)^+})
\\
&=\lambda_{\varphi\wt{\vartheta}^0}(\CL_G(x)^+),
\end{align*}
where $L/N=\cent{G/N}{xN}$, as desired.
\end{proof}

We conclude this section by showing that multiplication by a linear Brauer character preserves blocks of defect zero.

\begin{lem}
\label{lem:Multiplication by linear Brauer characters}
Let $B$ be a block of defect zero of a finite group $G$ and consider its unique Brauer character $\varphi\in\IBr(B)$. If $\lambda\in\IBr(G)$ is linear, then $\lambda\varphi$ belongs to a block of defect zero.
\end{lem}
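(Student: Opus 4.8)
The plan is to exploit the characterisation of defect-zero blocks via the $p$-part of character degrees, together with the fact that a linear Brauer character has trivial determinantal order behaviour and so does not change the relevant $p$-adic valuations. Concretely, since $B$ has defect zero, its unique irreducible Brauer character $\varphi$ satisfies $\varphi(1)_p = |G|_p$, and moreover $\varphi$ is in fact the restriction to $G^0$ of an ordinary character of defect zero: by \cite[Theorem 3.18]{Nav98}, there is $\chi \in \dz(G)$ with $\chi^0 = \varphi$, and $\{\chi\} = \Irr(B)$. The first step is therefore to reduce the statement about $\lambda\varphi$ to an ordinary statement. For this I would like to lift $\lambda$ to an ordinary linear character; a linear $\lambda \in \IBr(G)$ has $p'$-order (its kernel contains $\oh{p}{G}$ and $G/\Ker(\lambda)$ is a cyclic $p'$-group), so it lifts uniquely to a linear $\hat\lambda \in \Irr(G)$ with $\hat\lambda^0 = \lambda$, and $\hat\lambda\chi \in \Irr(G)$ still has $(\hat\lambda\chi)(1)_p = \chi(1)_p = |G|_p$, i.e. $\hat\lambda\chi \in \dz(G)$.

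The second step is to identify the block of $\hat\lambda\chi$ (equivalently of $\lambda\varphi$) and check it has defect zero. Since $\hat\lambda\chi \in \dz(G)$, the block $\bl(\hat\lambda\chi)$ automatically has defect zero — an ordinary character of defect zero is alone in its block and that block has defect zero, again by \cite[Theorem 3.18]{Nav98}. It then remains only to connect $\bl(\hat\lambda\chi)$ to $\bl(\lambda\varphi)$: because $(\hat\lambda\chi)^0 = \hat\lambda^0\chi^0 = \lambda\varphi$ and, since $\chi \in \dz(G)$ forces $\chi^0 = \varphi \in \IBr(G)$ to be irreducible, the product $\lambda\varphi$ is genuinely an irreducible Brauer character (linear times irreducible is irreducible by \cite[Corollary 8.20]{Nav98}); and an ordinary character and its restriction to $p$-regular elements always lie in the same block, so $\bl(\lambda\varphi) = \bl((\hat\lambda\chi)^0) = \bl(\hat\lambda\chi)$, which has defect zero. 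This finishes the argument.

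There is an even shorter route that avoids lifting: multiplication by the linear Brauer character $\lambda$ induces a permutation of $\IBr(G)$, and on the level of central characters one checks directly that $\lambda_{\lambda\varphi}$ differs from $\lambda_\varphi$ only by the (invertible) scalar factors coming from $\lambda$ evaluated on class sums, so $\bl(\lambda\varphi)$ has the same defect group as $\bl(\varphi) = B$; since $B$ has defect zero, so does $\bl(\lambda\varphi)$. Either way, the only mild subtlety — and the place I would be most careful — is making sure $\lambda\varphi$ is honestly an element of $\IBr(G)$ (so that ``the block of $\lambda\varphi$'' makes sense), which is where the defect-zero hypothesis on $B$ is used a second time, via $\varphi = \chi^0$ with $\chi \in \dz(G)$ and \cite[Corollary 8.20]{Nav98}. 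I expect the first, lifting-based approach to be the cleanest to write up rigorously, since it keeps everything inside the well-developed dictionary between $\dz(G)$ and defect-zero blocks.
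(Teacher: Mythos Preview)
Your proof is correct and takes a genuinely different route from the paper. The paper also aims to produce an ordinary $\psi\in\Irr(G)$ with $\psi^0=\lambda\varphi$, but does so indirectly: it uses the identity $\lambda\Phi_\varphi^0=\Phi_{\lambda\varphi}^0$ from \cite[Problem 2.13]{Nav98} together with $\Phi_\varphi^0=\varphi$ (which holds because $B$ has defect zero), and then expands $\Phi_{\lambda\varphi}^0$ in two ways to force the existence of a unique $\psi$ with $d_{\psi,\lambda\varphi}=1$ and $d_{\psi,\nu}=0$ for all $\nu\neq\lambda\varphi$, whence $\psi^0=\lambda\varphi$. Your argument instead lifts $\lambda$ to an ordinary linear character $\hat\lambda$ and simply takes $\psi=\hat\lambda\chi$; this is shorter and more transparent, and the only extra ingredient is the (easy) observation that linear Brauer characters lift to ordinary linear characters of $p'$-order. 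The paper's PIM argument, by contrast, never needs to lift $\lambda$ and would go through verbatim in settings where such a lift is less obvious, but for the statement at hand your approach is the more economical one. Your second route via central characters and defect groups is also valid, though less self-contained to write out.
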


\begin{proof}
It suffices to show that there exists $\psi\in\irr{G}$ such that $\psi^0=\lambda\varphi$. In fact, this would imply that $\psi(1)_p=\lambda(1)_p\varphi(1)_p=|G|_p$ and the result would follow from \cite[Theorem 3.18]{Nav98}. To prove our claim, let $\chi\in\irr{B}$ so that $\varphi=\chi^0$. By \cite[Problem 2.13]{Nav98} we know that $\lambda\Phi_\varphi^0=\Phi_{\lambda\varphi}^0$ while by the definition of $\Phi_\varphi$, and recalling that $\chi^0=\varphi$, we have $\Phi_\varphi^0=\varphi$. From this we deduce that
\[\lambda\varphi=\Phi^0_{\lambda\varphi}=\sum\limits_{\psi\in\irr{G}}d_{\psi,\lambda\varphi}\psi^0=\sum\limits_{\psi\in\irr{G}}d_{\psi,\lambda\varphi}\left(\sum\limits_{\xi\in\IBr(G)}d_{\psi,\xi}\xi\right).\]
Since $\lambda\varphi$ is an irreducible Brauer character, and because decomposition numbers are non-negative integers, the above equality forces $d_{\psi,\lambda\varphi}d_{\psi,\xi}\neq 0$ for a unique choice of $\psi\in\irr{G}$ and $\xi\in\IBr(G)$. Then, we must have $\xi=\lambda\varphi$, $d_{\psi,\lambda\varphi}=1$, and $d_{\psi,\nu}=0$ for every $\nu\in\IBr(G)$ with $\nu\neq \lambda\varphi$. This shows that $\psi^0=\lambda\varphi$ and the result follows.
\end{proof}

\section{Central and block isomorphisms of modular character triples}
\label{sec:Isomorphisms of modular character triples}

In this section, we collect the relevant results on isomorphisms of modular character triples that will be used in the rest of this paper. We refer the reader to \cite[Section 8]{Nav98} and \cite[Section 3]{Spa17I} for an overview of this theory. In particular, we will make use of the notion of \textit{central isomorphism} and \textit{block isomorphism} of modular character triples that can be found in \cite[Definition 3.1 and Definition 3.2]{Spa17I} (see also \cite[Section 3]{Nav-Spa14I}).

Recall that given a modular character triple $(G,N,\vartheta)$ there is a projective $\mathbb{F}$-representation of $G$ such that the restriction $\P_N$ affords the Brauer character $\vartheta$. Furthermore, we can always choose $\P$ such that its factor set $\alpha:G\times G\to\mathbb{F}^\times$ satisfies $\alpha(g,n)=1=\alpha(n,g)$ for every $g\in G$ and $n\in N$ (see \cite[Section 3]{Spa-Val} and \cite[Section 8]{Nav98}). In this case, we say that $\P$ is a projective $\mathbb{F}$-representation associated with $(G,N,\vartheta)$. We will often refer to $\P$ simply as a projective representation, instead of a projective $\mathbb{F}$-representation, when it is clear from the context that it is associated to a modular character triple. The next result allows us to construct well behaved strong isomorphisms of modular character triples (see \cite[Section 3]{Spa-Val}).

\begin{thm}
\label{thm:strong-iso}
Let $(G,N, \vartheta)$ and $(H, M, \varphi)$ be modular character triples and assume that $G=NH, M=H\cap N$ and that there exist projective representations $\mathcal{P}$ and $\mathcal{P}'$ associated with $(G,N,\vartheta)$ and $(H,M,\varphi)$ respectively and whose factor sets $\alpha$ and $\alpha'$ coincide via the natural isomorphism $\tau:G/N\to H/M$. Then, for any $N\leq J\leq G$ there exists a bijection
\begin{align*}
\sigma_J:\IBr(J\mid\vartheta)&\to\IBr(J\cap H\mid\varphi)
\\
\tr\left(\mathcal{Q}\otimes\mathcal{P}_J\right)&\mapsto\tr\left(\mathcal{Q}_{J\cap H}\otimes\mathcal{P}'_{J\cap H}\right)
\end{align*}
for any irreducible projective representation $\mathcal{Q}$ of $J/N$ with factor set $\alpha_{J\times J}^{-1}$ and 
\[(\sigma, \tau):(G, N,\vartheta)\to(H, M, \varphi)\]
is a strong isomorphism of modular character triples.
\end{thm}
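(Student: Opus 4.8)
The plan is to mimic the construction of strong isomorphisms of ordinary character triples (as in \cite[Section 11]{Nav18} and \cite[Section 3]{Spa17I}) in the modular setting, the main point being that the tensor-product construction $\mathcal{Q}\otimes\mathcal{P}_J$ behaves well with respect to $p$-Brauer characters exactly as it does for ordinary characters, once the factor sets are matched. First I would recall that, since $G=NH$ and $M=H\cap N$, the map $\tau:G/N\to H/M$ induced by $h M\mapsto hN$ (inverse direction) is a group isomorphism, so it makes sense to demand $\alpha'=\alpha\circ(\tau\times\tau)$ after identifying $G/N\cong H/M$; the hypothesis that $\alpha(g,n)=1=\alpha(n,g)$ for $g\in G,n\in N$ (and likewise for $\alpha'$) guarantees that $\alpha$ and $\alpha'$ factor through $G/N$ and $H/M$ respectively, so this matching condition is meaningful. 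Fix $N\leq J\leq G$ and set $J_0=J\cap H$; then $J=N J_0$ and $J_0\cap N=M$, and $\tau$ restricts to an isomorphism $J/N\to J_0/M$ carrying $\alpha_{J\times J}$ to $\alpha'_{J_0\times J_0}$.

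Next I would describe the bijection. Given $\chi\in\IBr(J\mid\vartheta)$, by \cite[Theorem 8.?]{Nav98} (the modular Clifford theory for projective representations, i.e. the $\IBr$-analogue of Gallagher's theorem relative to a projective representation) there is a unique irreducible projective representation $\mathcal{Q}$ of $J/N$ with factor set $\alpha_{J\times J}^{-1}$ such that $\chi$ is afforded by $\mathcal{Q}\otimes\mathcal{P}_J$; conversely every such $\mathcal{Q}$ produces an irreducible Brauer character in $\IBr(J\mid\vartheta)$, and these assignments are mutually inverse. One does the same on the $H$-side with $\mathcal{P}'$, obtaining a bijection between $\IBr(J_0\mid\varphi)$ and the irreducible projective representations of $J_0/M$ with factor set $(\alpha'_{J_0\times J_0})^{-1}$. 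Since $\tau$ identifies the two quotient groups and matches the factor sets $\alpha_{J\times J}^{-1}$ with $(\alpha'_{J_0\times J_0})^{-1}$, pulling an irreducible projective representation $\mathcal{Q}$ of $J/N$ back along $\tau$ gives an irreducible projective representation $\mathcal{Q}_{J_0/M}$ of $J_0/M$ with the right factor set, and this correspondence of projective representations is a bijection. Composing the three bijections yields $\sigma_J:\IBr(J\mid\vartheta)\to\IBr(J_0\mid\varphi)$ with the stated formula $\tr(\mathcal{Q}\otimes\mathcal{P}_J)\mapsto\tr(\mathcal{Q}_{J_0}\otimes\mathcal{P}'_{J_0})$.

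It then remains to verify the axioms of a strong isomorphism of modular character triples from \cite[Section 3]{Spa-Val}: compatibility of $\sigma_J$ with restriction and induction between subgroups $N\leq J_1\leq J_2\leq G$, with multiplication by Brauer characters of $J/N$ (inflated to $J$), with the action of $\Aut(G)_N$-type conjugation when applicable, and — crucially for the block-theoretic applications later in the paper — preservation of the relevant central/block data. The restriction/induction and multiplication compatibilities reduce to the fact that tensoring with $\mathcal{P}_J$ (resp.\ $\mathcal{P}'_{J_0}$) commutes with these operations, which is a formal manipulation of traces of tensor products of projective representations, identical to the ordinary case; the point is that $p$-regular elements are preserved under all the maps in sight, so the Brauer-character identities follow from the corresponding identities of ordinary (projective) character values restricted to $p$-regular classes. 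The main obstacle I anticipate is the bookkeeping needed to check that the scalar ambiguities inherent in projective representations are controlled uniformly across all the intermediate groups $J$ simultaneously — i.e.\ that one can choose the projective representations $\mathcal{Q}$ coherently so that $\sigma$ is a genuine \emph{strong} isomorphism and not merely a family of unrelated bijections — but this is handled exactly as in \cite[Section 3]{Spa-Val} and \cite[Section 8.?]{Nav98}, using that $\mathcal{P}$ and $\mathcal{P}'$ are fixed once and for all and that their factor sets are trivial on $N\times G$ and $G\times N$ (resp.\ on $M\times H$ and $H\times M$).
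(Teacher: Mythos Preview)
The paper does not prove this theorem; its entire proof reads ``This is \cite[Theorem 3.1]{Spa-Val}.'' So there is nothing to compare against on the paper's side beyond the citation, and your sketch is essentially the argument one finds in \cite{Spa-Val}: parametrise $\IBr(J\mid\vartheta)$ by irreducible projective representations of $J/N$ with factor set $\alpha_{J\times J}^{-1}$ via the tensor decomposition $\mathcal{Q}\otimes\mathcal{P}_J$, transport $\mathcal{Q}$ through the isomorphism $\tau$, and re-tensor with $\mathcal{P}'$. That core is correct.

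One inaccuracy to flag: in your final paragraph you list ``preservation of the relevant central/block data'' among the axioms to be verified for a \emph{strong} isomorphism. That is not part of the definition; strong isomorphisms (in the sense of \cite[Section 3]{Spa-Val}) only require the compatibility of the $\sigma_J$ with restriction, induction, and multiplication by (Brauer) characters of $J/N$. The central and block conditions are additional layers (Definitions \ref{def:central-iso} and \ref{def:block-iso} in the paper) imposed \emph{on top of} a strong isomorphism, and they are not claimed in this theorem. So you should drop that item from the checklist here; otherwise you are proving more than what is stated, and in fact more than what is true without further hypotheses.
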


\begin{proof}
This is \cite[Theorem 3.1]{Spa-Val}.
\end{proof}

By imposing additional requirements on the isomorphism of modular character triple $(\sigma,\tau)$ constructed above, we can define the notion of central and block isomorphisms of modular character triples. For this we follow \cite[Section 3]{Spa-Val} (see also \cite{Nav-Spa14I} and \cite{Spa17}). In what follows, if $\varphi$ is a (possibly reducible) Brauer character, then we denote by $\IBr(\varphi)$ the set of its irreducible constituents.

\begin{defi}[Central isomorphism of modular character triples]\label{def:central-iso}
Let $(\sigma,\tau):(G, N,\vartheta)\to(H, M, \varphi)$ be as in Theorem \ref{thm:strong-iso}. Suppose furthermore that $\cent G N\leq H$ and that
\[\IBr\left(\psi_{\cent J N}\right)=\IBr\left(\sigma_J(\psi)_{\cent J N}\right)\]
for any $N\leq J\leq G$ and $\psi\in\IBr(J\mid\vartheta)$. Then, we say that $(\sigma,\tau)$ is a central isomorphism of modular character triples and write $(G,N,\vartheta)\isoc(H,M,\varphi)$.
\end{defi}

As is the case for ordinary modular character triples (see \cite[Lemma 3.3]{Nav-Spa14I}), the condition required in Definition \ref{def:central-iso} can be reformulated in terms of scalar functions. More precisely, if $\P$ is a projective representation associated with the modular character triple $(G, N,\vartheta)$ then by Schur's lemma $\P(x)$ is a scalar matrix $\zeta(x) I_{\vartheta(1)}$ for every $x\in\cent G N$. This yields a scalar function $\zeta:\c_G(N)\to \mathbb{F}^\times$.

\begin{lem}\label{lem:central-iso-scalar}
Let $(\sigma,\tau):(G, N,\vartheta)\to(H, M, \psi)$ be an isomorphism given by projective representations $\P$ and $\P'$ and assume $\cent G N\leq H$. Then the following are equivalent:
\begin{enumerate}
\item For every $x\in\cent G N$ the matrices $\P(x)$ and $\P'(x)$ are associated with the same scalar.
\item $\IBr(\psi_{\cent J N})=\IBr(\sigma_J(\psi)_{\cent J N})$ for every $N\leq J\leq G$ and $\psi\in\IBr(J\mid\vartheta)$.
\end{enumerate}
\end{lem}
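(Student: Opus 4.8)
The plan is to prove the equivalence by comparing projective representations on $\cent J N$ and reducing everything to the defining property of the strong isomorphism $(\sigma,\tau)$ from Theorem \ref{thm:strong-iso}, exactly as in the ordinary case treated in \cite[Lemma 3.3]{Nav-Spa14I}. First I would fix projective representations $\P$ and $\P'$ associated with $(G,N,\vartheta)$ and $(H,M,\psi)$ respectively, with matching factor sets via $\tau$, and let $\zeta,\zeta':\cent G N\to\FF^\times$ be the scalar functions obtained by applying Schur's lemma to $\P$ and $\P'$ on $\cent G N$ (using $\cent G N\le H$ so that $\P'$ is defined there). Note that for $x\in\cent J N$, the representation $\mathcal{Q}\otimes\P_J$ sends $x$ to $\mathcal{Q}(x)\otimes\zeta(x)I_{\vartheta(1)}=\zeta(x)\,(\mathcal{Q}(x)\otimes I_{\vartheta(1)})$, and similarly $\mathcal{Q}_{J\cap H}\otimes\P'_{J\cap H}$ sends $x$ to $\zeta'(x)\,(\mathcal{Q}(x)\otimes I_{\psi(1)})$. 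Hence on $\cent J N$ the Brauer character $\psi=\tr(\mathcal{Q}\otimes\P_J)$ restricts to $\zeta\cdot\tr(\mathcal{Q}_{\cent J N})\cdot\vartheta(1)$ (interpreted as the Brauer character of $\zeta(x)\mathcal{Q}(x)$, repeated $\vartheta(1)$ times), and $\sigma_J(\psi)$ restricts to the analogous expression with $\zeta'$ and $\psi(1)$.

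The direction (i) $\Rightarrow$ (ii) is then essentially immediate: if $\zeta=\zeta'$, then for each $x\in\cent J N$ the matrices $(\mathcal{Q}\otimes\P_J)(x)$ and $(\mathcal{Q}_{J\cap H}\otimes\P'_{J\cap H})(x)$ are scalar multiples by the \emph{same} scalar $\zeta(x)$ of matrices with identical trace, so $\psi_{\cent J N}$ and $\sigma_J(\psi)_{\cent J N}$ have the same irreducible constituents (they are in fact equal up to the harmless multiplicity difference coming from $\vartheta(1)$ versus $\psi(1)$, which does not affect the \emph{set} $\IBr(-)$ of constituents). One should be slightly careful that $\IBr(\cdot)$ denotes the set of irreducible constituents and is therefore insensitive to the multiplicities, so the fact that $\vartheta(1)$ and $\psi(1)$ may differ causes no problem.

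For the converse (ii) $\Rightarrow$ (i), the key step is to extract the scalar. I would apply the hypothesis with $J=N\cent G N$ (the smallest subgroup containing both $N$ and $\cent G N$) and, say, the constituent $\psi\in\IBr(J\mid\vartheta)$ obtained from the trivial-ish choice of $\mathcal{Q}$, or more robustly run the argument for all relevant $\psi$. Restricting to a single element $x\in\cent G N$ and comparing: $\psi(x)=\zeta(x)\,\tr(\mathcal{Q}(x))\cdot(\text{mult})$ while $\sigma_J(\psi)(x)=\zeta'(x)\,\tr(\mathcal{Q}(x))\cdot(\text{mult})$, so equality of the sets of constituents forces, after choosing $\mathcal{Q}$ so that $\tr(\mathcal{Q}(x))\neq 0$ for the $x$ in question (e.g.\ $\mathcal{Q}$ of degree $1$ when $J/N$ has a suitable linear character, or by summing over all $\mathcal{Q}$ and using linear independence of the distinct linear characters of the center to separate the scalar), the ratio $\zeta(x)/\zeta'(x)$ to be a value forced to be $1$. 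The cleanest way is probably to observe that $\zeta$ and $\zeta'$ are themselves \emph{linear} Brauer characters of $\cent G N$ (being $1\times 1$ matrix functions that are multiplicative, since $\P$ can be taken multiplicative on $N\cent G N$), and that the hypothesis directly gives $\IBr(\vartheta_{\cent J N})$-type equalities which, unwound, say precisely that $\zeta$ and $\zeta'$ agree as linear Brauer characters of $\cent J N$ for every $J$; taking $J = G$ gives $\zeta=\zeta'$ on all of $\cent G N$.

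The main obstacle I anticipate is purely bookkeeping: making the identification between ``$\IBr(\psi_{\cent J N})=\IBr(\sigma_J(\psi)_{\cent J N})$ for all $J,\psi$'' and ``$\zeta=\zeta'$'' genuinely sharp, since the tensor-product description of $\psi$ and $\sigma_J(\psi)$ is only canonical up to the choice of $\mathcal{Q}$, and one must ensure that varying $\mathcal{Q}$ over all irreducible projective representations of $J/N$ with the prescribed factor set is enough to pin down the scalar on all of $\cent G N$ and not just on $\cent G N\cap(\text{some subgroup})$. This is handled by the freedom in choosing $J$ (any $N\le J\le G$, in particular $J=N\cent G N$ or $J=G$) together with the fact that irreducible projective representations of $J/N$ with a fixed factor set, restricted to the image of $\cent G N$, separate points via their ordinary linear characters after twisting — the standard column-orthogonality / linear independence argument for Brauer characters of the abelian-by-nothing group $\cent J N / (\cent J N\cap N)$. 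Once this is in place, both implications follow, and the proof concludes by invoking Definition \ref{def:central-iso} to rephrase (ii) as the assertion that $(\sigma,\tau)$ is a central isomorphism.
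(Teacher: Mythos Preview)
The paper does not give a self-contained proof of this lemma; it simply refers to the comment following \cite[Definition 3.3]{Spa-Val}, where the modular analogue of \cite[Lemma 3.3]{Nav-Spa14I} is explained. Your direct argument is essentially that standard computation, so in approach you are aligned with the literature the paper cites rather than doing something genuinely different.

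Your implication (i) $\Rightarrow$ (ii) is correct as written: the tensor description from Theorem \ref{thm:strong-iso} gives, for $x\in\cent J N$, that $\chi(x)=\zeta(x)\,\vartheta(1)\,\tr\mathcal{Q}(xN)$ and $\sigma_J(\chi)(x)=\zeta'(x)\,\psi(1)\,\tr\mathcal{Q}(xN)$, so if $\zeta=\zeta'$ the two restrictions are positive rational multiples of one another and have the same irreducible constituents.

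For (ii) $\Rightarrow$ (i) there is one slip worth flagging. You assert that $\zeta$ and $\zeta'$ are linear Brauer characters of $\cent G N$ because ``$\P$ can be taken multiplicative on $N\cent G N$''. That is not available in general: $\P$ is only a projective representation, and its factor set need not be trivial on $\cent G N\times\cent G N$. What \emph{is} true, and is what the argument actually uses, is that the ratio $\mu:=\zeta'\zeta^{-1}$ is a genuine homomorphism $\cent G N\to\FF^\times$, since the factor sets of $\P$ and $\P'$ coincide via $\tau$. Condition (ii) applied with $J=N$ forces $\mu$ to be trivial on $\z(N)=N\cap\cent G N$, so $\mu$ descends to a linear Brauer character of $\cent G N/\z(N)\cong N\cent G N/N$. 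The equalities in (ii), rewritten via the tensor formula, become $\IBr(\chi_{\cent J N})=\IBr((\mu\cdot\chi)_{\cent J N})$ for all $J$ and all $\chi\in\IBr(J\mid\vartheta)$; running this with $J=\langle N,x\rangle$ for $p$-regular $x\in\cent G N$ (where $J/N$ is cyclic and the projective representations $\mathcal{Q}$ are one-dimensional) pins down $\mu(x)=1$. With this correction your sketch goes through and matches the argument underlying the reference the paper cites.
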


\begin{proof}
See the comment following \cite[Definition 3.3]{Spa-Val}.
\end{proof}

We can additionally require some compatibility conditions for block induction. The following is a modular version of \cite[Definition 3.6]{Nav-Spa14I}. Notice however that we are considering a slightly different setting that can be found, for instance, in \cite[Definition 4.2]{Spa18}.

\begin{defi}[Block isomorphism of modular character triples]\label{def:block-iso}
Let $(\sigma,\tau):(G, N,\vartheta)\to(H, M, \varphi)$ be as in Definition \ref{def:central-iso}. Assume that there exists a defect group $D$ of $\bl(\varphi)$ such that $\cent G D\leq H$ and that
\[\bl(\psi)=\bl(\sigma_J(\psi))^J\]
for any $N\leq J\leq G$ and $\psi\in\IBr(J\mid\vartheta)$. Then we say that $(\sigma,\tau)$ is a block isomorphism of modular character triples and write $(G,N,\vartheta)\isob(H,M,\varphi)$.
\end{defi}

As done in Lemma \ref{lem:central-iso-scalar}, we can reformulate the condition on block induction required in the above definition in terms of certain scalars induced by the projective representations $\P$ and $\P'$. More precisely, notice that by \cite[Theorem 8.16]{Nav98} the representations $\P_{\langle N, x\rangle}$ and $\P'_{\langle N, x\rangle\cap H}$ afford extensions $\wt{\vartheta}$ and $\wt{\psi}$ of $\vartheta$ and $\psi$ respectively. It then follows that $\P(\CL_{\langle N, x\rangle}(x)^+)=\xi I_{\vartheta(1)}$ and $\P'((\CL_{\langle N, x\rangle}(x)\cap H)^+)=\xi' I_{\varphi(1)}$ for scalars $\xi$ and $\xi'$ in $\mathbb{F}$.

\begin{lem}\label{lem:block-iso-scalar}
Let $(\sigma,\tau):(G, N,\vartheta)\to(H, M,\psi)$ be the isomorphism of modular character triples given by Theorem \ref{thm:strong-iso} for a choice of projective representations $\P$ and $\P'$. If $(G,N,\vartheta)\isoc(H, M, \psi)$, then the following are equivalent:
\begin{enumerate}
\item $(G, N,\vartheta)\isob(H, M,\psi)$.
\item For every $x\in G$ the matrices
$\P(\CL_{\langle N, x\rangle}(x)^+)$ and $\P'((\CL_{\langle N, x\rangle}(x)\cap H)^+)$ are associated with the same scalar.
\end{enumerate}
\end{lem}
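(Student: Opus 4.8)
The plan is to rephrase condition (1) as an equality of central functions, to compute those central functions through the projective representations $\mathcal{P}$ and $\mathcal{P}'$, and to split off the contribution that only depends on the quotient $J/N$ (which is automatically matched by the natural isomorphism $\tau$) from the contribution that only depends on $\mathcal{P}$ and $\mathcal{P}'$ on the subgroups $\langle N,x\rangle$ — which is precisely what condition (2) records.

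First I would set up the reformulation. Write $\varphi$ for the Brauer character of $M$ and let $D$ be a defect group of $\bl(\varphi)$ with $\cent G D\le H$; this guarantees that every induced block $\bl(\sigma_J(\psi))^J$ occurring in Definition \ref{def:block-iso} is defined. Since $(G,N,\vartheta)\isoc(H,M,\varphi)$, the map $\sigma_J$ is the one from Theorem \ref{thm:strong-iso}, so $\psi=\tr(\mathcal{Q}\otimes\mathcal{P}_J)$ and $\sigma_J(\psi)=\tr(\mathcal{Q}_{J\cap H}\otimes\mathcal{P}'_{J\cap H})$ for some irreducible projective representation $\mathcal{Q}$ of $J/N$ with factor set $\alpha_{J\times J}^{-1}$, and $J=N(J\cap H)$ by the Dedekind identity. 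As a block is determined by its central function, (1) becomes the statement that
\[
\lambda_\psi\bigl(\CL_J(x)^+\bigr)=\lambda_{\sigma_J(\psi)}\bigl((\CL_J(x)\cap H)^+\bigr)
\]
for all $N\le J\le G$, all $\psi\in\IBr(J\mid\vartheta)$ and all $x\in J$, where the right-hand side is read inside $\FF(J\cap H)$ and is $0$ when $\CL_J(x)\cap H=\emptyset$. Since both sides vanish on $p$-singular elements, we may take $x$ to be $p$-regular. The implication $(1)\Rightarrow(2)$ is then immediate: given $x\in G$, apply this with $J=\langle N,x\rangle$ and with $\mathcal{Q}$ the one-dimensional projective representation of the cyclic group $J/N$ (with factor set $\alpha_{J\times J}^{-1}$) normalised so that $\tr(\mathcal{Q}\otimes\mathcal{P}_J)$ is the extension $\wt\vartheta$ of $\vartheta$ afforded by $\mathcal{P}_{\langle N,x\rangle}$; then $\sigma_J(\wt\vartheta)$ is the extension $\wt\varphi$ of $\varphi$ afforded by $\mathcal{P}'_{\langle N,x\rangle\cap H}$ (the possible linear twist is trivial because the factor sets agree through $\tau$), and evaluating the displayed identity at $\CL_{\langle N,x\rangle}(x)^+$ is exactly $\xi=\xi'$.

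For $(2)\Rightarrow(1)$ the key is to establish, as a modular and projective refinement of \cite[Lemma 2.5]{Spa13I} (compare the computation in the proof of Lemma \ref{lem:relative-blocks}), a factorisation
\[
\lambda_\psi\bigl(\CL_J(x)^+\bigr)=A_{\mathcal{Q}}(x)\cdot\lambda_{\wt\vartheta}\bigl(\CL_L(x)^+\bigr),
\]
where $L/N=\cent{J/N}{xN}$, so that $\CL_L(x)\subseteq\langle N,x\rangle$ (because $xN$ is central in $L/N$), $\mathcal{P}_{\langle N,x\rangle}$ affords the extension $\wt\vartheta$ of $\vartheta$, $\mathcal{P}(\CL_L(x)^+)$ is the scalar $\lambda_{\wt\vartheta}(\CL_L(x)^+)$, and the factor $A_{\mathcal{Q}}(x)$ is determined by $\mathcal{Q}$ and the position of $xN$ in $J/N$. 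The same factorisation applies to $\sigma_J(\psi)=\tr(\mathcal{Q}_{J\cap H}\otimes\mathcal{P}'_{J\cap H})$, and — this is where $\tau$ and $G=NH$ enter — its quotient factor is the same $A_{\mathcal{Q}}(x)$: the isomorphism $\tau$ identifies $J/N$ with $(J\cap H)/M$ and carries $\mathcal{Q}$ to $\mathcal{Q}_{J\cap H}$ with matching factor set, while $J=N(J\cap H)$ forces the image of $(\CL_J(x)\cap H)^+$ in $\FF((J\cap H)/M)$ to be the expected multiple of the transport of $\CL_{J/N}(xN)^+$ (in the nonempty case one may choose the class representative $x$ inside $J\cap H$; when $\CL_J(x)\cap H=\emptyset$ this factor is $0$). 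It then remains to match $\lambda_{\wt\vartheta}(\CL_L(x)^+)$ with the corresponding scalar $\lambda_{\wt\varphi}((\CL_L(x)\cap H)^+)$ of $\mathcal{P}'$. For this, decompose $\CL_L(x)=\bigsqcup_i\CL_{\langle N,x\rangle}(x_i)$ into $\langle N,x\rangle$-conjugacy classes; each $x_i$ lies in $xN$, so $\langle N,x_i\rangle=\langle N,x\rangle$, and applying (2) to each $x_i$ gives $\mathcal{P}(\CL_{\langle N,x\rangle}(x_i)^+)=\mathcal{P}'((\CL_{\langle N,x\rangle}(x_i)\cap H)^+)$ as scalars. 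Summing over $i$ yields the required equality, and this also disposes of the case $\CL_J(x)\cap H=\emptyset$, in which every summand vanishes. The central isomorphism hypothesis is used once more here, via Lemma \ref{lem:central-iso-scalar}, to control the scalars of $\mathcal{P}$ and $\mathcal{P}'$ on $\cent J N$ that intervene in the verification of the factorisation.

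The hard part will be the factorisation itself. In the ordinary situation of \cite[Lemma 2.5]{Spa13I} one has an honest character of $G/N$ and an honest irreducible extension of $\vartheta$ to $L$, so both factors are genuine central character values lying in $\mathbf{S}$; here $\mathcal{Q}$ is genuinely projective and $\mathcal{P}_L$ need not be honest, so the correct definition of the quotient factor $A_{\mathcal{Q}}(x)$ and the proof that $\lambda_\psi(\CL_J(x)^+)$ splits accordingly — together with the combinatorial comparison of $\CL_J(x)\cap H$ with conjugacy classes of $J\cap H$ afforded by $G=NH$ — is the genuinely new technical input. Granted this factorisation, everything else reduces to the scalar comparisons on the subgroups $\langle N,x\rangle$, which is exactly condition (2).
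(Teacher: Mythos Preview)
The paper does not give its own proof of this lemma: it simply cites \cite[Proposition 3.7(b)]{Spa17I}. Your proposal is therefore not competing with an argument in the paper but rather reconstructing (a version of) the proof in the cited source. Your overall strategy---reducing the block condition to an equality of central functions and then factoring $\lambda_\psi(\CL_J(x)^+)$ into a ``quotient'' piece depending only on $\mathcal{Q}$ and a ``local'' piece governed by $\mathcal{P}_{\langle N,x\rangle}$, exactly in the spirit of \cite[Lemma 2.5]{Spa13I}---is the right one, and is indeed how the result is established in Sp\"ath's paper.

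Two points of caution. First, in the direction $(1)\Rightarrow(2)$ you take $J=\langle N,x\rangle$ and want $\mathcal{Q}$ to be the trivial one-dimensional projective representation so that $\tr(\mathcal{Q}\otimes\mathcal{P}_J)$ is the extension $\wt\vartheta$ afforded by $\mathcal{P}_{\langle N,x\rangle}$; for this to make sense you are implicitly using that the factor set $\alpha$ restricted to $\langle N,x\rangle\times\langle N,x\rangle$ is a coboundary (true, since $\langle N,x\rangle/N$ is cyclic and $\FF$ is algebraically closed), so that $\mathcal{P}_{\langle N,x\rangle}$ is similar to an honest representation and the expression $\mathcal{P}(\CL_{\langle N,x\rangle}(x)^+)$ is unambiguous. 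It would be worth making this explicit, as the paper does in the paragraph preceding the lemma. Second, and more substantially, you yourself flag the factorisation for genuinely projective $\mathcal{Q}$ as ``the hard part'' and leave it as a claim; this is exactly where the work lies, and one should either carry it out (the argument in \cite{Spa17I} proceeds via the twisted group algebra $\FF_\alpha[J/N]$ and its centre) or, as the paper does, simply invoke the reference.
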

\begin{proof}
This is \cite[Proposition 3.7 (b)]{Spa17I}.
\end{proof}

Next, we collect some basic properties of block isomorphisms of modular character triples.

\begin{lem}
\label{lem:Basic properties}
Let $(G,N,\vartheta)$ and $(H,M,\psi)$ be modular character triples and assume that $(G,N,\vartheta)\isob(H,M,\psi)$. Then:
\begin{enumerate}
\item $(J,N,\vartheta)\isob(H\cap J,M,\psi)$, for every $N\leq J\leq G$;
\item $(G^a,N^a,\vartheta^a)\isob(H^a,M^a,\psi^a)$, for every $a\in\aut{G}$; and
\item if $(H, M,\psi)\isob(K, L, \rho)$ for some $(K,L,\rho)$, then $(G, N, \vartheta)\isob(K, L, \rho)$.
\end{enumerate}
\end{lem}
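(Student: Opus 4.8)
The statement collects three standard stability properties of the relation $\isob$, so the proof proposal is to verify each one by going back to the defining data — a pair of projective representations $\P$ and $\P'$ realising the isomorphism $(\sigma,\tau)\colon(G,N,\vartheta)\to(H,M,\psi)$ — and checking that Definitions \ref{def:central-iso} and \ref{def:block-iso} are preserved under the relevant operation. The guiding principle throughout is that the three requirements ((i) strong isomorphism via equal factor sets as in Theorem \ref{thm:strong-iso}, (ii) the central condition on $\cent G N$, (iii) the block-induction condition on $\cent G D$) can each be localised and transported by restriction, by conjugation, and by composition, and that the scalar reformulations of Lemma \ref{lem:central-iso-scalar} and Lemma \ref{lem:block-iso-scalar} make this bookkeeping routine.

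\textbf{Part (i).} Given $N\le J\le G$, I would simply restrict $\P$ to $J$ and $\P'$ to $H\cap J$. Since $G=NH$ and $M=H\cap N$, one checks $J=N(H\cap J)$ and $M=(H\cap J)\cap N$, so the setting of Theorem \ref{thm:strong-iso} applies to $(J,N,\vartheta)$ and $(H\cap J,M,\psi)$, and the factor sets still agree via the natural isomorphism $J/N\to (H\cap J)/M$. The central condition of Definition \ref{def:central-iso} for the restricted isomorphism follows because it is quantified over all intermediate subgroups $N\le J'\le G$ in the original, and $\cent J N\le\cent G N\le H$ gives $\cent J N\le H\cap J$; the block condition is inherited analogously (a defect group $D$ of $\bl(\psi)$ can be chosen inside $\cent G D\le H$, and one must observe $\cent J D\le H\cap J$ — here one may need to replace $D$ by a suitable $J$-conjugate, or invoke that $\cent J D\le \cent G D\cap J\le H\cap J$). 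One also uses transitivity of block induction, $\bl(\sigma_{J'}(\psi'))^{J'}$ for $J'\le J$, which is compatible with the original via $\bl(\,\cdot\,)^{J}$. Essentially this is \cite[Lemma 3.8(a)]{Spa17I} or the analogue of \cite[Lemma 3.8]{Nav-Spa14I}, and I would cite it if available rather than re-derive it.

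\textbf{Part (ii).} For $a\in\aut G$, conjugating $\P$ by $a$ (i.e.\ setting $\P^a(x)=\P(x^{a^{-1}})$) gives a projective representation associated with $(G^a,N^a,\vartheta^a)$ with the conjugated factor set, and likewise $\P'^a$ is associated with $(H^a,M^a,\psi^a)$; since conjugation by $a$ is an isomorphism of groups it carries all the incidence relations ($G=NH$, centralisers, defect groups, block induction, scalar functions) over verbatim, so the three conditions are preserved. This is a formal transport-of-structure argument with nothing to compute.

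\textbf{Part (iii).} For transitivity, compose the two isomorphisms $(\sigma,\tau)$ and $(\sigma',\tau')\colon(H,M,\psi)\to(K,L,\rho)$. The only genuine subtlety — and the part I expect to be the main obstacle — is that the composite need not literally be of the shape produced by Theorem \ref{thm:strong-iso} for a \emph{single} pair of projective representations, because the intermediate group $H$ sits differently with respect to $G$ and to $K$. The standard resolution, as in \cite[Lemma 3.8(c)]{Nav-Spa14I} and its modular analogue \cite[Lemma 3.8]{Spa17I}, is to pass to a common overgroup or to use that both $\isoc$ and $\isob$ are transitive as abstract relations on modular character triples once the scalar descriptions of Lemma \ref{lem:central-iso-scalar} and Lemma \ref{lem:block-iso-scalar} are in force: the central scalars multiply and the block-induction scalars multiply consistently, so the composite again satisfies conditions (ii) and (iii) of the definitions. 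Concretely I would reduce to the case where all three triples share the normal subgroup by the usual "strong isomorphism" manipulations, note $\cent G N\le\cent G D\le H$ and similarly one level up, and then verify the scalar conditions multiply. Since all of (i)–(iii) are recorded in \cite{Spa17I} (modular case) mirroring \cite{Nav-Spa14I} (ordinary case), the cleanest writeup is to cite those references for the hard compositional step and spell out only the short restriction and conjugation checks.
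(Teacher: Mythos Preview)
Your proposal is correct and takes essentially the same approach as the paper: the paper's proof simply states that these properties follow directly from Definition \ref{def:block-iso} and refers to the argument of \cite[Lemma 3.8]{Nav-Spa14I}, which is exactly the reference you invoke (together with its modular analogue in \cite{Spa17I}). Your write-up is more detailed than the paper's one-line citation, but the underlying idea---verify each stability property by manipulating the defining projective representations and appealing to the scalar criteria---is the same.
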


\begin{proof}
These properties follow directly from Definition \ref{def:block-iso}. See, for instance,  the argument used to prove \cite[Lemma 3.8]{Nav-Spa14I}.
\end{proof}

Another important feature of (central and) block isomorphisms of (ordinary and) modular character triples is the fact that the bijections $\sigma_J$ considered in Theorem \ref{thm:strong-iso} in turn give analogous isomorphisms.

\begin{lem}
\label{lem:Bijections induced by isomorphisms are compatible with isomorphisms}
Suppose that $(\sigma,\tau):(G, N, \vartheta)\to(H, M, \psi)$ is a block isomorphism of modular character triples. If $N\leq J\leq G$, then the map 
\[\sigma_J:\IBr(J\mid\vartheta)\to\IBr(J\cap H\mid\psi)\]
is $\norm H J$-equivariant and
\[\left(\norm G J_\mu,J,\mu\right)\isob\left(\norm H J_\mu, J\cap H, \sigma_J(\mu)\right)\]
for every $\mu\in\IBr(J\mid\vartheta)$.
\end{lem}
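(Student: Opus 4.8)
The plan is to reduce the statement for $(G,N,\vartheta)\isob(H,M,\psi)$ with the intermediate subgroup $J$ to an instance of the hypotheses and then invoke the characterizations already established in Lemmas \ref{lem:central-iso-scalar} and \ref{lem:block-iso-scalar}. Fix projective representations $\mathcal{P}$ and $\mathcal{P}'$ associated with $(G,N,\vartheta)$ and $(H,M,\psi)$ respectively, whose factor sets agree via $\tau\colon G/N\to H/M$ and which witness $(G,N,\vartheta)\isob(H,M,\psi)$; in particular $\cent G D\leq H$ for a suitable defect group $D$ of $\bl(\psi)$. Given $N\leq J\leq G$ and $\mu\in\IBr(J\mid\vartheta)$, write $\mu=\tr(\mathcal{Q}\otimes\mathcal{P}_J)$ for an irreducible projective representation $\mathcal{Q}$ of $J/N$ with factor set $\alpha_{J\times J}^{-1}$, so that $\sigma_J(\mu)=\tr(\mathcal{Q}_{J\cap H}\otimes\mathcal{P}'_{J\cap H})$ by the formula in Theorem \ref{thm:strong-iso}.

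**The $\norm H J$-equivariance.** First I would verify that $\sigma_J$ is $\norm H J$-equivariant. An element $h\in\norm H J$ normalizes $J$, hence also $N$ (as $N\normal G$) and $J\cap H$. Conjugation by $h$ sends the projective representation $\mathcal{Q}\otimes\mathcal{P}_J$ of $J$ to one affording $\mu^h$, and one checks that under $\tau$ this is compatible with conjugating $\mathcal{Q}_{J\cap H}\otimes\mathcal{P}'_{J\cap H}$ by $h$: the key point is that $\mathcal{P}$ and $\mathcal{P}'$ are intertwined by the strong isomorphism, so $\mathcal{P}_J^h$ and $(\mathcal{P}'_{J\cap H})^h$ again have matching factor sets via $\tau$, while $\mathcal{Q}^h$ restricts to $(\mathcal{Q}_{J\cap H})^h$. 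Comparing with the uniqueness in the tensor decomposition gives $\sigma_J(\mu^h)=\sigma_J(\mu)^h$. Alternatively — and this is probably cleaner to write — one observes that $(\sigma^h,\tau)$ is again a strong isomorphism $(G,N,\vartheta)\to(H,M,\psi)$ built from $\mathcal{P}^h,(\mathcal{P}')^h$, and then appeals to the uniqueness of the $\sigma_J$ attached to a given pair of projective representations.

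**The triple isomorphism.** For the main claim, set $G_1:=\norm G J_\mu$, $H_1:=\norm H J_\mu$, $M_1:=J\cap H$, and note that by Lemma \ref{lem:Basic properties}(i) we have $(J,N,\vartheta)\isob(J\cap H,M,\psi)$, so it suffices to produce the block isomorphism $(G_1,J,\mu)\isob(H_1,M_1,\sigma_J(\mu))$ and I may replace $G$ by $J$, i.e.\ assume $J=G$. Now $G_1=G_\mu$, $H_1=H_\mu$, $G_1=J\cdot H_1$ (since $G=NH$ and $N\leq J=G$ forces $G=G$, so this reduction is really: $G_1/J$ is naturally isomorphic to $H_1/M_1$), $J\cap H_1=M_1$. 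Let $\mathcal{R}$ be a projective representation of $G_1$ associated with $(G_1,J,\mu)$: one takes $\mathcal{R}=\mathcal{Q}\otimes\mathcal{P}$ restricted and suitably extended over $G_1$, and similarly $\mathcal{R}'=\mathcal{Q}_{H_1}\otimes\mathcal{P}'_{H_1}$ for $(H_1,M_1,\sigma_J(\mu))$; their factor sets agree via the natural isomorphism because those of $\mathcal{P},\mathcal{P}'$ do and those of $\mathcal{Q}$ are shared by restriction. This realizes the strong isomorphism of Theorem \ref{thm:strong-iso} for the triples $(G_1,J,\mu)$ and $(H_1,M_1,\sigma_J(\mu))$. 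To upgrade it to a block isomorphism I check the two scalar conditions. For centrality: $\cent{G_1}{J}\leq\cent G N\leq H$ (using $\cent G N\leq H$ from $(G,N,\vartheta)\isob(H,M,\psi)$, and $\cent{G_1}{J}\subseteq\cent G N$ since $N\leq J$), and on $\cent{G_1}{J}$ the scalar attached to $\mathcal{R}$ is the product of the scalar of $\mathcal{Q}$ (which is intrinsic to $\mathcal{Q}$) and the scalar of $\mathcal{P}$, and the latter agrees with that of $\mathcal{P}'$ by Lemma \ref{lem:central-iso-scalar} applied to the centrality of the original isomorphism; so the scalars for $\mathcal{R}$ and $\mathcal{R}'$ agree and Lemma \ref{lem:central-iso-scalar} gives $(G_1,J,\mu)\isoc(H_1,M_1,\sigma_J(\mu))$. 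For block induction: one needs a defect group $D_1$ of $\bl(\sigma_J(\mu))$ with $\cent{G_1}{D_1}\leq H_1$, and the class-sum scalars of $\mathcal{R}$ and $\mathcal{R}'$ to match for every $x\in G_1$. The scalar $\mathcal{R}(\CL_{\langle J,x\rangle}(x)^+)$ factors (via the standard computation behind Lemma \ref{lem:block-iso-scalar}, cf.\ \cite[Lemma 2.5]{Spa13I}) as a product of a contribution from $\mathcal{Q}$ over $\langle J,x\rangle/J$ and a contribution from $\mathcal{P}$ over an appropriate centralizer subgroup, and the $\mathcal{P}$-contribution equals the $\mathcal{P}'$-contribution by the block-isomorphism hypothesis and Lemma \ref{lem:block-iso-scalar}, while the $\mathcal{Q}$-contribution is shared by $\mathcal{R}$ and $\mathcal{R}'$ by construction. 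Hence the scalars agree and Lemma \ref{lem:block-iso-scalar} yields $(G_1,J,\mu)\isob(H_1,M_1,\sigma_J(\mu))$.

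**Main obstacle.** The routine but delicate part is the bookkeeping of factor sets and scalars when passing from $\mathcal{P},\mathcal{P}'$ to $\mathcal{R}=\mathcal{Q}\otimes\mathcal{P},\ \mathcal{R}'=\mathcal{Q}_{H_1}\otimes\mathcal{P}'_{H_1}$: one must make sure that $\mathcal{R}$ and $\mathcal{R}'$ really are \emph{associated} projective representations of the new triples (factor sets trivial on the normal subgroup and its interaction with the whole group), that their common factor set corresponds via the natural isomorphism $G_1/J\to H_1/M_1$, and — the genuinely nontrivial point — that the defect group $D$ of $\bl(\psi)$ with $\cent G D\leq H$ produces, after multiplying by the block of $\eta\in\IBr(J/N)$ corresponding to $\mathcal{Q}$, a defect group of $\bl(\sigma_J(\mu))$ lying in $H_1$ with the required centralizer containment. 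This last step is essentially the content of Lemma \ref{lem:relative-blocks} in the block-isomorphism language, and I expect it to be where the care is needed; everything else is a direct transcription of the ordinary case \cite[proof of analogous statements in Section 3]{Nav-Spa14I} to the modular setting, which the earlier lemmas of this section have been set up precisely to allow.
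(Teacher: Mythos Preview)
The paper does not give its own argument here: it simply cites \cite[Lemma 3.3]{Fen-Li-Zha22}, which adapts \cite[Proposition 3.9]{Nav-Spa14I} to the modular setting. Your outline follows that same route --- extend $\mathcal{Q}$ from $J/N$ to a projective representation $\tilde{\mathcal{Q}}$ of $\norm G J_\mu/N$, form $\mathcal{R}=\tilde{\mathcal{Q}}\otimes\mathcal{P}|_{\norm G J_\mu}$ and $\mathcal{R}'=\tilde{\mathcal{Q}}|_{\norm H J_\mu}\otimes\mathcal{P}'|_{\norm H J_\mu}$, and verify the scalar criteria of Lemmas~\ref{lem:central-iso-scalar} and~\ref{lem:block-iso-scalar} --- so in substance the approaches agree.

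There is, however, one genuinely wrong step in your sketch: the sentence ``I may replace $G$ by $J$, i.e.\ assume $J=G$'' should be deleted. If $J=G$ then $\mu\in\IBr(G)$ forces $\norm G J_\mu=G$, and the desired isomorphism collapses to the vacuous $(G,G,\mu)\isob(H,H,\sigma_G(\mu))$. No such reduction is available: the content of the lemma is precisely that $\norm G J_\mu$ is typically strictly larger than $J$, and the nontrivial input is the extension of $\mathcal{Q}$ from $J/N$ to $\norm G J_\mu/N$, which exists because $\mu$ is $\norm G J_\mu$-invariant and $\mathcal{P}$ is already defined on all of $G$. Your parenthetical shows you sensed something was off, and indeed the remainder of your argument works in the general case without the reduction; just remove that sentence and state directly that one extends $\mathcal{Q}$. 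One further remark: the defect-group condition $\cent{G_1}{D_1}\leq H_1$ is easier than you suggest --- pick a defect group $D_1$ of $\bl(\sigma_J(\mu))$ containing the given defect group $D$ of $\bl(\psi)$ (possible since $\bl(\sigma_J(\mu))$ covers $\bl(\psi)$), and then $\cent{G_1}{D_1}\leq\cent G D\cap G_1\leq H\cap\norm G J_\mu=H_1$; Lemma~\ref{lem:relative-blocks} is not needed here.
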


\begin{proof}
This statement can be found in \cite[Lemma 3.3]{Fen-Li-Zha22} and is based on \cite[Proposition 3.9]{Nav-Spa14I}.
\end{proof}

\subsection{Construction of block isomorphisms of modular character triples}

Several standard constructions that are used in representation theory are well behaved with respect to block isomorphisms of modular character triples. Here we collect the results needed in the subsequent sections. To start, we consider irreducible induction, which appears for instance when using the Clifford correspondence as well as the Fong--Reynolds correspondence.

\begin{lem}
\label{lem:Irreducible induction}
Let $N\normal G$, $H\leq G$, $G=NH$ and $M=N\cap H$. Consider $G_1\leq G$, $H_1=G_1\cap H$, $N_1=G_1\cap N$, $M_1=G_1\cap M$ such that $G_1N=G$ and $H_1M=H$. Assume furthermore that induction gives bijections
\[\IBr\left(J\cap G_1\enspace\middle|\enspace\vartheta\right)\to \IBr\left(J\enspace\middle|\enspace\vartheta^N\right)\]
and
\[\IBr\left(J\cap H_1\enspace\middle|\enspace\psi\right)\to \IBr\left(J\cap H\enspace\middle|\enspace\psi^M\right)\]
for every $N\leq J\leq G$. If $(G_1, N_1,\vartheta)\isob (H_1,M_1,\psi)$ and $\cent G D\leq H$ for some defect group $D$ of $\bl(\psi^M)$, then $(G, N, \vartheta^N)\isob (H, M, \psi^M)$.
\end{lem}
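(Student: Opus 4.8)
The plan is to transport the block isomorphism $(G_1,N_1,\vartheta)\isob(H_1,M_1,\psi)$ along the two induction bijections and verify that the resulting isomorphism $(G,N,\vartheta^N)\to(H,M,\psi^M)$ satisfies the conditions in Definition \ref{def:block-iso}. First I would fix projective representations $\P_1$ and $\P_1'$ associated with $(G_1,N_1,\vartheta)$ and $(H_1,M_1,\psi)$ realising the isomorphism via Theorem \ref{thm:strong-iso}, with factor sets agreeing under $G_1/N_1\cong H_1/M_1$, and from these build projective representations $\P$ and $\P'$ associated with $(G,N,\vartheta^N)$ and $(H,M,\psi^M)$. The natural candidate is the induced projective representation $\P=(\P_1)^G$ (suitably normalised on $N$ so that $\P_N$ affords $\vartheta^N$, using $N=N_1N$... more precisely $\P_1$ restricted to $N_1$ affords $\vartheta$ and $\vartheta^N$ is irreducible so induction of a projective representation makes sense); similarly $\P'=(\P_1')^H$. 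Since $G=G_1N$ and $H=H_1M$ with $H/M\cong H_1/M_1\cong G_1/N_1\cong G/N$, the factor sets of $\P$ and $\P'$ can be arranged to coincide under $G/N\cong H/M$, so Theorem \ref{thm:strong-iso} produces a strong isomorphism $(\sigma,\tau):(G,N,\vartheta^N)\to(H,M,\psi^M)$, and one checks that for $N\le J\le G$ the bijection $\sigma_J$ is compatible, through the hypothesised induction bijections $\IBr(J\cap G_1\mid\vartheta)\to\IBr(J\mid\vartheta^N)$ and $\IBr(J\cap H_1\mid\psi)\to\IBr(J\cap H\mid\psi^M)$, with the bijection $(\sigma_1)_{J\cap G_1}$ coming from the original isomorphism.

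Next I would check the central condition, i.e. that $(\sigma,\tau)$ is a central isomorphism. Using Lemma \ref{lem:central-iso-scalar}, it suffices to compare the scalar functions $\c_G(N)\to\FF^\times$ attached to $\P$ and to $\P'$. Here one needs $\c_G(N)\le H$, which follows from $G=G_1N$ and $\c_G(N)\le\c_G(N_1)$... one should argue that $\c_G(N)\le H_1\cdot(\text{something})$; in fact since the original isomorphism is central we have $\c_{G_1}(N_1)\le H_1$, and the induced representations have the property that the value of the induced factor set / scalar on an element of $\c_G(N)$ is determined by the value of $\P_1$ (resp. $\P_1'$) on the relevant coset representatives in $G_1$ (resp. $H_1$), all of which lie in $\c_{G_1}(N_1)$ up to elements of $N$; the central condition for $(\sigma_1,\tau_1)$ then forces the two scalars to agree. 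The same bookkeeping, now applied to the class-sum scalars $\P(\CL_{\langle N,x\rangle}(x)^+)$ versus $\P'((\CL_{\langle N,x\rangle}(x)\cap H)^+)$ and invoking Lemma \ref{lem:block-iso-scalar}, gives the block condition, using the hypothesis $\c_G(D)\le H$ for a defect group $D$ of $\bl(\psi^M)$; one should relate a defect group of $\bl(\psi^M)$ to one of $\bl(\psi)$ via the Fong--Reynolds-type correspondence underlying the induction bijection, so that the hypothesis $\c_G(D)\le H$ can be deduced from $\c_{G_1}(D_1)\le H_1$.

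Alternatively, and perhaps more cleanly, I would avoid recomputing everything from scratch and instead cite \cite[Proposition 3.9]{Nav-Spa14I} / the argument of \cite[Lemma 3.3]{Fen-Li-Zha22} in the form of Lemma \ref{lem:Bijections induced by isomorphisms are compatible with isomorphisms}: the maps $\sigma_J$ induced by a block isomorphism are themselves compatible with block isomorphisms, and irreducible induction of characters is the classical mechanism (Clifford theory, Fong--Reynolds) by which one passes between the triples $(G_1,N_1,\vartheta)$ and $(G,N,\vartheta^N)$. Concretely, one verifies directly that the composite $\IBr(J\cap G_1\mid\vartheta)\xrightarrow{\mathrm{ind}}\IBr(J\mid\vartheta^N)$ intertwines $(\sigma_1)_{J\cap G_1}$ with the desired $\sigma_J$, and then invokes the scalar reformulations above on a generating set to transfer the central and block conditions. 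The main obstacle, as usual in this circle of ideas, will be the careful construction and normalisation of the induced projective representations $\P$ and $\P'$ so that their factor sets genuinely coincide under $G/N\cong H/M$ and so that the class-sum scalars match on the nose; the group-theoretic verifications that $\c_G(N)\le H$ and $\c_G(D)\le H$, together with identifying the right defect group of $\bl(\psi^M)$, are the other place where one must be attentive but are ultimately routine Fong--Reynolds bookkeeping.
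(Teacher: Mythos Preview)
The paper does not give an independent proof of this lemma: its entire proof is the single line ``This is \cite[Proposition 3.4]{Fen-Li-Zha22}.'' So your proposal is not really comparable to the paper's argument; rather, you are sketching what the cited reference does. Your outline (induce the projective representations $\P_1,\P_1'$ to $\P,\P'$, match factor sets via $G/N\cong G_1/N_1\cong H_1/M_1\cong H/M$, and then verify the central and block conditions through the scalar criteria of Lemmas \ref{lem:central-iso-scalar} and \ref{lem:block-iso-scalar}) is indeed the standard route, and is essentially how \cite[Proposition 3.4]{Fen-Li-Zha22} proceeds, building on the ordinary-triple version \cite[Theorem 3.14]{Nav-Spa14I}.

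One point to correct in your sketch: the condition $\c_G(D)\le H$ for a defect group $D$ of $\bl(\psi^M)$ is an \emph{explicit hypothesis} of the lemma, not something you deduce from $\c_{G_1}(D_1)\le H_1$. In fact it need not follow from the data of the $\isob$ on the smaller triples (a defect group of $\bl(\psi^M)$ can be strictly larger than one of $\bl(\psi)$), which is precisely why it is assumed separately. So in your write-up you should simply invoke this hypothesis where needed rather than try to derive it.
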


\begin{proof}
This is \cite[Proposition 3.4]{Fen-Li-Zha22}.
\end{proof}

Next, we show that $\isob$ is compatible with direct products.

\begin{lem}
\label{lem:Direct products}
Suppose that $(G_i, N_i, \vartheta_i)\isob (H_i, M_i, \psi_i)$ for $i=1,2$. Then 
\[(G_1\times G_2, N_1\times N_2, \vartheta_1\times\vartheta_2)\isob (H_1\times H_2, N_1\times N_2,\psi_1\times\psi_2).\]
\end{lem}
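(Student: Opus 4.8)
The plan is to reduce the statement for direct products to the case of a single factor together with an "inflation/relative" argument, by working with the projective representations that define the isomorphisms. First I would invoke Theorem \ref{thm:strong-iso}: for each $i\in\{1,2\}$ the hypothesis $(G_i,N_i,\vartheta_i)\isob(H_i,M_i,\psi_i)$ is witnessed by projective $\mathbb{F}$-representations $\mathcal{P}_i$ and $\mathcal{P}_i'$ associated with $(G_i,N_i,\vartheta_i)$ and $(H_i,M_i,\psi_i)$ respectively, whose factor sets $\alpha_i,\alpha_i'$ agree via the natural isomorphism $G_i/N_i\to H_i/M_i$. Then I would form the external tensor products $\mathcal{P}:=\mathcal{P}_1\otimes\mathcal{P}_2$ and $\mathcal{P}':=\mathcal{P}_1'\otimes\mathcal{P}_2'$, which are projective representations of $G_1\times G_2$ and $H_1\times H_2$ associated with $(G_1\times G_2, N_1\times N_2,\vartheta_1\times\vartheta_2)$ and $(H_1\times H_2, M_1\times M_2,\psi_1\times\psi_2)$, with factor set $\alpha_1\times\alpha_2$ (resp.\ $\alpha_1'\times\alpha_2'$). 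Since $(G_1\times G_2)(H_1\times H_2)=G_1\times G_2$ and $(N_1\times N_2)\cap(H_1\times H_2)=M_1\times M_2$, and the factor sets match via the natural isomorphism on the quotient $(G_1/N_1)\times(G_2/N_2)$, Theorem \ref{thm:strong-iso} produces a strong isomorphism $(\sigma,\tau)$ of modular character triples for this data.

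Next I would verify the two extra conditions, centrality and block-compatibility, using the scalar reformulations in Lemma \ref{lem:central-iso-scalar} and Lemma \ref{lem:block-iso-scalar}. For centrality: $\mathbf{C}_{G_1\times G_2}(N_1\times N_2)=\mathbf{C}_{G_1}(N_1)\times\mathbf{C}_{G_2}(N_2)$, which is contained in $H_1\times H_2$ because each $\mathbf{C}_{G_i}(N_i)\leq H_i$; and for $x=(x_1,x_2)$ in this centralizer, $\mathcal{P}(x)=\mathcal{P}_1(x_1)\otimes\mathcal{P}_2(x_2)$ is the scalar $\zeta_1(x_1)\zeta_2(x_2)$, which equals the corresponding scalar for $\mathcal{P}'(x)$ since each $\mathcal{P}_i,\mathcal{P}_i'$ give the same scalar on $\mathbf{C}_{G_i}(N_i)$. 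For the block condition, pick a defect group $D_i$ of $\bl(\psi_i)$ with $\mathbf{C}_{G_i}(D_i)\leq H_i$; then $D_1\times D_2$ is a defect group of $\bl(\psi_1\times\psi_2)=\bl(\psi_1)\times\bl(\psi_2)$ (using that blocks of a direct product are products of blocks and defect groups multiply accordingly), and $\mathbf{C}_{G_1\times G_2}(D_1\times D_2)=\mathbf{C}_{G_1}(D_1)\times\mathbf{C}_{G_2}(D_2)\leq H_1\times H_2$. For the scalar in Lemma \ref{lem:block-iso-scalar}, given $x=(x_1,x_2)\in G_1\times G_2$ one has $\langle N_1\times N_2, x\rangle$ sitting inside $\langle N_1,x_1\rangle\times\langle N_2,x_2\rangle$, and the relevant conjugacy class sum factors through the two coordinates; the scalar attached to $\mathcal{P}(\mathrm{Cl}_{\langle N_1\times N_2,x\rangle}(x)^+)$ is a product (or more precisely is controlled by the product of) the two coordinate scalars, which match those for $\mathcal{P}'$ by the hypothesis $(G_i,N_i,\vartheta_i)\isob(H_i,M_i,\psi_i)$ and Lemma \ref{lem:block-iso-scalar} applied in each coordinate.

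The main obstacle I expect is the bookkeeping around the conjugacy-class sum in the block-compatibility scalar: the element $x=(x_1,x_2)$ need not have $\langle (N_1\times N_2), x\rangle = \langle N_1, x_1\rangle \times \langle N_2, x_2\rangle$, so one cannot naively write $\mathrm{Cl}_{\langle N_1\times N_2,x\rangle}(x)^+$ as a tensor product of the two coordinate class sums. The clean way around this is to not compute scalars by brute force but instead to use the functoriality already packaged in the earlier lemmas: observe that $(G_1\times G_2, N_1\times N_2, \vartheta_1\times\vartheta_2)$ maps under the projection-type maps onto the two factors, and combine Lemma \ref{lem:Basic properties}(i) with an Fong--Reynolds / Clifford-type reduction, or simply apply the known transitivity of $\isob$ in Lemma \ref{lem:Basic properties}(iii) through the intermediate triple $(G_1\times H_2, N_1\times M_2, \vartheta_1\times\psi_2)$: first push the second coordinate using $(G_2,N_2,\vartheta_2)\isob(H_2,M_2,\psi_2)$ (viewed with $G_1$ as an inert direct factor, where Lemma \ref{lem:central-iso-scalar} and Lemma \ref{lem:block-iso-scalar} reduce to the single-factor statement), then push the first coordinate similarly, and conclude by transitivity. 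This two-step reduction isolates all the genuine content into a single-factor computation that the earlier scalar lemmas already handle, avoiding any delicate joint class-sum analysis.
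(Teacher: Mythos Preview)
Your direct tensor-product approach is exactly what the paper does, and it works without the two-step detour. The worry you raise is legitimate at the level of subgroups---indeed $\langle N_1\times N_2,(x_1,x_2)\rangle$ is generally a \emph{proper} subgroup of $\langle N_1,x_1\rangle\times\langle N_2,x_2\rangle$, and the paper is in fact slightly careless when it asserts equality. But what Lemma~\ref{lem:block-iso-scalar} actually needs is only the \emph{conjugacy class} $\CL_{\langle N_1\times N_2,(x_1,x_2)\rangle}((x_1,x_2))$, and this does factor as $\CL_{\langle N_1,x_1\rangle}(x_1)\times\CL_{\langle N_2,x_2\rangle}(x_2)$. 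The reason is simple: since $N_1\times N_2$ is normal and $(x_1,x_2)$ centralises itself, the $\langle N_1\times N_2,(x_1,x_2)\rangle$-class of $(x_1,x_2)$ is just its $(N_1\times N_2)$-orbit $\{(x_1^{n_1},x_2^{n_2}):n_i\in N_i\}$, and for the same reason each $\CL_{\langle N_i,x_i\rangle}(x_i)=\{x_i^{n_i}:n_i\in N_i\}$. The intersection with $H_1\times H_2$ factors the same way. Hence
\[
\P\bigl(\CL_{\langle N_1\times N_2,(x_1,x_2)\rangle}((x_1,x_2))^+\bigr)=\P_1\bigl(\CL_{\langle N_1,x_1\rangle}(x_1)^+\bigr)\otimes\P_2\bigl(\CL_{\langle N_2,x_2\rangle}(x_2)^+\bigr),
\]
and similarly for $\P'$, so the scalars match coordinatewise and the direct argument goes through.

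Your proposed transitivity route through $(G_1\times H_2,\,N_1\times M_2,\,\vartheta_1\times\psi_2)$ would also work, but note that each of the two steps still requires precisely the conjugacy-class factorisation above (now with one factor carrying the identity isomorphism), so it does not actually bypass the computation you were trying to avoid. Incidentally, your explicit verification that $D_1\times D_2$ is a defect group of $\bl(\psi_1\times\psi_2)$ with $\cent{G_1\times G_2}{D_1\times D_2}\leq H_1\times H_2$ is a useful addition; the paper leaves this point implicit.
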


\begin{proof}
For $i=1,2$, let $(\sigma_i,\tau_i)$ be a block isomorphism associated with a choice of projective representations $\P_i$ and $\P'_i$ with factor sets $\alpha_i$ and $\alpha'_i$ respectively. Set
$\P(x,y)=\P_1(x)\otimes\P_2(y)$ and $\P'(x',y')=\P'_1(x')\otimes\P'_2(y')$ for every $x\in G_1$, $y\in G_2$, $x'\in H_1$ and $y'\in H_2$. Now $\P$ and $\P'$ are projective representations of $G_1\times G_2$ and $H_1\times H_2$ associated with $\vartheta_1\times\vartheta_2$ and $\psi_1\times\psi_2$ respectively.  By the properties of the Kronecker product, if $x,g\in G_1$ and $y,t\in G_2$, we have
\begin{align*}
P((xg,yt))&=(\alpha_1(x,g)\P_1(x)\P_1(g))\otimes(\alpha_2(y,t)(\P_2(y)\P_2(t)))=\\&=\alpha_1(x,g)\alpha_2(y,t)\P((x,y))\P((g,t))
\end{align*}
and therefore the factor set $\alpha$ of $\P$ satisfies
\[\alpha((x,y),(g,t))=\alpha_1(x,g)\alpha_2(y,t).\]
The same argument applies for the factor set $\alpha'$ of $\P'$ and hence we deduce that the factor sets $\alpha$ and $\alpha'$ coincide via the isomorphism 
\[\tau:(G_1\times G_2)/(N_1\times N_2)\to (H_1\times H_2)/(M_1\times M_2).\]
We then obtain a strong isomorphism of modular character triples  
\[(\sigma, \tau):(G_1\times G_2, N_1\times N_2, \vartheta_1\times\vartheta_2)\to (H_1\times H_2, N_1\times N_2,\psi_1\times\psi_2)\] according to Theorem \ref{thm:strong-iso}. We show that $(\sigma,\tau)$ is a block isomorphism by using Lemma \ref{lem:central-iso-scalar} and Lemma \ref{lem:block-iso-scalar}. First, notice that $\cent{G_1\times G_2}{N_1\times N_2}=\cent {G_1}{N_1}\times \cent {G_2}{N_2}\leq H_1\times H_2$ and let $x\in\cent {G_1}{N_1}$ and $y\in\cent {G_2}{N_2}$. Then $\P((x,y))=\P_1(x)\otimes\P_2(y)$ is associated with the same scalar as $\P'((x,y))=\P'_1(x)\otimes\P'_2(y)$. Consider now $x\in G_1$ and $y\in G_2$ and observe that $\langle N_1\times N_2, (x,y)\rangle=\langle N_1,x\rangle \times \langle N_2, y\rangle$ and that $\CL_{\langle N_1\times N_2, (x,y)\rangle}((x,y))=\CL_{\langle N_1, x\rangle}(x)\times\CL_{\langle N_2, y\rangle}(y)$. As a consequence
\[\CL_{\langle N_1\times N_2, (x,y)\rangle}((x,y))^+=\sum_{u\in\CL_{\langle N_1, x\rangle}(x) \atop v\in \CL_{\langle N_2, y\rangle}(y)}(u,v)\]
and hence
\[\P\left(\CL_{\langle N_1\times N_2, (x,y)\rangle}(x,y)^+\right)=\P_1\left(\CL_{\langle N_1, x\rangle}(x)^+\right)\otimes\P_2\left(\CL_{\langle N_2, y\rangle}(y)^+\right).\]
Similarly, we have
\begin{align*}
\P\left(\left(\CL_{\langle N_1\times N_2, (x,y)\rangle}(x,y)\cap (H_1\times H_2)\right)^+\right)=\P_1\left(\left(\CL_{\langle N_1, x\rangle}(x)\cap H_1)^+\right)\otimes\P_2\left((\CL_{\langle N_2, y\rangle}(y)\cap H_2\right)^+\right)
\end{align*}
and we conclude that $\P(\CL_{\langle N_1\times N_2, (x,y)\rangle}(x,y)^+)$ and $\P'(\CL_{\langle N_1\times N_2, (x,y)\rangle}(x,y)\cap (H_1\times H_2))^+)$ are associated with the same scalar as desired.
\end{proof}

Using Lemma \ref{lem:Direct products} we can show that block isomorphisms of modular character triples are compatible with wreath products. First, recall that, if $N\normal G$ and $b$ is a block of $N$, then Dade's ramification group of $b$, denoted by $G[b]$, coincides with the subgroup of $G$ generated by $N$ and the elements $x\in G$ such that $\lambda_{\wt{b}}(\CL_{\langle N, x\rangle}(x)^+)\neq 0$ for some block $\wt{b}$ of $\langle N, x\rangle$ covering $b$ (see \cite[Section 2.4]{Spa17}).

\begin{lem}
\label{lem:Wreath products}
Suppose that $(G,N,\vartheta)\isob(H,M,\psi)$ and let $n$ be a positive integer. Write $\wt{G}=G\wr S_n$, $\wt{N}=N^n$, $\wt{\vartheta}=\vartheta^n$ and similarly $\wt{H}=H\wr S_n$, $\wt{M}=M^n$, $\wt{\psi}=\psi^n$. Then
\[\left(\wt{G},\wt{N},\wt{\vartheta}\right)\isob\left(\wt{H},\wt{M},\wt{\psi}\right).\]
\end{lem}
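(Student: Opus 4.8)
The plan is to reduce the wreath product case to the direct product case (Lemma \ref{lem:Direct products}) already established, together with a suitable handling of the symmetric group on top. More precisely, write $\wt{G}=G\wr S_n$ and think of it as an extension $\wt{N}\normal \wt{G}$ with $\wt{G}/\wt{N}$ mapping onto $S_n$ via the quotient $G^n\rtimes S_n\to (G/N)^n\rtimes S_n$; note that $\wt{\vartheta}=\vartheta^n$ is $\wt{G}$-invariant because $\vartheta$ is $G$-invariant and the $S_n$-action permutes the (identical) factors. By Lemma \ref{lem:Direct products} applied to $n$ copies we obtain a block isomorphism $(G^n,N^n,\vartheta^n)\isob(H^n,M^n,\psi^n)$ coming from projective representations $\P^{\otimes n}$ and $(\P')^{\otimes n}$; the key point is that these tensor-power representations carry a natural $S_n$-action permuting the tensor factors (with the appropriate sign-free intertwining, since all factors agree), so $\P^{\otimes n}$ extends to a projective representation $\wt{\P}$ of $\wt{G}=G\wr S_n$ and likewise $(\P')^{\otimes n}$ extends to $\wt{\P}'$ of $\wt{H}=H\wr S_n$, with factor sets that still coincide via the natural isomorphism $\wt{G}/\wt{N}\cong \wt{H}/\wt{M}$ (this isomorphism is the identity on the $S_n$-part and is induced by $\tau$ on each of the $n$ factors). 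This gives, via Theorem \ref{thm:strong-iso}, a strong isomorphism $(\wt\sigma,\wt\tau):(\wt G,\wt N,\wt\vartheta)\to(\wt H,\wt M,\wt\psi)$.

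It then remains to upgrade this strong isomorphism to a block isomorphism, which by Lemma \ref{lem:central-iso-scalar} and Lemma \ref{lem:block-iso-scalar} amounts to checking two scalar conditions. For the central condition: $\cent{\wt G}{\wt N}$ is contained in $G^n$ (indeed in $\cent{G}{N}^n\leq H^n\le\wt H$, since an element with a nontrivial $S_n$-component cannot centralise $\wt N=N^n$ unless $N$ is abelian, in which case one argues directly) and on $\cent{G}{N}^n$ the scalar attached to $\wt\P=\P^{\otimes n}$ is the product of the scalars attached to $\P$, which by hypothesis equals the corresponding product for $\P'$; hence $\wt\P$ and $\wt\P'$ are associated with the same scalar on $\cent{\wt G}{\wt N}$. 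For the block condition: given $x=(g_1,\dots,g_n)\rho\in\wt G$ with $\rho\in S_n$, one analyses $\CL_{\langle\wt N,x\rangle}(x)$. When $\rho=1$ the class sum factors as a ``tensor product'' of the class sums in the $n$ factors exactly as in the proof of Lemma \ref{lem:Direct products}, and the scalars match factorwise. When $\rho\neq 1$, one uses that $x$ has a power $x^m$ (with $m$ the order of $\rho$, a divisor of $n!$) lying in $G^n$; since Dade's ramification group and, more relevantly, the scalars $\lambda_{\wt b}(\CL_{\langle\wt N,x\rangle}(x)^+)$ appearing in Lemma \ref{lem:block-iso-scalar} can be controlled along the cyclic subgroup $\langle x\rangle$, and since $\wt\P$ restricted to $\langle\wt N,x\rangle$ is obtained by inducing/tensoring the factor representations in an $S_n$-equivariant way, the scalar attached to $\wt\P(\CL_{\langle\wt N,x\rangle}(x)^+)$ is a product of scalars of the type occurring for $\P$ along suitable cycle-orbits; the same decomposition holds verbatim for $\wt\P'$, and the factorwise equality of scalars propagates.

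The main obstacle is precisely this last point: handling elements $x\in\wt G$ whose image in $S_n$ is nontrivial, i.e.\ making rigorous the claim that the scalar functions defining $\isob$ behave compatibly with the $S_n$-twisting in the wreath product. The cleanest way to organise this is to observe that for such an $x$, writing $\rho$ as a product of disjoint cycles, the group $\langle\wt N,x\rangle$ decomposes (up to conjugacy) as a direct product of wreath-type pieces indexed by the cycles of $\rho$, each of the form $\langle N,y\rangle\wr C_\ell$ for an appropriate $\ell$-cycle and an element $y$ obtained by multiplying the $g_i$ along the cycle; the class sum of $x$ then factors accordingly, and on each such piece the relevant scalar for $\P^{\otimes\ell}\rtimes C_\ell$ is, by a direct Kronecker-product computation as in Lemma \ref{lem:Direct products}, equal to the scalar $\lambda$ attached to $\P$ at the single element $y\in\langle N,y\rangle$ (the $C_\ell$-twist contributing nothing extra because all tensor factors are equal). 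Since the corresponding computation for $\P'$ yields the scalar attached to $\P'$ at $y$, and these agree by the hypothesis $(G,N,\vartheta)\isob(H,M,\psi)$, Lemma \ref{lem:block-iso-scalar} gives $(\wt G,\wt N,\wt\vartheta)\isob(\wt H,\wt M,\wt\psi)$, completing the proof. One should also record that a defect group $\wt D$ of $\bl(\wt\psi)$ can be taken of the form $D\wr R$ for $D$ a defect group of $\bl(\psi)$ and $R\in\Syl_p(S_n)$, whence $\cent{\wt G}{\wt D}\le \cent{G}{D}^n\rtimes\cent{S_n}{R}\le \wt H$, so the defect-group containment hypothesis of Definition \ref{def:block-iso} is satisfied.
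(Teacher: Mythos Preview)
Your overall architecture matches the paper's: extend $\P^{\otimes n}$ and $(\P')^{\otimes n}$ to $\wt G$ and $\wt H$ by twisting with a permutation representation of $S_n$, check that the factor sets agree, and then verify the scalar conditions of Lemma~\ref{lem:central-iso-scalar} and Lemma~\ref{lem:block-iso-scalar}. The central part is fine and essentially identical to the paper's.

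The genuine gap is in your treatment of the block condition for elements $x=(g_1,\dots,g_n)\rho$ with $\rho\neq 1$. You assert that the cycle decomposition of $\rho$ factors $\langle\wt N,x\rangle$ into wreath-type pieces and that on a single $\ell$-cycle piece the scalar attached to $\wt\P(\CL_{\langle\wt N,x\rangle}(x)^+)$ equals the scalar attached to $\P$ at the product $y=g_1\cdots g_\ell$, ``the $C_\ell$-twist contributing nothing extra because all tensor factors are equal.'' This is not a direct Kronecker-product computation in the sense of Lemma~\ref{lem:Direct products}: the conjugacy class of $x$ in $\langle N^\ell,x\rangle$ consists of elements $(n_1g_1n_2^{-1},\dots,n_\ell g_\ell n_1^{-1})\rho$, and evaluating $\wt\P$ on their sum involves the permutation matrix $\R(\rho)$ in an essential way. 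Relating this to $\P(\CL_{\langle N,y\rangle}(y)^+)$ and to the corresponding intersection with $H$ requires real work that you have not supplied; the claim is plausible but unproven as stated. A second, minor error: the defect group of $\bl(\wt\psi)$ is $D^n$ (since $\wt M=M^n$), not $D\wr R$; fortunately $\cent{\wt G}{D^n}=\cent G D^n\leq H^n\leq\wt H$ still gives what you need.

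The paper avoids your cycle computation entirely. By \cite[Lemma~4.2(b)]{Spa17} it suffices to check the block scalar condition only for $x$ in Dade's ramification group $\wt G[\bl(\wt\vartheta)]$, and \cite[Proposition~2.5(a)]{Spa17I} forces any such $x$ to lie in $\wt N\cent{\wt G}{\wt D}\leq G^n$. Thus $\rho=1$ automatically, $\wt\P_{G^n}=\P^{\otimes n}$, and the computation reduces verbatim to the direct-product case of Lemma~\ref{lem:Direct products}. This is both shorter and sidesteps the unjustified step in your argument; if you want to salvage your direct approach, you would need to carry out the cycle computation in full.
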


\begin{proof}
To start, we show that $(\wt{G},\wt{N},\wt{\vartheta})\isoc(\wt{H},\wt{M},\wt{\psi})$. Let $\R:S_n\to \GL_n(\mathbb{F})$ be the representation from the proof of \cite[Theorem 4.24]{Val16}, and consider projective representations $\P$ and $\P'$ with factor sets $\alpha$ and $\alpha'$ respectively giving the isomorphism $(G,N,\vartheta)\isob(H,M,\psi)$. Let $\wt{\P}$ be the projective representation of $\wt{G}$ given by
\[\wt{\P}((g_1,\dots,g_n)\sigma)=(\P(g_1)\otimes\dots\otimes \P(g_n))\R(\sigma)\]
for $g_i\in G$ and define analogously the projective a representation $\wt{\P}'$ of $\wt{H}$. The factor sets $\wt{\alpha}$ of $\wt{\P}$ and $\wt{\alpha}'$ of $\wt{\P}'$ satisfy
\begin{align*}
\wt{\alpha}((g_1,\dots,g_n)\sigma,(x_1,\dots,x_n)\tau)=&\prod_{i=1}^n\alpha\left(g_i,x_{\sigma(i)}\right)\\
\wt{\alpha}'((h_1,\dots,h_n)\sigma,(y_1,\dots,y_n)\tau)=&\prod_{i=1}^n\alpha'\left(h_i,y_{\sigma(i)}\right)
\end{align*}
for every $g_i, x_i\in G $ and $h_i, y_i\in H$. Then, noticing that $\cent{\wt G}{\wt N}=\cent G N^n\leq H^n$ and arguing as in the final part of the proof of \cite[Theorem 10.21]{Nav18}, by using Theorem \ref{thm:strong-iso} and Lemma \ref{lem:central-iso-scalar} we conclude that $(\wt{G},\wt{N},\wt{\vartheta})\isoc(\wt{H},\wt{M},\wt{\psi})$. By Lemma \ref{lem:block-iso-scalar} it remains to check that $\wt{\P}(\CL_{\langle \wt{N},\wt{x}\rangle}(\wt{x})^+)$ and $\wt{P}'(\CL_{\langle \wt{N},\wt{x}\rangle}(\wt{x})\cap \wt{H})^+)$
are associated with the same scalar for every $\wt{x}=(g_1,\dots,g_n)\sigma\in\wt G$. For this, arguing as in \cite[Lemma 4.2(b)]{Spa17} and noticing that $\bl(\wt{\psi})^{\wt{N}}=\bl(\wt{\vartheta})$, we may assume that $\wt{x}$ belongs to Dade's ramification group $\wt{G}[\bl(\wt{\vartheta})]$. In this case, \cite[Proposition 2.5(a)]{Spa17I} implies that $\wt{x}\in \wt{N}\cent{\wt{G}}{\wt{D}}\leq G^n$. Since $\wt{\P}_{G^n}=\P\otimes\dots\otimes\P$, arguing as in the proof of Lemma \ref{lem:Direct products} we can show that the matrices above are associated with the same scalar and the result follows.
\end{proof}

The next result, often referred to as the Butterfly theorem, shows that when considering block isomorphisms of (ordinary or) modular character triples we can replace the ambient group with any other group inducing the same automorphisms on the normal subgroups of the triples. This was formally stated for the first time in \cite[Theorem 5.3]{Spa17} building on earlier similar ideas used in several other reduction theorems.

\begin{lem}[Butterfly theorem]
\label{lem:Butterfly theorem}
Let $(G, N, \vartheta)$ and $(H, M, \psi)$ be modular character triples with $(G, N, \vartheta)\isob(H, M, \psi)$. Suppose that $N\normal K$ and that $\epsilon_G(G)=\epsilon_K(K)$ where $\epsilon_G:G\to \aut{N}$ and $\epsilon_K:K\to \aut{N}$ are the homomorphisms defined by conjugation. If $L:=\epsilon_K^{-1}(\epsilon_G(H))$, then $(K, N, \vartheta)\isob (L, M, \psi)$.
\end{lem}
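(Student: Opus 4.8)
The plan is to interpose between $G$ and $K$ the fibre product over $\aut N$, and then to move the given block isomorphism from the $G$-side to the $K$-side along its two projections. I record first an auxiliary fact used repeatedly: if $D$ is a defect group of $\bl(\psi)$ as in Definition \ref{def:block-iso}, then $D\le M\le N$, so $\cent GN\le\cent GD\le H$.

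Set $\widehat G:=\{(g,k)\in G\times K\mid\epsilon_G(g)=\epsilon_K(k)\}$ and let $\rho\colon\widehat G\to G$ and $\pi\colon\widehat G\to K$ be the coordinate projections. From $\epsilon_G(G)=\epsilon_K(K)$ one gets that $\rho$ and $\pi$ are surjective; from the defining relation $\epsilon_G(g)=\epsilon_K(k)$ (which forces $\,{}^gn={}^kn$ for $n\in N$) one gets that $\Delta N:=\{(n,n)\mid n\in N\}$ is normal in $\widehat G$ with $\rho,\pi$ restricting on it to isomorphisms onto $N$, and a direct computation gives $\Ker\rho=1\times\cent KN$, $\Ker\pi=\cent GN\times 1$ and $\cent{\widehat G}{\Delta N}=\cent GN\times\cent KN$, so that both kernels lie in $\cent{\widehat G}{\Delta N}$ and meet $\Delta N$ trivially. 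With $\widehat H:=\rho^{-1}(H)$ and $\Delta M:=\{(m,m)\mid m\in M\}$ one checks $\widehat G=\Delta N\widehat H$, $\widehat H\cap\Delta N=\Delta M$, $\Ker\rho\subseteq\widehat H$, $\Ker\pi\subseteq\widehat H$ (this last using $\cent GN\le H$), $\pi(\widehat H)=L$ and $\Ker(\pi|_{\widehat H})=\Ker\pi$; thus $\pi$ induces isomorphisms $\widehat G/\Ker\pi\cong K$ and $\widehat H/\Ker\pi\cong L$ carrying $\Delta N,\Delta M$ onto $N,M$. Transporting $\vartheta,\psi$ along $\rho$ (equivalently $\pi$, which agrees with $\rho$ on $\Delta N$) to characters $\widehat\vartheta,\widehat\psi$ of $\Delta N,\Delta M$ produces modular character triples, since $\rho,\pi$ intertwine the relevant conjugation actions.

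Now fix projective representations $\mathcal P,\mathcal P'$ realising $(G,N,\vartheta)\isob(H,M,\psi)$ and proceed in two steps, in each verifying the hypotheses of Theorem \ref{thm:strong-iso} and the scalar criteria of Lemmas \ref{lem:central-iso-scalar} and \ref{lem:block-iso-scalar}. \emph{Step 1 (pull back along $\rho$).} The compositions $\mathcal P\circ\rho$ and $\mathcal P'\circ(\rho|_{\widehat H})$ are projective representations associated with $(\widehat G,\Delta N,\widehat\vartheta)$ and $(\widehat H,\Delta M,\widehat\psi)$; their (inflated) factor sets agree via the natural isomorphism $\widehat G/\Delta N\cong\widehat H/\Delta M$; the scalars on $\cent{\widehat G}{\Delta N}\subseteq\widehat H$ coincide by the central-isomorphism hypothesis; and since $\rho$ induces a bijection $\CL_{\langle\Delta N,\widehat x\rangle}(\widehat x)\to\CL_{\langle N,\rho(\widehat x)\rangle}(\rho(\widehat x))$ restricting to one between the intersections with $\widehat H$ and with $H$, the block scalars match by Lemma \ref{lem:block-iso-scalar} applied to the original isomorphism. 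Together with the (easy) inclusion $\cent{\widehat G}{\Delta D}\subseteq\widehat H$ for a suitable defect group $\Delta D$ of $\bl(\widehat\psi)$, this gives $(\widehat G,\Delta N,\widehat\vartheta)\isob(\widehat H,\Delta M,\widehat\psi)$. \emph{Step 2 (descend by $Z:=\Ker\pi$).} Writing $\alpha$ for the factor set and $\zeta$ for the central scalar function of $\mathcal P\circ\rho$, one chooses a scalar function $\mu$ on $\widehat G$ inflated from $\widehat G/\Delta N$ (so the twists below stay associated with the triples) and solving $\mu(yz)=\mu(y)\alpha(y,z)^{-1}\zeta(z)^{-1}$ for $z\in Z$ — possible by the cocycle identity for $\alpha$ and the facts $Z\cap\Delta N=1$, $Z\subseteq\widehat H$; then $\mu\cdot(\mathcal P\circ\rho)$ factors through $\widehat G/Z$ and $\mu|_{\widehat H}\cdot(\mathcal P'\circ\rho|_{\widehat H})$ factors through $\widehat H/Z$ (using also that the central scalars agree on $Z$). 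Since each class $\CL_{\langle\Delta N,\widehat x\rangle}(\widehat x)$ lies in a single $\Delta N$-coset, $\mu$ is constant on it, so twisting by $\mu$ multiplies the $\widehat G$- and $\widehat H$-side block scalars by the same factor and preserves their equality, and the central condition is preserved similarly; passing to $\widehat G/Z$ (conjugacy classes and their intersections with $\widehat H$ inject into $\widehat G/Z$, $\widehat H/Z$ because $Z\cap\Delta N=1$) yields $(\widehat G/Z,\Delta N Z/Z,\overline\vartheta)\isob(\widehat H/Z,\Delta M Z/Z,\overline\psi)$. Transporting through the triple isomorphisms induced by $\pi$ and invoking transitivity of $\isob$ (Lemma \ref{lem:Basic properties}) gives $(K,N,\vartheta)\isob(L,M,\psi)$.

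The delicate point is the construction of $\mu$ in Step 2: one needs a single function on $\widehat G$ that simultaneously makes the projective representation descend to $\widehat G/Z$, preserves the normalisation on $\Delta N$, preserves the central scalars on $Z$, and (being constant on conjugacy classes) preserves the block scalars; this hinges on the cocycle identity together with $Z$ being central over $\Delta N$ and meeting it trivially, and it is the modular counterpart of the corresponding step in the proof of \cite[Theorem 5.3]{Spa17}. Everything else is bookkeeping with fibre products and with the behaviour of conjugacy-class sums under group homomorphisms.
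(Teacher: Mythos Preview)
The paper does not prove this lemma: it simply cites \cite[Theorem 3.5]{Spa17I}. Your proposal is essentially a reconstruction of that very proof (the fibre-product argument from \cite[Theorem 5.3]{Spa17} adapted to the modular setting), so the approaches coincide. Your sketch is correct in outline: the bijection of conjugacy classes under $\rho$ in Step~1 holds because $\epsilon_G(g)=\epsilon_K(k)$ forces $\cent N g=\cent N k$, and the consistency of the twist $\mu$ in Step~2 follows from the cocycle identity together with the normalisation $\alpha(\cdot,n)=\alpha(n,\cdot)=1$ on $\Delta N$ and $Z\cap\Delta N=1$, exactly as you indicate. One small point you leave implicit is the defect-group condition $\cent K D\le L$ needed in Definition~\ref{def:block-iso} for the final triple; this follows since any $k\in\cent K D$ has $\epsilon_K(k)=\epsilon_G(g)$ for some $g\in G$, and $g$ then centralises $D\le N$, whence $g\in\cent G D\le H$ and $k\in L$.
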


\begin{proof}
This is \cite[Theorem 3.5]{Spa17I}
\end{proof}

Let $(G,N,\vartheta)$ be a modular character triple. For any choice of projective representation $\P$ associated with $(G,N,\vartheta)$ we can construct a central extension of $G$ containing an isomorphic copy of $N$ and where the character corresponding to $\vartheta$ extends. This can often be used to reduce questions about $(G,N,\vartheta)$ to the case where $\vartheta$ extends to $G$. The next lemma shows that this construction is compatible with block isomorphisms of modular character triples. The analogous result of ordinary character triple can be found in \cite[Theorem 4.1]{Nav-Spa14I}.

\begin{lem}
\label{lem:4.1}
Let $(G, K, \vartheta)$ be a modular character triple. Let $\P$ be a projective representation of $G$ associated with $\vartheta$. Then $\P$ defines a group $\wh{G}$ together with a surjective homomorphism $\eps:\wh{G}\to G$ with finite cyclic central kernel $Z$ of $p'$-order with the following properties.
\begin{enumerate}
\item $\wh{K}=K_0\times Z$ where $\wh{K}=\eps^{-1}(K)$, $K_0\cong K$ via the restriction $\eps_{K_0}$ and $K_0\normal \wh{G}$. Further, the action of $\wh{G}$ on $K_0$ coincides with the action of $G$ on $K$ via $\eps$.
\item The character $\vartheta_0\in\IBr(K_0)$ associated to $\vartheta$ via the isomorphism $\eps_{K_0}$ extends to $\wh{G}$.
\item If $K\leq J\leq G$ and $\wh{J}=\eps^{-1}(J)$ then $\eps(\cent{\wh{G}}{\wh{J}})=\cent{G}{J}$.
\item Let $(H, M, \vartheta')$ be a modular character triple with $(G,K,\vartheta)\isoc(H,M,\vartheta')$, and denote by $M_0$ the subgroup of $K_0$ corresponding to $M\leq K$ under $\eps_{K_0}$ and by $\vartheta_0'$ the character corresponding to $\vartheta'$. If $(\wh{G},K_0,\vartheta_0)\isob (\wh{H},M_0,\vartheta_0')$ then $(G,K,\vartheta)\isob(H, M,\vartheta')$. 
\end{enumerate}
\end{lem}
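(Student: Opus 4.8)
The plan is to construct $\wh{G}$ explicitly from the projective representation $\P$ in the standard way and then verify that each of the four listed properties transfers from the ordinary case (as in \cite[Theorem 4.1]{Nav-Spa14I}) to the modular setting, with particular care given to part (iv) where block induction enters. First I would let $\alpha:G\times G\to\FF^\times$ be the factor set of $\P$, normalised so that $\alpha(g,k)=1=\alpha(k,g)$ for $g\in G$ and $k\in K$; after multiplying by a suitable coboundary I may assume $\alpha$ takes values in a finite subgroup of $\FF^\times$, hence in the image of a finite cyclic $p'$-group $Z$ (every finite subgroup of $\FF^\times$ has order prime to $p$, which is exactly where the hypothesis that $Z$ has $p'$-order comes from). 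Then $\wh{G}$ is the central extension of $G$ by $Z$ determined by the $2$-cocycle $\alpha$, with $\eps:\wh{G}\to G$ the natural projection. By construction $\P$ lifts to an honest $\FF$-representation $\wh{\P}$ of $\wh{G}$ with $Z$ acting by scalars through a faithful character; this gives property (ii), since the restriction of $\wh{\P}$ to $\wh{K}=\eps^{-1}(K)$ affords an extension of $\vartheta_0$, and since $\alpha$ is trivial on $K$ we get the splitting $\wh{K}=K_0\times Z$ of property (i). Property (iii) is a purely group-theoretic statement about central extensions and follows exactly as in the ordinary case: if $\wh{J}=\eps^{-1}(J)$ then $\cent{\wh{G}}{\wh{J}}$ surjects onto $\cent{G}{J}$ because $Z$ is central, and the reverse inclusion of images is immediate.

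The substance of the lemma is part (iv), so next I would set up the parallel construction on the $H$-side. Since $(G,K,\vartheta)\isoc(H,M,\vartheta')$ is witnessed by projective representations $\P$ (as above) and $\P'$ with \emph{equal} factor sets via $\tau:G/K\to H/M$, the same cocycle $\alpha$ (transported through $\tau$) defines $\wh{H}$, and there is a commutative square relating $\eps:\wh{G}\to G$, $\eps':\wh{H}\to H$ and the inclusions; in particular $Z$ is common to both extensions and $\wh{H}=\eps^{-1}(\varepsilon_G(H)\text{-part})$ can be identified with $\eps'^{-1}(H)$. The point is that the strong isomorphism data of Theorem \ref{thm:strong-iso} for $(G,K,\vartheta)\to(H,M,\vartheta')$ and for $(\wh{G},K_0,\vartheta_0)\to(\wh{H},M_0,\vartheta_0')$ are tied together: a projective representation $\Q$ of $J/K$ with factor set $\alpha_{J\times J}^{-1}$ is precisely an \emph{ordinary} representation of $\wh{J}/K_0$ (the cocycle has been trivialised by passing to the cover), and $\tr(\Q\otimes\P_J)$ corresponds under $\eps$ to $\tr(\wh{\Q}\otimes\wh{\P}_{\wh{J}})$. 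Thus $\sigma_J$ for the original triple and $\sigma_{\wh{J}}$ for the lifted triple are intertwined by inflation along $\eps$, and likewise on the $H$-side.

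Granting this compatibility, the implication in (iv) is bookkeeping with blocks. Assume $(\wh{G},K_0,\vartheta_0)\isob(\wh{H},M_0,\vartheta_0')$. Fix $K\leq J\leq G$ and $\mu\in\IBr(J\mid\vartheta)$; let $\wh{\mu}\in\IBr(\wh{J}\mid\vartheta_0)$ be the inflation of $\mu$ (these are exactly the Brauer characters of $\wh{J}$ lying over $\vartheta_0$ on which $Z$ acts trivially, and $\sigma_{\wh{J}}$ maps this subset onto the analogous subset of $\IBr(\wh{J}\cap\wh{H}\mid\vartheta_0')$ since $Z$ is central and acts the same way on both sides). Because $Z$ is a central $p'$-subgroup, $\eps$ induces a defect-preserving bijection of blocks $\Bl(\wh{J}\mid 1_Z)\to\Bl(J)$ that is compatible with block induction (a central $p'$-section does not change the block theory), so $\bl(\wh{\mu})^{\wh{J}}=\bl(\mu)$ pulled back, and similarly $\bl(\sigma_J(\mu))$ pulls back to $\bl(\sigma_{\wh{J}}(\wh{\mu}))$. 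The hypothesis gives $\bl(\wh{\mu})=\bl(\sigma_{\wh{J}}(\wh{\mu}))^{\wh{J}}$, and pushing this equality down through $\eps$ yields $\bl(\mu)=\bl(\sigma_J(\mu))^J$; the defect-group condition $\cent{G}{D}\leq H$ follows from $\cent{\wh{G}}{\wh{D}}\leq\wh{H}$ via property (iii) applied to $\langle K,\dots\rangle$. Hence $(G,K,\vartheta)\isob(H,M,\vartheta')$ by Definition \ref{def:block-iso}. The main obstacle I anticipate is making the identification of projective representations of $J/K$ with ordinary representations of $\wh{J}/K_0$ genuinely canonical, and checking that the scalar functions of Lemma \ref{lem:central-iso-scalar} and the class-sum scalars of Lemma \ref{lem:block-iso-scalar} are literally transported (not merely up to the ambiguity $Z$) — this is where one must be careful that the same $Z$ and the same cocycle are used throughout, so that the equality of scalars on $\wh{G}$ descends to the equality of scalars on $G$ rather than only to equality modulo the action of $Z$.
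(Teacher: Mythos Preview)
Your proposal is correct and follows essentially the same approach as the paper, which simply refers to the argument of \cite[Theorem 4.1]{Nav-Spa14I}: construct $\wh{G}$ as the central $p'$-extension determined by the factor set of $\P$, verify (i)--(iii) as straightforward group-theoretic facts, and for (iv) restrict the bijections $\sigma_{\wh{J}}$ to characters with $Z$ in their kernel and push the block-induction identity $\bl(\wh{\mu})=\bl(\sigma_{\wh{J}}(\wh{\mu}))^{\wh{J}}$ down through the central $p'$-quotient. Your explicit identification of $\IBr(J\mid\vartheta)$ with the $Z$-trivial part of $\IBr(\wh{J}\mid\vartheta_0)$ via the central-scalar compatibility is exactly the mechanism used there, and the anticipated obstacle you flag (that the same $Z$ and cocycle govern both sides so scalars descend on the nose) is handled precisely because the central isomorphism $(G,K,\vartheta)\isoc(H,M,\vartheta')$ is assumed to be given by $\P$ and a $\P'$ with the \emph{same} factor set.
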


\begin{proof}
This follows arguing as in the proof of \cite[Theorem 4.1]{Nav-Spa14I}.
\end{proof}

Before proceeding further, we consider some additional compatibility properties of the construction given above.

\begin{rem}
\label{rem:sigma-maps-central-extension}
Consider the setting of Lemma \ref{lem:4.1} and let $K\leq J\leq G$. If $\sigma_J:\IBr(J\mid\vartheta)\to\IBr(J\cap H\mid\vartheta')$ and $\sigma_{\wh{J}}:\IBr(\wh{J}\mid \vartheta_0\times 1_Z)\to\IBr(\wh{J}\cap \wh{H}\mid \vartheta_0'\times 1_Z)$ are the character bijections induced by the isomorphisms of character triples considered in Lemma \ref{lem:4.1} (iv), then
\[\sigma_J(\chi)=\eps_{\wh{J\cap H}}(\sigma_{\wh{J}}(\wh{\chi}))\]
for every $\wh{\chi}\in\IBr(\wh{J}\mid\vartheta_0\times 1_Z)$ and where $\chi\in\IBr(J\mid\vartheta)$ corresponds to $\wh{\chi}$ via the isomorphism $\wh{J}/Z\simeq J$ induced by $\epsilon_{\wh{J}}$.
\end{rem}

Next, we consider the behaviour of block isomorphisms of modular character triples with respect to inflation of Brauer characters.

\begin{lem}
\label{lem:Lifting isomorphisms from quotients}
Let $G$ be a finite group and consider subgroups $H\leq G$ and $Z,N\unlhd G$ such that $Z\leq M:=N\cap H$. Set $\overline{G}:=G/Z$, $\overline{H}:=H/Z$, $\overline{N}:=N/Z$, and $\overline{M}:=M/Z$ and suppose that $(\overline{G},\overline{N},\overline{\vartheta})\isob(\overline{H},\overline{M},\overline{\varphi})$ for some irreducible Brauer characters $\overline{\vartheta}\in\IBr(\overline{N})$ and $\overline{\varphi}\in\IBr(\overline{M})$. If $\vartheta\in\IBr(N)$ and $\varphi\in\IBr(M)$ are the inflations of $\overline{\vartheta}$ and $\overline{\varphi}$ respectively, then $(G,N,\vartheta)\isob(H,M,\varphi)$.
\end{lem}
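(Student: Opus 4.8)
The plan is to realise the block isomorphism of the quotient triples by projective representations and then inflate. Let $\pi_G\colon G\to\overline{G}$ and $\pi_H\colon H\to\overline{H}$ denote the quotient maps and fix projective representations $\overline{\P}$ and $\overline{\P}'$ realising $(\overline{G},\overline{N},\overline{\vartheta})\isob(\overline{H},\overline{M},\overline{\varphi})$, so that their factor sets $\overline{\alpha}$ and $\overline{\alpha}'$ are trivial whenever one argument lies in $\overline{N}$, resp. $\overline{M}$, and coincide via the natural isomorphism $\overline{\tau}\colon\overline{G}/\overline{N}\to\overline{H}/\overline{M}$. I would set $\P:=\overline{\P}\circ\pi_G$ and $\P':=\overline{\P}'\circ\pi_H$ and first check that these are projective representations associated with $(G,N,\vartheta)$ and $(H,M,\varphi)$: the restriction of $\P$ to $N$ affords the inflation to $N$ of $\overline{\vartheta}$, which is $\vartheta$ by hypothesis, and the factor set of $\P$ is $\alpha(g,g')=\overline{\alpha}(\pi_G(g),\pi_G(g'))$, which is trivial whenever one argument lies in $N$ since $Z\le N$; similarly for $\P'$ and $\varphi$. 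Because $Z\le N\cap H=M$, the projections induce isomorphisms $G/N\cong\overline{G}/\overline{N}$ and $H/M\cong\overline{H}/\overline{M}$, so $G=NH$ (as $\overline{G}=\overline{N}\,\overline{H}$), $M=N\cap H$, and $\alpha$ and $\alpha'$ coincide via the natural isomorphism $\tau\colon G/N\to H/M$ corresponding to $\overline{\tau}$. Hence Theorem \ref{thm:strong-iso} yields a strong isomorphism $(\sigma,\tau)\colon(G,N,\vartheta)\to(H,M,\varphi)$ realised by $\P$ and $\P'$.

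Next I would upgrade this to a central isomorphism via Lemma \ref{lem:central-iso-scalar}. If $x\in\cent G N$ then $\pi_G(x)\in\cent{\overline{G}}{\overline{N}}\le\overline{H}$, and since $Z\le H$ this forces $x\in H$; thus $\cent G N\le H$. For such $x$ we have $\P(x)=\overline{\P}(\pi_G(x))$ and $\P'(x)=\overline{\P}'(\pi_G(x))$, which are associated with the same scalar because $\overline{\P}$ and $\overline{\P}'$ realise a central isomorphism. By Lemma \ref{lem:central-iso-scalar} we obtain $(G,N,\vartheta)\isoc(H,M,\varphi)$.

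It then remains to verify the scalar condition of Lemma \ref{lem:block-iso-scalar}: for every $x\in G$ the matrices $\P(\CL_{\langle N,x\rangle}(x)^+)$ and $\P'((\CL_{\langle N,x\rangle}(x)\cap H)^+)$ should be associated with the same scalar. Writing $J=\langle N,x\rangle$, the map $\pi_G$ sends $J$ onto $\overline{J}:=\langle\overline{N},\overline{x}\rangle$ with kernel $Z$ and intertwines the (transitive) $J$-conjugation actions on $\CL_J(x)$ and on $\CL_{\overline{J}}(\overline{x})$, so it restricts to a uniformly $k$-to-one surjection $\CL_J(x)\to\CL_{\overline{J}}(\overline{x})$ with $k=|\CL_J(x)|/|\CL_{\overline{J}}(\overline{x})|$; since fibres lie in cosets of $Z\le H$, a fibre meets $H$ only if it lies entirely in $H$, so $\pi_G$ also restricts to a uniformly $k$-to-one surjection $\CL_J(x)\cap H\to\CL_{\overline{J}}(\overline{x})\cap\overline{H}$. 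Hence $\P(\CL_J(x)^+)=k^{*}\,\overline{\P}(\CL_{\overline{J}}(\overline{x})^+)$ and $\P'((\CL_J(x)\cap H)^+)=k^{*}\,\overline{\P}'((\CL_{\overline{J}}(\overline{x})\cap\overline{H})^+)$, where $k^{*}\in\FF$ is the image of $k$. Applying Lemma \ref{lem:block-iso-scalar} to the quotient triples, the matrices $\overline{\P}(\CL_{\overline{J}}(\overline{x})^+)$ and $\overline{\P}'((\CL_{\overline{J}}(\overline{x})\cap\overline{H})^+)$ are associated with the same scalar, and multiplying by $k^{*}$ the same holds for $\P$ and $\P'$. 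Lemma \ref{lem:block-iso-scalar} then gives $(G,N,\vartheta)\isob(H,M,\varphi)$.

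The argument is essentially formal once the inflated projective representations are set up. The only genuinely computational point is the uniform-fibre bookkeeping for the conjugacy-class sums, namely that $\pi_G$ carries $\CL_J(x)^+$ and $(\CL_J(x)\cap H)^+$ to integer multiples of the corresponding class sums in $\overline{J}$; this is routine. The one thing I would be careful about is that the defect-group clause in the definition of $\isob$ is indeed subsumed by Lemma \ref{lem:block-iso-scalar}, exactly as in the proofs of Lemma \ref{lem:Direct products} and Lemma \ref{lem:Wreath products}, so that checking the two scalar conditions suffices. I do not anticipate a substantive obstacle beyond this.
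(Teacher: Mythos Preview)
Your approach is essentially the same as the paper's: inflate the projective representations realising the quotient isomorphism and verify the scalar conditions of Lemma~\ref{lem:central-iso-scalar} and Lemma~\ref{lem:block-iso-scalar}. The paper abbreviates this by citing \cite[Lemma~3.12]{Nav-Spa14I}, which carries out exactly the inflation-and-class-sum computation you wrote out; your uniform-fibre argument for the class sums is correct.

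The one point where the paper is more careful than you is the defect-group clause in Definition~\ref{def:block-iso}. You note that Lemma~\ref{lem:block-iso-scalar}, as stated, appears to subsume it, but the paper does \emph{not} rely on that: it verifies explicitly that some defect group $D$ of $\bl(\varphi)$ satisfies $\cent G D\leq H$, using \cite[Theorem~9.9]{Nav98} to compare defect groups of $\bl(\varphi)$ and of the dominated block $\bl(\overline{\varphi})$. Concretely, if $Q$ is a defect group of $\bl(\overline{\varphi})$ with $\c_{\overline G}(Q)\leq\overline H$, then there is a defect group $D$ of $\bl(\varphi)$ with $Q\leq DZ/Z$, whence $\c_G(D)Z/Z\leq\c_{\overline G}(Q)\leq\overline H$ and so $\c_G(D)\leq H$. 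Since the source \cite[Proposition~3.7(b)]{Spa17I} for Lemma~\ref{lem:block-iso-scalar} may well take the defect-group containment as a standing hypothesis rather than as part of the conclusion, you should either confirm that point in the source or simply insert the two-line check above; either way your argument then goes through.
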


\begin{proof}
To verify the group theoretical conditions, observe that by hypothesis there is a defect group $Q$ of $\bl(\overline{\varphi})$ such that $\c_{\overline{G}}(Q)\leq \overline{H}$ and that according to \cite[Theorem 9.9]{Nav98} we can find a defect group $D$ of $\bl(\varphi)$ satisfying $Q\leq DZ/Z$ and hence $\c_G(D)Z/Z\leq \c_{\overline{G}}(Q)\leq \overline{H}$. It follows that $\c_G(D)\leq H$ as required. We can now conclude arguing as in the proof of \cite[Lemma 3.12]{Nav-Spa14I}. 
\end{proof}

The converse of Lemma \ref{lem:Lifting isomorphisms from quotients} does not hold in general. However, we can still prove an analogous statement under additional structural assumptions.

\begin{lem}
\label{lem:going to quotients}
Let $(G, N,\vartheta)$ and $(H, M,\varphi)$ be modular character triples and assume that $(G,N,\vartheta)\isob(H, M,\varphi)$. Consider $Z\leq \ker\vartheta\cap\ker\varphi$ and set $\overline{J}:=JZ/Z$ for every $J\leq G$. Denote by $\overline{\vartheta}\in\IBr(\overline{N})$ and $\overline{\varphi}\in\IBr(\overline{M})$ the characters corresponding to $\vartheta$ and $\varphi$ respectively via inflation. If $\cent G N/Z=\cent{G/Z}{N/Z}$ and $p$ does not divide the order of $Z$, then $(\overline{G},\overline{N},\overline{\vartheta})\isob(\overline{H},\overline{M},\overline{\varphi})$.
\end{lem}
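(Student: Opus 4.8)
The plan is to deflate the projective representations realising the given block isomorphism to the quotients $\overline{G}$ and $\overline{H}$, and then to verify the defining conditions of Definition \ref{def:block-iso}. Let $\P$ and $\P'$ be projective representations giving $(G,N,\vartheta)\isob(H,M,\varphi)$, with factor sets $\alpha$ and $\alpha'$ coinciding via the isomorphism $G/N\simeq H/M$. Since $Z\leq\ker\vartheta\cap\ker\varphi$ and these factor sets are trivial on $N$ and on $M$, every matrix $\P(z)$ with $z\in Z$ is the identity, so $\P$ and $\P'$ are constant on cosets of $Z$ and factor through $\overline{G}$ and $\overline{H}$; this produces projective representations $\overline{\P}$ and $\overline{\P}'$ associated with $(\overline G,\overline N,\overline\vartheta)$ and $(\overline H,\overline M,\overline\varphi)$ whose factor sets are the deflations of $\alpha$ and $\alpha'$ and still coincide via $\overline G/\overline N\simeq\overline H/\overline M$. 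As $\overline G=\overline N\,\overline H$ and $\overline M=\overline H\cap\overline N$, Theorem \ref{thm:strong-iso} yields a strong isomorphism $(\overline\sigma,\overline\tau)\colon(\overline G,\overline N,\overline\vartheta)\to(\overline H,\overline M,\overline\varphi)$; moreover, comparing the two constructions as in Remark \ref{rem:sigma-maps-central-extension}, one checks that $\overline\sigma_{\overline J}$ is compatible with inflation of Brauer characters for every $N\leq J\leq G$ with $\overline J=J/Z$, i.e. $\sigma_J$ restricted to those $\psi\in\IBr(J\mid\vartheta)$ trivial on $Z$ agrees with $\overline\sigma_{\overline J}$ under inflation.

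I would then settle the central condition through Lemma \ref{lem:central-iso-scalar}. The hypothesis $\cent G N/Z=\cent{\overline G}{\overline N}$ gives $\cent{\overline G}{\overline N}=\overline{\cent G N}\leq\overline H$ (using $\cent G N\leq H$), and it also allows one to lift any $\overline x\in\cent{\overline G}{\overline N}$ to an element $x\in\cent G N$; since $(G,N,\vartheta)\isoc(H,M,\varphi)$ the matrices $\overline\P(\overline x)=\P(x)$ and $\overline\P'(\overline x)=\P'(x)$ are associated with the same scalar. Hence $(\overline G,\overline N,\overline\vartheta)\isoc(\overline H,\overline M,\overline\varphi)$.

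For the block condition, fix a defect group $D$ of $\bl(\varphi)$ with $\cent G D\leq H$. Because $p\nmid|Z|$ we have $D\cap Z=1$, and since $\varphi$ is inflated from $\overline\varphi$ the block $\bl(\varphi)$ dominates $\bl(\overline\varphi)$, so $\overline D:=DZ/Z\cong D$ is a defect group of $\bl(\overline\varphi)$. A coprime-action argument — if $g$ normalises $DZ$ then $D$ and $D^g$ are Sylow $p$-subgroups of $DZ$, hence conjugate by an element of $Z$ — shows $\cent{\overline G}{\overline D}=\overline{\cent G D}\leq\overline H$. It remains to check $\bl(\overline\psi)=\bl(\overline\sigma_{\overline J}(\overline\psi))^{\overline J}$ for $\overline N\leq\overline J\leq\overline G$ and $\overline\psi\in\IBr(\overline J\mid\overline\vartheta)$: writing $\psi\in\IBr(J\mid\vartheta)$ for the inflation of $\overline\psi$, the compatibility from the first step gives that $\sigma_J(\psi)$ is the inflation of $\overline\sigma_{\overline J}(\overline\psi)$, while $\bl(\psi)=\bl(\sigma_J(\psi))^J$ holds by assumption; as $\bl(\psi)$ and $\bl(\sigma_J(\psi))$ dominate $\bl(\overline\psi)$ and $\bl(\overline\sigma_{\overline J}(\overline\psi))$ respectively, and block induction is compatible with domination by the normal $p'$-subgroup $Z$, the displayed identity follows. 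Definition \ref{def:block-iso} then yields $(\overline G,\overline N,\overline\vartheta)\isob(\overline H,\overline M,\overline\varphi)$.

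The step I expect to be most delicate is the block condition: one must carefully justify, for a normal $p'$-subgroup $Z$ contained in the relevant kernels, that block induction commutes with block domination, that the defect groups of a dominated block are the images of those of the dominating block, and that $\cent{\overline G}{\overline D}$ really equals $\overline{\cent G D}$. I would deliberately avoid running the argument through the scalar reformulation of Lemma \ref{lem:block-iso-scalar}, since the class sum $\CL_{\langle N,x\rangle}(x)^+$ maps onto $\CL_{\langle\overline N,\overline x\rangle}(\overline x)^+$ along fibres whose common size can be divisible by $p$, so the comparison of scalars would become degenerate precisely in the difficult cases; working directly with block induction as in Definition \ref{def:block-iso} sidesteps this.
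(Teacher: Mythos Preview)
Your argument is correct and, in substance, is the same route the paper takes: deflate the projective representations, check the central condition via $\cent{G}{N}/Z=\cent{\overline G}{\overline N}$, and then upgrade to a block isomorphism using that $Z$ is a normal $p'$-subgroup so defect groups and block induction pass cleanly to the quotient. The only difference is packaging: the paper's proof is two lines because it outsources both halves to the literature, invoking \cite[Lemma 2.17]{Spa18} for $(\overline G,\overline N,\overline\vartheta)\isoc(\overline H,\overline M,\overline\varphi)$ and then \cite[Proposition 2.4(b)]{Nav-Spa14I} for the block upgrade (after noting $\cent{G}{D}/Z=\cent{G/Z}{D/Z}$), whereas you unpack exactly what those citations contain. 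Your caution about avoiding Lemma \ref{lem:block-iso-scalar} is reasonable but slightly overstated: since $|Z|$ is prime to $p$ the fibre sizes are units in $\mathbb F$, so the scalar comparison does not actually degenerate; still, working directly with block induction and domination as you do is the cleaner path and is what the cited \cite[Proposition 2.4(b)]{Nav-Spa14I} does as well.
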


\begin{proof}
By \cite[Lemma 2.17]{Spa18} we know that $(\overline{G},\overline{N},\overline{\vartheta})\isoc(\overline{H},\overline{M},\overline{\varphi})$. Next, observe that under the above assumptions we have $\cent G D/Z=\cent{G/Z}{D/Z}$ for some defect group $D$ of $\bl(\varphi)$. Then, we conclude $(\overline{G},\overline{N},\overline{\vartheta})\isob(\overline{H},\overline{M},\overline{\varphi})$ by \cite[Proposition 2.4(b)]{Nav-Spa14I}.
\end{proof}

We conclude this section by considering the compatibility of block isomorphisms of modular character triples with respect to multiplication of characters. This situation appears, for instance, when applying Gallagher's theorem and was described in \cite[Theorem 4.6]{Nav-Spa14I} in the ordinary case.

\begin{lem}
\label{lem:4.6}
Let $K\normal G$, $H\leq G$, $M=K\cap H$ and $Z\leq M$ such that $Z\normal G$. Consider $\chi\in\IBr(G)$ and suppose that $\chi_Z$ is irreducible and there exists $\beta\in\Irr(Z)$ such that $\beta^0=\chi_Z$, $\o_p(Z)\leq \ker{\beta}$ and where the inflation of $\beta$ to $Z/\o_p(Z)$ has defect zero. Set $\overline{J}:=JZ/Z$ for every $J\leq G$ and let $(\overline{G},\overline{K},\overline{\rho})$ and $(\overline{H},\overline{M},\overline{\rho}')$ be modular character triples with $(\overline{G},\overline{K},\overline{\rho})\isob(\overline{H},\overline{M},\overline{\rho}')$. Denote by $\rho$ and $\rho'$ the inflations of $\overline{\rho}$ and $\overline{\rho}'$ to $K$ and $M$ respectively and define $\tau:=\rho\chi_K$ and $\tau':=\rho'\chi_M$. If $\c_G(D)\leq H$ for some defect group $D$ of $\bl(\tau')$, then $(G, K,\tau)\isob(H, M,\tau')$.
\end{lem}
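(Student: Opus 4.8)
The plan is to realise the modular character triple $(G,K,\tau)$ by a projective $\FF$-representation assembled from the representation giving the barred isomorphism and an honest representation affording $\chi$, and then to check the central and block conditions via Lemma~\ref{lem:central-iso-scalar} and Lemma~\ref{lem:block-iso-scalar}. First observe that, since $Z\le M\le K\cap H$, we have $\overline{G}=G/Z$, $\overline{K}=K/Z$, $\overline{H}=H/Z$, $\overline{M}=M/Z$, that $G=KH$ (which is part of what makes the hypothesis $(\overline{G},\overline{K},\overline{\rho})\isob(\overline{H},\overline{M},\overline{\rho}')$ meaningful, cf. Theorem~\ref{thm:strong-iso}), and that the quotient map $G\to\overline{G}$ induces isomorphisms $G/K\cong\overline{G}/\overline{K}$ and $H/M\cong\overline{H}/\overline{M}$. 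Fix projective representations $\overline{\mathcal{P}},\overline{\mathcal{P}}'$ realising the barred isomorphism and let $\mathcal{P}_0,\mathcal{P}_0'$ be their inflations to $G$ and $H$; these are associated with $(G,K,\rho)$ and $(H,M,\rho')$. Let $\mathcal{X}$ be an $\FF$-representation of $G$ affording $\chi$. Since $\chi_Z\in\IBr(Z)$ and $Z\le M\le K$, both $\chi_K$ and $\chi_M$ are irreducible; and using the hypotheses that $\o_p(Z)\le\ker\beta$ and that the inflation of $\beta$ to $Z/\o_p(Z)$ has defect zero, together with Lemma~\ref{lem:ordinary-modular extension} and Lemma~\ref{lem:gallagher-lift} applied to suitable sections, one checks that $\tau=\rho\chi_K\in\IBr(K)$ and $\tau'=\rho'\chi_M\in\IBr(M)$, so that $(G,K,\tau)$ and $(H,M,\tau')$ are genuine modular character triples. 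Put $\mathcal{P}:=\mathcal{P}_0\otimes\mathcal{X}$ and $\mathcal{P}':=\mathcal{P}_0'\otimes\mathcal{X}_H$; tensoring with the honest representation $\mathcal{X}$ does not alter factor sets, so $\mathcal{P}$ and $\mathcal{P}'$ are projective representations associated with $(G,K,\tau)$ and $(H,M,\tau')$ whose factor sets coincide via the natural isomorphism $G/K\to H/M$ (because those of $\mathcal{P}_0,\mathcal{P}_0'$ do, being inflated from those of $\overline{\mathcal{P}},\overline{\mathcal{P}}'$). By Theorem~\ref{thm:strong-iso} this yields a strong isomorphism $(\sigma,\nu)\colon(G,K,\tau)\to(H,M,\tau')$.

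Next I would check that $(\sigma,\nu)$ is a central isomorphism via Lemma~\ref{lem:central-iso-scalar}. The inclusion $\cent{G}{K}\le H$ follows from the central part of the barred hypothesis, since $\cent{G}{K}$ maps into $\cent{\overline{G}}{\overline{K}}\le\overline{H}$ and $Z\le H$. For $x\in\cent{G}{K}$ the matrix $\mathcal{X}(x)$ is scalar (because $\mathcal{X}_K$ is irreducible), so $\mathcal{P}(x)=\mathcal{P}_0(x)\otimes\mathcal{X}(x)$ is scalar and its associated scalar is the product of those of $\mathcal{P}_0(x)$ and $\mathcal{X}(x)$; the same holds for $\mathcal{P}'(x)$. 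The $\mathcal{X}$-scalars agree since they come from the one representation $\mathcal{X}$, and the $\mathcal{P}_0$-scalars agree because $\mathcal{P}_0,\mathcal{P}_0'$ are inflated from $\overline{\mathcal{P}},\overline{\mathcal{P}}'$, which satisfy the central scalar condition. Hence $(G,K,\tau)\isoc(H,M,\tau')$.

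It remains to establish the block condition, which is the technical heart of the argument. By Lemma~\ref{lem:block-iso-scalar} it suffices to show that for every $x\in G$ the matrices $\mathcal{P}(\CL_{\langle K,x\rangle}(x)^+)$ and $\mathcal{P}'((\CL_{\langle K,x\rangle}(x)\cap H)^+)$ are associated with the same scalar. Arguing as in the proof of Lemma~\ref{lem:Wreath products} — that is, following \cite[Lemma 4.2]{Spa17} and \cite[Proposition 2.5]{Spa17I} — one reduces to the case where $x$ lies in Dade's ramification group of $\bl(\tau)$, both scalars being zero otherwise; then $x\in K\cent{G}{D}$ for a defect group $D$ of $\bl(\tau)$, and one uses the hypothesis $\cent{G}{D}\le H$ (valid here because, by the relation of defect groups under the block induction $\bl(\tau')^K=\bl(\tau)$ which is verified along the way, a defect group of $\bl(\tau')$ is, up to conjugacy, contained in one of $\bl(\tau)$). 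On $K\cent{G}{D}$ the scalar attached to $\mathcal{P}=\mathcal{P}_0\otimes\mathcal{X}$ at the relevant class sum splits, by \cite[Lemma 2.5]{Spa13I}, into a contribution from $\mathcal{P}_0$ and one from $\mathcal{X}$: the former matches the corresponding contribution of $\mathcal{P}_0'$ by the block scalar condition for $(\overline{G},\overline{K},\overline{\rho})\isob(\overline{H},\overline{M},\overline{\rho}')$, while the latter agrees on the $G$-side and the $H$-side because $\mathcal{X}$ is a representation of the ambient group $G$ and, by $\cent{G}{D}\le H$, the class sums computed in $H$ equal those computed in $G$. This gives $(G,K,\tau)\isob(H,M,\tau')$. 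I expect the main obstacle to be exactly this last step — the reduction to Dade's ramification group and the ensuing comparison of $\CL_{\langle K,x\rangle}(x)^+$ with $(\CL_{\langle K,x\rangle}(x)\cap H)^+$ inside $K\cent{G}{D}$, together with the verification of $\bl(\tau')^K=\bl(\tau)$ needed to control the defect groups; everything else parallels the ordinary statement \cite[Theorem 4.6]{Nav-Spa14I}, the distinctly modular ingredient being the use of the hypotheses on $\beta$ to secure irreducibility of $\tau$ and $\tau'$.
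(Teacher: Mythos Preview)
Your construction of $\mathcal{P}=\mathcal{P}_0\otimes\mathcal{X}$, $\mathcal{P}'=\mathcal{P}_0'\otimes\mathcal{X}_H$ and the verification of the strong and central isomorphism match the paper exactly. The divergence is in the block step.

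The paper does \emph{not} go through Lemma~\ref{lem:block-iso-scalar} and Dade's ramification group. Instead it checks Definition~\ref{def:block-iso} directly: for $K\le J\le G$ and $\psi\in\IBr(J\mid\tau)$ it writes $\psi=\chi_J\vartheta$ with $\overline{\vartheta}\in\IBr(\overline{J}\mid\overline{\rho})$, so that $\sigma_J(\psi)=\chi_{J\cap H}\vartheta'$ with $\overline{\vartheta}'=\overline{\sigma}_{\overline{J}}(\overline{\vartheta})$. The key move is to pass to ordinary characters: using the hypotheses on $\beta$ and Lemma~\ref{lem:ordinary-modular extension} one lifts $\chi$ to some $\xi\in\Irr(G)$ with $\xi^0=\chi$, picks $\overline{\gamma}\in\Irr(\bl(\overline{\vartheta}))$ and $\overline{\gamma}'\in\Irr(\bl(\overline{\vartheta}'))$, and by Lemma~\ref{lem:relative-blocks} reduces the question to showing $\bl(\gamma\xi_J)=\bl(\gamma'\xi_{J\cap H})^J$. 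That equality is then obtained by a direct computation of $\lambda_{\gamma\xi_J}(\CL_J(x)^+)$ using \cite[Lemma 2.5]{Spa13I} (which factors the central function of the product $\gamma\xi_J$ through the quotient by $Z$) together with the hypothesis $\bl(\overline{\gamma}')^{\overline{J}}=\bl(\overline{\gamma})$ and \cite[Lemma 5.3.1(i)]{Nag-Tsu89}. No ramification-group reduction, no comparison of class sums in $H$ versus $G$, is needed.

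Your route via Lemma~\ref{lem:block-iso-scalar} is in principle viable, but the sketch has a real gap. The scalar attached to $\mathcal{P}(\CL_{\langle K,x\rangle}(x)^+)=\sum_y \mathcal{P}_0(y)\otimes\mathcal{X}(y)$ does not factor as a product of a $\mathcal{P}_0$-scalar and an $\mathcal{X}$-scalar at the level of matrices; \cite[Lemma 2.5]{Spa13I} gives a factorisation of the central function $\lambda_{\wt\tau}$, but the two pieces are evaluated at \emph{different} class sums (one in $\langle K,x\rangle/Z$, the other in the preimage $L$ of the centraliser of $xZ$), not simply ``the $\mathcal{P}_0$-part and the $\mathcal{X}$-part at the same class sum''. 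Consequently your claim that the $\mathcal{X}$-contributions on the $G$-side and the $H$-side agree because ``the class sums computed in $H$ equal those computed in $G$'' once $x\in K\cent{G}{D}$ is not justified: the $H$-side involves $(\CL_{\langle K,x\rangle}(x)\cap H)^+$, which is genuinely smaller, and equality of the relevant $\mathcal{X}$-pieces does not follow from $\cent{G}{D}\le H$ alone. If you insist on the scalar route you would have to unwind \cite[Lemma 2.5]{Spa13I} carefully on both sides and match the $L$-pieces, which essentially reproduces the paper's computation; it is much cleaner to bypass Lemma~\ref{lem:block-iso-scalar} and argue with $\lambda$-functions and the ordinary lifts $\xi,\gamma,\gamma'$ as the paper does.
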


\begin{proof}
Suppose that $(\overline{G},\overline{N},\overline{\rho})\isob(\overline{H},\overline{M},\overline{\rho}')$ is given by a choice of projective representations $\overline{\P}$ and $\overline{P}'$ and consider the inflations $\P$ and $\P'$ of $\overline{\P}$ and $\overline{\P}'$ respectively. Let $\Q$ be a representation of $G$ affording $\chi$ and define $\R=\Q\otimes \P$ and $\R'=\Q_H\otimes \P'$. Then it follows that $(G, K,\tau)\isoc(H, M,\tau')$ via $(\R,\R')$. Consider now $N\leq J\leq G$ and let $\sigma_J:\IBr(J\mid\tau)\to\IBr(J\cap H\mid\tau')$ be the map given by Theorem \ref{thm:strong-iso} with respect to the projective representations $\R$ and $\R'$. Similarly, let $\overline{\sigma}_{\overline{J}}:\IBr(\overline{J}\mid\overline{\rho})\to\IBr(\overline{J\cap H}\mid\overline{\rho}')$ be the map given by Theorem \ref{thm:strong-iso} with respect to the projective representations $\overline{\P}$ and $\overline{\P}'$. If $\psi\in\IBr(J\mid\tau)$ we may write $\psi=\chi_J\vartheta$, where $\overline{\vartheta}\in\IBr(\overline{J}\mid\overline{\rho})$, and then
\[\sigma_J(\chi_J\vartheta)=\chi_{J\cap H}\vartheta'\]
where $\vartheta'\in\IBr(J\cap H)$ corresponds to $\overline{\vartheta}':=\overline{\sigma}_{\overline{J}}(\overline{\vartheta})$ via inflation. Consider $\overline{\gamma}\in\Irr(\bl(\overline{\vartheta}))$ and $\overline{\gamma}'\in\Irr(\bl(\overline{\vartheta}'))$ with inflations $\gamma$ and $\gamma'$ to $J$ and $J\cap H$ respectively. By Lemma \ref{lem:ordinary-modular extension}, we can find some $\xi\in\Irr(G)$ such that $\xi^0=\chi$ so that $\xi_Z=\beta$. Then, according to Lemma \ref{lem:relative-blocks}, we get $\bl(\gamma\xi_J)=\bl(\vartheta\chi_J)$ and similarly $\bl(\gamma'\xi_{J\cap H})=\bl(\vartheta'\chi_{J\cap H})$. Therefore to show that $\bl(\sigma_J(\psi))^J=\bl(\psi)$ it suffices to show that $\bl(\gamma\xi_J)=\bl(\gamma'\xi_{J\cap H})^J$, or equivalently, that
\[\lambda_{\gamma\xi_J}(\CL_J(x)^+)=\lambda_{\gamma'\xi_{J\cap H}}^J(\CL_J(x)^+)\]
for all $x\in J$. Write $L/Z:=\cent{\overline{J}}{xZ}$ and notice that 
\[\lambda_{\gamma\xi_J}(\CL_J(x)^+)=\lambda_{\xi_L}(\CL_L(x)^+)\lambda_{\overline{\gamma}}(\CL_{\overline{J}}(xZ)^+)\]
by \cite[Lemma 2.5]{Spa13I}. Since $\bl(\overline{\gamma}')^{\overline{J}}=\bl(\overline{\gamma})$, we deduce that
\[\lambda_{\overline{\gamma}}(\CL_{\overline{J}}(xZ)^+)=\lambda_{\overline{\gamma}'}^{\overline{J}}(\CL_{\overline{J}}(xZ)^+).\]
Finally, using \cite[Lemma 5.3.1(i)]{Nag-Tsu89} we conclude that
\begin{align*}
\lambda_{\gamma\xi_J}(\CL_J(x)^+)&=\lambda_{\xi_L}(\CL_L(x)^+)\lambda_{\overline{\gamma}'}^{\overline{J}}(\CL_{\overline{J}}(xZ)^+)
\\
&=\left(\frac{|\CL_L(x)|\xi(x)}{\xi(1)}\right)^*\left(\frac{|\CL_{\overline{J}}(xZ)|(\overline{\gamma}')^{\overline{J}}(xZ)}{(\overline{\gamma}')^{\overline{J}}(1)}\right)^*
\\
&=\left(\frac{|\CL_J(x)|\xi(x)(\overline{\gamma}')^{\overline{J}}(xZ)}{\xi(1)(\overline{\gamma}')^{\overline{J}}(1)}\right)^*
\\
&=\left(\frac{|\CL_J(x)|(\gamma'\xi_{J\cap H})^J(x)}{(\gamma'\xi_{J\cap H})^J(1)}\right)^*
\\
&=\lambda_{\gamma'\xi_{J\cap H}}^J(\CL_J(x)^+)
\end{align*}
as desired.
\end{proof}

\section{Inductive Blockwise Alperin Weight Condition and consequences}
\label{sec:iBAW}

The \textit{Inductive Alperin Weight Condition} (iAWC) first appeared in the reduction theorem of Navarro and Tiep for the Alperin Weight Conjecture \cite{Nav-Tie11}. More precisely, in \cite[Theorem A]{Nav-Tie11} it was shown  that the Alperin Weight Conjecture holds for a finite group $G$ provided that every finite non-abelian simple group involved in $G$ satisfies the iAWC. Later, in \cite{Spa13I} Sp\"ath introduced the \textit{Inductive Blockwise Alperin Weight Condition} (iBAWC) and proved a similar reduction theorem for the blockwise version of the Alperin Weight Conjecture. Both the iAWC and the iBAWC were later reformulated by Cabanes \cite{Cab13} and Sp\"ath \cite{Spa17I} in terms of isomorphisms of modular character triples. Observe that, while originally tailored to finite quasi-simple groups, these conditions can be formulated more generally for arbitrary finite groups. Before introducing a precise statement, we introduce some further notation.

Let $G$ be a finite group and denote by $\dzo(G)$ the set of irreducible Brauer characters $\psi$ of $G$ whose corresponding character $\overline{\psi}$ of the quotient $G/\o_p(G)$ belongs to a $p$-block of defect zero. Here, we are using the fact that $\o_p(G)$ is contained in the kernel of any irreducible Brauer character of $G$ thanks to \cite[Lemma 2.32]{Nav98}. Moreover, notice that in this case $\overline{\psi}=\overline{\varphi}^0$ for some uniquely defined $\overline{\varphi}\in\dz(G/\o_p(G))$ according to \cite[Theorem 3.18]{Nav98}. Now, we define a \textit{$p$-weight} of $G$ to be a pair $(Q,\psi)$ where $Q$ is a radical $p$-subgroup of $G$, that is $Q=\o_p(\n_G(Q))$, and $\psi\in\dzo(\n_G(Q))$. Let $\Alp(G)$ be the set of $p$-weights of $G$. We also denote by $\Rad(G)$ the set of radical $p$-subgroups of $G$ and write $\Rado(G)$ for the subset consisting of those radical $p$-subgroups $Q$ of $G$ such that $(Q,\psi)\in\Alp(G)$ for some $\psi\in\dzo(\n_G(Q))$. Notice that the group $G$ acts by conjugation on $\Alp(G)$ and denote by $\Alp(G)/G$ the corresponding set of $G$-orbits and by $\overline{(Q,\psi)}$ the $G$-orbit of a $p$-weight $(Q,\psi)$. We can now state the Inductive Blockwise Alperin Weight Condition for arbitrary finite groups.

\begin{conj}[Inductive Blockwise Alperin Weight Condition]
\label{conj:iBAWC 2}
Let $G\unlhd A$ be finite groups and consider a prime number $p$. Then there exists an $A$-equivariant bijection
\[\Omega:\IBr(G)\to\Alp(G)/G\]
such that
\[\left(A_\vartheta,G,\vartheta\right)\isob\left(\n_A(Q)_\psi,\n_G(Q),\psi\right)\]
for every $\vartheta\in\IBr(G)$ and $(Q,\psi)\in\Omega(\vartheta)$.
\end{conj}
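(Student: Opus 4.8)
The plan is to establish Conjecture~\ref{conj:iBAWC 2} for a given finite group $G$ by deducing it from the block-wise formulation Conjecture~\ref{conj:Main, iBAWC} applied to $G$, and therefore ultimately from the validity of that condition for the simple groups involved in $G$ via Theorem~\ref{thm:Main, Reduction}. Concretely, one first passes from the family of per-block, $A_B$-equivariant bijections provided by Conjecture~\ref{conj:Main, iBAWC} to a single $A$-equivariant bijection on all of $\IBr(G)$, and then invokes the reduction theorem. This is an inductive condition, so the statement is not proved unconditionally; what is proved is that it holds for $G$ whenever it holds for the covering groups of the non-abelian simple sections of $G$.

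The starting point is that both sides of Conjecture~\ref{conj:iBAWC 2} split along the $p$-blocks of $G$. On the one hand $\IBr(G)=\bigsqcup_{B\in\Bl(G)}\IBr(B)$. On the other hand, every $p$-weight $(Q,\psi)\in\Alp(G)$ determines the $p$-block $\bl(\psi)^G$ of $G$; this block is defined because $Q$, being radical, is a defect group of $\bl(\psi)$, and the assignment is constant on $G$-orbits, so that $\Alp(G)/G=\bigsqcup_{B\in\Bl(G)}\Alp(B)/G$. Moreover, any bijection satisfying the conclusion of Conjecture~\ref{conj:iBAWC 2} is automatically block-preserving: taking $J=G$ in the block-induction requirement of Definition~\ref{def:block-iso} applied to $(A_\vartheta,G,\vartheta)\isob(\n_A(Q)_\psi,\n_G(Q),\psi)$ forces $\bl(\vartheta)=\bl(\psi)^G$. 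Thus it suffices to construct $\Omega$ block by block, and one only needs to verify the character-triple condition for a single representative of each $G$-orbit of weights, the others following by conjugating with an element of $G$ and using Lemma~\ref{lem:Basic properties}(ii) together with the fact that inner automorphisms of $G$ fix every element of $\IBr(G)$.

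The second step is the gluing. The group $A$ permutes $\Bl(G)$ and $G\leq A_B$ for every $B$; fix representatives $B_1,\dots,B_k$ of the $A$-orbits on $\Bl(G)$. For each $i$, Conjecture~\ref{conj:Main, iBAWC} (for $G\unlhd A$ and the block $B_i$) yields an $A_{B_i}$-equivariant bijection $\Omega_{B_i}\colon\IBr(B_i)\to\Alp(B_i)/G$ satisfying the required block isomorphisms. For an arbitrary block in the orbit, written $B_i^{a}$ with $a\in A$, set $\Omega_{B_i^{a}}(\vartheta^{a}):=\overline{(Q^{a},\psi^{a})}$ whenever $\Omega_{B_i}(\vartheta)=\overline{(Q,\psi)}$. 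This is well defined: if $B_i^{a}=B_i^{a'}$ then $a(a')^{-1}\in A_{B_i}$, and the $A_{B_i}$-equivariance of $\Omega_{B_i}$ together with $G^{a'}=G$ shows that the two prescriptions agree. Gluing over all blocks produces a bijection $\Omega\colon\IBr(G)\to\Alp(G)/G$ which is $A$-equivariant by construction; and the condition $(A_\vartheta,G,\vartheta)\isob(\n_A(Q)_\psi,\n_G(Q),\psi)$ transfers from $\Omega_{B_i}$ to $\Omega$ by conjugating with the relevant $a\in A$ and invoking Lemma~\ref{lem:Basic properties}(ii). Finally, Conjecture~\ref{conj:Main, iBAWC} for $G$ (with respect to every $G\unlhd A$ and every $p$-block) is precisely what Theorem~\ref{thm:Main, Reduction} delivers from the corresponding condition for every covering group of every non-abelian finite simple group involved in $G$, which completes the deduction.

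The genuine obstacle is that Conjecture~\ref{conj:iBAWC 2} is open in general: all of the substantive content is carried by Theorem~\ref{thm:Main, Reduction} and by the verification of the inductive condition for finite simple groups, both outside the present scope. Within the step carried out here, the only delicate point is the well-definedness of the transport of $\Omega_{B_i}$ across an $A$-orbit of blocks, which genuinely relies on the per-block $A_B$-equivariance — this is exactly why the inductive condition is stated block-wise with $A_B$-equivariance in the first place.
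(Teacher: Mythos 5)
The statement you were asked to prove is a \emph{conjecture}: the paper never proves Conjecture~\ref{conj:iBAWC 2} unconditionally, and no unconditional proof is known. What the paper proves is the reduction theorem (Theorem~\ref{thm:Reduction for iBAW}, which is Theorem~\ref{thm:Main, Reduction} in the global formulation) together with verifications for particular classes of groups and blocks in Section~\ref{sec:Results}. Your write-up is honest about this, and the conditional argument you give is sound and consistent with the paper's own treatment: the blockwise splitting of both sides ($\IBr(G)=\bigsqcup_B\IBr(B)$ and $\Alp(G)/G=\bigsqcup_B\Alp(B)/G$ via $(Q,\psi)\mapsto\bl(\psi)^G$), the observation that any bijection satisfying the $\isob$ condition is automatically block-preserving by taking $J=G$ in Definition~\ref{def:block-iso}, and the transport of the per-block $A_B$-equivariant bijections across an $A$-orbit of blocks are exactly how the paper regards Conjecture~\ref{conj:Main, iBAWC} and Conjecture~\ref{conj:iBAWC 2} as reformulations of one another (see the passage preceding Conjecture~\ref{conj:iBAWC} in Section~\ref{sec:Results}). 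Your gluing step and the appeal to Lemma~\ref{lem:Basic properties}(ii) for conjugate representatives are correct.

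The genuine gap, which you yourself flag, is that this establishes only the implication ``inductive condition for the relevant quasi-simple groups $\Rightarrow$ Conjecture~\ref{conj:iBAWC 2} for $G$'', i.e.\ it re-derives the statement of Theorem~\ref{thm:Main, Reduction} in the global form; all of the substance (the minimal-counterexample analysis, Theorem~\ref{thm:Lifting bijection for weights}, the Dade--Glauberman--Nagao machinery of Section~\ref{sec:DGN}, and Theorems~\ref{thm:Reduction, inductive step} and~\ref{thm:Reduction, relative weights to weights}) is deferred to that theorem and not reproved, and the hypothesis on simple groups is itself open in general. So as a proof of the statement as literally posed, the attempt cannot be, and is not, complete. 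One minor imprecision: the well-definedness of $\bl(\psi)^G$ for a weight $(Q,\psi)$ is not a consequence of $Q$ being a defect group of $\bl(\psi)$; the correct justification, which the paper cites, is \cite[Theorem 4.14]{Nav98}, i.e.\ block induction is defined from any subgroup containing $Q\c_G(Q)$, and $\n_G(Q)$ is such a subgroup.
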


Throughout the rest of this paper we say that Conjecture \ref{conj:iBAWC 2} holds for a finite group $G$ at the prime $p$ if it holds with respect to the prime $p$ and for every choice of $G\unlhd A$. We will often avoid mentioning the choice of the prime $p$ when this is clear from the context. We now collect some important properties and consequence of Conjecture \ref{conj:iBAWC 2}. First, using Lemma \ref{lem:Direct products} and Lemma \ref{lem:Wreath products} we show that Conjecture \ref{conj:iBAWC 2} extends to direct and wreath products.

\begin{pro}
\label{prop:iAWC and direct and wreath products}
Let $G\unlhd A$ be finite groups, $n$ a positive integer, and set $\wt{G}:=G^n$ and $\wt{A}:=A\wr S_n$. If Conjecture \ref{conj:iBAWC 2} holds for $G\unlhd A$, then it holds for $\wt{G}\unlhd \wt{A}$.
\end{pro}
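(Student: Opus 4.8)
The plan is to realise every piece of data attached to $\wt{G}\unlhd\wt{A}$ as an $n$-fold product of the corresponding data for $G\unlhd A$, to transport the bijection $\Omega$ coordinatewise, and then to read off the block isomorphisms of modular character triples required by Conjecture~\ref{conj:iBAWC 2} from Lemma~\ref{lem:Direct products} and Lemma~\ref{lem:Wreath products}. First I would recall the standard fact that the radical $p$-subgroups of $\wt{G}=G^n$ are exactly the subgroups $\wt{Q}=Q_1\times\cdots\times Q_n$ with $Q_i\in\Rad(G)$, together with the identities $\n_{\wt{G}}(\wt{Q})=\n_G(Q_1)\times\cdots\times\n_G(Q_n)$ and $\oh{p}{\n_{\wt{G}}(\wt{Q})}=\oh{p}{\n_G(Q_1)}\times\cdots\times\oh{p}{\n_G(Q_n)}$. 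Since $\IBr$ and $\dz$ of a direct product decompose as products of characters of the factors, and $\oh{p}{\n_G(Q_i)}=Q_i$ when $Q_i$ is radical, it follows that $\dzo(\n_{\wt{G}}(\wt{Q}))$ consists precisely of the products $\psi_1\times\cdots\times\psi_n$ with $\psi_i\in\dzo(\n_G(Q_i))$. Hence there are bijections, compatible with the natural action of $\wt{A}=A\wr S_n$ (the base group acting coordinatewise, $S_n$ permuting the coordinates), between $\IBr(\wt{G})$ and $\IBr(G)^n$ and between $\Alp(\wt{G})$ and $\Alp(G)^n$; in particular $\Alp(\wt{G})/\wt{G}\cong(\Alp(G)/G)^n$. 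Using these identifications I would define $\wt{\Omega}(\vartheta_1\times\cdots\times\vartheta_n):=\overline{(Q_1\times\cdots\times Q_n,\ \psi_1\times\cdots\times\psi_n)}$ for any choice of $(Q_i,\psi_i)\in\Omega(\vartheta_i)$. This is well defined because each $\Omega(\vartheta_i)$ is a single $G$-orbit, it is a bijection because $\Omega$ is one, and it is $\wt{A}$-equivariant because $\Omega$ is $A$-equivariant and the construction is symmetric in the coordinates.

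Next I would verify the character triple condition. Fix $\wt{\vartheta}\in\IBr(\wt{G})$ and a representative $(\wt{Q},\wt{\psi})$ of the orbit $\wt{\Omega}(\wt{\vartheta})$. By Lemma~\ref{lem:Basic properties}(ii) and the $\wt{A}$-equivariance of $\wt{\Omega}$ it suffices to verify the condition after replacing $\wt{\vartheta}$ by an $\wt{A}$-conjugate; using the $S_n$-part and the base-group part of $\wt{A}$, I may therefore assume $\wt{\vartheta}=(\vartheta^{(1)})^{\times m_1}\times\cdots\times(\vartheta^{(r)})^{\times m_r}$ with $\vartheta^{(1)},\dots,\vartheta^{(r)}$ pairwise non-$A$-conjugate. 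Since $\wt{G}$ fixes $\wt{\vartheta}$ (inner automorphisms fix Brauer characters), I may further replace the chosen weight by a $\wt{G}$-conjugate and so assume $(\wt{Q},\wt{\psi})=(Q^{(1)},\psi^{(1)})^{\times m_1}\times\cdots\times(Q^{(r)},\psi^{(r)})^{\times m_r}$ with $(Q^{(j)},\psi^{(j)})\in\Omega(\vartheta^{(j)})$. Because $\Omega$ is an $A$-equivariant bijection, the pairwise non-$A$-conjugacy of the $\vartheta^{(j)}$ forces the weights $(Q^{(j)},\psi^{(j)})$ to be pairwise non-$A$-conjugate as well, and a routine computation then yields
\[\wt{A}_{\wt{\vartheta}}=\prod_{j=1}^{r}\bigl(A_{\vartheta^{(j)}}\wr S_{m_j}\bigr),\qquad \n_{\wt{A}}(\wt{Q})_{\wt{\psi}}=\prod_{j=1}^{r}\bigl(\n_A(Q^{(j)})_{\psi^{(j)}}\wr S_{m_j}\bigr),\]
together with $\wt{G}=\prod_{j}G^{m_j}$ and $\n_{\wt{G}}(\wt{Q})=\prod_{j}\n_G(Q^{(j)})^{m_j}$, the three factorisations being compatible with one another.

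By Lemma~\ref{lem:Direct products} it is now enough to prove, for each $j$, that
\[\bigl(A_{\vartheta^{(j)}}\wr S_{m_j},\ G^{m_j},\ (\vartheta^{(j)})^{\times m_j}\bigr)\isob\bigl(\n_A(Q^{(j)})_{\psi^{(j)}}\wr S_{m_j},\ \n_G(Q^{(j)})^{m_j},\ (\psi^{(j)})^{\times m_j}\bigr).\]
Conjecture~\ref{conj:iBAWC 2} for $G\unlhd A$ supplies $(A_{\vartheta^{(j)}},G,\vartheta^{(j)})\isob(\n_A(Q^{(j)})_{\psi^{(j)}},\n_G(Q^{(j)}),\psi^{(j)})$, and feeding this block isomorphism into Lemma~\ref{lem:Wreath products}, with the integer there taken to be $m_j$, produces exactly the displayed relation. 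This would complete the proof.

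I expect the main obstacle to be the bookkeeping concentrated in the first two paragraphs: justifying the product decomposition of the radical $p$-subgroups of a direct product (and of the associated normalisers, of $\mathbf{O}_p$, and of the weights), checking that the resulting identifications are genuinely $\wt{A}$-equivariant, and verifying the stabiliser factorisations after the reduction to the standard form above. Once this structural analysis is in place, the block-isomorphism content of the statement follows formally from Lemma~\ref{lem:Direct products} and Lemma~\ref{lem:Wreath products}.
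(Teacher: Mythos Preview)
Your proposal is correct and follows essentially the same approach as the paper: both arguments decompose the data for $\wt{G}\unlhd\wt{A}$ into $n$-fold products, define $\wt{\Omega}$ coordinatewise, reduce via $\wt{A}$-equivariance (Lemma~\ref{lem:Basic properties}) to a standard form in which the stabilisers factor as direct products of wreath products, and then conclude with Lemma~\ref{lem:Direct products} and Lemma~\ref{lem:Wreath products}. Your write-up is in fact slightly more explicit than the paper's about why the non-$A$-conjugacy of the $\vartheta^{(j)}$ forces the corresponding weights to be non-$A$-conjugate, which is exactly what is needed for the stabiliser factorisation on the weight side.
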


\begin{proof}
By hypothesis we have an $A$-equivariant bijection $\Omega:\IBr(G)\to \Alp(G)/G$ such that for every $\chi\in\IBr(G)$ and $(Q,\psi)\in\Omega(\chi)$ we have 
\[(A_\chi,G,\chi)\isob(\norm A Q_\chi,\norm G Q,\psi).\]
By \cite[Theorem 8.21]{Nav98} we know that $\IBr(\wt{G})$ consists of Brauer character of the form $\chi_1\times \dots\times \chi_n$ with $\chi_i\in\IBr(G)$. Similarly, by \cite[Lemma 2.3(b)]{Nav-Tie11} the radical $p$-subgroups of $\wt{G}$ can be written as $Q_1\times \dots\times Q_n$ for some $Q_i\in\Rad(G)$ and hence each $\wt{\psi}\in\dzo(\norm{\wt{G}}{\wt{Q}})$ can be written as a product $\wt{\psi}=\psi_1\times\dots\times\psi_n$ with $\psi_i\in\dzo(\norm{G}{Q_i})$. We can then define a bijection $\wt{\Omega}:\IBr(\wt{G})\to \Alp(\wt{G})/\wt{G}$ by setting
\[\wt{\Omega}\left(\chi_1\times \dots\times \chi_n\right):=\overline{(Q_1\times\dots\times Q_n,\psi_1\times\dots\times \psi_n)}\]
for every $\chi_1,\dots,\chi_n\in\IBr(G)$ and $(Q_1,\psi_1),\dots, (Q_n,\psi_n)\in\Alp(G)$ such that $(Q_i,\psi_i)\in\Omega(\chi_i)$.
It follows from this definition, and using the fact that $\Omega$ is $A$-invariant, we also deduce that $\wt{\Omega}$ is an $\wt{A}$-equivariant bijection. To conclude, we fix $\wt{\chi}\in\IBr(\wt{G})$ and $(\wt{Q},\wt{\psi})\in\wt{\Omega}(\wt{\chi})$ and show that the corresponding modular character triples are block isomorphic. By Lemma \ref{lem:Basic properties} it is no loss of generality to assume that $\wt\chi=\chi_1\times\dots\times \chi_n$, $\wt{Q}=Q_1\times \dots\times Q_n$, and $\wt{\psi}=\psi_1\times\dots\times\psi_n$ with $(Q_i,\psi_i)\in\Omega(\chi_i)$ and where $\chi_i$ and $\chi_j$ are either equal or not $A$-conjugate and $(Q_i,\psi_i)=(Q_j,\psi_j)$ whenever $\chi_i=\chi_j$. In this case the stabiliser $\wt{A}_{\wt{\chi}}$ is a direct product of groups of the form $A_{\chi_i}\wr{S_{m_i}}$ where $m_i$ is the number of factors equal to $\chi_i$ appearing in $\wt{\chi}$. Similarly, $\n_{\wt{A}}(\wt{Q})_{\wt{\psi}}$ is the direct product of groups of the form $\n_A(Q_i)_{\psi_i}\wr{S_{m_i}}$ and we then obtain
\[\left(\wt{A}_{\wt{\chi}},\wt{G},\wt{\chi}\right)\isob\left(\norm{\wt{A}}{\wt{Q}}_{\wt\psi},\norm{\wt{G}}{\wt{Q}},\wt{\psi}\right)\]
by applying Lemma \ref{lem:Direct products} and Lemma \ref{lem:Wreath products}. This completes the proof.
\end{proof}

The following corollary allows us to control how simple groups embed in a larger group and is an important ingredient in the proof of Theorem \ref{thm:Main, Reduction}. The following argument is somewhat standard and already appeared in the reduction theorems for other local-global conjectures (see, for instance, \cite[Theorem 10.25]{Nav18}).

\begin{cor}
\label{cor:iBAW for embedded perfect group}
Let $K\unlhd A$ be finite groups with $K$ perfect and assume that $p\nmid |\zent K|$ and that $K/\z(K)$ is a direct product of isomorphic, say to $S$, non-abelian simple groups of order divisible by $p$. If Conjecture \ref{conj:iBAWC 2} holds for $X\unlhd X\rtimes \aut{X}$ where $X$ is the universal $p'$-covering group of $S$, then Conjecture \ref{conj:iBAWC 2} holds for $K\unlhd A$.
\end{cor}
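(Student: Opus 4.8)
The plan is to reduce from the group $K$ to a direct product of copies of the universal $p'$-covering group $X$ of $S$, and then apply Proposition \ref{prop:iAWC and direct and wreath products} together with Lemma \ref{lem:4.1} (the central extension construction). First I would set $\overline{K}=K/\z(K)\cong S^n$ and let $L$ be the full preimage in $K$ of one of the simple direct factors $S$; then $L$ is a component of $K$ and, since $K$ is perfect, $K$ is the central product of the $A$-conjugates of $L$. Because $p\nmid|\z(K)|$, each such $L$ is a perfect central extension of $S$ by a $p'$-group, hence a quotient of $X$; lifting through $\z(K)$ (which is harmless as it has $p'$-order and lies in the kernel of every Brauer character and does not affect radical $p$-subgroups, by \cite[Lemma 2.32]{Nav98} and the fact that $\oh{p}{K}=1$), one arranges that $K$ is, up to the central $p'$-subgroup, a quotient of $X^n$. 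More precisely, there is a group $\wt{K}$ which is a quotient of $X^n$ by a central $p'$-subgroup, a surjection $\wt{K}\twoheadrightarrow K$ with central $p'$-kernel, and a compatible embedding of $A$ into a group $\wt{A}$ acting on $\wt{K}$ with $\wt{A}/\wt{K}$ mapping onto $A/K$, so that $\wt{A}\leq \wt{K}\rtimes\Aut(\wt{K})$ and $\wt{K}$ sits between $X^n$ and $(X\rtimes\Aut(X))^n$ permuted by $S_n$.

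Next I would invoke Proposition \ref{prop:iAWC and direct and wreath products}: since Conjecture \ref{conj:iBAWC 2} holds for $X\unlhd X\rtimes\Aut(X)$ by hypothesis, it holds for $X^n\unlhd (X\wr S_n)$, and in particular one gets an $(X\wr S_n)$-equivariant bijection $\Omega$ together with block isomorphisms of modular character triples $(\,(X\wr S_n)_\vartheta, X^n,\vartheta)\isob(\norm{X\wr S_n}{Q}_\psi,\norm{X^n}{Q},\psi)$ for $(Q,\psi)\in\Omega(\vartheta)$. To transfer this to the subquotient $\wt{K}$, and then to $K$ and $A$, I would argue in two moves. The first is to pass to the quotient $\wt{K}$ of $X^n$ by its central $p'$-subgroup using Lemma \ref{lem:going to quotients} (noting that the relevant centraliser-of-$p$-subgroup conditions are automatic here since $p\nmid|\z|$ and radical $p$-subgroups are unchanged under quotients by central $p'$-groups), and to pass from $X\wr S_n$ down to the appropriate overgroup containing $\wt{A}$ by the Butterfly theorem (Lemma \ref{lem:Butterfly theorem}), which lets one replace the ambient group by any group inducing the same automorphisms. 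The second move is to lift the central $p'$-kernel of $\wt{K}\twoheadrightarrow K$ back up via Lemma \ref{lem:Lifting isomorphisms from quotients} (inflation of Brauer characters) — or, equivalently and perhaps more cleanly, to realise $K$ directly as one of the intermediate subgroups $J$ with $X^n\le J$ via $\Aut$ and use Lemma \ref{lem:Bijections induced by isomorphisms are compatible with isomorphisms} to restrict $\Omega$ and the block isomorphisms to $J$, then quotient by the central $p'$-subgroup.

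Concretely, the cleanest route is: embed $A$ into $\wt{A}:=\wt{K}\rtimes\Aut(\wt{K})$ with $\wt{K}\unlhd\wt{A}$; observe $\wt{K}$ is a subquotient of $X^n$ in the sense that there is $H$ with $X^n\unlhd H\unlhd X\wr S_n$ and a central $p'$-subgroup $Z$ with $H/Z\cong$ (a group mapping onto) $\wt{K}$; apply Proposition \ref{prop:iAWC and direct and wreath products} to get the iBAWC bijection and block isomorphisms for $X^n\unlhd X\wr S_n$; restrict to $H$ using the $(X\wr S_n)$-equivariance of $\Omega$ together with Clifford theory (Lemma \ref{lem:Irreducible induction} for the irreducible-induction bijections needed, and Lemma \ref{lem:Bijections induced by isomorphisms are compatible with isomorphisms} for the compatibility of the restricted $\sigma_J$-maps) to obtain iBAWC for $H\unlhd$ its normaliser in $X\wr S_n$; then go to the quotient $H/Z$ by Lemma \ref{lem:going to quotients}; and finally apply the Butterfly theorem (Lemma \ref{lem:Butterfly theorem}) to replace the ambient group by $\wt{A}$ and conclude iBAWC for $\wt{K}\unlhd\wt{A}$, which by the embedding $K\hookrightarrow\wt{K}$, $A\hookrightarrow\wt{A}$ (again via Butterfly, or directly) gives the result for $K\unlhd A$. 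Throughout one uses that $\IBr$ of a quotient by a central $p'$-group is just inflation, that radical $p$-subgroups and the sets $\Alp$, $\dzo$ behave well under such quotients and central products, and that $\Aut$ of a central product/direct power of quasi-simple groups decomposes as a wreath product of the automorphism groups of the factors modulo the relevant central/diagonal pieces.

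The main obstacle, as in the analogous arguments cited (e.g. \cite[Theorem 10.25]{Nav18}), is the bookkeeping for the automorphism groups and centralisers: one must check that the embedding $A\hookrightarrow\wt{A}$ can be chosen so that the normalisers $\norm{\wt{A}}{Q}$ of radical $p$-subgroups $Q$ of $\wt{K}$ match up correctly with the corresponding normalisers in $X\wr S_n$ under all the reductions, and that the centraliser conditions $\c_G(D)\le H$ in the definition of $\isob$ (Definition \ref{def:block-iso}) are preserved at each step — the Butterfly theorem is exactly what makes this manageable, but verifying its hypothesis $\epsilon_G(G)=\epsilon_K(K)$ in the present setup, i.e. that $\wt{A}$ and the relevant overgroup in $X\wr S_n$ induce the same automorphisms on $\wt{K}$ (equivalently on $X^n$ modulo $Z$), requires care. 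A secondary subtlety is ensuring that the central $p'$-kernels one quotients by really lie in the kernels of all the Brauer characters involved and do not interfere with block induction, so that Lemma \ref{lem:going to quotients} applies with its hypothesis $\c_G(N)/Z=\c_{G/Z}(N/Z)$; here perfectness of $K$ and $p\nmid|\z(K)|$ are used in an essential way.
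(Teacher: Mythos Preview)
Your approach is essentially the same as the paper's --- wreath product proposition for $X^n$, quotient by a central $p'$-subgroup, then the Butterfly theorem --- but you take several unnecessary detours. The paper's route is direct: since $K$ is perfect with $p\nmid|\z(K)|$ and $K/\z(K)\cong S^r$, the group $H:=X^r$ surjects onto $K$ with central $p'$-kernel $Z$ (this is a single fact, no intermediate $\wt K$ is needed). Proposition~\ref{prop:iAWC and direct and wreath products} gives Conjecture~\ref{conj:iBAWC 2} for $H\unlhd H\rtimes\Aut(H)$; one application of Lemma~\ref{lem:going to quotients} (with the centraliser hypothesis supplied by \cite[Theorem~10.24(c)]{Nav18}) passes to $K\unlhd K\rtimes\Aut(K)$; and a single Butterfly (Lemma~\ref{lem:Butterfly theorem}) converts the ambient group $K\rtimes\Aut(K)$ to $A$, after checking $\n_A(Q)_\psi=\eps^{-1}(\wh\eps(\n_U(Q)))$ by a short normaliser computation.

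What you add --- restricting to an intermediate $H$ between $X^n$ and $X\wr S_n$ via Clifford theory (Lemmas~\ref{lem:Irreducible induction} and \ref{lem:Bijections induced by isomorphisms are compatible with isomorphisms}), a separate $\wt K$, an embedding $A\hookrightarrow\wt A$, and Lemma~\ref{lem:Lifting isomorphisms from quotients} --- is not needed, and in one place points the wrong way: $K$ is a \emph{quotient} of $X^r$, not a subgroup, so ``realise $K$ directly as one of the intermediate subgroups $J$'' cannot work, and the lemma you want is \ref{lem:going to quotients}, not \ref{lem:Lifting isomorphisms from quotients}. The real content you flag as the ``main obstacle'' (matching normalisers under Butterfly) is handled in the paper by a three-line argument comparing $\eps:A\to\Aut(K)$ and $\wh\eps:K\rtimes\Aut(K)\to\Aut(K)$.
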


\begin{proof}
Assume that $K/\z(K)$ is isomorphic to $r$ copies of $S$ and write $H=X^r$, so that $H$ is a perfect central extension of $K$, and $\wt{A}=\Aut(H)$. By Proposition \ref{prop:iAWC and direct and wreath products} we know that Conjecture \ref{conj:iBAWC 2} holds for $H\normal H\rtimes \wt{A}$ and so there exists an $\wt{A}$-equivariant bijection
\[\wt{\Omega}:\IBr(H)\to \Alp(H)/H\]
such that
\[\left(H\rtimes\wt{A}_{\chi},H,\chi\right)\isob\left(\norm H D\rtimes\norm{\wt{A}}{D}_{\varphi},\norm H D,\varphi\right)\]
for all $\chi\in\IBr(H)$ and $(D,\varphi)\in\wt{\Omega}(\chi)$. Now let $\pi:H\to K$ be the canonical epimorphism and set $Z:=\ker\pi\leq \zent H$ and $\overline{J}:=JZ/Z$ for every $J\leq H$. By the definition of central isomorphism, for all $\chi\in\IBr(H)$ and $(D,\varphi)\in\wt{\Omega}(\chi)$, we have $\IBr(\chi_Z)=\IBr(\varphi_Z)$. In particular, $Z\leq \ker\chi$ if and only if $Z\leq \ker \varphi$ and so, if $\wt{A}_Z$ denotes the stabiliser of $Z$ under the action of $\wt{A}$, then it follows that the $\wt{\Omega}$ induces an $\wt{A}_Z$-equivariant bijection
\[\wt{\Omega}_Z:\IBr\left(\overline{H}\right)\to\Alp\left(\overline{H}\right)/\overline{H}.\]
Moreover, by applying Lemma \ref{lem:going to quotients} together with \cite[Theorem 10.24(c)]{Nav18}, we deduce that
\[\left(\overline{H}\rtimes \wt{A}_Z,\overline{H},\overline{\chi}\right)\isob\left(\norm{\overline{H}}{\overline{D}}\rtimes\norm{\wt{A}_Z}{\overline{D}},\norm{\overline{H}}{\overline{D}},\overline{\varphi}\right)\]
for all $\overline{\chi}\in\IBr(\overline{H})$ and $(\overline{D},\overline\varphi)\in\wt{\Omega}_Z(\overline\chi)$. Since $\overline{H}\simeq K$ and $\overline{H}\rtimes \wt{A}_Z\simeq K\rtimes\Aut(K)$ (see, for instance, the proof of \cite[Theorem 10.25]{Nav18}) this proves that Conjecture \ref{conj:iBAWC 2} holds for $K\normal K\rtimes\Aut(K)$. Therefore there exists an $\Aut(K)$-equivariant bijection 
\[\Omega:\IBr(K)\to\Alp(K)/K\]
such that
\[\left(K\rtimes \Aut(K),K,\psi\right)\isob\left(\norm K Q\rtimes\norm{\Aut(K)}{Q}_\vartheta,\norm K Q,\vartheta\right)\]
for every $\psi\in\IBr(K)$, $(Q,\vartheta)\in\Omega(\psi)$. We can now conclude by applying Sp\"ath's Butterfly theorem. More precisely, let $\eps:A\to \Aut(K)$ and $\wh{\eps}:K\rtimes \Aut(K)\to \Aut(K)$ be the homomorphisms induced by conjugation on $K$ via elements of $A$ and $K\rtimes \aut{K}$ respectively. Set $Y:=\epsilon(A)\leq \aut{K}$ and notice that $\Omega$ is $Y$-equivariant since it is $\aut{K}$-equivariant. 
Now, let $\psi\in\IBr(K)$, $(Q,\vartheta)\in\Omega(\psi)$ and write $U=\wh{\eps}^{-1}(Y_{\psi})$. Observe that $Y_\psi=\eps(A_{\psi})$ and that $\wh{\eps}(U)=Y_\psi=\eps(A_{\psi})$. Since $U\leq K\rtimes\Aut(K)$ we get
\[\left(U,K,\psi\right)\isob\left(\norm U Q,\norm K Q,\vartheta\right)\]
by Lemma \ref{lem:Basic properties}. Then, if we prove that $\norm{A}{Q}_{\psi}=\eps^{-1}(\wh{\eps}(\norm U Q))$, applying Lemma \ref{lem:Butterfly theorem} we finally obtain
\[\left(A_\chi,K,\psi\right)\isob\left(\norm A Q_{\vartheta},\norm K Q,\vartheta\right).\]
To prove the claimed equality, let $x\in\norm{A}{Q}_\psi$ so that $\eps(x)\in Y_{\psi}$ and hence $\eps(x)=\eps(g)$ for some $g\in U$. Since $x$ and $g$ induce the same action on $K$, it follows that $g$ normalizes $Q$ and so $g\in\norm U Q$. Conversely, if $g\in\norm U Q$ then $\wh{\eps}(g)\in Y_{\psi}$ and there is some $x\in A_{\psi}$ with $\eps(x)=\wh{\eps}(g)$. Then $x$ must normalize $Q$ and the claim follows. This finally completes the proof.
\end{proof}

Following the proofs of \cite[Proposition 2.10]{Ros22} and \cite[Proposition 2.4]{Ros-iMcK}, we now show how to lift the bijections given by Corollary \ref{cor:iBAW for embedded perfect group} from the group $K$ to any intermediate subgroup $K\leq J\leq A$. We first introduced one more definition.

\begin{defi}
\label{def:Above Weights}
For any finite group $G$ with a normal subgroup $K\unlhd G$, we define the set $\Alp(G\mid K)$ consisting of pairs $(Q,\eta)$ where $Q\in \Rad(K)$ and $\eta\in\IBr(\n_G(Q))$ lies above some Brauer characters in the set $\dzo(\n_K(Q))$. Observe that the group $G$ acts by conjugation on the $\Alp(G\mid K)$ and denote by $\Alp(G\mid K)/G$ the corresponding set of $G$-orbits.
\end{defi}

We can now prove the main result of this section.

\begin{thm}
\label{thm:Lifting bijection for weights}
Suppose that Conjecture \ref{conj:iBAWC 2} holds with respect to the finite groups $K\unlhd A$. If $K\leq J\leq A$, then there exists an $\n_A(J)$-equivariant bijection
\[\Omega_K^J:\IBr(J)\to \Alp(J\mid K)/J\]
such that
\begin{equation}
\label{eq:Lifting bijections for weights}
\left(\n_A(J)_\chi,J,\chi\right)\isob\left(\n_A(J,Q)_\eta,\n_J(Q),\eta\right)
\end{equation}
for every $\chi\in\IBr(J)$ and $(Q,\eta)\in\Omega_K^J(\chi)$.
\end{thm}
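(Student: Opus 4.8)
The plan is to build $\Omega_K^J$ by a Clifford-theoretic descent from the bijection provided by Conjecture \ref{conj:iBAWC 2} applied to $K\unlhd A$, lifting it first to $K\unlhd J$ and then pushing the correspondence up to $\IBr(J)$ and $\Alp(J\mid K)/J$ via the Clifford correspondence relative to $K$. More precisely, let $\Omega\colon\IBr(K)\to\Alp(K)/K$ be an $A$-equivariant bijection as in Conjecture \ref{conj:iBAWC 2}, so that $(A_\vartheta,K,\vartheta)\isob(\norm A Q_\psi,\norm K Q,\psi)$ for every $\vartheta\in\IBr(K)$ and $(Q,\psi)\in\Omega(\vartheta)$. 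Given $\chi\in\IBr(J)$, pick an irreducible constituent $\vartheta$ of $\chi_K$; by the Clifford correspondence $\chi$ corresponds to a unique $\widehat\chi\in\IBr(J_\vartheta\mid\vartheta)$ with $\widehat\chi^J=\chi$. Choose $(Q,\psi)\in\Omega(\vartheta)$. Using Lemma \ref{lem:Basic properties}(i) we restrict the block isomorphism $(A_\vartheta,K,\vartheta)\isob(\norm A Q_\psi,\norm K Q,\psi)$ to $J_\vartheta$, obtaining $(J_\vartheta,K,\vartheta)\isob(\norm J Q_\psi\cap(\text{stuff}),\norm K Q,\psi)$, and Lemma \ref{lem:Bijections induced by isomorphisms are compatible with isomorphisms} then transports $\widehat\chi$ to some $\widehat\eta\in\IBr$ of the corresponding subgroup lying above $\psi$, with the required block isomorphism of the normalisers of the characters holding. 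Inducing $\widehat\eta$ up to $\norm J Q$ gives an irreducible Brauer character $\eta$ above $\psi\in\dzo(\norm K Q)$, hence $(Q,\eta)\in\Alp(J\mid K)$, and we set $\Omega_K^J(\chi):=\overline{(Q,\eta)}$.

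The next block of work is to check this is well-defined and bijective. Well-definedness requires independence of the choices of $\vartheta$, of $(Q,\psi)\in\Omega(\vartheta)$, and of the Clifford-correspondent, all of which follow from the $A$-equivariance of $\Omega$ together with Lemma \ref{lem:Basic properties}(ii) (conjugation compatibility of $\isob$): replacing $\vartheta$ by a $J$-conjugate conjugates everything accordingly, and different representatives in $\Omega(\vartheta)$ are $A_\vartheta$-conjugate. For surjectivity and injectivity I would exhibit the inverse: given $(Q,\eta)\in\Alp(J\mid K)$ with $\eta$ above $\psi\in\dzo(\norm K Q)$, the pair $(Q,\psi)$ lies in $\Omega(\vartheta)$ for a unique $\vartheta\in\IBr(K)$ (up to $K$-conjugacy); Clifford theory relative to $\norm K Q\unlhd\norm J Q$ identifies $\eta$ with a character above $\psi$, which the bijection $\sigma$ of Theorem \ref{thm:strong-iso} (in the guise used in Lemma \ref{lem:Bijections induced by isomorphisms are compatible with isomorphisms}) matches to a character above $\vartheta$, which induces to the desired $\chi\in\IBr(J)$. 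One must verify the two assignments are mutually inverse — this is a diagram-chase through the Clifford correspondence and the $\sigma$-maps, using that induction is a bijection on the relevant $\IBr$-sets above $\vartheta$ respectively $\psi$ because $K$ is normal in $J$. The $\norm A J$-equivariance is then immediate from the $A$-equivariance of $\Omega$ and the naturality of the Clifford correspondence under conjugation.

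The final — and genuinely substantive — step is to establish the block isomorphism \eqref{eq:Lifting bijections for weights} for the pair $(\chi,(Q,\eta))$. One has from Lemma \ref{lem:Bijections induced by isomorphisms are compatible with isomorphisms}, applied with $J_\vartheta$ in place of $J$, a block isomorphism relating the stabiliser of $\widehat\chi$ to the stabiliser of $\widehat\eta$. To pass from $\widehat\chi$ to $\chi=\widehat\chi^{\,J}$ and from $\widehat\eta$ to $\eta=\widehat\eta^{\,\norm J Q}$ one invokes Lemma \ref{lem:Irreducible induction}, which is precisely designed to say that $\isob$ survives simultaneous irreducible induction on both sides — here the two induction hypotheses of that lemma are the Clifford correspondences relative to $K\unlhd J$ and to $\norm K Q\unlhd \norm J Q$, and the centraliser hypothesis $\cent{}{D}\le H$ comes from the defect-group condition already built into the block isomorphism supplied by Conjecture \ref{conj:iBAWC 2}. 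I expect the main obstacle to be bookkeeping: correctly identifying the ambient groups ($\norm A J$, $J_\vartheta$, $\norm A{J,Q}=\norm A J\cap\norm A Q$, $\norm J Q_\eta$, etc.) so that the hypotheses of Lemma \ref{lem:Irreducible induction} are literally met — in particular checking $G_1N=G$ and $H_1M=H$ type conditions for the relevant quadruple of subgroups — and ensuring the stabilisers match up on the nose. A secondary subtlety is that $\Alp(J\mid K)$ uses radical $p$-subgroups of $K$, not of $J$, so one must be careful that $Q\in\Rad(K)$ is exactly what is produced and that $\norm J Q$, $\norm A J\cap\norm A Q$ are the correct normalisers appearing in the statement; this is a matter of unwinding Definition \ref{def:Above Weights} rather than a real difficulty.
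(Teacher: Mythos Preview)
Your proposal is correct and follows essentially the same approach as the paper: Clifford correspondence over $K$, transport via the $\sigma_{J_\vartheta}$-map coming from Lemma \ref{lem:Bijections induced by isomorphisms are compatible with isomorphisms}, and then Lemma \ref{lem:Irreducible induction} to pass from the Clifford correspondents back to $\chi$ and $\eta$. The only differences are organisational: the paper first replaces $A$ by $\n_A(J)$ (so that $J\unlhd A$ and hence $J_\vartheta\unlhd A_\vartheta$, which makes the normaliser in Lemma \ref{lem:Bijections induced by isomorphisms are compatible with isomorphisms} come out cleanly as $A_{\vartheta,\varphi}$), and it builds the bijection on explicit $A$-transversals rather than defining it pointwise and checking well-definedness afterwards---but these amount to the same argument. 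One small point: your intermediate step of first restricting via Lemma \ref{lem:Basic properties}(i) and \emph{then} applying Lemma \ref{lem:Bijections induced by isomorphisms are compatible with isomorphisms} is redundant and in fact loses the ambient $A_\vartheta$; you should apply Lemma \ref{lem:Bijections induced by isomorphisms are compatible with isomorphisms} directly to the original isomorphism with the intermediate subgroup $J_\vartheta$, as you correctly do in your final paragraph.
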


\begin{proof}
To start, replacing $A$ with $\n_A(J)$, observe that it is no loss of generality to assume that $J$ is normal in $A$. Let $\Omega_K$ be the $A$-equivariant bijection given by Conjecture \ref{conj:iBAWC 2} and, for every $\vartheta\in\IBr(K)$ and $(Q,\psi)\in\Omega_K(\vartheta)$, fix a pair of projective representations $(\P^{(\vartheta)},\P^{(Q,\psi)})$ inducing the block isomorphism of modular character triples
\begin{equation}
\label{eq:Lifting bijections for weights 1}
\left(A_\vartheta,K,\vartheta\right)\isob\left(\n_A(Q)_\psi,\n_K(Q),\psi\right).
\end{equation}
Let $\mathcal{S}$ be an $A$-transversal in $\IBr(K)$ and denote by $\mathbb{S}$ the set consisting of the $K$-orbits $\Omega_K(\vartheta)$ for $\vartheta\in\mathcal{S}$. The equivariance properties of the bijection $\Omega_K$ imply that $\mathbb{S}$ is an $A$-transversal in $\Alp(K)/K$. Observe that each irreducible Brauer character $\chi$ of $J$ lies above an irreducible Brauer character $\vartheta'$ of $K$ that is $A$-conjugate to a unique $\vartheta\in\mathcal{S}$. In particular, there exists an $A$-transversal $\mathcal{T}$ in $\IBr(J)$ such that each element $\chi\in\mathcal{T}$ lies above some element $\vartheta\in\mathcal{S}$. Furthermore, if $\chi$ lies above another $\vartheta'\in\mathcal{S}$ then there exists an element $x\in J$ such that $\vartheta'=\vartheta^x$. Since $J\leq A$, the choice of $\mathcal{S}$ yields $\vartheta=\vartheta'$. Therefore, every element $\chi\in\mathcal{T}$ lies over a unique $\vartheta\in\mathcal{S}$. Consider now the Clifford correspondent $\varphi\in\IBr(J_\vartheta\mid \vartheta)$ of $\chi$ (see \cite[Theorem 8.9]{Nav98}) and remember that the choice of projective representations $(\P^{(\vartheta)},\P^{(Q,\psi)})$ associated with \eqref{eq:Lifting bijections for weights 1} induces an $\n_A(Q)_\vartheta$-equivariant bijection
\[\sigma_{J_\vartheta}:\IBr\left(J_\vartheta\enspace\middle|\enspace \vartheta\right)\to \IBr\left(\n_J(Q)_\psi\enspace\middle|\enspace \psi\right)\]
where we are using the fact that $\n_A(Q)_\vartheta=\n_A(Q)_\psi$. Again using \cite[Theorem 8.9]{Nav98} it follows that $\sigma_{J_\vartheta}(\varphi)^{\n_J(Q)}$ is an irreducible Brauer character of $\n_J(Q)$ for each $\varphi\in\IBr(J_\vartheta\mid \vartheta)$. Then, the set $\mathbb{T}$ consisting of $J$-orbits of pairs $(Q,\sigma_{J_\vartheta}(\varphi)^{\n_J(Q)})$ is an $A$-transversal in $\Alp(J\mid K)/J$ and there exists a bijection
\[\Phi:\mathcal{T}\to\mathbb{T}\]
given by sending $\chi$ to the $J$-orbit of $(Q,\sigma_{J_\vartheta}(\varphi)^{\n_J(Q)})$ where $\varphi$ is the Clifford correspondent of $\chi$ over $\vartheta$ as above. We can finally define an $A$-equivariant bijection by setting
\[\Omega_K^J\left(\chi^x\right):=\Phi(\chi)^x\]
for every $\chi\in\mathcal{T}$ and $x\in A$. It remains to show that the isomorphism \eqref{eq:Lifting bijections for weights 1} implies \eqref{eq:Lifting bijections for weights}. For this purpose, let $\chi\in\mathcal{T}$ and $(Q,\eta)\in\Phi(\chi)$ so that $\eta=\sigma_{J_\vartheta}(\varphi)^{\n_J(Q)}$ where $\vartheta$ is the unique character of $\mathcal{S}$ lying below $\chi$ and $\varphi\in\IBr(J_\vartheta)$ is the Clifford correspondent of $\chi$ over $\vartheta$. By Lemma \ref{lem:Basic properties} (ii) it is enough to show that the condition on modular character triples \eqref{eq:Lifting bijections for weights} is satisfied for this specific choice of $\chi$ and $(Q,\eta)$. Observe that because $\sigma_{J_\vartheta}$ is $\n_A(Q)_\vartheta$-equivariant and $\n_A(Q)_\vartheta=\n_A(Q)_\psi$, the stabiliser $\n_A(Q)_{\vartheta,\varphi}$ coincides with $\n_A(Q)_{\psi,\sigma_{J_\vartheta}(\varphi)}$. Then, by applying Lemma \ref{lem:Bijections induced by isomorphisms are compatible with isomorphisms} to the block isomorphism \eqref{eq:Lifting bijections for weights 1}, we get
\[\left(A_{\vartheta,\varphi},J_\vartheta,\varphi\right)\isob\left(\n_A(Q)_{\psi,\sigma_{J_\vartheta}(\varphi)},\n_J(Q)_\psi,\sigma_{J_\vartheta}(\varphi)\right)\]
from which we deduce
\[\left(A_{\chi},J,\chi\right)\isob\left(\n_A(Q)_{\eta},\n_J(Q),\eta\right)\]
according to Lemma \ref{lem:Irreducible induction}. Observe that the latter result can be applied because $A_{\vartheta,\varphi}=A_{\vartheta,\chi}$ by the Clifford correspondence while $A_\chi=JA_{\vartheta,\chi}$ and $A_\chi=J\n_A(Q)_\eta$ by the Frattini argument applied together with Clifford's theorem and the equivariance properties of $\Omega_K^J$ respectively. 
\end{proof}

\section{The Dade--Glauberman--Nagao correspondence and modular character triples}
\label{sec:DGN}

The aim of this section is to obtain a bijection for Brauer characters compatible with the Dade--Glauberman--Nagao correspondence and inducing block isomorphisms of modular character triples. Our Theorem \ref{thm:Above modular DGN} below extends \cite[Theorem 4.2]{Nav-Tie11}, \cite[Theorem 3.8]{Spa13I}, \cite[Proposition 3.11]{Fen-Li-Zha23II}, and provides a modular version of \cite[Theorem 5.13]{Nav-Spa14I}. 

\subsection{Relative defect zero Brauer characters}

Let $N\unlhd G$ be finite groups. For every irreducible characters $\chi\in\irr{G}$ and $\vartheta\in\irr{N}$ with $\chi$ lying above $\vartheta$, recall that $\chi(1)/\vartheta(1)$ divides the index $|G:N|$ according to \cite[Theorem 5.12]{Nav18}. Then, we define the \textit{$N$-relative defect} of $\chi$ to be the non-negative integer $d_N(\chi)$ such that
\[p^{d_N(\chi)}=\dfrac{|G:N|_p}{\chi(1)_p/\vartheta(1)_p}.\]
Observe that $d_N(\chi)$ does not depend on the choice of $\vartheta\in\irr{N}$ lying below $\chi$. For a given $\vartheta\in\irr{N}$, we denote by $\rdz(G\mid \vartheta)$ the set of irreducible characters of $G$ with $N$-relative defect zero and lying above $\vartheta$. 

Our first aim is to define a notion of relative defect zero Brauer character. Unfortunately, for a Brauer character $\chi\in\IBr(G)$ it is not true in general that $\chi(1)_p$ divides $|G|_p$ (see the example preceding \cite[Theorem 3.18]{Nav98}) and therefore the obvious definition in terms of character degrees will not work in this context. To circumvent this problem, we show that (under suitable assumptions) relative defect zero characters remain irreducible under reduction modulo $p$. More precisely, we prove the following result.

\begin{lem}
\label{lem:Definition of rdzo}
Let $K\leq M$ be normal subgroups of $G$ with $M/K$ a $p$-group and consider a $G$-invariant $\vartheta\in\dz(K)$. Let $\wh{\vartheta}\in\irr{M}$ be a $G$-invariant extension of $\vartheta$ (which exists according to \cite[Theorem 2.4]{Nav-Tie11}).
\begin{enumerate}
\item If $\chi\in\rdz(G\mid \wh{\vartheta})$, then $\chi^0\in\IBr(G)$.
\item The map $\rdz(G\mid \wh{\vartheta})\to\IBr(G)$ given by sending $\chi$ to $\chi^0$ is injective.
\item The image of $\rdz(G\mid \wh{\vartheta})$ in $\IBr(G)$ under the above map does not depend on the choice of the extension $\wh{\vartheta}$.
\end{enumerate}
\end{lem}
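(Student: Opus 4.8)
The plan is to reduce all three parts to a simple situation. The key preliminary fact is that $(G,M,\wh\vartheta)$ is an \emph{ordinary-modular} character triple: since $M/K$ is a $p$-group and $\vartheta^0\in\IBr(K)$ is $G$-invariant (because $\vartheta\in\dz(K)$), the character $\vartheta^0$ has a \emph{unique} extension to some $\xi\in\IBr(M)$ (existence because $M/K$ is a $p$-group, cf. \cite[Chapter 8]{Nav98}; uniqueness because $M/K$ has no nontrivial linear Brauer character), and comparing degrees forces $\wh\vartheta^0=\xi\in\IBr(M)$. In particular $\wh\vartheta^0$ does not depend on the chosen $G$-invariant extension $\wh\vartheta$ (any two such differ by a linear character of $M/K$, whose restriction to $p$-regular elements is trivial), which settles the first and main half of (iii).

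Having this, I would apply \cite[Problem 8.13]{Nav98} to the ordinary-modular triple $(G,M,\wh\vartheta)$ to replace it by an isomorphic ordinary-modular triple in which $M$ is a $p'$-group; such an isomorphism preserves the quotient $G/M$ and all character-degree ratios above $\wh\vartheta$ (hence the $M$-relative defect) and is compatible with restriction to $p$-regular elements, so statements (i)--(ii) are unaffected. Thus I may assume $M$ is a $p'$-group and $\wh\vartheta\in\Irr(M)=\IBr(M)$ is $G$-invariant, so that $\wh\vartheta\in\dz(M)$. Now Lemma~\ref{lem:4.1}, applied to the modular triple $(G,M,\wh\vartheta)$, yields a central extension $\wh G\to G$ with cyclic central kernel $Z$ of $p'$-order and $\wh M:=\eps^{-1}(M)=M_0\times Z$ a $p'$-group, such that the character of $\wh M$ corresponding to $\wh\vartheta$ extends to a Brauer character of $\wh G$; since $\wh M$ is a $p'$-group, the ``furthermore'' clause of Lemma~\ref{lem:ordinary-modular extension} lets us take this extension to be (the reduction of) an ordinary $\mu\in\Irr(\wh G)$ with $\mu^0\in\IBr(\wh G)$.

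Pulling $\chi$ back to $\wh\chi\in\Irr(\wh G)$ along $\wh G\to G$, Gallagher's theorem gives $\wh\chi=\eta\mu$ with $\eta\in\Irr(\wh G/\wh M)\cong\Irr(G/M)$, and $\wh\chi(1)=\chi(1)$, $\mu(1)=\wh\vartheta(1)$ yield $\eta(1)=\chi(1)/\wh\vartheta(1)$; hence $\chi\in\rdz(G\mid\wh\vartheta)$ exactly when $\eta(1)_p=|G:M|_p=|\wh G:\wh M|_p$, i.e. when $\eta\in\dz(\wh G/\wh M)$. Then (i) is Lemma~\ref{lem:gallagher-lift}: $\eta\in\dz(\wh G/\wh M)$ forces $\eta^0\in\IBr(\wh G/\wh M)$ by \cite[Theorem 3.18]{Nav98}, so $\wh\chi^0=(\eta\mu)^0\in\IBr(\wh G)$, whence $\chi^0\in\IBr(G)$. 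For (ii), if $\chi_1^0=\chi_2^0$ with $\wh\chi_i=\eta_i\mu$ and $\eta_i\in\dz(\wh G/\wh M)$, then $\eta_1^0\mu^0=\eta_2^0\mu^0$ gives $\eta_1^0=\eta_2^0$ by the Gallagher bijection \cite[Corollary 8.20]{Nav98}, hence $\eta_1=\eta_2$ since $\nu\mapsto\nu^0$ is injective on $\dz(\wh G/\wh M)$ by \cite[Theorem 3.18]{Nav98}, hence $\chi_1=\chi_2$.

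For the remaining content of (iii) I would show that the image of $\rdz(G\mid\wh\vartheta)$ in $\IBr(G)$ depends only on the modular triple $(G,M,\wh\vartheta^0)$, which by the first paragraph is independent of $\wh\vartheta$. Fixing a projective $\FF$-representation $\mathcal P$ associated with $(G,M,\wh\vartheta^0)$ with factor set $\alpha$, the set $\IBr(G\mid\wh\vartheta^0)$ is parametrised by the irreducible $\FF$-projective representations $\mathcal R$ of $G/M$ with factor set $\bar\alpha^{-1}$ via $\mathcal R\mapsto\tr(\mathcal R\otimes\mathcal P)$ (Theorem~\ref{thm:strong-iso}); lifting $\mathcal P$ to a projective representation of $(G,M,\wh\vartheta)$ in characteristic $0$ and tracking degrees as above, one checks that $\chi\mapsto\chi^0$ carries $\rdz(G\mid\wh\vartheta)$ bijectively onto $\{\tr(\mathcal R\otimes\mathcal P):(\dim\mathcal R)_p=|G/M|_p\}$, a set intrinsic to $(G,M,\wh\vartheta^0)$. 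I expect the technical heart of the argument -- which also underlies (i) and (ii) -- to be the assertion that reduction modulo $p$ takes an irreducible $\overline{\QQ}$-projective representation of $G/M$ of dimension divisible by $|G/M|_p$ to an irreducible $\FF$-projective one, and distinct such to distinct ones (the twisted-group-algebra analogue of \cite[Theorem 3.18]{Nav98}, obtainable either from the block theory of twisted group algebras or by re-running the argument above through the auxiliary central extension, where $G/M$ appears as a genuine quotient), together with the bookkeeping needed to choose the reduction of the second step and the central extension so that they depend only on $\wh\vartheta^0$.
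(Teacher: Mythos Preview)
Your route differs from the paper's. The paper reduces via the ordinary-modular triple $(G,K,\vartheta)$ (not $(G,M,\wh\vartheta)$) to the case where $K$ is a central $p'$-subgroup; then $M=K\times D$ with $D$ a Sylow $p$-subgroup of $M$, $\wh\vartheta=\vartheta\times\mu$ for a $G$-invariant linear $\mu\in\irr D$, and Navarro's bijection \cite[Theorem~4.1]{Nav04I} between $\dz(G/D)$ and $\rdz(G\mid\mu)$ does all the work: part (i) because $\psi_\mu^0=\wh\mu\cdot\psi^0$ with $\psi^0\in\IBr(G/D)$, part (ii) by cancelling $\wh\mu$, and part (iii) by the paper's Lemma~\ref{lem:Multiplication by linear Brauer characters} (a different $\wh\vartheta$ gives a different $\mu'$, and one checks explicitly that $\{\psi_\mu^0\}=\{\psi'_{\mu'}{}^0\}$ using that lemma). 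Your approach instead reduces $M$ to a $p'$-group and then passes to a central extension where the character extends; this is more self-contained (no appeal to \cite{Nav04I}) but, as you note, leaves more bookkeeping.

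There is, however, a genuine gap in your Gallagher step. After applying Lemma~\ref{lem:4.1} you obtain $\mu\in\Irr(\wh G)$ extending $\vartheta_0\in\Irr(M_0)$, but the pullback $\wh\chi=\chi\circ\eps$ lies over $\vartheta_0\times 1_Z\in\Irr(\wh M)$, whereas $\mu_{\wh M}=\vartheta_0\times\nu$ for some (typically nontrivial) $\nu\in\Irr(Z)$ determined by the factor set; thus $\wh\chi$ need not be of the form $\eta\mu$ with $\eta\in\Irr(\wh G/\wh M)$. You can repair this by running Gallagher over $M_0$ rather than $\wh M$ (so $\eta\in\Irr(\wh G/M_0)$ with $\eta_Z=\bar\nu$), but then the defect-zero condition on $\eta$ has to be read in $\wh G/M_0$ relative to the central $p'$-subgroup $\wh M/M_0$, and your appeals to \cite[Theorem~3.18]{Nav98} and Lemma~\ref{lem:gallagher-lift} need to be adjusted accordingly.

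Your treatment of (iii) is too sketchy to count as a proof. The observation that $\wh\vartheta^0$ is independent of $\wh\vartheta$ is correct and is the right starting point, but the claim that the image ``depends only on the modular triple $(G,M,\wh\vartheta^0)$'' via a twisted-group-algebra analogue of \cite[Theorem~3.18]{Nav98} is asserted rather than proved, and you yourself flag it with ``I expect''. The paper avoids this by working in the concrete setting $M=K\times D$ and comparing two choices $\mu,\mu'$ directly via Lemma~\ref{lem:Multiplication by linear Brauer characters}; if you want to stay on your route you would need to actually carry out the twisted analogue (e.g.\ by passing to the central extension once and for all, where it becomes the untwisted statement, and then checking independence of the auxiliary choices).
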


\begin{proof}
By \cite[Problem 8.13]{Nav98} we can find an ordinary-modular character triple $(H,Z,\lambda)$ with $Z$ a central subgroup of $H$ with order prime to $p$ and an isomorphism of ordinary-modular character triples $(\sigma,\tau):(G,K,\vartheta)\to (H,Z,\lambda)$. Let $Z\leq N\leq H$ such that $\tau(M/K)=N/Z$ and set $\wh{\lambda}:=\sigma_M(\wh{\vartheta})$. Observe that $\lambda\in\dz(Z)$, that $\wh{\lambda}$ is an $H$-invariant extension of $\lambda$, and that $N/Z$ is a $p$-group. Next, let $\varphi:=\sigma_G(\chi)$ and notice that $\varphi$ lies above $\wh{\lambda}$. Furthermore, $\chi(1)/\wh{\vartheta}(1)=\varphi(1)/\wh{\lambda}(1)$ and therefore $\varphi\in\rdz(H\mid \wh{\lambda})$. Furthermore, since $(\sigma,\tau)$ is an isomorphism of ordinary-modular character triples, it follows that $\chi^0\in\IBr(G)$ if and only if $\varphi^0\in\IBr(H)$. Hence, it is no loss of generality to assume that $K$ is a central subgroup of $G$ of order prime to $p$.

Now, we can write $M=K\times D$ for a Sylow $p$-subgroup $D$ of $M$ and $\wh{\vartheta}=\vartheta\times \mu$ for some $G$-invariant linear character $\mu\in\irr{D}$. In this situation \cite[Theorem 4.1]{Nav04I} yields a canonical bijection
\begin{align*}
\dz(G/D)&\to \rdz(G\mid \mu)
\\
\psi &\mapsto\psi_\mu
\end{align*}
and where $\psi_\mu^0=\wh{\mu}\cdot\psi^0$ for some linear Brauer character $\wh{\mu}$ of $G$. Then, recalling that $\psi^0$ is an irreducible Brauer character for every $\psi\in\dz(G/D)$, we deduce that $\psi_\mu^0\in\IBr(G)$ for every $\psi_\mu\in\rdz(G\mid \mu)$. To prove the first statement, it now suffices to show that each character $\chi\in\rdz(G\mid \wh{\vartheta})$ belongs to $\rdz(G\mid \mu)$. To see this recall that $K$ is a $p'$-group and $\mu$ is linear so that $\chi(1)_p=\wh{\vartheta}(1)_p|G:M|_p=\mu(1)_p|G:D|$ for every $\chi\in\rdz(G\mid \wh{\vartheta})$. This shows that $\chi\in\rdz(G\mid \mu)$ as claimed. Furthermore, if $\chi_i\in\rdz(G\mid \wh{\vartheta})$ for $i=1,2$, then we can find $\psi_i\in\dz(G/D)$ such that $\chi_i=\psi_{i,\mu}$. If $\chi_1^0=\chi_2^0$, then we get $\wh{\mu}\cdot\psi_1^0=\wh{\mu}\cdot\psi_1^0$ and therefore $\psi_1^0=\psi_2^0$. This implies that $\psi_1=\psi_2$ and therefore that $\chi_1=\chi_2$ which implies the second sentence of the statement.

We now show that the set of characters of the form $\chi^0$ for $\chi\in\rdz(G\mid \wh{\vartheta})$ does not depend on the choice of the extension $\wh{\vartheta}$. Suppose that $\wh{\vartheta}'$ is another $G$-invariant extension of $\vartheta$ to $M$. Arguing as in the previous paragraph, this determines a unique $G$-invariant character $\mu'\in\irr{D}$ and a linear Brauer character $\wh{\mu}'\in\IBr(G)$ such that $\psi_{\mu'}'^0=\wh{\mu}'\cdot\psi'^0$ for every $\psi'\in\dz(G/D)$. Now let $\chi'\in\rdz(G\mid \mu')$ and write $\chi'=\psi'_{\mu'}$ for some $\psi'\in\dz(G/D)$. In order to prove (iii), we need to find $\chi\in\rdz(G\mid \mu)$ such that $\chi'^0=\chi^0$. As explained before, we can write $\chi'^0=\wh{\mu}'\cdot\psi'^0$ and hence $\chi'^0=\wh{\mu}\cdot(\lambda\cdot\psi'^0)$ for $\lambda:=\wh{\mu}^{-1}\cdot\wh{\mu}'$. Moreover, since $\lambda$ is linear, Lemma \ref{lem:Multiplication by linear Brauer characters} implies that $\lambda\cdot\psi'^0$ belongs to a block of defect zero of $G/D$. In particular, there exists some $\psi\in\dz(G/D)$ such that $\psi^0=\lambda\cdot\psi'^0$. This implies that $\chi'^0=\wh{\mu}\cdot\psi^0=\psi_\mu^0$ and our claim follows by setting $\chi=\psi_\mu\in\rdz(G\mid \mu)$.
\end{proof}

We can now define the set of relative defect zero Brauer characters.

\begin{defi}
\label{def:rdzo}
Let $K\leq M$ be normal subgroups of $G$ with $M/K$ a $p$-group and consider a $G$-invariant $\varphi\in\dzo(K)$. Set $\overline{H}:=H\o_p(K)/\o_p(K)$ for every $H\leq G$. By definition $\overline{\varphi}$ belongs to a block of defect zero of $\overline{K}$ and we can find a unique $\overline{\vartheta}\in\irr{\overline{K}}$ such that $\overline{\vartheta}^0=\overline{\varphi}$ according to \cite[Theorem 3.18]{Nav98}. By applying Lemma \ref{lem:Definition of rdzo} to $\overline{G}$, for any $\overline{G}$-invariant extension $\wh{\vartheta}\in\irr{\overline{M}}$ of $\overline{\vartheta}$, the set of Brauer characters $\overline{\chi}^0$ for $\overline{\chi}\in\rdz(\overline{G}\mid \wh{\vartheta})$ is a well defined subset of $\IBr(\overline{G})$ which does not depend on the choice of the extension $\wh{\vartheta}$. We define the set $\rdzo(G\mid M,\varphi)$ to be the set of inflations to $G$ of such Brauer characters $\overline{\chi}^0$. More generally, if $\varphi\in\dzo(K)$ is $M$-invariant, but not necessarily $G$-invariant, then we denote by $\rdzo(G\mid M,\varphi)$ the set of Brauer characters of $G$ whose Clifford correspondent over $\varphi$ (see \cite[Theorem 8.9]{Nav98}) belongs to $\rdzo(G_\varphi\mid M,\varphi)$.
\end{defi}

Recall that the set $\dz(G)$ of defect zero characters of a finite group $G$ can be recovered as the set of $1$-relative defect zero characters, i.e. $\dz(G)=\rdz(G\mid 1)$ and where we denote by $1$ the trivial character of the identity group. Similarly, we observe that the set $\dzo(G)$ can be recovered as a particular case of Definition \ref{def:rdzo}.

\begin{rem}
Consider $K\leq M\leq G$ and $\vartheta$ as in Definition \ref{def:rdzo}. If $K=1$, $M=\o_p(G)$ and $\vartheta=1$, then \cite[Theorem 3.18]{Nav98} implies that $\dzo(G)$ coincides with $\rdzo(G\mid \o_p(G),1)$ and where we denote by $1$ the trivial Brauer character of the identity group.
\end{rem}

\subsection{A bijection above the Dade--Glauberman--Nagao correspondence}

We now come to the main result of this section. In order to introduce this statement, we quickly recall the definition of the Dade--Glauberman--Nagao correspondence as defined in \cite[Section 4]{Nav-Tie11}. Assume that $K\unlhd M$ with $M/K$ a $p$-group and let $\vartheta\in\dz(K)$ be $M$-invariant. By \cite[Corollary 9.6]{Nav98} there is a unique block, say $b$, of $M$ covering the block of $\vartheta$. Furthermore, if $D$ is a defect group of $b$, then \cite[Theorem 9.17]{Nav98} implies that $D$ is a complement of $K$ in $M$, that is, $M=KD$ and $1=K\cap D$. Now, notice that $\n_M(D)=D\times \c_K(D)$ and that, if $C$ is the Brauer correspondent of $b$ in $\n_M(D)$, then $C$ covers a unique block $c$ of $\c_K(D)$ with defect zero. We denote by $\Pi_D(\vartheta)\in\dz(\c_K(D))$ the unique ordinary character belonging to $c$, called the \textit{Dade--Glauberman--Nagao correspondent} (DGN correspondent for short) of $\vartheta$ with respect to $D$. We refer the reader to \cite[Section 4]{Nav-Tie11} and \cite{Nav-Spa14II} for further information.

Next, we define a version of the DGN correspondence for Brauer characters. To start, and for future reference, we consider the following hypothesis.

\begin{hyp}\label{hyp:DGN hypotheses}
Suppose that $K\unlhd M$ with $M/K$ a $p$-group and let $\varphi\in\dzo(K)$ be $M$-invariant. Set $L:=\o_p(K)$ and denote by $\overline{\varphi}$ the Brauer character of $\overline{K}:=K/L$ corresponding to $\varphi$. By \cite[Theorem 3.18]{Nav98} we can find a unique character $\overline{\vartheta}\in\dz(\overline{K})$ such that $\overline{\vartheta}^0=\overline{\varphi}$. Let $D$ be a $p$-subgroup of $M$ such that $\overline{D}:=D/L$ is a defect group of the unique block of $\overline{M}:=M/L$ covering the block of $\overline{\varphi}$ and notice that $M=KD$ and $K\cap D=L$ by \cite[Theorem 9.17]{Nav98}. Observe that the uniqueness of $\overline{\vartheta}$ implies that $\overline{\vartheta}$ is $\overline{M}$-invariant.
\end{hyp}

We can now define a version of the DGN correspondence for Brauer characters as follows.

\begin{defi}
\label{def:DGN for Brauer characters}
Assume Hypothesis \ref{hyp:DGN hypotheses}. By the above paragraph we can define the DGN correspondent $\overline{\vartheta}':=\Pi_{\overline{D}}(\overline{\vartheta})\in\dz(\c_{\overline{K}}(\overline{D}))$ of $\overline{\vartheta}$ with respect to $\overline{D}$. Now, let $\vartheta'$ be the ordinary character of $\n_K(D)$ corresponding to $\overline{\vartheta}'$ via inflation and define the \textit{Dade--Glauberman--Nagao correspondence} (DGN correspondent for short) of $\varphi$ with respect to $D$ by setting
\[\pi_D(\varphi):=\vartheta'^0\in\dzo(\norm K D).\]
\end{defi}

We are now ready to state the main result of this section. 

\begin{thm}
\label{thm:Above modular DGN}
Let $K\leq M\leq A$ be finite groups with $K$ and $M$ normal in $A$ and $M/K$ a $p$-group. Let $\varphi\in\dzo(K)$ be $A$-invariant and $D$ a $p$-subgroup of $M$ such that $D/\o_p(K)$ is a defect group of the unique block of $M/\o_p(K)$ covering the block of $\overline{\varphi}$ in $K/\o_p(K)$. Consider the DGN correspondent $\pi_D(\varphi)\in\dzo(\n_K(D))$ as in Definition \ref{def:DGN for Brauer characters}. If $M\leq G\unlhd A$ and $M/K$ is a radical $p$-subgroup of $G/K$, then there exists an $\n_A(D)$-equivariant bijection
\[\Delta_{D,\varphi}^G:\rdzo\left(G\enspace\middle|\enspace M,\varphi\right)\to\dzo\left(\n_G(D)\enspace\middle|\enspace\pi_D(\varphi)\right)\]
such that
\[\left(A_\chi,G,\chi\right)\isob\left(\n_A(D)_\chi,\n_G(D),\Delta_{D,\varphi}^G(\chi)\right)\]
for every $\chi\in\rdzo(G\mid M,\varphi)$.
\end{thm}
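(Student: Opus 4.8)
The plan is to proceed in two layers: first reduce to the case where $K$ is a central $p'$-subgroup of $G$, then handle that case by invoking the ordinary-character version of this statement (which is essentially \cite[Theorem 5.13]{Nav-Spa14I}) together with the dictionaries established in Section \ref{sec:Notations} between ordinary, modular, and ordinary-modular character triples. Concretely, I would begin by using \cite[Problem 8.13]{Nav98} to replace $(A,K,\varphi)$ — or rather its natural ordinary-modular incarnation after passing to $K/\o_p(K)$ — by an isomorphic ordinary-modular character triple $(\wt A,\wt Z,\wt\varphi)$ in which $\wt Z$ is a central $p'$-subgroup. Lemma \ref{lem:Definition of rdzo} and Definition \ref{def:rdzo} are precisely designed so that the sets $\rdzo(G\mid M,\varphi)$ transport along such isomorphisms, and the DGN correspondent of Definition \ref{def:DGN for Brauer characters} is defined on the level of $K/\o_p(K)$, so it too transports. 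Thus it is no loss of generality to assume $K$ is central in $G$ of order prime to $p$ and $\varphi\in\Irr(K)$ is linear; here $M=K\times D$ with $D=\o_p(M)$-complement, and $M/K\cong D$ is a radical $p$-subgroup of $G/K$, i.e.\ $D$ (modulo $K$) is radical in the appropriate quotient.

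In this reduced situation I would run the argument through ordinary characters. Pick a $G$-invariant extension $\wh\varphi\in\Irr(M)$ of $\varphi$ (using \cite[Theorem 2.4]{Nav-Tie11}); by Lemma \ref{lem:Definition of rdzo}, restriction to $p$-regular elements is a bijection from $\rdz(G\mid\wh\varphi)$ onto $\rdzo(G\mid M,\varphi)$, and independently of the choice of $\wh\varphi$. On the other side, $\pi_D(\varphi)=\vartheta'^{\,0}$ where $\vartheta'=\Pi_D(\wh\varphi|_K)$ is an ordinary DGN correspondent, so $\dzo(\n_G(D)\mid\pi_D(\varphi))$ is likewise the image under $^0$ of $\dz(\n_G(D)\mid\wh\varphi')$ for a suitable extension $\wh\varphi'$ of $\vartheta'$ to the relevant subgroup. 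Now I apply the \emph{ordinary} version of the theorem — the relative-defect-zero DGN bijection compatible with block isomorphisms of ordinary character triples, which is \cite[Theorem 5.13]{Nav-Spa14I} (or the $K=1$ special cases \cite[Theorem 4.2]{Nav-Tie11}, \cite[Theorem 3.8]{Spa13I}) — to obtain an $\n_A(D)$-equivariant bijection
\[
\rdz\left(G\enspace\middle|\enspace\wh\varphi\right)\to\dz\left(\n_G(D)\enspace\middle|\enspace\vartheta'\right)
\]
with $(A_\chi,G,\chi)\isoc(\n_A(D)_\chi,\n_G(D),\Pi(\chi))$ together with the block-induction compatibility, i.e.\ an ordinary \emph{block} isomorphism. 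Composing with the two bijections $^0$ above defines $\Delta_{D,\varphi}^G$, and its $\n_A(D)$-equivariance is inherited.

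The remaining — and I expect principal — task is to upgrade the ordinary block isomorphism of character triples to a \emph{modular} block isomorphism of modular character triples. Here I would use the machinery of Section \ref{sec:Notations}: since $\chi\in\rdz(G\mid\wh\varphi)$ has $\chi^0\in\IBr(G)$ and $\wh\varphi$ extends $\varphi$ with $\wh\varphi^0\in\IBr(M)$ (an ordinary-modular situation), Lemma \ref{lem:ordinary-modular extension}, Lemma \ref{lem:gallagher-lift}, and especially Lemma \ref{lem:relative-blocks} let me compare the central functions $\lambda_\chi$ and $\lambda_{\chi^0}$ on conjugacy-class sums; combined with Lemma \ref{lem:block-iso-scalar}'s reformulation of block isomorphism in terms of the scalars $\P(\CL_{\langle N,x\rangle}(x)^+)$, the ordinary block-isomorphism data yields the modular one. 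The delicate points will be: (a) keeping track of the $\o_p(K)$-quotient throughout, since $\rdzo$ and $\pi_D$ are defined after inflation from $G/\o_p(K)$, which means I should really run the whole argument in $\overline G=G/\o_p(K)$ and then inflate via Lemma \ref{lem:Lifting isomorphisms from quotients}; (b) the non-invariant case, where $\varphi$ is only $M$-invariant — this is handled by Clifford theory: pass to $G_\varphi$, apply the invariant case there, and induce back up using Lemma \ref{lem:Irreducible induction}, noting that $\n_{G_\varphi}(D)=\n_G(D)_\varphi$ and that the Clifford correspondence is compatible with the DGN correspondence (as in \cite[Section 4]{Nav-Tie11}); and (c) verifying the central-subgroup hypothesis $\cent G D\le \n_A(D)_{\text{target}}$ needed to even speak of a block isomorphism — but this is automatic since the target ambient group is $\n_A(D)$ itself. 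The genuinely new content, as opposed to bookkeeping, is step (a)–(c) assembled correctly; the core bijection is borrowed wholesale from the ordinary theory.
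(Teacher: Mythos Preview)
Your overall outline has the right shape---quotient by $\o_p(K)$, then inflate back via Lemma \ref{lem:Lifting isomorphisms from quotients}---and you correctly identify that the ordinary Dade--Glauberman--Nagao machinery of \cite{Nav-Spa14I} is the engine. But two of your reductions have genuine gaps.

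First, the reduction to $K$ central via \cite[Problem 8.13]{Nav98} does not transport the problem. An ordinary-modular character triple isomorphism $(A,K,\varphi)\to(\wt A,\wt Z,\wt\varphi)$ only gives a correspondence between intermediate subgroups $K\le J\le A$ and $\wt Z\le \wt J\le \wt A$; it says nothing about subgroups that do \emph{not} contain $K$. Here $\n_A(D)$ does not contain $K$ (indeed $K\cap D=1$ and $\n_K(D)=\c_K(D)$ is typically proper in $K$), so neither $\n_G(D)$ nor the block-theoretic data attached to it transport along such an isomorphism. In the central model $\wt D$ becomes normal in $\wt A$ and the DGN correspondence collapses to the identity, so you would be proving a triviality there and then have no mechanism to pull the nontrivial $\isob$ back. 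The paper uses \cite[Problem 8.13]{Nav98} only inside Lemma \ref{lem:Definition of rdzo}, to characterise $\rdzo$, never to transport block isomorphisms.

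Second, the ``upgrade'' from an ordinary $\isob$ (as in \cite[Theorem 5.13]{Nav-Spa14I}) to a modular $\isob$ is the heart of the matter and your justification does not suffice. Lemmas \ref{lem:ordinary-modular extension}, \ref{lem:gallagher-lift}, \ref{lem:relative-blocks} compare blocks of individual characters; they do not produce modular projective $\mathbb{F}$-representations with matching factor sets, which is what Definition \ref{def:block-iso} (via Theorem \ref{thm:strong-iso} and Lemma \ref{lem:block-iso-scalar}) actually demands. Reducing a $\CC$-projective representation modulo $p$ to obtain an $\mathbb{F}$-projective representation associated with $\chi^0$ is not automatic and is nowhere established in the paper's toolkit.

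The paper avoids both issues by working at a different level. After passing to $\o_p(K)=1$, it uses the central extension $\wh A$ of Lemma \ref{lem:4.1} (a genuine group epimorphism, so all subgroups including $\n_G(D)$ lift) to reduce to the case where $\varphi$ \emph{extends} to $A$. In that case (Proposition \ref{pro:dgn-isomorphism-with-extension}) the extensions $\wt\varphi=\wt\vartheta^0$ and $\wt\varphi'=(\wt\vartheta')^0$ are afforded by honest linear $\mathbb{F}$-representations $\P,\P'$, so the modular block isomorphism $(A,M,\psi)\isob(\n_A(D),\n_M(D),\psi')$ is built directly; the scalar comparison uses \cite[Proposition 5.12]{Nav-Spa14I} (recorded as Lemma \ref{lem:scalars-centralizer}) together with \cite[Lemma 4.2]{Nav-Spa14I}. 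The bijection $\Delta_{D,\varphi}^G$ and the $\chi$-level block isomorphisms then fall out of Lemma \ref{lem:Bijections induced by isomorphisms are compatible with isomorphisms}, with a separate check that $\sigma_J$ sends $\rdzo$ to $\dzo$. The key structural difference from your plan is that the paper constructs one modular $\isob$ at the level of $M$ (where the projective representations are linear) rather than one ordinary $\isob$ per $\chi$ followed by an unexplained modular conversion.
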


\begin{rem}
Observe that if $D$ is the $p$-subgroup of $M$ considered in the above theorem, then $D$ is a radical $p$-subgroup of $G$. In fact, notice first that $D$ is a radical $p$-subgroup of $M$ and recall furthermore that $M/K$ is a radical $p$-subgroup of $G/K$ by hypothesis. Since $G/K$ is isomorphic to $\n_G(D)/\n_K(D)$ we deduce that $\n_M(D)/\n_K(D)$ is a radical $p$-subgroup of $\n_G(D)/\n_K(D)$. Then, since $\n_M(D)/\n_K(D)$ is normal in $\n_G(D)/\n_K(D)$, it follows that $\o_p(\n_G(D)/\n_M(D))=1$. This implies that $\o_p(\n_G(D)/D)$ is contained in $\n_M(D)/D$. But then, recalling that $D$ is a radical $p$-subgroup of $M$, we get $\o_p(\n_M(D)/D)=1$ and therefore $\o_p(\n_G(D)/D)\leq\o_p(\n_M(D)/D)=1$ which implies that $D=\o_p(\n_G(D))$ as claimed.
\end{rem}

Our proof of Theorem \ref{thm:Above modular DGN} is inspired by the argument developed in \cite[Section 5]{Nav-Spa14I}. We start with the following lemma (see \cite[Proposition 5.12]{Nav-Spa14I}).

\begin{lem}
\label{lem:scalars-centralizer}
Assume Hypothesis \ref{hyp:DGN hypotheses} with $L=1$ and suppose that $\vartheta$ extends to $\wt{\vartheta}\in\Irr(A)$. Set $\vartheta':=\Pi_D(\vartheta)$, $H:=\n_A(D)$, and $N:=\c_K(D)$. Then there exists an extension $\wt{\vartheta}'\in\Irr(H)$ of $\vartheta'$ such that
\[\wt{\vartheta}(x)^*=e^*\wt{\vartheta}'(x)^*\]
whenever $x$ is a $p$-regular element of $G$ with $D\in\Syl_p(\cent M x)$ and where $e=[\vartheta_N,\vartheta']$. Furthermore, we have
\[\IBr\left((\wt{\vartheta}^0)_{\cent A M}\right)=\IBr\left((\wt{\vartheta}'^0)_{\cent A M}\right).\]
\end{lem}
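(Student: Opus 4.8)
The plan is to reduce the statement to a known result about the \emph{ordinary} Dade--Glauberman--Nagao correspondence, namely \cite[Proposition 5.12]{Nav-Spa14I}, and then to transfer the conclusion from ordinary characters to Brauer characters via restriction to $p$-regular elements. Since $L=\o_p(K)=1$ by assumption, Hypothesis \ref{hyp:DGN hypotheses} simplifies: $\overline{K}=K$, $\overline{M}=M$, $\overline{\vartheta}=\vartheta\in\dz(K)$, $\vartheta^0=\varphi$, $D$ is a genuine defect group complement of $K$ in $M$, and $\vartheta'=\Pi_D(\vartheta)\in\dz(N)$ with $N=\c_K(D)$. So $\pi_D(\varphi)=\vartheta'^0$ by Definition \ref{def:DGN for Brauer characters}.

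First I would invoke the ordinary result. Because $\vartheta$ extends to $\wt{\vartheta}\in\Irr(A)$, \cite[Proposition 5.12]{Nav-Spa14I} (with $G$ there playing the role of $A$ here, or an appropriate intermediate group) produces an extension $\wt{\vartheta}'\in\Irr(H)$ of $\vartheta'$ satisfying the scalar identity $\wt{\vartheta}(x)^*=e^*\wt{\vartheta}'(x)^*$ for every $p$-regular $x$ with $D\in\Syl_p(\c_M(x))$, together with the centralizer condition $\IBr\big((\wt{\vartheta})_{\c_A M}\big)$ matching the appropriate character of $\c_A M$ for the ordinary triples; here $e=[\vartheta_N,\vartheta']$, which is prime to $p$. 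The only genuinely new content to establish is then the \emph{modular} statements: that $\wt{\vartheta}^0\in\IBr(A)$ and $\wt{\vartheta}'^0\in\IBr(H)$ are irreducible, and that $\IBr\big((\wt{\vartheta}^0)_{\c_A M}\big)=\IBr\big((\wt{\vartheta}'^0)_{\c_A M}\big)$.

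For irreducibility: $(A,K,\vartheta)$ is an ordinary-modular character triple since $\vartheta\in\dz(K)$ has $\vartheta^0\in\IBr(K)$, and $\wt{\vartheta}$ is an extension of $\vartheta$; by Lemma \ref{lem:ordinary-modular extension} (applied with $N=K$, $G=A$, using that $\vartheta$ extends), $\wt{\vartheta}^0\in\IBr(A)$. For $\wt{\vartheta}'$, note $\vartheta'\in\dz(N)$, so $(H,N,\vartheta')$ is again ordinary-modular and $\vartheta'$ extends to $H$, whence $\wt{\vartheta}'^0\in\IBr(H)$ by the same lemma. The scalar identity then reads $(\wt{\vartheta}^0)(x)=e^*(\wt{\vartheta}'^0)(x)$ on the relevant $p$-regular classes, since $*$ is a ring homomorphism and ordinary character values restricted to $p$-regular elements are Brauer character values. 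For the centralizer condition: $\c_A M$ consists of $p$-regular and $p$-singular elements, but what matters is that both $\wt{\vartheta}^0$ and $\wt{\vartheta}'^0$ restrict to $\c_A M$ as (scalar multiples of) Brauer characters whose irreducible constituents are determined by the underlying linear characters of $\c_A M$; since $\IBr((\wt{\vartheta})_{\c_A M})$ already agrees with the corresponding ordinary object by the ordinary result and $\c_A M$ acts through scalars $\zeta(x)I$ (Schur's lemma applied to a projective representation associated with the triple, as in Lemma \ref{lem:central-iso-scalar}), passing to $p$-regular elements and applying $*$ gives the matching set of linear Brauer characters, hence the claimed equality.

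The main obstacle I anticipate is the careful bookkeeping around the reduction to the $L=1$ case already being \emph{assumed} here (so that is not the obstacle), and instead making sure that the hypotheses of \cite[Proposition 5.12]{Nav-Spa14I} are exactly met — in particular that $\vartheta$ extends to the full group $A$ (given) and that $D$ is a defect group of the right block — and then verifying that the scalar $e=[\vartheta_N,\vartheta']$ is a $p'$-number so that $e^*\neq 0$ in $\FF$; this last point follows from the structure of the DGN correspondence (the defect group $D$ acts with the Glauberman-type multiplicity, which is prime to $p$), but it should be spelled out since the modular identity is vacuous if $e^*=0$. Beyond that, the argument is a routine transfer via the homomorphism $*:\mathbf{S}\to\FF$ and Lemma \ref{lem:ordinary-modular extension}.
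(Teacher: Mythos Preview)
Your proposal is correct and follows essentially the same approach as the paper: invoke \cite[Proposition 5.12]{Nav-Spa14I} to obtain $\wt{\vartheta}'$ together with the scalar identity and the ordinary equality $\Irr(\wt{\vartheta}_{\c_A(M)})=\Irr(\wt{\vartheta}'_{\c_A(M)})$, then observe that the latter immediately yields $\IBr((\wt{\vartheta}^0)_{\c_A(M)})=\IBr((\wt{\vartheta}'^0)_{\c_A(M)})$. You include some extra material (irreducibility of $\wt{\vartheta}^0$ and $\wt{\vartheta}'^0$, and $e$ being prime to $p$) that is not required for the lemma as stated---these facts are used only later, in Proposition~\ref{pro:dgn-isomorphism-with-extension}---but this does no harm to the argument.
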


\begin{proof}
Let $\wt{\vartheta}'$ be the extension of $\vartheta$ given by \cite[Proposition 5.12]{Nav-Spa14I} and observe that the first part of the statement is satisfied and that in addition
\[\irr{\wt{\vartheta}_{\c_A(M)}}=\irr{\wt{\vartheta}'_{\c_A(M)}}.\]
But then the latter equality implies that $\IBr(\wt{\vartheta}^0_{C})=\IBr((\wt{\vartheta}')^0_C)$ as required.
\end{proof}

We now use Lemma \ref{lem:scalars-centralizer} to construct the following block isomorphisms of modular character triples.

\begin{pro}
\label{pro:dgn-isomorphism-with-extension}
Assume Hypothesis \ref{hyp:DGN hypotheses} with $L=1$ and suppose that $M, K\normal A$ and that $\varphi$ extends to $\wt{\varphi}\in\IBr(A)$. Consider $\vartheta'=\Pi_{D}(\vartheta)$ and $\varphi'=\pi_D(\varphi)=(\vartheta')^0$, and let $\psi\in\IBr(M)$ and $\psi'\in\IBr(\norm M D)$ be the unique characters lying above $\varphi$ and $\varphi'$ respectively. Then
\[(A,M,\psi)\isob (\n_A(D),\n_M(D),\psi').\]
Furthermore, if $M\leq J\leq A$ and $D$ is a radical $p$-subgroup of $J$ then the bijection
\[\sigma_J:\IBr\left(J\enspace\middle|\enspace\psi\right)\to\IBr\left(\n_J(D)\enspace\middle|\enspace\psi'\right)\] given by Theorem \ref{thm:strong-iso} maps $\rdzo(J\mid M,\varphi)$ onto $\dzo(\norm J D\mid\varphi')$.
\end{pro}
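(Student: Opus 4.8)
\emph{Plan.} The idea is to exploit that both $\psi$ and $\psi'$ \emph{extend}: $\psi$ extends to $\wt\vartheta^{0}$, where $\wt\vartheta\in\Irr(A)$ extends $\vartheta$, and $\psi'$ extends to $(\wt\vartheta')^{0}$, where $\wt\vartheta'\in\Irr(\n_A(D))$ is the extension of $\vartheta'=\Pi_D(\vartheta)$ produced by Lemma~\ref{lem:scalars-centralizer}. Hence the projective representations associated with the two triples can be chosen to be ordinary representations with trivial factor sets; Theorem~\ref{thm:strong-iso} then gives a strong isomorphism, the last equality of Lemma~\ref{lem:scalars-centralizer} upgrades it to a central one, and the Brauer--homomorphism estimate underlying the Dade--Glauberman--Nagao correspondence upgrades it further to a block isomorphism. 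The final claim is then a Gallagher-type bookkeeping.

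\emph{Set-up.} By Lemma~\ref{lem:ordinary-modular extension} fix $\wt\vartheta\in\Irr(A)$ extending $\vartheta$ with $\wt\vartheta^{0}\in\IBr(A)$, and replace $\wt\varphi$ by $\wt\vartheta^{0}$; let $\wt\vartheta'\in\Irr(\n_A(D))$ be as in Lemma~\ref{lem:scalars-centralizer}, so that $(\wt\vartheta')^{0}\in\IBr(\n_A(D))$ again by Lemma~\ref{lem:ordinary-modular extension}. Now $\wt\vartheta^{0}$ restricts to an irreducible Brauer character of $M$ lying over $\varphi$, and, writing $N:=\c_K(D)$ so that $\n_M(D)=D\times N$, $(\wt\vartheta')^{0}$ restricts to $1_{D}\times\varphi'\in\IBr(\n_M(D))$ lying over $\varphi'$; by uniqueness $\psi=(\wt\vartheta^{0})_{M}$ and $\psi'=\bigl((\wt\vartheta')^{0}\bigr)_{\n_M(D)}$. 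Since $\varphi$, and therefore $\vartheta$, is $A$-invariant, the unique block $b$ of $M$ covering $\bl(\vartheta)$ is $A$-invariant, so the conjugacy of its defect groups yields $A=M\n_A(D)$; trivially $\n_M(D)=M\cap\n_A(D)$, and $\bl(\psi)=b$ has defect group $D$. Theorem~\ref{thm:strong-iso} now applies with $\mathcal P,\mathcal P'$ taken to be ordinary representations affording $\wt\vartheta^{0}$ and $(\wt\vartheta')^{0}$ respectively (both factor sets trivial, hence coinciding via $\tau\colon A/M\xrightarrow{\sim}\n_A(D)/\n_M(D)$), producing a strong isomorphism $(\sigma,\tau)\colon(A,M,\psi)\to(\n_A(D),\n_M(D),\psi')$.

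\emph{From strong to block isomorphism.} One has $\c_A(M)\le\c_A(D)\le\n_A(D)$, and by Lemma~\ref{lem:central-iso-scalar} the assertion $(A,M,\psi)\isoc(\n_A(D),\n_M(D),\psi')$ reduces to the statement that $\mathcal P(x)$ and $\mathcal P'(x)$ carry the same scalar for $x\in\c_A(M)$, i.e.\ that the linear Brauer characters obtained by restricting $\wt\vartheta^{0}$ and $(\wt\vartheta')^{0}$ to $\c_A(M)$ coincide; this is precisely the last equality in Lemma~\ref{lem:scalars-centralizer}. To promote this to $\isob$ via Lemma~\ref{lem:block-iso-scalar} I first observe that $\psi'=1_{D}\times\varphi'$ with $\varphi'\in\dz(N)$, so $\bl(\psi')$ has defect group $D$ and $\c_A(D)\le\n_A(D)$, which settles the group-theoretic requirement. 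For the scalar condition, following the reductions of \cite[Lemma~4.2]{Spa17} and \cite[Section~5]{Nav-Spa14I}, and using \cite[Proposition~2.5(a)]{Spa17I} together with the fact that $\bl(\psi)$ has defect group $D$, it is enough to treat $p$-regular $x\in\c_A(D)$ with $D\in\Syl_p(\c_M(x))$. For such an $x$, $\mathcal P$ (respectively $\mathcal P'$) restricts on $\langle M,x\rangle$ (respectively on $\langle M,x\rangle\cap\n_A(D)$) to an irreducible extension of $\psi$ (respectively of $\psi'$) whose block agrees with that of $\wt\vartheta_{\langle M,x\rangle}$ (respectively of $\wt\vartheta'_{\langle M,x\rangle\cap\n_A(D)}$), so the two scalars equal
\[
\Bigl(|\CL_{\langle M,x\rangle}(x)|\,\tfrac{\wt\vartheta(x)}{\wt\vartheta(1)}\Bigr)^{*}
\qquad\text{and}\qquad
\Bigl(|\CL_{\langle M,x\rangle}(x)\cap\n_A(D)|\,\tfrac{\wt\vartheta'(x)}{\wt\vartheta'(1)}\Bigr)^{*}.
\]
By Lemma~\ref{lem:scalars-centralizer}, $\wt\vartheta(x)^{*}=e^{*}\wt\vartheta'(x)^{*}$ with $e=[\vartheta_N,\vartheta']$, so it remains to reconcile the two class-size factors modulo $p$; this is carried out through the Brauer homomorphism realising $\Pi_D$ (the Brauer correspondent $C$ of $b$ in $\n_M(D)$ covering the defect-zero block of $\vartheta'$), the multiplicity $e$ absorbing exactly the discrepancy. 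I expect this class-sum/Brauer-homomorphism bookkeeping—keeping track of $p$-singular parts so that $e$ cancels—to be the main obstacle of the proof. This yields $(A,M,\psi)\isob(\n_A(D),\n_M(D),\psi')$.

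\emph{The map on relative defect zero characters.} Let $M\le J\le A$ with $D$ radical in $J$. Then $b$ is $J$-invariant, so $J=M\n_J(D)$ and $\tau$ restricts to an isomorphism $J/M\xrightarrow{\sim}\n_J(D)/\n_M(D)$. Because the factor sets are trivial, the bijection $\sigma_J\colon\IBr(J\mid\psi)\to\IBr(\n_J(D)\mid\psi')$ of Theorem~\ref{thm:strong-iso} sends $\eta\cdot(\wt\vartheta^{0})_{J}$ to $\widehat\eta\cdot\bigl((\wt\vartheta')^{0}\bigr)_{\n_J(D)}$, where $\eta\in\IBr(J/M)$ and $\widehat\eta\in\IBr(\n_J(D)/\n_M(D))$ correspond under (the restriction of) $\tau$. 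Unravelling Definition~\ref{def:rdzo} with $L=1$ (so $\overline{J}=J$) by means of \cite[Theorem~3.18]{Nav98} and Lemma~\ref{lem:gallagher-lift}, one finds that $\rdzo(J\mid M,\varphi)$ consists exactly of those $\eta\cdot(\wt\vartheta^{0})_{J}$ with $\eta=\eta_{0}^{0}$ for some $\eta_{0}\in\dz(J/M)$, and, by the identical computation, $\dzo(\n_J(D)\mid\varphi')$ consists of those $\widehat\eta\cdot\bigl((\wt\vartheta')^{0}\bigr)_{\n_J(D)}$ with $\widehat\eta=\widehat\eta_{0}^{0}$ for some $\widehat\eta_{0}\in\dz(\n_J(D)/\n_M(D))$ (both sets being empty unless $\o_p(J/M)=1$). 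Since the group isomorphism $\tau$ carries $\dz(J/M)$ onto $\dz(\n_J(D)/\n_M(D))$ and is compatible with reduction modulo $p$, it follows that $\sigma_J$ restricts to the desired bijection $\rdzo(J\mid M,\varphi)\to\dzo(\n_J(D)\mid\varphi')$, the blocks being matched by Lemma~\ref{lem:relative-blocks}. This completes the proof.
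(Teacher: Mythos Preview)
Your overall strategy matches the paper's: extend $\psi$ and $\psi'$ via $\wt\vartheta^{0}$ and $(\wt\vartheta')^{0}$, take ordinary representations for $\mathcal P,\mathcal P'$ so that the factor sets are trivial, deduce the central isomorphism from the last equality of Lemma~\ref{lem:scalars-centralizer}, and handle the final $\rdzo$ claim by Gallagher bookkeeping through the isomorphism $J/M\cong\n_J(D)/\n_M(D)$. These parts are essentially correct and coincide with the paper's argument.

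The genuine gap is the one you yourself flag: the block step. You try to verify the scalar criterion of Lemma~\ref{lem:block-iso-scalar}, but the class-sum comparison you sketch is not carried out; you never explain how the multiplicity $e$ and the index $|K:\c_K(D)|$ conspire to make the two scalars agree, and the reduction via \cite[Proposition~2.5(a)]{Spa17I} to $p$-regular $x$ with $D\in\Syl_p(\c_M(x))$ is not by itself enough to close this. The paper does \emph{not} go through Lemma~\ref{lem:block-iso-scalar} here. Instead it verifies the block-induction condition of Definition~\ref{def:block-iso} directly: it first proves $\bl(\wt\vartheta_J)=\bl(\wt\vartheta'_{J\cap H})^J$ for every $M\le J\le A$ by combining the value equality $\wt\vartheta(x)^*=e^*\wt\vartheta'(x)^*$ from Lemma~\ref{lem:scalars-centralizer} with the degree congruence
\[
\wt\vartheta(1)_{p'}\equiv e\,|K:\c_K(D)|_{p'}\,\wt\vartheta'(1)_{p'}\pmod p
\]
of \cite[Theorem~5.2]{Nav-Spa14I}, which together yield the $\omega$-value identity required by \cite[Lemma~4.2]{Nav-Spa14I} (after checking that $D$ is a common defect group of $\bl(\wt\vartheta_M)$ and $\bl(\wt\vartheta'_{\n_M(D)})$). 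It then propagates this to arbitrary $\beta\wt\varphi_J$ via \cite[Proposition~3.6]{Spa13I}. The degree congruence from \cite[Theorem~5.2]{Nav-Spa14I} is exactly the missing ingredient that absorbs the factor $e$ and the index discrepancy you were worried about.
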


\begin{proof}
To start, we construct the block isomorphism of modular character triples stated above. Straightforward calculations show that the group theoretical conditions from Definition \ref{def:block-iso} are satisfied. By Lemma \ref{lem:ordinary-modular extension}, there exists an extension $\wt{\vartheta}\in\Irr(A)$ of $\vartheta$ such that $\wt{\vartheta}^0=\wt{\varphi}$. Set $H:=\n_A(D)$ and let $\wt{\vartheta}'\in\Irr(H)$ be the extension of $\vartheta'$ given by Lemma \ref{lem:scalars-centralizer}. Now, if we define $\wt{\varphi}':=(\wt{\vartheta}')^0$, then we deduce from Lemma \ref{lem:ordinary-modular extension} that $\wt{\varphi}'\in\IBr(H)$ is an extension of $\varphi'$. By \cite[Theorem 8.11]{Nav98} it follows that $\wt{\varphi}$ and $\wt{\varphi}'$ are actually extensions of $\psi$ and $\psi'$ respectively. Let $\P$ and $\P'$ be modular representations of $A$ and $H$ affording $\wt{\varphi}$ and $\wt{\varphi}'$ respectively and consider the corresponding strong isomorphism of modular character triples $(\sigma,\tau):(A, M, \psi)\to(H, \norm M D, \psi')$ given by Theorem \ref{thm:strong-iso}. Notice that if $M\leq J\leq A$ and $\chi\in\IBr(J\mid\psi)$, then \cite[Corollary 8.20]{Nav98} implies that $\chi=\beta\wt{\varphi}_J$ for some $\beta\in\IBr(J/M)$ and then we also have $\sigma_J(\chi)=\beta_{J\cap H}\wt{\varphi}'_{J\cap H}$ with $\beta_{J\cap H}\in \IBr(J\cap H/\n_M(D))$.

Next, by the definition of $\wt{\varphi}'$ and according to Lemma \ref{lem:scalars-centralizer}, we deduce that $\IBr(\wt{\varphi}_{\c_A(M)})=\IBr(\wt{\varphi}'_{\c_A(M)})$ and therefore that $\IBr(\wt{\varphi}_{\c_J(M)})=\IBr(\wt{\varphi}'_{\cent J M})$ for every $M\leq J\leq A$. Furthermore, if $\beta\in\IBr(J/M)$, then $\IBr((\beta\wt{\varphi})_{\cent J M})$ coincides with the set of irreducible constituents of the characters of the form $\eta\nu_{\cent J M}$ with $\eta\in\IBr(\beta_{\cent J M})$ and $\nu\in\IBr(\wt{\varphi}_{\cent G M})$. This shows that
\[\IBr\left((\beta\wt{\varphi})_{\cent J M}\right)=\IBr\left((\beta_{H}\wt{\varphi}')_{\cent J M}\right)=\IBr\left(\sigma_J(\beta\wt{\varphi})_{\cent J M}\right)\]
and hence that $(A, M, \psi)\isoc(H, \norm M D, \psi')$.

We now show that the condition on block induction from Definition \ref{def:block-iso} is satisfied. For this purpose, we first show that $\bl(\wt{\varphi}_J)=\bl(\wt{\varphi}'_{J\cap H})^J$, or equivalently that $\bl(\wt{\vartheta}_J)=\bl(\wt{\vartheta}'_{J\cap H})^J$, for every $M\leq J\leq A$. Let $e:=[\vartheta_{\cent K D},\vartheta']\neq 0 \mod p$ and recall that, according to Lemma \ref{lem:scalars-centralizer}, the characters $\wt{\vartheta}$ and $\wt{\vartheta}'$ satisfy
\begin{equation}\label{eq:vartheta}
\wt{\vartheta}(x)^*=e^*\wt{\vartheta}'(x)^*
\end{equation}
for all $x\in H^0$ with $D\in\Syl_p(\cent M x)$. By \cite[Theorem 5.2]{Nav-Spa14I} we get 
\[\wt{\vartheta}(1)_{p'}\equiv e|K:\cent K D|_{p'}\wt{\vartheta}'(1)_{p'} \mod p\]
and, by using the fact that $|K:\cent K D|=|M:\norm M D|$ and applying \eqref{eq:vartheta}, we obtain
\[\left(\frac{|\norm M D|_{p'}\wt{\vartheta}'(x)}{\wt{\vartheta}'(1)_{p'}}\right)^*=\left(\frac{|M|_{p'}\wt{\vartheta}(x)}{\wt{\vartheta}(1)_{p'}}\right)^*,\]
for all $x\in H^0$ with $D\in{\rm Syl}_p(\cent M x)$.
Now, observe that $D$ is a common defect group of $\bl(\wt{\vartheta}_M)$ and $\bl(\wt{\vartheta}'_{\norm M D})$. In fact, by definition $D$ is a defect group of $\bl(\psi)$ and $\bl(\psi)=\bl(\wt{\vartheta}_M)$. Moreover, if $E$ is a defect group of $\bl(\wt{\vartheta}'_{\n_M(D)})$, then $D\leq \o_p(\n_M(D))\leq E$. On the other hand, by \cite[Theorem 9.17]{Nav98} we know that $E\cap \c_K(D)$ is a defect group of $\bl(\vartheta')$ and hence $E\cap K=1$. This shows that $E$ is a complement of $K$ in $M$. But so is $D$ (see \cite[Theorem 9.17]{Nav98}) and therefore $D=E$ is a common defect group of $\bl(\wt{\vartheta}_M)$ and $\bl(\wt{\vartheta}'_{\norm M D})$. It now follows from \cite[Lemma 4.2]{Nav-Spa14I} that $\bl(\wt{\vartheta}_J)=\bl(\wt{\vartheta}'_{J\cap H})^J$ as required. Finally, by \cite[Proposition 3.6]{Spa13I} we know that
\[\bl\left(\beta\wt{\varphi}_J\right)=\bl\left(\beta_{J\cap H}\wt{\varphi}'_{J\cap H}\right)^J=\bl\left(\sigma_J(\beta\wt{\varphi}_J)\right)^J\]
which proves that $(A, M, \psi)\isob(H, \norm M D, \psi')$. This concludes the first part of the proof.

Consider now $M\leq J\leq A$ such that $D$ is a radical $p$-subgroup of $J$ and let $\gamma\in\IBr(J\mid\psi)$. We wish to prove that $\gamma\in\rdzo(J\mid M,\varphi)$ if and only if $\gamma':=\sigma_J(\gamma)\in\dzo(\norm J D\mid \varphi')$. As in the previous paragraph, we may write $\gamma=\beta\wt{\varphi}_J$ and $\gamma'=\beta_{J\cap H}\wt{\varphi}'_{J\cap H}$ for some $\beta\in\IBr(J/M)=\IBr(J/K)$. Assume first that $\gamma'\in\dzo(\norm J D\mid \varphi')$ and recall that $D$ is a radical $p$-subgroup of $J$. Then, there exists $\chi\in\Irr(\norm J D)$ with $D\leq\ker\chi$ and such that $\chi^0=\gamma'$ and $\chi(1)_p=|\norm J D:D|_p$. Now, by Lemma \ref{lem:gallagher-lift} we can find some $\eta'\in\Irr(J\cap H/\n_M(D))$ such that $\eta'^0=\beta_{J\cap H}$. Furthermore via the isomorphism $J/M\simeq \norm J D/\norm M D$ we can write $\eta'=\eta_{J\cap H}$ for a unique $\eta\in\Irr(J/M)$. Observe then that $\eta^0=\beta$. Furthermore,
\[\beta(1)_p=\frac{\sigma_J(1)_p}{\wt{\varphi}'(1)_p}=\frac{|\norm J D:D|_p}{|\cent K D|_p}=|J:M|_p.\]
By applying Lemma \ref{lem:gallagher-lift} once again, we deduce now that there is some $\rho\in\Irr(J)$ with $\rho^0=\gamma$. Moreover, $\rho$ lies over some extension $\wt{\vartheta}\in\Irr(M)$ of $\vartheta$ and satisfies
\[\rho(1)_p=\gamma(1)_p=\beta(1)_p\wt{\varphi}(1)_p=|J:M|_p\wt{\vartheta}(1)_p\]
which implies that $\rho\in\rdz(J\mid\wt{\vartheta})$, so $\gamma\in\rdzo(J\mid M,\varphi)$. A similar argument shows that if $\gamma\in\rdzo(J\mid M,\varphi)$ then $\gamma'\in\dzo(\n_J(D)\mid \varphi')$ and the proof is now complete.
\end{proof}

We now extend Proposition \ref{pro:dgn-isomorphism-with-extension} to the case where $\varphi$ is $A$-invariant but does not necessarily extend to $A$. Before proving this result, we show that the central extension constructed in Lemma \ref{lem:4.1} preserves (relative) defect zero Brauer characters.

\begin{lem}
\label{lem:4.1 with dzo}
Consider the setting of Lemma \ref{lem:4.1} .
\begin{enumerate}
\item For every $K\leq J\leq G$ the restriction $\eps_{\wh{J}}:\wh{J}\to J$ maps $\dzo(\wh{J}\mid \vartheta_0\times 1_Z)$ onto $\dzo(J\mid\vartheta)$.
\item Let $K\leq U\leq J\leq G$ such that $ {U}\normal  {G}$ and ${U}/{K}$ is a $p$-group. Then the restriction $\eps_{\wh{J}}:\wh{J}\to J$ maps $\rdzo(\wh{J}\mid \wh{U},\vartheta_0\times 1_Z)$ onto $\rdzo(J\mid U, \vartheta)$.
\end{enumerate}
\end{lem}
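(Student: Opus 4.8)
Fix $K\leq J\leq G$ and put $\wh{J}=\eps^{-1}(J)$, $\wh{U}=\eps^{-1}(U)$, $\wh{K}=\eps^{-1}(K)=K_0\times Z$. The plan is to exploit that $Z$ is a \emph{central} $p'$-subgroup of $\wh{G}$, so that it is invisible to every piece of data appearing in the definitions of $\dzo$ and $\rdzo$: $p$-parts of orders and of degrees, $p$-blocks of defect zero, the operators $\o_p(-)$, and reduction modulo $p$. By Lemma~\ref{lem:4.1}, $\eps_{\wh{J}}$ is surjective with central $p'$-kernel $Z$, hence induces an isomorphism $\wh{J}/Z\xrightarrow{\sim}J$ restricting to $\wh{K}/Z\xrightarrow{\sim}K$ and carrying the class of $\vartheta_0\times 1_Z$ (trivial on $Z$) to $\vartheta$. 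Along the way I will use two elementary observations about a finite group $X$ with a central $p'$-subgroup $Y$: (a) $\o_p(X/Y)=\o_p(X)Y/Y$ and the quotient map restricts to an isomorphism $\o_p(X)\xrightarrow{\sim}\o_p(X/Y)$ — the preimage $R$ of $\o_p(X/Y)$ has $Y$ as a central Hall $p'$-subgroup, so $R=Y\times P$ with $P$ its unique, hence characteristic, hence normal in $X$, Sylow $p$-subgroup; and (b) inflation along $X\to X/Y$ restricts to degree-preserving bijections on $\Irr$ and on $\IBr$ onto the characters trivial on $Y$, commutes with $(-)^{0}$, and takes a $p$-block of defect zero to a $p$-block of defect zero (the blocks of $X/Y$ and of $X$ containing a character and its inflation are isomorphic $\FF$-algebras, and defect zero means simple); moreover $|X|_p=|X/Y|_p$, so for $M$ normal in $X$ with $Y\leq M$ the relative defect $d_{M}(-)$ is preserved as well.

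For (i): unwinding the definition, $\dzo(\wh{J}\mid\vartheta_0\times 1_Z)$ consists of those $\chi\in\IBr(\wh{J}\mid\vartheta_0\times 1_Z)$ whose image in $\IBr(\wh{J}/\o_p(\wh{J}))$ lies in a $p$-block of defect zero, and similarly for $\dzo(J\mid\vartheta)$. I would first note that every $\chi\in\IBr(\wh{J}\mid\vartheta_0\times 1_Z)$ has $Z\leq\ker\chi$: since $Z$ is central, $\chi_Z$ is a multiple of a linear character, and restricting to $\wh{K}=K_0\times Z$ forces this linear character to be $1_Z$. Hence $\eps_{\wh{J}}$ identifies $\IBr(\wh{J}\mid\vartheta_0\times 1_Z)$ with $\IBr(J\mid\vartheta)$ via $\wh{J}/Z\xrightarrow{\sim}J$. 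By (a) applied to $\wh{J}$ and $Z$, the subgroup $Z\o_p(\wh{J})/\o_p(\wh{J})$ is a central $p'$-subgroup of $\wh{J}/\o_p(\wh{J})$ with quotient isomorphic to $J/\o_p(J)$; under this the image of $\chi$ corresponds to the image of its partner $\psi\in\IBr(J\mid\vartheta)$, and by (b) one lies in a defect-zero block iff the other does. So $\eps_{\wh{J}}$ restricts to a bijection $\dzo(\wh{J}\mid\vartheta_0\times 1_Z)\to\dzo(J\mid\vartheta)$.

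For (ii): note first that $\vartheta\in\dzo(K)$ (as is implicit in Definition~\ref{def:rdzo}), whence $\vartheta_0\times 1_Z\in\dzo(\wh{K})$ by (a)--(b) applied to $\wh{K}=K_0\times Z$, so that $\o_p(\wh{K})=\o_p(K_0)$. Since $\vartheta$ is $G$-invariant it is $J$- and $\wh{J}$-invariant, so Definition~\ref{def:rdzo} applies in its non-relative form on both sides: $\rdzo(J\mid U,\vartheta)$ is built by passing to $\overline{J}:=J/\o_p(K)\supseteq\overline{U}:=U/\o_p(K)\supseteq\overline{K}:=K/\o_p(K)$, lifting $\overline{\vartheta}\in\IBr(\overline{K})$ to the unique $\overline{\tau}\in\dz(\overline{K})$ with $\overline{\tau}^{0}=\overline{\vartheta}$, fixing an $\overline{J}$-invariant extension $\wh{\overline{\tau}}\in\Irr(\overline{U})$ (\cite[Theorem 2.4]{Nav-Tie11}), forming $\rdz(\overline{J}\mid\wh{\overline{\tau}})$, applying $(-)^{0}$ (injective and choice-independent by Lemma~\ref{lem:Definition of rdzo}), and inflating to $J$; likewise on the $\wh{}$-side with $\o_p(\wh{K})=\o_p(K_0)$. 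Using (a) for $Z$, $\eps_{\wh{J}}$ induces central $p'$-quotients $\wh{J}/\o_p(K_0)\twoheadrightarrow\overline{J}$, $\wh{K}/\o_p(K_0)\twoheadrightarrow\overline{K}$, $\wh{U}/\o_p(K_0)\twoheadrightarrow\overline{U}$, each with kernel isomorphic to $Z$, carrying the image of $\vartheta_0\times 1_Z$ to $\overline{\vartheta}$; by (b) they carry its $\dz$-lift (which, being the inflation of $\overline{\tau}$, is trivial on the central $p'$-subgroup) to $\overline{\tau}$, the inflation of $\wh{\overline{\tau}}$ to an $\overline{J}$-invariant extension of that $\dz$-lift, and the corresponding $\rdz$-set bijectively onto $\rdz(\overline{J}\mid\wh{\overline{\tau}})$ — every character above that extension being trivial on the central $p'$-subgroup because the extension is, with relative defect and $(-)^{0}$ preserved throughout. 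Finally the inflations to $\wh{J}$ of the resulting Brauer characters are trivial on $Z\o_p(K_0)$, hence on $Z$, so $\eps_{\wh{J}}$ identifies them with the corresponding inflations to $J$; chaining these canonical identifications is precisely the map induced by $\eps_{\wh{J}}$, and it yields the asserted bijection $\rdzo(\wh{J}\mid\wh{U},\vartheta_0\times 1_Z)\to\rdzo(J\mid U,\vartheta)$.

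The main obstacle is purely organizational: there are several nested quotients in play ($Z$, $\o_p(K)$, $\o_p(K_0)$, $\o_p(J)$), and one must check that each ingredient of Definition~\ref{def:rdzo} — the unique $\dz$-lift from \cite[Theorem 3.18]{Nav98}, the invariant extension from \cite[Theorem 2.4]{Nav-Tie11}, and the well-definedness and choice-independence from Lemma~\ref{lem:Definition of rdzo} — is genuinely compatible with the central $p'$-quotient, so that the chain of canonical bijections is well defined and independent of all choices. No deeper difficulty is expected; the content is the single slogan that a central $p'$-subgroup is invisible to all the relevant invariants.
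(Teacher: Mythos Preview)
Your proposal is correct and rests on the same slogan as the paper's proof: a central $p'$-subgroup $Z$ is invisible to $p$-parts of orders and degrees, to $\o_p(-)$, and to reduction modulo $p$. The paper's argument is considerably shorter because it bypasses the step-by-step unwinding of Definition~\ref{def:rdzo} and instead uses the degree characterization directly: a Brauer character $\varphi\in\IBr(\wh{J}\mid\vartheta_0\times 1_Z)$ lies in $\dzo(\wh{J})$ (resp.\ $\rdzo(\wh{J}\mid\wh{U},\vartheta_0\times 1_Z)$) iff it lifts to some $\chi\in\Irr(\wh{J})$ with $\chi(1)_p=|\wh{J}:\o_p(\wh{J})|_p$ (resp.\ $\chi(1)_p=|\wh{J}:\wh{U}|_p$), and since $Z$ is central of $p'$-order this condition is equivalent to the corresponding one for $\eps_{\wh{J}}(\chi)$ in $J$. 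Your careful tracing through the nested quotients and the ingredients of Lemma~\ref{lem:Definition of rdzo} buys transparency about why each piece of the definition transports correctly, whereas the paper's version buys brevity by compressing everything into a single degree comparison; either route is fine.
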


\begin{proof}
Let $\varphi\in\IBr(\wh{J}\mid\vartheta_0\times 1_Z)$ and assume there is some $\chi\in\Irr(\wh{J})$ with $\chi^0=\varphi$. Then $\eps_{\wh{J}}(\chi)\in\Irr(J)$ satisfies $\eps_{\wh{J}}(\chi)^0=\eps_{\wh{J}}(\varphi)$. Furthermore, using that $Z$ is a central $p'$-group  we obtain that $\chi(1)_p=|\wh{J}:\oh{p}{\wh{J}}|_p$ if and only if $\eps_{\wh{J}}(\chi)(1)_p=|J:\oh{p}J|_p$, and then the first part of the statement follows. Similarly, noticing that $\chi(1)_p=|\wh{J}:\wh{U}|_p$ if and only if $\eps_{\wh{J}}(\chi)(1)_p=|J:U|_p$, we obtain the second part of the statement.
\end{proof}

Using the central extension constructed in Lemma \ref{lem:4.1} we obtain the following corollary as a direct consequence of Proposition \ref{pro:dgn-isomorphism-with-extension} and Lemma \ref{lem:4.1 with dzo}.

\begin{cor}
\label{cor:dgn-isomorphism-without-extension}
Assume Hypothesis \ref{hyp:DGN hypotheses} with $L=1$ and suppose that $M, K\normal A$ and that $\varphi$ is $A$-invariant. Consider $\vartheta'=\Pi_{D}(\vartheta)$ and $\varphi'=\pi_D(\varphi)=(\vartheta')^0$, and let $\psi\in\IBr(M)$ and $\psi'\in\IBr(\norm M D)$ be the unique characters lying above $\varphi$ and $\varphi'$ respectively. Then
\[(A,M,\psi)\isob (\n_A(D),\n_M(D),\psi').\]
Furthermore, if $M\leq J\leq A$ and $D$ is a radical $p$-subgroup of $J$ then the bijection
\[\sigma_J:\IBr\left(J\enspace\middle|\enspace\psi\right)\to\IBr\left(\n_J(D)\enspace\middle|\enspace\psi'\right)\] given by Theorem \ref{thm:strong-iso} maps $\rdzo(J\mid M,\varphi)$ onto $\dzo(\norm J D\mid\varphi')$.
\end{cor}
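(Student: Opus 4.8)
The plan is to deduce the statement from Proposition~\ref{pro:dgn-isomorphism-with-extension} by passing to a central extension of $A$ in which $\varphi$ becomes extendible, and then descending again. First I would apply Lemma~\ref{lem:4.1} to the modular character triple $(A,K,\varphi)$, obtaining a surjective homomorphism $\eps\colon\wh A\to A$ with central cyclic kernel $Z$ of $p'$-order, a normal subgroup $\wh K:=\eps^{-1}(K)=K_0\times Z$ of $\wh A$ with $\eps_{K_0}$ identifying $K_0$ with $K$, and such that the character $\wh\varphi:=\varphi_0\times 1_Z\in\IBr(\wh K)$, where $\varphi_0\in\IBr(K_0)$ corresponds to $\varphi$, extends to $\wh A$. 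Setting $\wh M:=\eps^{-1}(M)\normal\wh A$ one has $\wh K\le\wh M$, $\wh M/\wh K\simeq M/K$ is a $p$-group, and $\oh{p}{\wh K}=1$ since $\oh{p}{K}=L=1$ and $Z$ has $p'$-order. As $\vartheta_0\times 1_Z\in\dz(\wh K)$ lifts $\wh\varphi$, \cite[Corollary~9.6 and Theorem~9.17]{Nav98} provide a defect group $D_0$ of the unique block of $\wh M$ covering $\bl(\wh\varphi)$ that is a complement of $\wh K$ in $\wh M$; since $Z$ lies in the kernel of that block and is a $p'$-group, $\eps$ induces an isomorphism $\wh M/Z\simeq M$ carrying this block to the unique block of $M$ over $\bl(\varphi)$ and preserving defect groups, so after replacing $D_0$ by an $\wh M$-conjugate we may assume $\eps(D_0)=D$. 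Thus Hypothesis~\ref{hyp:DGN hypotheses} holds for $\wh K\normal\wh M\normal\wh A$, $\wh\varphi$ and $D_0$, with trivial ``$L$''.

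Now Proposition~\ref{pro:dgn-isomorphism-with-extension} applies, since $\wh\varphi$ extends to $\wh A$ and $\wh M,\wh K\normal\wh A$, and yields
\[(\wh A,\wh M,\wh\psi)\isob(\n_{\wh A}(D_0),\n_{\wh M}(D_0),\wh\psi'),\]
where $\wh\psi$ and $\wh\psi'$ are the unique irreducible Brauer characters of $\wh M$ and $\n_{\wh M}(D_0)$ lying above $\wh\varphi$ and $\pi_{D_0}(\wh\varphi)$, and moreover for $\wh M\le J'\le\wh A$ with $D_0$ radical in $J'$ the bijection $\sigma_{J'}$ of Theorem~\ref{thm:strong-iso} carries $\rdzo(J'\mid\wh M,\wh\varphi)$ onto $\dzo(\n_{J'}(D_0)\mid\pi_{D_0}(\wh\varphi))$. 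Because $\wh\varphi$ is $\wh M$-invariant and trivial on the central subgroup $Z$, every irreducible Brauer character of $\wh M$ lying above it is trivial on $Z$; in particular $\wh\psi$ is the inflation along $\eps$ of $\psi$. For the corresponding statement about $\wh\psi'$ one also needs that $\pi_{D_0}(\wh\varphi)$ is trivial on $Z$ and corresponds, via the isomorphism $\c_{\wh K}(D_0)/Z\simeq\c_K(D)$ induced by $\eps$, to $\pi_D(\varphi)=\varphi'$; granting this, $\wh\psi'$ is the inflation of $\psi'$.

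To descend, observe that $Z\le\ker\wh\psi\cap\ker\wh\psi'$, $p\nmid|Z|$, and $\c_{\wh A}(\wh M)/Z=\c_{\wh A/Z}(\wh M/Z)$ by Lemma~\ref{lem:4.1}(iii), so Lemma~\ref{lem:going to quotients} gives $(A,M,\psi)\isob(\n_A(D),\n_M(D),\psi')$ once we identify $\wh A/Z$ with $A$ and $\wh M/Z$ with $M$ via $\eps$ and, using that $D_0=\oh{p}{D_0Z}$ is characteristic in $\eps^{-1}(D)=D_0\times Z$ (whence $\n_{\wh A}(D_0)=\eps^{-1}(\n_A(D))$ and $\n_{\wh M}(D_0)=\eps_{\wh M}^{-1}(\n_M(D))$), identify $\n_{\wh A}(D_0)/Z$ with $\n_A(D)$ and $\n_{\wh M}(D_0)/Z$ with $\n_M(D)$. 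For the second assertion, fix $M\le J\le A$ with $D$ radical in $J$ and put $\wh J:=\eps^{-1}(J)$; a short computation with $\mathbf{O}_p$ shows $D_0$ is radical in $\wh J$, so the previous display applies with $J'=\wh J$. Arguing as in the proof of Lemma~\ref{lem:4.1 with dzo} (which uses only that $Z$ is central of $p'$-order) one identifies, via $\eps$, the set $\rdzo(\wh J\mid\wh M,\wh\varphi)$ with $\rdzo(J\mid M,\varphi)$ and $\dzo(\n_{\wh J}(D_0)\mid\pi_{D_0}(\wh\varphi))$ with $\dzo(\n_J(D)\mid\varphi')$, while Remark~\ref{rem:sigma-maps-central-extension} identifies the downstairs bijection $\sigma_J$ with the one induced by $\sigma_{\wh J}$ on these quotients; the mapping property of $\sigma_{\wh J}$ therefore transfers to $\sigma_J$, which completes the argument.

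The step I expect to be the main obstacle is the one flagged in the second paragraph: checking that the Dade--Glauberman--Nagao correspondence is compatible with the central $p'$-extension produced by Lemma~\ref{lem:4.1}, that is, that $\pi_{D_0}(\varphi_0\times 1_Z)$ is trivial on $Z$ and reduces to $\pi_D(\varphi)$. Morally this is immediate, since the construction of $\pi_D$ rests only on block covering, defect groups and the Brauer correspondence, none of which is affected by a central $p'$-subgroup lying in the relevant kernels; but spelling it out cleanly — together with the analogous bookkeeping for $\n_{\wh M}(D_0)/Z\simeq\n_M(D)$, for the radicality transfer ``$D$ radical in $J\Rightarrow D_0$ radical in $\wh J$'', and for the uniqueness statements under the passage between $A$ and $\wh A$ — is where the actual work lies.
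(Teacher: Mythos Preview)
Your overall strategy matches the paper's: pass to a central extension where extension exists, apply Proposition~\ref{pro:dgn-isomorphism-with-extension}, then descend. The difference is \emph{where} you take the central extension. The paper applies Lemma~\ref{lem:4.1} to the triple $(A,M,\psi)$ rather than to $(A,K,\varphi)$. This yields $\wh M=M_0\times Z$ with $M_0\cong M$ via $\eps_{M_0}$, and the subgroup of $M_0$ corresponding to $K$, together with the transported $\varphi_0$ and defect group $D_0$, gives a copy of the entire DGN setup inside $M_0$. Consequently $\pi_{D_0}(\varphi_0)$ corresponds to $\pi_D(\varphi)$ simply by transport along the group isomorphism $\eps_{M_0}$, and the ``main obstacle'' you flag disappears: no compatibility of the DGN correspondence with an extra central factor needs to be checked. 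The descent is then handled directly by Lemma~\ref{lem:4.1}(iv), and the second assertion by Lemma~\ref{lem:4.1 with dzo} together with Remark~\ref{rem:sigma-maps-central-extension}, exactly as you indicate.

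Your variant, applying Lemma~\ref{lem:4.1} to $(A,K,\varphi)$, introduces two avoidable wrinkles. First, the DGN correspondence now takes place in $\wh K=K_0\times Z$ rather than in an isomorphic copy of $K$, which is precisely the compatibility question you single out. Second, and more concretely, your claim that $\wh\varphi=\varphi_0\times 1_Z$ extends to $\wh A$ is not what Lemma~\ref{lem:4.1}(ii) gives: it guarantees that $\varphi_0\in\IBr(K_0)$ extends to $\wh A$, but in the standard construction the extension is afforded by $(g,z)\mapsto z\,\P(g)$ and therefore restricts to the \emph{faithful} character on $Z$, not the trivial one. So $\varphi_0\times 1_Z$ need not extend, and the hypothesis of Proposition~\ref{pro:dgn-isomorphism-with-extension} is not met as stated; repairing this (e.g.\ by working over the faithful central character and then twisting) is possible but adds bookkeeping. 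Both issues vanish if you take the central extension at the level of $(A,M,\psi)$ as the paper does.
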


\begin{proof}
Let $\P$ be a projective representation of $A$ associated with $(A, M,\psi)$ and consider the central extension $\eps:\wh{A}\to A$ from Lemma \ref{lem:4.1}. Let $Z=\ker\eps$ and set $\wh{M}:=\eps^{-1}(M)$, so that $\wh{M}=M_0\times Z$, $M_0\cong M$ via the restriction $\eps_{M_0}$, and $\psi_0=\psi\circ\eps_{M_0}\in\IBr(M_0)$ extends to $\wh{A}$. By first applying Proposition \ref{pro:dgn-isomorphism-with-extension} in the group $\wh{A}$ and then using Lemma \ref{lem:4.1} (iv), we deduce that $(A,M,\psi)\isob (\n_A(D),\n_M(D),\psi')$. Moreover, using the second half of Proposition \ref{pro:dgn-isomorphism-with-extension} and applying Lemma \ref{lem:4.1 with dzo} (see also Remark \ref{rem:sigma-maps-central-extension}), we deduce that the bijection $\sigma_J$ maps $\rdzo(J\mid M,\varphi)$ onto $\dzo(\norm J D\mid\varphi')$ as required.
\end{proof}

We can finally prove Theorem \ref{thm:Above modular DGN}.

\begin{proof}[Proof of Theorem \ref{thm:Above modular DGN}]
Let $L:=\o_p(K)$ and define $\overline{H}=H/L$ for every $L\leq H\leq A$. Notice that $L$ is contained in $D$ and therefore $L\leq \n_K(D)$ so that $\overline{\n_K(D)}=\n_{\overline{K}}(\overline{D})=\c_{\overline{K}}(\overline{D})$. Now we can consider the Brauer characters $\overline{\varphi}\in\dzo(\overline{K})$ and $\overline{\varphi}'\in\dzo(\c_{\overline{K}}(\overline{D}))$ corresponding to $\varphi\in\dzo(K)$ and $\varphi':=\pi_D(\varphi)$ respectively via inflation of characters. Noticing that $\overline{\varphi}':=\pi_{\overline{D}}(\overline{\varphi})$ we can apply Corollary \ref{cor:dgn-isomorphism-without-extension}, with $J=\overline{G}$, to obtain an $\n_{\overline{A}}(\overline{D})$-equivariant bijection
\[\overline{\Delta}_{\overline{D},\overline{\varphi}}^{\overline{G}}:\rdzo\left(\overline{G}\enspace\middle|\enspace \overline{M},\overline{\varphi}\right)\to\dzo\left(\n_{\overline{G}}(\overline{D})\enspace\middle|\enspace \overline{\varphi}'\right)\]
that satisfies
\[\left(\overline{A}_{\overline{\chi}},\overline{G},\overline{\chi}\right)\isob\left(\n_{\overline{A}}(\overline{D})_{\overline{\chi}},\n_{\overline{G}}(\overline{D}),\Delta_{\overline{D},\overline{\varphi}}^{\overline{G}}(\overline{\chi})\right)\]
for every $\overline{\chi}\in\rdzo(\overline{G}\mid \overline{M},\overline{\varphi})$ thanks to the block isomorphism of modular character triples given by Corollary \ref{cor:dgn-isomorphism-without-extension} and applying Lemma \ref{lem:Bijections induced by isomorphisms are compatible with isomorphisms}. Then, since $L$ is contained in every character belonging either to $\rdzo(G\mid M,\varphi)$ or to $\dzo(\n_G(D)\mid \varphi')$, we deduce that by inflation of characters the bijection $\overline{\Delta}_{\overline{D},\overline{\varphi}}^{\overline{G}}$ induces a bijection $\Delta_{D,\varphi}^G$ with the required properties. To obtain the block isomorphism of modular character triples from the statement, we apply Lemma \ref{lem:Lifting isomorphisms from quotients}. This completes the proof.
\end{proof}

We conclude this section with a result of independent interest. This can be seen as a modular version of \cite[Corollary 11.3]{Lad10}.

\begin{cor}
\label{cor:DGN modular iso}
Assume Hypothesis \ref{hyp:DGN hypotheses}. Then the modular character triples $(A,K,\varphi)$ and $(\norm A D,\norm K D,\pi_D(\varphi))$ are strongly isomorphic in the sense of Theorem \ref{thm:strong-iso}.
\end{cor}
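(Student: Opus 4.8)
The plan is to produce projective $\mathbb{F}$-representations of $A$ and of $\norm A D$ that verify the hypotheses of Theorem~\ref{thm:strong-iso} for the triples $(A,K,\varphi)$ and $(\norm A D,\norm K D,\pi_D(\varphi))$; the strong isomorphism is then its conclusion. (We read the statement as in Theorem~\ref{thm:Above modular DGN}, i.e.\ with the additional assumptions that $K,M\normal A$ and that $\varphi$ is $A$-invariant, so that $\pi_D(\varphi)$ is $\norm A D$-invariant and the second triple makes sense.) Writing $\overline X:=X\o_p(K)/\o_p(K)$, the uniqueness of $\overline\vartheta$ in Hypothesis~\ref{hyp:DGN hypotheses} forces $\overline\vartheta$, and hence the unique block of $\overline M$ covering $\bl(\overline\varphi)$, to be $\overline A$-invariant; a Frattini argument applied to the transitive $\overline M$-action on the defect groups of that block then yields $\overline A=\overline M\,\norm{\overline A}{\overline D}$, which lifts to $A=M\norm A D=K\norm A D$ since $M=KD$ and $\o_p(K)\le D$. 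Together with $\norm K D=K\cap\norm A D$, this is the group-theoretic part of the hypothesis of Theorem~\ref{thm:strong-iso}; alternatively it follows from the block isomorphism established next.

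The core input is the Dade--Glauberman--Nagao block isomorphism at the level of $M$. Let $\psi\in\IBr(M)$ and $\psi'\in\IBr(\norm M D)$ be the unique Brauer characters lying over $\varphi$ and over $\varphi':=\pi_D(\varphi)$, respectively. If $\o_p(K)=1$, then $(A,M,\psi)\isob(\norm A D,\norm M D,\psi')$ by Corollary~\ref{cor:dgn-isomorphism-without-extension}. For general $\o_p(K)$ we argue exactly as in the proof of Theorem~\ref{thm:Above modular DGN}: pass to $\overline A=A/\o_p(K)$, where $\o_p(\overline K)=1$ and Hypothesis~\ref{hyp:DGN hypotheses} again holds, apply Corollary~\ref{cor:dgn-isomorphism-without-extension} there, and inflate through Lemma~\ref{lem:Lifting isomorphisms from quotients}, using that $\o_p(K)$ lies in the kernels of $\varphi$ and of $\varphi'$. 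In either case we obtain a \emph{strong} isomorphism $(A,M,\psi)\to(\norm A D,\norm M D,\psi')$, realised by a pair of projective representations $\P$ of $A$ and $\P'$ of $\norm A D$ whose factor sets are trivial on $M$, respectively on $\norm M D$, and coincide via the natural isomorphism $A/M\simeq\norm A D/\norm M D$.

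It now suffices to observe that the \emph{same} pair $(\P,\P')$ verifies the hypotheses of Theorem~\ref{thm:strong-iso} for the triples in the statement. Since $M/K$ is a $p$-group, the $M$-invariant Brauer character $\varphi$ has a unique extension to $M$, namely $\psi$, so $\psi_K=\varphi$; hence $\P_K$ affords $\varphi$, and as the factor set of $\P$ is trivial on $M\supseteq K$, the representation $\P$ is associated with $(A,K,\varphi)$. The same argument (with $\norm M D/\norm K D\simeq M/K$) gives $\psi'_{\norm K D}=\varphi'$, so $\P'$ is associated with $(\norm A D,\norm K D,\varphi')$. The factor set of $\P$, being inflated from $A/M$, is a fortiori inflated from $A/K$, and likewise for $\P'$; since the natural isomorphism $A/K\simeq\norm A D/\norm K D$ is compatible with $A/M\simeq\norm A D/\norm M D$ along the canonical projections, the two factor sets still coincide via $A/K\simeq\norm A D/\norm K D$. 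Theorem~\ref{thm:strong-iso} applied to $\P$ and $\P'$ then produces a strong isomorphism $(A,K,\varphi)\to(\norm A D,\norm K D,\varphi')$, and $\varphi'=\pi_D(\varphi)$ because $\P'_{\norm K D}$ affords $\psi'_{\norm K D}=\varphi'$. The step that needs the most care is precisely this descent from the triple over $M$ to the one over $K$: that $\psi$ restricts \emph{irreducibly} to $\varphi$ (which is where $M/K$ being a $p$-group is used) and that the compatibility of the factor sets persists when $A/M$ is replaced by the larger quotient $A/K$; the reduction modulo $\o_p(K)$ requires only the routine check, identical to the one in Theorem~\ref{thm:Above modular DGN}, that the defect-group data descends correctly.
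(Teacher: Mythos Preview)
Your proof is correct and follows essentially the same approach as the paper's: establish $A=K\norm A D$, take projective representations $\P,\P'$ realising the block isomorphism $(A,M,\psi)\isob(\norm A D,\norm M D,\psi')$, observe that $\P,\P'$ are already associated with the triples over $K$ and $\norm K D$ since $\psi_K=\varphi$ and $\psi'_{\norm K D}=\pi_D(\varphi)$, and check that the factor sets still agree via $A/K\simeq\norm A D/\norm K D$. The paper sources the block isomorphism from Theorem~\ref{thm:Above modular DGN} (applied with $G=M$) rather than from Corollary~\ref{cor:dgn-isomorphism-without-extension} plus lifting, and for the factor-set compatibility it simply cites \cite[Theorem~8.14]{Nav98} in place of your explicit ``inflated from $A/M$, hence from $A/K$'' argument, but these are presentational differences only.
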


\begin{proof}
First, notice that since $M\norm A D =A$ and $M=KD\leq K\norm A D$ then we have $K\norm A D=M\norm A D=A$ so the group theoretical conditions of Theorem \ref{thm:strong-iso} are satisfied. Let $\P$ and $\P'$ be projective representations giving the isomorphism
\[(A,M,\psi)\isob(\norm A D,\norm M D,\psi')\]
from Theorem \ref{thm:Above modular DGN} and where $\psi$ and $\psi'$ are the extensions of $\varphi$ and $\pi_D(\varphi)$ to $M$ and $\norm M D$ respectively. Notice that $\P$ and $\P'$ are actually projective representations associated with $(A,K,\varphi)$ and $(\norm A D,\norm K D,\pi_D(\varphi))$. Now, using \cite[Theorem 8.14]{Nav98} it follows that the factor sets of $\P$ and $\P'$ coincide via the natural isomorphism $A/K\to\norm A D/\norm K D$. We can then conclude by applying Theorem \ref{thm:strong-iso}.
\end{proof}

\section{The reduction}
\label{sec:Reduction}

In this section, we finally show that Conjecture \ref{conj:Main, iBAWC} reduces to quasi-simple groups and hence prove Theorem \ref{thm:Main, Reduction}. For the reader's convenience, we restate Theorem \ref{thm:Main, Reduction} below. Recall that a simple group $S$ is said to be \textit{involved} in a finite group $G$ if there exist subgroups $N\unlhd H\leq G$ such that $H/N$ is isomorphic to $S$.

\begin{thm}
\label{thm:Reduction for iBAW}
Let $G$ be a finite group and suppose that Conjecture \ref{conj:iBAWC 2} holds for every covering group of any non-abelian finite simple group of order divisible by $p$ involved in $G$. Then Conjecture \ref{conj:iBAWC 2} holds for $G$.
\end{thm}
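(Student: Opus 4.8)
The plan is to argue by induction on $|G|$, following the well-established template for reduction theorems of local-global counting conjectures (as in \cite{Spa13I}, \cite{Nav-Spa14I}, \cite{Ros-iMcK}), but now bookkeeping block isomorphisms of modular character triples throughout. So fix $G\unlhd A$ and a prime $p$; we must produce an $A$-equivariant bijection $\Omega\colon\IBr(G)\to\Alp(G)/G$ satisfying the $\isob$-condition. The base case is $G$ with no non-abelian simple group of order divisible by $p$ involved, i.e.\ $G$ is $p$-solvable (or more precisely has a normal series with factors either $p'$-groups or $p$-groups up to the relevant simple sections); here one invokes the known validity of the (blockwise) Alperin Weight Conjecture with character-triple refinement for $p$-solvable groups, or reduces directly via the normal-subgroup machinery below until one reaches such groups.

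The inductive step splits according to the structure of $G$ relative to a chosen minimal normal subgroup or a suitable characteristic subgroup. \textbf{Case 1: $G$ has a proper nontrivial normal subgroup $N$ not of the above exceptional type, or more generally one can peel off a normal subgroup.} Here one uses Clifford theory over $N$: every $\vartheta\in\IBr(G)$ lies over some $\mu\in\IBr(N)$, and one wants to combine a bijection for $N$ (obtained from the inductive hypothesis, or from Corollary \ref{cor:iBAW for embedded perfect group} when $N$ is the relevant layer) with bijections for the stabilizers $G_\mu/N$ which are again handled by induction. The key tools are the Clifford correspondence (irreducible induction, Lemma \ref{lem:Irreducible induction}), the lifting of weight bijections to intermediate subgroups (Theorem \ref{thm:Lifting bijection for weights}), and the compatibility of $\isob$ with the various standard operations (Lemmas \ref{lem:Direct products}, \ref{lem:Wreath products}, \ref{lem:Butterfly theorem}, \ref{lem:4.1}, \ref{lem:4.6}). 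The radical subgroups of $G$ are organized using the fact that a radical $p$-subgroup $Q$ of $G$ with $QN/N$ radical in $G/N$ controls the weights, together with the Dade--Glauberman--Nagao machinery developed in Section \ref{sec:DGN}: Theorem \ref{thm:Above modular DGN} and Corollary \ref{cor:dgn-isomorphism-without-extension} let us pass the $\isob$-condition through the DGN correspondence, which is precisely what is needed to match weights of $G$ with weights of $N_G(Q)$-type subgroups while preserving block isomorphisms of modular character triples.

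\textbf{Case 2: $G$ is quasi-simple, or more precisely the generalized Fitting subgroup situation forces us down to the components.} When $G=F^*(G)$ is (a central product of) quasi-simple groups with simple quotients of order divisible by $p$, Corollary \ref{cor:iBAW for embedded perfect group} applies directly: the hypothesis that Conjecture \ref{conj:iBAWC 2} holds for the universal $p'$-covering group $X$ of each involved simple $S$ (with $X\unlhd X\rtimes\Aut(X)$) is exactly what Corollary \ref{cor:iBAW for embedded perfect group} consumes, giving Conjecture \ref{conj:iBAWC 2} for $K\unlhd A$ with $K$ this perfect layer. The remaining work is to bridge from $F^*(G)$ up to $G$ itself and then up to $A$: one takes $K=F^*(G)$, applies Corollary \ref{cor:iBAW for embedded perfect group} to get the bijection for $K\unlhd A$, then uses Theorem \ref{thm:Lifting bijection for weights} with $J=G$ to lift it to an $\N_A(G)$-equivariant (hence, since $G\unlhd A$, $A$-equivariant) bijection $\Omega_K^G\colon\IBr(G)\to\Alp(G\mid K)/G$ with the $\isob$-condition \eqref{eq:Lifting bijections for weights}; finally one must identify $\Alp(G\mid K)/G$ with $\Alp(G)/G$, which is where $C_G(K)=Z(K)$ has $p'$-order and $K$ contains every radical $p$-subgroup's relevant part, so that weights of $G$ are detected on $K$.

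\textbf{Main obstacle.} I expect the delicate point to be the case analysis gluing Case 1 and Case 2 together, namely handling a general $G$ where $F^*(G)$ is a central product involving both $p'$-components (central, handled by $p'$-theory and DGN) and components of order divisible by $p$, while $G/F^*(G)$ may be arbitrary. One must carefully choose the normal subgroup to induct on so that the weights, the radical subgroups, and the character correspondences all factor compatibly, and verify that the $\isob$-relation is preserved at every gluing step — this requires the full force of the Butterfly theorem (Lemma \ref{lem:Butterfly theorem}) to replace ambient groups, the central-extension construction (Lemma \ref{lem:4.1}) to reduce to the extendible case, and the DGN-compatibility results of Section \ref{sec:DGN} to move between $G$ and normalizers of radical subgroups. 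Getting the equivariance bookkeeping for the stabilizers $\N_A(Q)_\psi$ correct throughout, and ensuring the final bijection is genuinely $A$-equivariant and not merely $\N_A(\cdot)$-equivariant on pieces, is the technical heart of the argument.
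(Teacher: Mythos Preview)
Your overall architecture is right --- find a normal $K\unlhd A$ with $K\leq G$ and $K\nleq\z(G)$ for which the conjecture holds (via Corollary \ref{cor:iBAW for embedded perfect group}), lift to a bijection $\Omega_K^G:\IBr(G)\to\Alp(G\mid K)/G$ via Theorem \ref{thm:Lifting bijection for weights}, and then somehow pass to $\Alp(G)/G$. But the last step is not the ``identification'' you describe, and this is where the real content lies.

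The sets $\Alp(G\mid K)/G$ and $\Alp(G)/G$ are genuinely different: the former indexes over radical $p$-subgroups $Q$ of $K$ with $\eta\in\IBr(\n_G(Q))$ lying above $\dzo(\n_K(Q))$, while the latter indexes over radical $p$-subgroups $D$ of $G$ with $\nu\in\dzo(\n_G(D))$. Radical $p$-subgroups of $G$ need not lie in $K$, and the characters $\eta$ are not defect-zero in any sense. The paper bridges this gap via an \emph{intermediate} set $\Wr(G\mid K)/G$ of triples $(Q,R,\chi)$ (Definition \ref{def:Relative weights triples}) and constructs two separate bijections. First, $\Psi_K^G:\Alp(G\mid K)/G\to\Wr(G\mid K)/G$ (Theorem \ref{thm:Reduction, inductive step}) by applying the inductive hypothesis \emph{a second time} --- not to $G$ but to the quotients $\wh{G}/K_0$ arising from the central-extension trick of Lemma \ref{lem:4.1}, which is where the relative-defect-zero sets $\rdzo$ of Definition \ref{def:rdzo} enter (Lemma \ref{lem:Reduction, above a character in dzo}). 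Second, $\Lambda_K^G:\Wr(G\mid K)/G\to\Alp(G)/G$ (Theorem \ref{thm:Reduction, relative weights to weights}) via the DGN machinery of Theorem \ref{thm:Above modular DGN}; this step is independent of the inductive hypothesis. You mention both the central-extension lemma and the DGN results, but you do not articulate this two-stage factorisation, and your suggestion that ``$K$ contains every radical $p$-subgroup's relevant part'' is not how the argument works.

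A related point: the paper minimises $|G:\z(G)|$ first and then $|A|$, not $|G|$. This matters because the central extension $\wh{G}$ in Lemma \ref{lem:4.1} may have $|\wh{G}|>|G|$, but one still has $|\wh{G}/K_0:\z(\wh{G}/K_0)|\leq|G:K\z(G)|<|G:\z(G)|$, which is what makes the second application of induction in Lemma \ref{lem:Reduction, above a character in dzo} go through.
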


We prove Theorem \ref{thm:Reduction for iBAW} arguing inductively by considering a minimal counterexample. Suppose that $G$ is a finite group satisfying the assumptions of Theorem \ref{thm:Reduction for iBAW} and for which the conclusion fails. More precisely suppose that, Conjecture \ref{conj:iBAWC 2} fails with respect to $G\unlhd A$ and that $G$ and $A$ have been minimized with respect to $|G:\z(G)|$ first and then $|A|$. In this case, the pair $(G,A)$ satisfies the following hypothesis.

\begin{hyp}
\label{hyp:Inductive hypothesis}
Let $G\unlhd A$ be finite groups, with $G$ satisfying the requirements of Theorem \ref{thm:Reduction for iBAW}, and suppose that Conjecture \ref{conj:iBAWC 2} holds for every $X\unlhd Y$ such that every non-abelian finite simple group of order divisible by $p$ involved in $X$ is also involved in $G$, and at least one of the following conditions is satisfied:
\begin{enumerate}
\item $|X:\z(X)|<|G:\z(G)|$;
\item $|X:\z(X)|=|G:\z(G)|$ and $|Y|<|A|$.
\end{enumerate}
\end{hyp}

We now describe the structure of the minimal counterexample $G$. To start, we show that $G$ does not have non-trivial normal $p$-subgroups.

\begin{lem}
\label{lem:Removing normal p-subgroups}
If Conjecture \ref{conj:iBAWC 2} holds for $G/\o_p(G)\unlhd A/\o_p(G)$, then it holds for $G\unlhd A$.
\end{lem}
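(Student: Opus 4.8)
The plan is the standard reduction modulo $\o_p(G)$. Set $V:=\o_p(G)$; it is characteristic in $G$ and hence normal in $A$, and we write $\overline{X}:=XV/V$ for $X\le A$. First I would record the two relevant inflation bijections. Since $V$ lies in the kernel of every irreducible Brauer character of $G$ by \cite[Lemma 2.32]{Nav98}, inflation gives an $A$-equivariant bijection $\IBr(\overline{G})\to\IBr(G)$. For weights, observe that if $Q$ is a radical $p$-subgroup of $G$ then $QV$ is a $p$-subgroup normalised by $\n_G(Q)$ (which normalises both $Q$ and $V\normal G$), so $QV\le\o_p(\n_G(Q))=Q$ and therefore $V\le Q$. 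It follows that $Q\mapsto\overline{Q}$ is a bijection from the radical $p$-subgroups of $G$ onto those of $\overline{G}$, that $\n_G(Q)/V=\n_{\overline{G}}(\overline{Q})$, and that $\o_p(\n_G(Q))=Q$ while $\o_p(\n_{\overline{G}}(\overline{Q}))=\overline{Q}$; in particular the quotients $\n_G(Q)/\o_p(\n_G(Q))$ and $\n_{\overline{G}}(\overline{Q})/\o_p(\n_{\overline{G}}(\overline{Q}))$ are identified, and hence inflation maps $\dzo(\n_{\overline{G}}(\overline{Q}))$ bijectively onto $\dzo(\n_G(Q))$. Combining these, inflation induces an $A$-equivariant bijection $\Alp(\overline{G})/\overline{G}\to\Alp(G)/G$, sending $\overline{(\overline{Q},\overline{\psi})}$ to $\overline{(Q,\psi)}$.

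By hypothesis Conjecture \ref{conj:iBAWC 2} holds for $\overline{G}\unlhd\overline{A}$, so there is an $\overline{A}$-equivariant bijection $\overline{\Omega}\colon\IBr(\overline{G})\to\Alp(\overline{G})/\overline{G}$ with the required property. Composing $\overline{\Omega}$ with the two inflation bijections above yields an $A$-equivariant bijection $\Omega\colon\IBr(G)\to\Alp(G)/G$. To verify the condition on modular character triples, fix $\vartheta\in\IBr(G)$ and $(Q,\psi)\in\Omega(\vartheta)$, and let $\overline{\vartheta}\in\IBr(\overline{G})$ and $\overline{\psi}\in\IBr(\n_{\overline{G}}(\overline{Q}))$ be the characters corresponding to $\vartheta$ and $\psi$; then $(\overline{Q},\overline{\psi})\in\overline{\Omega}(\overline{\vartheta})$, so
\[\left(\overline{A}_{\overline{\vartheta}},\overline{G},\overline{\vartheta}\right)\isob\left(\n_{\overline{A}}(\overline{Q})_{\overline{\psi}},\n_{\overline{G}}(\overline{Q}),\overline{\psi}\right).\]
Equivariance of inflation identifies $\overline{A}_{\overline{\vartheta}}=A_\vartheta/V$, $\n_{\overline{A}}(\overline{Q})_{\overline{\psi}}=\n_A(Q)_\psi/V$ and $\n_{\overline{G}}(\overline{Q})=\n_G(Q)/V$ (the inclusion $\n_A(Q)_\psi\le A_\vartheta$ being a consequence of the $A$-equivariance of $\Omega$), and $\vartheta,\psi$ are the inflations of $\overline{\vartheta},\overline{\psi}$. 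Applying Lemma \ref{lem:Lifting isomorphisms from quotients} with ambient group $A_\vartheta$ and, in the notation of that lemma, $Z=V$, $N=G$, $H=\n_A(Q)_\psi$ and $M=\n_G(Q)$ — which is legitimate since $V\le Q\le\n_G(Q)=G\cap\n_A(Q)_\psi$, $V,G\unlhd A_\vartheta$ — we obtain
\[\left(A_\vartheta,G,\vartheta\right)\isob\left(\n_A(Q)_\psi,\n_G(Q),\psi\right),\]
which is exactly the condition in Conjecture \ref{conj:iBAWC 2}. Hence Conjecture \ref{conj:iBAWC 2} holds for $G\unlhd A$.

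I do not expect a genuine obstacle here: the whole argument is bookkeeping around passing to the quotient by $\o_p(G)$. The two points deserving care are the observation that every radical $p$-subgroup of $G$ contains $\o_p(G)$ — which is what makes radical subgroups, their normalisers and the associated $\dzo$-sets descend faithfully to $\overline{G}$ — and the verification that the groups appearing in the character-triple condition for $\overline{G}\unlhd\overline{A}$ are precisely the images modulo $V$ of those appearing for $G\unlhd A$, so that Lemma \ref{lem:Lifting isomorphisms from quotients} applies verbatim.
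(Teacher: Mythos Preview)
Your proposal is correct and follows essentially the same approach as the paper's proof: identify $\IBr(G)$ with $\IBr(\overline{G})$ and $\Alp(G)/G$ with $\Alp(\overline{G})/\overline{G}$ via inflation (using that every radical $p$-subgroup contains $\o_p(G)$), transport the bijection $\overline{\Omega}$, and lift the block isomorphisms of modular character triples via Lemma~\ref{lem:Lifting isomorphisms from quotients}. The paper cites \cite[Lemma 1.3]{Dad92} for the containment $\o_p(G)\le Q$, whereas you give the short direct argument; otherwise the two proofs are the same.
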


\begin{proof}
Set $\overline{A}:=A/\o_p(G)$ and $\overline{G}:=G/\o_p(G)$ and suppose that $\overline{\Omega}:\IBr(\overline{G})\to \Alp(\overline{G})/\overline{G}$ is the map given by Conjecture \ref{conj:iBAWC 2} applied to $\overline{G}\unlhd \overline{A}$. By \cite[Lemma 2.32]{Nav98} we know that $\o_p(G)$ is contained in the kernel of every irreducible Brauer character and therefore we can identify $\IBr(\overline{G})$ with $\IBr(G)$ via inflation of characters. On the other hand, if $(Q,\psi)$ is a $p$-weight of $G$ then $Q$ is a radical $p$-subgroup and hence $\o_p(G)$ is contained in $Q$ (see, for instance, \cite[Lemma 1.3]{Dad92}). Then, applying \cite[Lemma 2.32]{Nav98} we deduce that $\psi\in\dzo(\n_G(Q))$ can be identified with the corresponding character $\overline{\psi}\in\dzo(\n_{\overline{G}}(\overline{Q}))$ where $\overline{Q}:=Q/\o_p(G)$. It follows that the map $\overline{\Omega}$ induces an $A$-equivariant bijection $\Omega:\IBr(G)\to \Alp(G)/G$. To conclude, notice that the isomorphisms of modular character triples induced by $\overline{\Omega}$ can be lifted to analogous block isomorphisms for the map $\Omega$ thanks to Lemma \ref{lem:Lifting isomorphisms from quotients}.
\end{proof}

\begin{cor}
\label{cor:Trival p-core}
Suppose that Hypothesis \ref{hyp:Inductive hypothesis} holds for the pair $(G,A)$ while Conjecture \ref{conj:iBAWC 2} fails with respect to $G\unlhd A$. Then $\o_p(G)=1$. 
\end{cor}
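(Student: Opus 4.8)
The plan is to argue by contradiction, so suppose that $\o_p(G)\neq 1$. Since $\o_p(G)$ is characteristic in $G$ and $G\unlhd A$, we have $\o_p(G)\unlhd A$; set $\overline{A}:=A/\o_p(G)$ and $\overline{G}:=G/\o_p(G)$, so that $\overline{G}\unlhd \overline{A}$. By Lemma \ref{lem:Removing normal p-subgroups} it suffices to show that Conjecture \ref{conj:iBAWC 2} holds for $\overline{G}\unlhd \overline{A}$, and I would obtain this by applying Hypothesis \ref{hyp:Inductive hypothesis} to the pair $(X,Y):=(\overline{G},\overline{A})$.

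The heart of the argument is then to verify the requirements of Hypothesis \ref{hyp:Inductive hypothesis} for $(\overline{G},\overline{A})$. The condition on simple sections is immediate: every non-abelian finite simple group of order divisible by $p$ involved in $\overline{G}$ is the image of a section of $G$ and hence is involved in $G$. For the numerical condition, note that $\z(G)\o_p(G)/\o_p(G)$ is a central subgroup of $\overline{G}$, whence
\[|\overline{G}:\z(\overline{G})|\leq |\overline{G}:\z(G)\o_p(G)/\o_p(G)|=|G:\z(G)\o_p(G)|\leq |G:\z(G)|.\]
If this inequality is strict, then condition (i) of Hypothesis \ref{hyp:Inductive hypothesis} holds; if equality holds, then $\o_p(G)\neq 1$ forces $|\overline{A}|<|A|$, so condition (ii) holds. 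In either case Hypothesis \ref{hyp:Inductive hypothesis} applies and gives that Conjecture \ref{conj:iBAWC 2} holds for $\overline{G}\unlhd \overline{A}$; Lemma \ref{lem:Removing normal p-subgroups} then yields that it holds for $G\unlhd A$, contradicting our standing assumption. Therefore $\o_p(G)=1$.

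I do not expect a genuine obstacle here, since this is a routine bookkeeping reduction of the kind appearing in the set-up of other reduction theorems. The only point that needs a moment's care is the displayed inequality together with the case distinction afterwards: passing to $\overline{G}=G/\o_p(G)$ need not strictly decrease $|G:\z(G)|$ --- precisely when $\o_p(G)\leq \z(G)$ --- and in that borderline case one must instead use that the order of the ambient group has strictly decreased, which is exactly what condition (ii) of Hypothesis \ref{hyp:Inductive hypothesis} records.
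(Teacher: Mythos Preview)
Your proof is correct and follows essentially the same approach as the paper: assume $\o_p(G)\neq 1$, pass to the quotient $\overline{G}\unlhd\overline{A}$, verify that Hypothesis \ref{hyp:Inductive hypothesis} applies via the inequality $|\overline{G}:\z(\overline{G})|\leq|G:\z(G)|$ together with $|\overline{A}|<|A|$, and then invoke Lemma \ref{lem:Removing normal p-subgroups} to derive a contradiction. The paper's version is slightly terser, simply noting both inequalities at once rather than splitting into cases, but the content is identical.
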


\begin{proof}
Assume that $\o_p(G)\neq 1$, set $\overline{A}:=A/\o_p(G)$ and $\overline{G}:=G/\o_p(G)$. Then $|\overline{G}:\z(\overline{G})|\leq |G:\z(G)|$ and $|\overline{A}|<|A|$ so that Conjecture \ref{conj:iBAWC 2} holds for $\overline{G}\unlhd \overline{A}$ thanks to Hypothesis \ref{hyp:Inductive hypothesis}. Lemma \ref{lem:Removing normal p-subgroups} now implies that Conjecture \ref{conj:iBAWC 2} holds for $G\unlhd A$, a contradiction.
\end{proof}

By applying Corollary \ref{cor:Trival p-core} together with Corollary \ref{cor:iBAW for embedded perfect group}, we can now give a description of the structure of $G$.

\begin{pro}
\label{prop:Structure of a minimal counterexample}
Suppose that Hypothesis \ref{hyp:Inductive hypothesis} holds for the pair $(G,A)$ while Conjecture \ref{conj:iBAWC 2} fails with respect to $G\unlhd A$. Then, there exists a subgroup $K$ of $G$ with $K\unlhd A$ and $K\nleq \z(G)$ such that Conjecture \ref{conj:iBAWC 2} holds with respect to $K\unlhd A$.
\end{pro}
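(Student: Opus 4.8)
The plan is to exhibit the required $K$ by analysing the generalised Fitting subgroup of $G$ and splitting into two cases according to whether $G$ has a component of order divisible by $p$. I would first record some preliminaries. By Corollary~\ref{cor:Trival p-core} we have $\o_p(G)=1$, so any subgroup $K\le G$ with $K\unlhd A$ satisfies $\o_p(K)\le\o_p(G)=1$, and in particular $\z(K)$ is a $p'$-group, since $\o_p(\z(K))\le\o_p(K)=1$. Next, Conjecture~\ref{conj:iBAWC 2} holds \emph{trivially} for any finite $N\unlhd B$ with $N$ of order prime to $p$: the only radical $p$-subgroup of $N$ is $1$, one has $\dzo(N)=\IBr(N)=\Irr(N)$, and $\vartheta\mapsto\overline{(1,\vartheta)}$ is a $B$-equivariant bijection for which the required condition $(B_\vartheta,N,\vartheta)\isob(\n_B(1)_\vartheta,\n_N(1),\vartheta)$ is just the reflexivity of $\isob$. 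Consequently, if $G$ were abelian it would be a $p'$-group (as $\o_p(G)=1$) and Conjecture~\ref{conj:iBAWC 2} would hold for $G\unlhd A$, contrary to hypothesis; hence $G$ is non-abelian.

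\textbf{Case 1: no component of $G$ has order divisible by $p$.} Then the layer $E(G)$ is a $p'$-group, and since $\o_p(G)=1$ the Fitting subgroup equals $\o_{p'}(G)$, so $F^*(G)=\o_{p'}(G)E(G)$ is a $p'$-group. Set $K:=F^*(G)$. Then $K$ is characteristic in $G$, hence $K\unlhd A$ and $K\le G$, and $K$ is a $p'$-group, so Conjecture~\ref{conj:iBAWC 2} holds for $K\unlhd A$ by the preliminary remark. Finally $K\nleq\z(G)$: otherwise $\c_G(F^*(G))=G$, which together with the standard inclusion $\c_G(F^*(G))\le F^*(G)$ forces $G\le F^*(G)\le\z(G)$, contradicting that $G$ is non-abelian.

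\textbf{Case 2: some component $L$ of $G$ has $p\mid|L|$.} Let $\mathcal{O}$ be the orbit of $L$ under the permutation action of $A$ on the components of $G$, and put $K:=\prod_{L'\in\mathcal{O}}L'$. Since $\mathcal{O}$ is $A$-invariant, $K$ is an $A$-invariant subgroup of $E(G)\le G$, so $K\unlhd A$; being generated by quasi-simple, hence perfect, subgroups, $K$ is perfect and nontrivial, so $K\nleq\z(G)$. By the preliminary remark $\o_p(K)=1$, so $p\nmid|\z(K)|$; moreover $K/\z(K)\cong\prod_{L'\in\mathcal{O}}L'/\z(L')$ is a direct product of copies of $S:=L/\z(L)$ (the $L'$ being $A$-conjugate to $L$), the group $S$ is non-abelian simple, and $p\mid|S|$ because $p\mid|L|$ while $\z(L)$ is a $p'$-group. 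Since $S$ is involved in $G$, the hypothesis of Theorem~\ref{thm:Reduction for iBAW} built into Hypothesis~\ref{hyp:Inductive hypothesis} ensures that Conjecture~\ref{conj:iBAWC 2} holds for the universal $p'$-covering group $X$ of $S$ with respect to $X\unlhd X\rtimes\Aut(X)$. Corollary~\ref{cor:iBAW for embedded perfect group}, applied to $K\unlhd A$, then yields Conjecture~\ref{conj:iBAWC 2} for $K\unlhd A$, which completes the argument.

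The step demanding the most care is the verification in Case~2 that all hypotheses of Corollary~\ref{cor:iBAW for embedded perfect group} are genuinely met — namely that $K$ is perfect with $\z(K)$ of $p'$-order and $K/\z(K)$ a direct product of copies of a \emph{single} non-abelian simple group of order divisible by $p$ involved in $G$ — together with the correct choice of which $A$-orbit of components to take; the degenerate situations ($G$ abelian, or all components of $p'$-order) are then cleared away using the trivial validity of Conjecture~\ref{conj:iBAWC 2} for groups of order prime to $p$.
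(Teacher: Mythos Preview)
Your argument is correct and follows essentially the same strategy as the paper: use Corollary~\ref{cor:Trival p-core} to get $\o_p(G)=1$, handle a degenerate case via a normal $p'$-subgroup, and otherwise produce a perfect $K$ meeting the hypotheses of Corollary~\ref{cor:iBAW for embedded perfect group}. One small slip: the claim that ``the Fitting subgroup equals $\o_{p'}(G)$'' is not true in general (only $F(G)\le\o_{p'}(G)$ holds, since $\o_{p'}(G)$ need not be nilpotent); what you actually need, and what is true, is that $F(G)$ is a $p'$-group because $\o_p(G)=1$, so $F^*(G)=F(G)E(G)$ is a $p'$-group in your Case~1.

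Your case split differs from the paper's: the paper distinguishes whether $\z(G)<\o_{p'}(G)$ (then $K:=\o_{p'}(G)$) or $\z(G)=\o_{p'}(G)$ (then $G$ is not $p$-solvable and one finds a \emph{characteristic} perfect $K\le F^*(G)$), whereas you split on whether a component of order divisible by $p$ exists, taking $K=F^*(G)$ in the negative case and an $A$-orbit of components in the positive case. Both decompositions are valid and rely on the same two inputs (trivial validity of Conjecture~\ref{conj:iBAWC 2} for $p'$-groups, and Corollary~\ref{cor:iBAW for embedded perfect group}); the paper's choice gives a characteristic $K$, while yours gives only an $A$-invariant one, but the proposition requires merely $K\unlhd A$, so either suffices.
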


\begin{proof}
By Corollary \ref{cor:Trival p-core} we have $\o_p(G)=1$ and thus $\z(G)\leq \o_{p'}(G)$. If $\z(G)<\o_{p'}(G)$, then we define $K:=\o_{p'}(G)$ and observe that Conjecture \ref{conj:iBAWC 2} trivially holds for $K\unlhd A$. We may therefore assume that $\o_{p'}(G)=\z(G)$. Now, if $G$ is $p$-solvable it must be abelian and Conjecture \ref{conj:iBAWC 2} holds for $G$, against our assumptions. We conclude that $G$ has a non-abelian composition factor of order divisible by $p$. More precisely, $\z(G)<\F^*(G)$ and we can find a perfect subgroup $K\leq \F^*(G)$ with $K$ characteristic in $G$, hence normal in $A$, and such that $K/\z(K)$ is isomorphic to a direct product of copies of $S$ for some non-abelian simple group $S$ of order divisible by $p$. Observe also that $p$ does not divide the order of $\z(K)$ since $\o_p(G)=1$. Then, $K\nleq \z(G)$ and Conjecture \ref{conj:iBAWC 2} holds with respect to $K\unlhd A$ by our assumption and thanks to Corollary \ref{cor:iBAW for embedded perfect group}.
\end{proof}

If $K$ is the group given by Proposition \ref{prop:Structure of a minimal counterexample}, then by applying Theorem \ref{thm:Lifting bijection for weights} with $J=G$ we obtain an $A$-equivariant bijection between the sets $\IBr(G)$ and $\Alp(G\mid K)/G$ (see Definition \ref{def:Relative weights}) inducing block isomorphisms of modular character triples. Therefore, to conclude the proof of Theorem \ref{thm:Reduction for iBAW} we now need to construct a bijection with similar properties between the sets $\Alp(G\mid K)/G$ and $\Alp(G)/G$. We start by introducing some further notation.

\begin{defi}
\label{def:Relative weights}
We denote by $\Alpr(G\mid \vartheta)$ the set of pairs $(R,\chi)$ such that $R/K\in\Rad(G_\vartheta/K)$ and $\chi\in\rdzo(\n_G(R)\mid R, \vartheta)$. The group $G_\vartheta$ acts by conjugation on $\Alpr(G\mid \vartheta)$ and we let $\Alpr(G\mid \vartheta)/G_\vartheta$ denote the corresponding set of $G_\vartheta$-orbits.
\end{defi}

The argument used to prove the following lemma is inspired by \cite[Lemma 7.3]{Nav-Spa14I}.

\begin{lem}
\label{lem:Reduction, above a character in dzo}
Suppose that Hypothesis \ref{hyp:Inductive hypothesis} holds for the pair $(G,A)$ and let $K\leq G$ with $K\unlhd A$ and with an $A$-invariant $\vartheta\in\dzo(K)$. If $|G:K\z(G)|<|G:\z(G)|$, then there exists an $A$-equivariant bijection
\[\Upsilon_\vartheta^G:\IBr\left(G\enspace\middle|\enspace\vartheta\right)\to\Alpr(G\mid \vartheta)/G\]
such that
\[\left(A_\eta,G,\eta\right)\isob\left(\n_A(R)_\chi,\n_G(R),\chi\right)\]
for every $\eta\in\IBr(G\mid \vartheta)$ and $(R,\chi)\in\Upsilon_\vartheta^G(\eta)$.
\end{lem}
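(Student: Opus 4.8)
The plan is to reduce the statement to an application of Conjecture~\ref{conj:iBAWC 2} for the quotient $G/K$. The numerical hypothesis $|G:K\z(G)|<|G:\z(G)|$ is equivalent to $K\not\le\z(G)$, and since $\z(G/K)\supseteq K\z(G)/K$ it gives $|G/K:\z(G/K)|\le|G:K\z(G)|<|G:\z(G)|$; together with the obvious fact that every simple group involved in $G/K$ is involved in $G$, Hypothesis~\ref{hyp:Inductive hypothesis}(i) makes Conjecture~\ref{conj:iBAWC 2} available for $G/K\unlhd A/K$. As a preliminary, I would first reduce to the case where $\vartheta$ has an extension $\wt\vartheta\in\IBr(A)$: applying Lemma~\ref{lem:4.1} to the modular character triple $(A,K,\vartheta)$ produces $\eps\colon\wh A\to A$ with central kernel $Z$ of $p'$-order, $\wh K:=\eps^{-1}(K)=K_0\times Z$ with $K_0\cong K$, and a character $\vartheta_0\times 1_Z$ of $\wh K$ corresponding to $\vartheta$ that extends to $\wh A$. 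Writing $\wh G:=\eps^{-1}(G)$, one has $\wh K\not\le\z(\wh G)$ and $\vartheta_0\times 1_Z\in\dzo(\wh K)$, while $Z$ lies in the kernel of every Brauer character above $\vartheta_0\times 1_Z$ and of every member of $\Alpr(\wh G\mid\vartheta_0\times 1_Z)$; hence via $\eps$, and using Lemma~\ref{lem:4.1 with dzo} and Remark~\ref{rem:sigma-maps-central-extension} together with the compatibility properties of Lemma~\ref{lem:4.1}, the statement for $G\unlhd A$ follows from the one for $\wh G\unlhd\wh A$. So I assume from now on that $\vartheta$ extends to $\wt\vartheta\in\IBr(A)$.

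With $\wt\vartheta$ in hand, Gallagher's theorem \cite[Corollary 8.20]{Nav98} gives an $A$-equivariant bijection $\IBr(G/K)\to\IBr(G\mid\vartheta)$, $\beta\mapsto\beta\wt\vartheta_G$. On the weight side, fix $R/K\in\Rad(G/K)$ and put $N:=\n_G(R)$; then $R/K=\o_p(N/K)$, so $\o_p(N/R)=1$. Passing to $N/\o_p(K)$ — on which $\wt\vartheta$ descends, since $\o_p(K)\le\o_p(A)\le\ker\wt\vartheta$ — and combining Gallagher's theorem with the relative-defect degree count together with Lemmas~\ref{lem:Definition of rdzo} and~\ref{lem:gallagher-lift} and \cite[Theorem 3.18]{Nav98}, I would identify $\rdzo(N\mid R,\vartheta)$ with $\{\beta\wt\vartheta_N\mid\beta\in\dzo(N/K)\}$, which in turn is in natural bijection with $\dzo(\n_{G/K}(R/K))$. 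As $\vartheta$ is $A$-invariant this is $\n_A(R)$-equivariant, and assembling over all $R/K\in\Rad(G/K)$ yields an $A$-equivariant bijection $\Alpr(G\mid\vartheta)\to\Alp(G/K)$ given by $(R,\beta\wt\vartheta_N)\mapsto(R/K,\beta)$, hence $\Alpr(G\mid\vartheta)/G\to\Alp(G/K)/(G/K)$.

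Now let $\overline\Omega\colon\IBr(G/K)\to\Alp(G/K)/(G/K)$ be the bijection supplied by Conjecture~\ref{conj:iBAWC 2} for $G/K\unlhd A/K$, so that $\bigl((A/K)_\beta,G/K,\beta\bigr)\isob\bigl(\n_{A/K}(R/K)_{\beta'},\n_{G/K}(R/K),\beta'\bigr)$ for every $\beta\in\IBr(G/K)$ and $(R/K,\beta')\in\overline\Omega(\beta)$. Define $\Upsilon_\vartheta^G$ to be the composite of the bijection of the previous paragraph with $\overline\Omega$ and the inverse of the first Gallagher bijection; it is an $A$-equivariant bijection $\IBr(G\mid\vartheta)\to\Alpr(G\mid\vartheta)/G$. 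For the triple condition, fix $\eta=\beta\wt\vartheta_G$ and $(R,\chi)=(R,\beta'\wt\vartheta_N)\in\Upsilon_\vartheta^G(\eta)$; writing $\overline J$ for $JK/K$ and using $A_\eta/K=(A/K)_\beta$ and $\n_A(R)_\chi/K=\n_{A/K}(R/K)_{\beta'}$, the displayed isomorphism reads $\bigl(\overline{A_\eta},\overline G,\beta\bigr)\isob\bigl(\overline{\n_A(R)_\chi},\overline N,\beta'\bigr)$. I then apply Lemma~\ref{lem:4.6} with its $Z$ equal to $K$, its $\chi$ equal to $\wt\vartheta_{A_\eta}$, and its $\beta$ the lift of $\vartheta$ in $\Irr(K)$ with $\o_p(K)$ in its kernel and defect zero over $K/\o_p(K)$; this upgrades the isomorphism to $\bigl(A_\eta,G,\eta\bigr)\isob\bigl(\n_A(R)_\chi,\n_G(R),\chi\bigr)$, provided one checks the hypothesis $\cent{A_\eta}{D}\le\n_A(R)_\chi$ for some defect group $D$ of $\bl(\chi)$. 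Since $\beta'\in\dzo(N/K)$, the block $\bl(\beta')$ has defect group $R/K=\o_p(N/K)$; comparing defect groups along $N\to N/K$, and using that $\bl(\chi)$ covers $\bl(\vartheta)$ which has defect group $\o_p(K)$, one obtains a defect group $D$ of $\bl(\chi)$ with $DK=R$. Then any $x\in\cent{A_\eta}{D}$ normalises $R=DK$ (it centralises $D$ and normalises $K\unlhd A$) and its image $xK$ centralises $R/K$, so $xK\in\cent{A_\eta/K}{R/K}\le\n_A(R)_\chi/K$ by the group-theoretic part of the isomorphism from $\overline\Omega$; as $K\le\n_A(R)_\chi$ this gives $x\in\n_A(R)_\chi$, as required.

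The step I expect to be the main obstacle is the verification of the group-theoretic side conditions — above all, the centralizer hypothesis $\cent{A_\eta}{D}\le\n_A(R)_\chi$ of Lemma~\ref{lem:4.6}, which forces one to follow a defect group of $\bl(\chi)$ precisely through the chain $K\le R\le\n_G(R)$ and its image in $\n_G(R)/K$ — and, to a lesser extent, carrying out the reduction to the extendible case, i.e.\ descending the block isomorphisms through the central extension of Lemma~\ref{lem:4.1}.
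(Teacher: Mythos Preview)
Your approach is essentially the paper's own: pass to the central extension of Lemma~\ref{lem:4.1} so that $\vartheta$ extends, apply Hypothesis~\ref{hyp:Inductive hypothesis} to the quotient by (the copy of) $K$ to get the bijection $\overline\Omega$, translate back via Gallagher, and upgrade the triple isomorphisms using Lemma~\ref{lem:4.6}. Two points deserve tightening. First, the descent from $\wh G\unlhd\wh A$ to $G\unlhd A$ is not handled by Lemma~\ref{lem:4.1}(iv), which only concerns triples with second entry $K_0$; what you need is Lemma~\ref{lem:going to quotients}, applied with $Z=\ker\eps$ and using Lemma~\ref{lem:4.1}(iii) to verify $\cent{\wh A}{\wh G}/Z=\cent A G$ (this is exactly how the paper descends). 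Second, the assertion that a defect group $D$ of $\bl(\chi)$ satisfies $DK=R$ is stronger than what you justify and stronger than what is needed: the paper invokes \cite[Corollary~1.5(i.b)]{Mur96} to obtain only $R/K\le DK/K$, and your own next sentence (passing to $A/K$, where $xK$ centralises $DK/K\supseteq R/K$) already works with this weaker inclusion. With these two fixes your argument matches the paper's proof.
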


\begin{proof}
Let $\P$ be a projective representation associated with the modular character triple $(A,K,\vartheta)$ and consider the central extension $\wh{A}$ of $A$ by the $p'$-subgroup $Z$ constructed in Lemma \ref{lem:4.1}. Let $\eps:\wh{A}\to A$ be the epimorphism given by $\eps(x,z)=x$ for each $x\in A$ and $z\in Z$ and set $\wh{L}:=\eps^{-1}(L)$ for every $L\leq A$. Recall then that $\wh{K}=K_0\times Z$ for a subgroup $K_0\leq\wh{A}$ isomorphic to $K$ via the restriction $\eps_{K_0}$. Let $\vartheta_0:=\vartheta\circ \eps_{K_0}$ and observe that $\vartheta_0\in\dzo(K_0)$ and that, by Lemma \ref{lem:4.1}, $\vartheta_0$ has an extension $\wt{\vartheta}\in\IBr(\wh{A})$. Notice that $\wh{G}/\wh{K}$ is isomorphic to $G/K$ and, recalling that $\wh{K}/K_0\simeq Z$ is a $p'$-group, it follows that every non-abelian simple group of order divisible by $p$ involved in $\wh{G}/K_0$ is also involved in $G$. Furthermore, if we set $\overline{X}:=XK_0/K_0$ for every $X\leq \wh{A}$, then $|\overline{G}:\zent{\overline{G}}|\leq|G:K\zent G|<|G:\zent G|$ and therefore, by Hypothesis \ref{hyp:Inductive hypothesis}, Conjecture \ref{conj:iBAWC 2} holds for the pair $(\overline{G},\overline{A})$. Thus, there exists an $\overline{A}$-equivariant bijection
\[\Omega_{\overline{G}}:\IBr\left(\overline{G}\right)\to \Alp\left(\overline{G}\right)/\overline{G}\]
such that
\begin{equation}
\label{eq:Reduction, above a character in dzo, 1}
\left(\overline{A}_{\overline{\rho}}, \overline{G},\overline{\rho}\right)\isob\left(\norm{\overline{A}}{\overline{R}}_{\overline{\psi}}, \norm{\overline{G}}{\overline{R}},\overline{\psi}\right)
\end{equation}
for all $\overline{\rho}\in\IBr(\overline{G})$ and $(\overline{R}, \overline{\psi})\in\Omega_{\overline{G}}(\overline{\rho})$. Consider now $\tau\in\IBr(\wh{G}\mid\vartheta_0)$. Since $\vartheta_0$ extends to $\wt{\vartheta}\in\IBr(\wh{A})$, applying \cite[Corollary 8.20]{Nav98}, we can find a unique $\overline{\rho}\in\IBr(\overline{G})$ such that $\tau=\rho\wt{\vartheta}_{\wh{G}}$ and where $\rho$ is the inflation of $\overline\rho$ to $\wh{G}$. Similarly, given any pair $(\wh{R},{\varphi})\in\Alp_r(\wh{G}\mid\vartheta_0)$, we can find a unique $\overline{\psi}\in\IBr(\norm{\wh{G}}{\wh{R}}/K_0)$, with inflation $\psi\in\IBr(\norm{\wh{G}}{\wh{R}})$, such that $\varphi=\psi\wt{\vartheta}_{\norm{\wh{G}}{\wh{R}}}$. Since $\vartheta_0$ has defect zero (recall here that it is no loss of generality to assume that $\o_p(G)=1$ thanks to Lemma \ref{lem:Removing normal p-subgroups}), it lifts to an ordinary character of $K_0$. Then Lemma \ref{lem:gallagher-lift} implies that $\psi$ lifts to some ordinary character since so does $\varphi$. Moreover,
\[\psi(1)_p=\varphi(1)_p/\vartheta_0(1)_p=|\norm{\wh{G}}{\wh{R}}:\wh R|_p=|\norm{\overline{G}}{\overline{R}}:\overline{R}|_p\]
and hence we conclude that $(\overline{R},\overline{\psi})$ belongs to $\Alp(\overline{G})$. This shows that the map $\Omega_{\overline{G}}$ induces a bijection
\[\Upsilon_{\vartheta_0}^{\wh G}:\IBr\left(\wh{G}\enspace\middle|\enspace\vartheta_0\right)\to \Alp_r\left(\wh{G}\enspace\middle|\enspace\vartheta_0\right)/\wh{G}\]
given by sending the Brauer character $\rho\wt{\vartheta}_{\wh{G}}$ to the $\wh{G}$-orbit of the pair $(\wh{R},\psi\wt{\vartheta}_{\n_{\wh{G}}(\wh{R})})$ whenever $(\overline{R},\overline{\psi})$ belongs to the $\overline{G}$-orbit $\Omega_{\overline{G}}(\overline{\psi})$ as described above. Now, according to \cite[Corollary 1.5 (i.b)]{Mur96}, we can find a defect group $\overline{U}$ of $\bl(\overline{\psi})$ and a defect group $D$ of $\bl(\varphi)$ such that $\overline{U}\leq DK_0/K_0$. On the other hand, since $\wh{R}/K_0$ is a radical $p$-subgroup of $\wh{G}/K_0$, we deduce that $\wh{R}/K_0\leq \overline{U}$ thanks to \cite[Theorem 4.8]{Nav98}. It follows that $\c_{\wh{A}}(D)K_0/K_0\leq \n_{\wh{A}}(\wh{R})/K_0$ and therefore $\c_{\wh{A}_\rho}(D)\leq \n_{\wh{A}}(\wh{R})_\rho=\n_{\wh{A}}(\wh{R})_\psi$. We can now apply Lemma \ref{lem:4.6} to the block isomorphisms \eqref{eq:Reduction, above a character in dzo, 1} to get
\begin{equation}
\label{eq:Reduction, above a character in dzo, 2}
\left(\wh{A}_{\tau}, \wh{G},\tau\right)\isob\left(\norm{\wh{A}_\varphi}{\wh{R}}, \norm{\wh G}{\wh{R}},\varphi\right),
\end{equation}
where recall that $\tau=\rho\wt{\vartheta}_{\wh{G}}$ and $\varphi=\psi\wt{\vartheta}_{\n_{\wh{G}}(\wh{R})}$. Observe also that the bijection $\Upsilon_{\vartheta_0}^{\wh{G}}$ is $\wh{A}$-equivariant because the Brauer character $\wt{\vartheta}_{\wh G}$ is $\wh{A}$-invariant and using the equivariance properties of $\Omega_{\overline{G}}$. Next, for every $\tau\in\IBr(\wh{G}\mid\vartheta_0)$ and $(\wh{R},\varphi)\in\Upsilon_{\vartheta_0}^{\wh{G}}(\tau)$, we know by Definition \ref{def:central-iso} that $Z\leq \ker{\tau}$ if and only if $Z\leq \ker{\varphi}$. As a consequence, $\Upsilon_{\vartheta_0}^{\wh G}$ restricts to a bijection
\[\IBr\left(\wh G\enspace\middle|\enspace\vartheta_0\times 1_Z\right)\to \Alp_r\left(\wh G\enspace\middle|\enspace\vartheta_0\times 1_Z\right)/\wh{G}.\]
Finally, observe that $\norm{\wh{G}}{\wh{R}}=\widehat{\norm G R}$ and hence $\norm G R$ is isomorphic to $\norm{\wh{G}}{\wh{R}}/Z$. Therefore the epimorphism $\eps$ maps the character sets $\IBr(\wh{G}\mid\vartheta_0\times 1_Z)$ and $\Alp_r(\wh{G}\mid\vartheta_0\times 1_Z)/\wh{G}$ onto $\IBr(G\mid\vartheta)$ and $\Alp_r(G\mid\vartheta)/G$ respectively. In this way we can then construct an $A$-equivariant bijection 
\[\Upsilon^{G}_{\vartheta}:\IBr(G\mid\vartheta)\to\Alp_r(G\mid\vartheta)/G\]
as required in the statement. To prove the condition on block isomorphisms let $\eta\in\IBr(G\mid\vartheta)$ and $(R,\chi)\in\Upsilon_{\vartheta}^G(\eta)$. Since $\cent{\wh{A}}{\wh{G}}/Z=\cent A G$ by Lemma \ref{lem:4.1}, we can apply Lemma \ref{lem:going to quotients} to the block isomorphism \eqref{eq:Reduction, above a character in dzo, 2} in order to get
\[(A_\eta,G,\eta)\isob(\norm A R_\chi, \norm G R, \chi)\]
as desired.
\end{proof}

We now combine the bijections obtain in the lemma above for the various $\vartheta\in\dzo(K)$. Denote by $\Alpr(G\mid \dzo(K))$ and $\IBr(G\mid \dzo(K))$ the union of the sets $\Alpr(G\mid \vartheta)$ and $\IBr(G\mid \vartheta)$ respectively, for $\vartheta$ running in the set $\dzo(K)$. Since the set $\dzo(K)$ is stable under $G$-conjugation, we deduce that $G$ acts on the set $\Alpr(G\mid \dzo(K))$ and denote by $\Alpr(G\mid \dzo(K))/G$ the corresponding set of $G$-orbits.

\begin{pro}
\label{prop:Reduction, above any character in dzo}
Suppose that Hypothesis \ref{hyp:Inductive hypothesis} holds for the pair $(G,A)$ and let $K\leq G$ with $K\unlhd A$. If $|G:K\z(G)|<|G:\z(G)|$, then there exists an $A$-equivariant bijection
\[\Upsilon_K^G:\IBr\left(G\enspace\middle|\enspace\dzo(K)\right)\to\Alpr(G\mid \dzo(K))/G\]
such that
\[\left(A_\eta,G,\eta\right)\isob\left(\n_A(R)_\chi,\n_G(R),\chi\right)\]
for every $\eta\in\IBr(G\mid \vartheta)$ and $(R,\chi)\in\Upsilon_K^G(\eta)$.
\end{pro}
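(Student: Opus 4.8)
The plan is to glue the bijections produced by Lemma~\ref{lem:Reduction, above a character in dzo} over the $A$-orbits of $\dzo(K)$, following verbatim the strategy of the proof of Theorem~\ref{thm:Lifting bijection for weights}: there the bijections coming from Conjecture~\ref{conj:iBAWC 2} for $K\unlhd A$ were transported along the Clifford correspondence to all of $\IBr(J)$ for $K\leq J\leq A$; here the role of Conjecture~\ref{conj:iBAWC 2} is played by Lemma~\ref{lem:Reduction, above a character in dzo}.

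Concretely, I would fix an $A$-transversal $\mathcal{S}$ in $\dzo(K)$. For $\vartheta\in\mathcal{S}$ one has $K\leq G_\vartheta\unlhd A_\vartheta$ and $\vartheta$ is $A_\vartheta$-invariant. Every element of $\z(G)$ centralises $K$, hence fixes $\vartheta$ and centralises $G_\vartheta$, so $\z(G)\leq\z(G_\vartheta)$ and $|G_\vartheta|\leq|G|$; consequently $|G_\vartheta:\z(G_\vartheta)|\leq|G:\z(G)|$ and $|G_\vartheta:K\z(G_\vartheta)|\leq|G:K\z(G)|<|G:\z(G)|$. The first of these shows that Hypothesis~\ref{hyp:Inductive hypothesis} for $(G,A)$ implies it for $(G_\vartheta,A_\vartheta)$ (both conditions of Hypothesis~\ref{hyp:Inductive hypothesis} relative to $(G_\vartheta,A_\vartheta)$, together with the involvement condition, being implied by the corresponding ones relative to $(G,A)$), and the second is the bound needed to run the argument of Lemma~\ref{lem:Reduction, above a character in dzo}. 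Applying Lemma~\ref{lem:Reduction, above a character in dzo} to $G_\vartheta\unlhd A_\vartheta$ therefore yields an $A_\vartheta$-equivariant bijection
\[\Upsilon_\vartheta\colon\IBr\left(G_\vartheta\enspace\middle|\enspace\vartheta\right)\to\Alpr(G_\vartheta\mid\vartheta)/G_\vartheta\]
with $(A_{\vartheta,\eta},G_\vartheta,\eta)\isob(\n_{A_\vartheta}(R)_\chi,\n_{G_\vartheta}(R),\chi)$ for all $\eta\in\IBr(G_\vartheta\mid\vartheta)$ and $(R,\chi)\in\Upsilon_\vartheta(\eta)$.

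Next, I would transport these along the Clifford correspondence. On the character side, induction gives a bijection $\IBr(G_\vartheta\mid\vartheta)\to\IBr(G\mid\vartheta)$ by \cite[Theorem~8.9]{Nav98}; on the weight side, using $\n_{G_\vartheta}(R)=\n_G(R)_\vartheta$ and \cite[Theorem~8.9]{Nav98} applied inside $\n_G(R)$, the assignment $(R,\chi)\mapsto(R,\chi^{\n_G(R)})$ identifies $\Alpr(G_\vartheta\mid\vartheta)/G_\vartheta$ with the set of $G$-orbits in $\Alpr(G\mid\dzo(K))$ lying over the $G$-orbit of $\vartheta$. Composing with $\Upsilon_\vartheta$, letting $\vartheta$ run over $\mathcal{S}$, and extending $A$-equivariantly produces the required $A$-equivariant bijection $\Upsilon_K^G$. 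The block-isomorphism condition $(A_\eta,G,\eta)\isob(\n_A(R)_\chi,\n_G(R),\chi)$ is then deduced from the one over $G_\vartheta$ by Lemma~\ref{lem:Irreducible induction} (together with Lemma~\ref{lem:Bijections induced by isomorphisms are compatible with isomorphisms} and Lemma~\ref{lem:Basic properties}), exactly as in the closing paragraph of the proof of Theorem~\ref{thm:Lifting bijection for weights}; the stabiliser equalities $A_\eta=GA_{\vartheta,\eta}=G\,\n_A(R)_\chi$ and $A_{\vartheta,\eta}=\n_{A_\vartheta}(R)_\chi$ needed to invoke Lemma~\ref{lem:Irreducible induction} follow from the Frattini argument (applied with Clifford's theorem) and from the equivariance of $\Upsilon_K^G$.

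The main obstacle is organisational rather than conceptual: one must make precise the ``Clifford correspondence for relative weights'' used above, i.e.\ check that $\Alpr(G\mid\dzo(K))/G$ decomposes as the disjoint union over $\vartheta\in\mathcal{S}$ of the sets $\Alpr(G_\vartheta\mid\vartheta)/G_\vartheta$, compatibly with the $A$-action, so that the gluing is well defined and $A$-equivariant. This is the same (by now standard) bookkeeping that underlies the proof of Theorem~\ref{thm:Lifting bijection for weights}; the one genuinely new point — the descent of the inequality $|G:K\z(G)|<|G:\z(G)|$ and of Hypothesis~\ref{hyp:Inductive hypothesis} to the stabilisers $(G_\vartheta,A_\vartheta)$, using $\z(G)\leq\z(G_\vartheta)$ — was dealt with above.
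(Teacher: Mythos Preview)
Your proposal is correct and follows essentially the same route as the paper: fix an $A$-transversal in $\dzo(K)$, apply Lemma~\ref{lem:Reduction, above a character in dzo} over each stabiliser $(G_\vartheta,A_\vartheta)$, then glue via the Clifford correspondence and upgrade the block isomorphisms through Lemma~\ref{lem:Irreducible induction}. Your explicit verification that the bound $|G_\vartheta:K\z(G_\vartheta)|\leq|G:K\z(G)|<|G:\z(G)|$ (together with the descent of Hypothesis~\ref{hyp:Inductive hypothesis}) suffices to rerun the \emph{argument} of Lemma~\ref{lem:Reduction, above a character in dzo} is a point the paper leaves implicit; the reference to Lemma~\ref{lem:Bijections induced by isomorphisms are compatible with isomorphisms} is unnecessary here since the required triples already come directly from Lemma~\ref{lem:Reduction, above a character in dzo}.
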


\begin{proof}
Let $\mathcal{U}$ be an $A$-transversal in the set $\dzo(K)$ and observe that for each $\vartheta\in\mathcal{U}$, by applying Lemma \ref{lem:Reduction, above a character in dzo} to $G_\vartheta\unlhd A_\vartheta$, there exists an $A_\vartheta$-equivariant bijection
\[\Upsilon_\vartheta^{G_\vartheta}:\IBr(G_\vartheta\mid \vartheta)\to \Alpr(G_\vartheta\mid \vartheta)/G_\vartheta\]
such that
\begin{equation}
\label{eq:Reductiom, above any character in dzo, 1}
\left(A_{\vartheta,\nu},G_\vartheta,\nu\right)\isob\left(\n_{A_\vartheta}(R)_\psi,\n_{G_\vartheta}(R),\psi\right)
\end{equation}
for every $\nu\in\IBr(G_\vartheta\mid \vartheta)$ and $(R,\psi)\in\Upsilon_\vartheta^{G_\vartheta}(\nu)$. Next, choose an $A_\vartheta$-transversal $\mathbb{S}_\vartheta$ in $\IBr(G_\vartheta\mid \vartheta)$ and observe that the equivariance properties of $\Upsilon_\vartheta^{G_\vartheta}$ imply that the image $\mathbb{T}_\vartheta:=\Upsilon_\vartheta^{G_\vartheta}(\mathbb{S}_\vartheta)$ is an $A_\vartheta$-transversal in $\Alpr(G_\vartheta\mid \vartheta)/G_\vartheta$. By using \cite[Theorem 8.9]{Nav98} we deduce that the set $\mathcal{S}_\vartheta$ of characters of the form $\eta=\nu^G$ with $\nu\in\mathbb{S}_\vartheta$ is an $A_\vartheta$-transversal in the set $\IBr(G\mid \vartheta)$. Similarly, if $\mathcal{T}_\vartheta$ denotes the set of $G$-orbits of pairs of the form $(R,\chi)$ with $\chi=\psi^{\n_G(R)}$ and where the $G_\vartheta$-orbit of $(R,\psi)$ belongs to $\mathbb{T}_\vartheta$, then $\mathcal{T}_\vartheta$ is an $A_\vartheta$-transversal in $\Alpr(G\mid \vartheta)/G$. Finally, \cite[Corollary 8.7]{Nav98} shows that the set $\mathcal{S}$ consisting of characters $\eta$ belonging to $\mathcal{S}_\vartheta$ for some $\vartheta\in \mathcal{U}$ is an $A$-transversal in $\IBr(G\mid \dzo(K))$. Likewise, the set $\mathcal{T}$ consisting of $G$-orbits $\overline{(R,\chi)}\in\mathcal{T}_\vartheta$ for some $\vartheta\in\mathcal{U}$ is an $A$-transversal in $\Alpr(G\mid \dzo(K))/G$. We then obtain an $A$-equivariant bijection $\Upsilon_K^G$ by setting
\[\Upsilon_K^G\left(\eta^x\right):=\overline{(R,\chi)}^x\]
for all $x\in A$ and every $\eta\in\mathcal{S}$ and $\overline{(R,\chi)}\in\mathcal{T}$ such that there exists some $\vartheta\in\mathcal{U}$ for which $\eta\in\mathcal{S}_\vartheta$, $\overline{(R,\chi)}\in\mathcal{T}_\vartheta$ and $(R,\psi)\in\Upsilon_\vartheta^{G_\vartheta}(\nu)$, where $\nu\in\IBr(G_\vartheta\mid \vartheta)$ and $\psi\in\IBr(\n_G(R)_\vartheta\mid \vartheta)$ are the Clifford correspondents of $\eta$ and $\chi$ respectively.

It remains to prove the condition on block isomorphisms of modular character triples from the statement. Let $\eta$, $\nu$, $\chi$, and $\psi$ be as in the previous paragraph and observe that by Lemma \ref{lem:Basic properties} (ii) it is enough to show that
\[\left(A_\eta,G,\eta\right)\isob\left(\n_A(R)_\chi,\n_G(R),\chi\right)\]
for our choice of $\eta$ and $(R,\chi)$. This condition follows by applying Lemma \ref{lem:Irreducible induction} since the block isomorphism of modular character triples from \eqref{eq:Reductiom, above any character in dzo, 1} is satisfied with respect to our choice of $\nu$ and $\psi$. This completes the proof.
\end{proof}

Using Proposition \ref{prop:Reduction, above any character in dzo} we can now construct a bijection from $\Alp(G\mid K)/G$ to an intermediate set $\Wr(G\mid K)/G$ that we now define. Recall that for any finite group $H$ we denote by $\Rado(H)$ the set of radical $p$-subgroups $Q$ of $H$ such that $(Q,\psi)\in\Alp(H)$ for some $\psi\in\dzo(\n_H(Q))$.

\begin{defi}
\label{def:Relative weights triples}
Let $K\unlhd G$ be finite groups. We denote by $\Wr(G\mid K)$ the set of triples $(Q,R,\chi)$ where $Q$ is a radical $p$-subgroup in $\Rado(K)$ and the pair $(R,\chi)$ belongs to the set $\Alpr(\n_G(Q)\mid \dzo(\n_K(Q)))$ introduced in Definition \ref{def:Relative weights} (see also the comment before Proposition \ref{prop:Reduction, above any character in dzo}). Once again the group $G$ acts by conjugation on $\Wr(G\mid K)$ and we denote by $\Wr(G\mid K)/G$ the set of $G$-orbits.
\end{defi}

We now construct a bijection between the set $\Alp(G\mid K)/G$ (from Definition \ref{def:Above Weights}) and the set $\Wr(G\mid K)/G$.

\begin{thm}
\label{thm:Reduction, inductive step}
Suppose that Hypothesis \ref{hyp:Inductive hypothesis} holds for the pair $(G,A)$ and let $K$ be a subgroup of $G$ with $K\unlhd A$ and $K\nleq \z(G)$. Then there exists an $A$-equivariant bijection
\[\Psi_K^G:\Alp(G\mid K)/G\to\Wr(G\mid K)/G\]
such that
\begin{equation}
\label{eq:Reduction, inductive step}
\left(\n_A(Q)_\eta,\n_G(Q),\eta\right)\isob\left(\n_A(Q,R)_\chi,\n_G(Q,R),\chi\right)
\end{equation}
for every $(Q,\eta)\in\Alp(G\mid K)$ and every $(Q,R,\chi)\in\Wr(G\mid K)$ whose $G$-orbits correspond via the bijection $\Psi_K^G$.
\end{thm}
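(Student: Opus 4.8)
The plan is to build $\Psi_K^G$ one radical $p$-subgroup at a time, following the same blueprint used in the proof of Proposition \ref{prop:Reduction, above any character in dzo} (which treated one $\vartheta\in\dzo(K)$ at a time), now with the role of $\dzo(K)$ played by the set $\Rado(K)$ of radical $p$-subgroups $Q$ of $K$ with $\dzo(\n_K(Q))\neq\emptyset$. Fix an $A$-transversal $\mathcal{Q}$ in $\Rado(K)$; since $K\unlhd A$ the set $\Rado(K)$ is $A$-stable and the $A$-stabiliser of $Q\in\mathcal{Q}$ is $\n_A(Q)$. As $G$-sets, both $\Alp(G\mid K)$ and $\Wr(G\mid K)$ decompose according to the $G$-orbit of the first coordinate $Q$, with the $Q$-fibre of $\Alp(G\mid K)$ being $\{Q\}\times\IBr(\n_G(Q)\mid\dzo(\n_K(Q)))$ and that of $\Wr(G\mid K)$ being $\{Q\}\times\Alpr(\n_G(Q)\mid\dzo(\n_K(Q)))$. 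By orbit bookkeeping identical to (and in fact simpler than) that in the proof of Proposition \ref{prop:Reduction, above any character in dzo}, it then suffices to produce, for each $Q\in\mathcal{Q}$, an $\n_A(Q)$-equivariant bijection
\[\Psi_Q:\IBr\big(\n_G(Q)\mid\dzo(\n_K(Q))\big)\to\Alpr\big(\n_G(Q)\mid\dzo(\n_K(Q))\big)/\n_G(Q)\]
satisfying $\big(\n_A(Q)_\eta,\n_G(Q),\eta\big)\isob\big(\n_{\n_A(Q)}(R)_\chi,\n_{\n_G(Q)}(R),\chi\big)$ for $\eta\mapsto(R,\chi)$, and then to glue the $\Psi_Q$ into an $A$-equivariant bijection and transport the block isomorphisms to all of $\Alp(G\mid K)$ using Lemma \ref{lem:Basic properties}(ii). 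The identifications $\n_{\n_A(Q)}(R)=\n_A(Q)\cap\n_A(R)=\n_A(Q,R)$ and $\n_{\n_G(Q)}(R)=\n_G(Q,R)$ turn the displayed isomorphism into exactly \eqref{eq:Reduction, inductive step}.

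The bijection $\Psi_Q$ will be obtained by applying Proposition \ref{prop:Reduction, above any character in dzo} to the triple $(\n_G(Q),\n_A(Q),\n_K(Q))$ in place of $(G,A,K)$, noting that $\n_G(Q)=G\cap\n_A(Q)\unlhd\n_A(Q)$. Two hypotheses need checking. First, Hypothesis \ref{hyp:Inductive hypothesis} must hold for $(\n_G(Q),\n_A(Q))$: the non-abelian simple groups of order divisible by $p$ involved in $\n_G(Q)$ are involved in $G$; if $Q\unlhd G$ then $(\n_G(Q),\n_A(Q))=(G,A)$ and there is nothing to check; and if $Q\not\unlhd G$ then, since $\z(G)$ centralises $Q\leq K\unlhd G$ we get $\z(G)\leq\z(\n_G(Q))$, whence $|\n_G(Q):\z(\n_G(Q))|\leq|\n_G(Q):\z(G)|<|G:\z(G)|$, so Hypothesis \ref{hyp:Inductive hypothesis} for $(G,A)$ already yields Conjecture \ref{conj:iBAWC 2} — a fortiori Hypothesis \ref{hyp:Inductive hypothesis} — for $(\n_G(Q),\n_A(Q))$. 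Second, the index condition of Proposition \ref{prop:Reduction, above any character in dzo}, namely $|\n_G(Q):\n_K(Q)\z(\n_G(Q))|<|\n_G(Q):\z(\n_G(Q))|$, i.e. $\n_K(Q)\not\leq\z(\n_G(Q))$: when $Q\unlhd G$ this reads $K\not\leq\z(G)$, which is our standing hypothesis on $K$, and more generally this must be verified case by case.

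The main obstacle is precisely the degenerate possibility that $Q\not\unlhd G$ yet $\n_K(Q)\leq\z(\n_G(Q))$, where Proposition \ref{prop:Reduction, above any character in dzo} does not apply directly. Here one unwinds the degeneracy: $\n_K(Q)\leq\z(\n_G(Q))$ forces $\n_G(Q)=\c_G(Q)$ and $\n_K(Q)=\c_K(Q)$ to be abelian with $Q=\o_p(\n_K(Q))\in\Syl_p(\n_K(Q))$, so $\n_K(Q)/Q$ is a $p'$-group; consequently $\dzo(\n_K(Q))=\IBr(\n_K(Q))$ and $\IBr(\n_G(Q)\mid\dzo(\n_K(Q)))=\IBr(\n_G(Q))$. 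On the other hand $|\n_G(Q):\z(\n_G(Q))|<|G:\z(G)|$ gives Conjecture \ref{conj:iBAWC 2} for $\n_G(Q)\unlhd\n_A(Q)$ outright, and combining Theorem \ref{thm:Lifting bijection for weights} (with $\n_K(Q)\leq\n_G(Q)\leq\n_A(Q)$ and $J=\n_G(Q)$) with a direct comparison between $\Alp(\n_G(Q)\mid\n_K(Q))$ and $\Alpr(\n_G(Q)\mid\dzo(\n_K(Q)))$ in this central situation produces the bijection $\Psi_Q$ with its block isomorphism property. I expect the careful handling of this edge case — and checking that the gluing, the $A$-equivariance, and all the orbit counting go through simultaneously — to be the technically heaviest part of the argument; everything else is a routine adaptation of the reasoning already carried out for Proposition \ref{prop:Reduction, above any character in dzo}.
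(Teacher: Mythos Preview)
Your overall plan coincides with the paper's: fix an $A$-transversal in $\Rado(K)$, apply Proposition~\ref{prop:Reduction, above any character in dzo} fibre-by-fibre to $\n_G(Q)\unlhd\n_A(Q)$ with $\n_K(Q)$ in the role of $K$, and then glue. The block-isomorphism bookkeeping and the identifications $\n_{\n_A(Q)}(R)=\n_A(Q,R)$ are also exactly as in the paper.

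Where you diverge is in trying to apply Proposition~\ref{prop:Reduction, above any character in dzo} \emph{as stated}, which forces you to verify $\n_K(Q)\nleq\z(\n_G(Q))$ and leads to the edge case you worry about. The paper avoids this entirely. It does not invoke Proposition~\ref{prop:Reduction, above any character in dzo} as a black box but rather repeats its \emph{argument} (that of Lemma~\ref{lem:Reduction, above a character in dzo}): the only place where the inductive hypothesis enters is to obtain Conjecture~\ref{conj:iBAWC 2} for a quotient $\overline{X}$ with $|\overline{X}:\z(\overline{X})|\leq|\n_G(Q):\n_K(Q)\z(\n_G(Q))|$, and this bound is compared not to $|\n_G(Q):\z(\n_G(Q))|$ but to the \emph{original} $|G:\z(G)|$. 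The chain
\[
|\n_G(Q):\n_K(Q)\z(\n_G(Q))|\leq|\n_G(Q):\n_K(Q)\z(G)|\leq|G:K\z(G)|<|G:\z(G)|
\]
(using $\z(G)\leq\z(\n_G(Q))$, $\n_G(Q)\cap K\z(G)=\n_K(Q)\z(G)$, and $K\nleq\z(G)$) holds for every $Q$, so Hypothesis~\ref{hyp:Inductive hypothesis} for $(G,A)$ applies uniformly and there is no degenerate case to treat. Once you make this observation, your entire edge-case discussion becomes unnecessary, and the proof is exactly the routine transversal/gluing argument you describe.
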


\begin{proof}
To start, let $\mathbb{T}$ be an $A$-transversal in $\Rado(K)$ and observe that, for every $Q\in\mathbb{T}$, we have $|\n_G(Q):\n_K(Q)\z(\n_G(Q))|\leq |\n_G(Q):\n_K(Q)\z(G)|\leq |G:K\z(G)|<|G:\z(G)|$. We can then apply (the argument of) Proposition \ref{prop:Reduction, above any character in dzo} to $\n_G(Q)\unlhd \n_A(Q)$ to construct an $\n_A(Q)$-equivariant bijection
\[\Upsilon_{\n_K(Q)}^{\n_G(Q)}:\IBr\left(\n_G(Q)\enspace\middle|\enspace\dzo(\n_K(Q))\right)\to\Alpr(\n_G(Q)\mid \dzo(\n_K(Q)))/\n_G(Q)\]
such that
\[\left(\n_A(Q)_\eta,\n_G(Q),\eta\right)\isob\left(\n_A(Q,R)_\chi,\n_G(Q,R)_\chi,\chi\right)\]
for every $\eta\in\IBr(\n_G(Q)\mid \dzo(\n_K(Q)))$ and $(R,\chi)\in\Upsilon_{\n_K(Q)}^{\n_G(Q)}(\eta)$. Next, let $\mathbb{S}_Q$ be an $\n_A(Q)$-transversal in $\IBr(\n_G(Q)\mid \dzo(\n_K(Q)))$ and consider its image $\mathcal{S}_Q$ under the bijection $\Upsilon_{\n_K(Q)}^{\n_G(Q)}$. Since the latter is $\n_A(Q)$-equivariant, it follows that $\mathcal{S}_Q$ is an $\n_A(Q)$-transversal in $\Alpr(\n_G(Q)\mid \dzo(\n_K(Q)))/\n_G(Q)$. Observe that the set $\mathbb{S}$ consisting of $G$-orbits of pairs $(Q,\eta)$ with $Q\in\mathbb{T}$ and $\eta\in\mathbb{S}_Q$ is an $A$-transversal in $\Alp(G\mid K)/G$. Similarly, the set $\mathcal{S}$ consisting of $G$-orbits of triples $(Q,R,\chi)$ with $Q\in\mathbb{T}$ and where the $\n_G(Q)$-orbit of $(R,\chi)$ belongs to $\mathcal{S}_Q$, more precisely $(R,\chi)\in\Upsilon_{\n_K(Q)}^{\n_G(Q)}(\eta)$, is an $A$-transversal in $\Wr(G\mid K)/G$. Our construction shows that there is a bijection $\wh{\Psi}_K^G:\mathbb{S}\to \mathcal{S}$ and we can therefore define an $A$-equivariant bijection $\Psi_K^G$ as required in the statement above by setting
\[\Psi_K^G\left(\overline{(Q,\eta)}^x\right):=\wh{\Psi}_K^G\left(\overline{(Q,\eta)}\right)^x\]
for every $G$-orbit $\overline{(Q,\eta)}\in\mathbb{S}$ and every $x\in A$. Observe that the desired block isomorphisms of modular character triples are given directly by the properties of the bijections $\Upsilon_{\n_K(Q)}^{\n_G(Q)}$.
\end{proof}

In our final step we use the results on the Dade--Glauberman--Nagao correspondence obtained in Section \ref{sec:DGN}, and in particular Theorem \ref{thm:Above modular DGN}, to construct a bijection between $\Wr(G\mid K)/G$ and $\Alp(G)/G$. Observe that this step of our proof is independent on the inductive hypothesis and holds in full generality.

\begin{thm}
\label{thm:Reduction, relative weights to weights}
Let $K\leq G\leq A$ be finite groups with $K,G\unlhd A$. Then, there exists an $A$-equivariant bijection
\[\Lambda_K^G:\Wr(G\mid K)/G\to\Alp(G)/G\] 
such that
\[\left(\n_A(Q,R)_\chi,\n_G(Q,R),\chi\right)\isob\left(\n_A(D)_\nu,\n_G(D),\nu\right)\]
for every $(Q,R,\chi)\in\Wr(G\mid K)$ and every $(D,\nu)\in\Lambda_K^G(\overline{(Q,R,\chi)})$.
\end{thm}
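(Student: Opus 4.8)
The plan is to unpack the definition of $\Wr(G\mid K)$ and use the modular Dade--Glauberman--Nagao machinery from Section~\ref{sec:DGN} to send a triple $(Q,R,\chi)$ to a weight $(D,\nu)$. Recall that a triple $(Q,R,\chi)\in\Wr(G\mid K)$ consists of $Q\in\Rado(K)$, a subgroup $R$ with $R/\n_K(Q)$ a radical $p$-subgroup of $\n_{G}(Q)_{\vartheta}/\n_K(Q)$ for the relevant $\vartheta\in\dzo(\n_K(Q))$ lying below $\chi$, and $\chi\in\rdzo(\n_{G}(R)\mid R,\vartheta)$. The idea is that the pair $(Q,R)$ should be collapsed to a single radical $p$-subgroup $D$ of $G$ via the DGN correspondence, exactly as in the ordinary case treated in \cite[Section~7]{Nav-Spa14I}. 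Concretely, $R$ is normal in $\n_G(Q)_\vartheta$ modulo $\n_K(Q)$; applying the argument surrounding Theorem~\ref{thm:Above modular DGN} to the layer $\n_K(Q)\unlhd R$ inside $\n_G(Q)_\vartheta$, one produces a $p$-subgroup $D\leq R$ which is a defect group of the block of $R/\o_p(\n_K(Q))$ above $\overline{\vartheta}$, and then $D$ is the sought-after radical $p$-subgroup of $G$. The remark following Theorem~\ref{thm:Above modular DGN} already verifies that such a $D$ is radical in the appropriate overgroup, so one reuses that computation.

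First I would fix an $A$-transversal $\mathbb{T}$ in $\Rado(K)$ and, for each $Q\in\mathbb{T}$, an $\n_A(Q)$-transversal in the set of $G$-orbits of triples with first coordinate $Q$; working $Q$ by $Q$ as in the proof of Theorem~\ref{thm:Reduction, inductive step}, it suffices to construct the bijection and the block isomorphism for a fixed representative. For such a representative $(Q,R,\chi)$, let $\vartheta\in\dzo(\n_K(Q))$ be the character below $\chi$, let $D$ be the DGN-type defect group attached to the layer $\n_K(Q)\unlhd R$ (invariance of $\vartheta$ under $\n_A(Q)_\vartheta$ is automatic since $R/\n_K(Q)$ is radical, hence we may pass to $\n_A(Q)_\vartheta$), and set $\varphi:=\pi_D(\vartheta)\in\dzo(\n_{\n_K(Q)}(D))=\dzo(\n_K(D))$ after identifying $\n_{\n_K(Q)}(D)$ with $\n_K(D)$ (here one checks $\n_K(Q)\cap\n(D)=\n_K(D)$ using $Q=\o_p(\n_K(Q))\leq D$ and $D$ radical). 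Then Theorem~\ref{thm:Above modular DGN}, applied with $K\leftarrow \n_K(Q)$, $M\leftarrow R$, $G\leftarrow \n_G(Q)$, $A\leftarrow \n_A(Q)$, yields an $\n_A(Q,D)$-equivariant bijection
\[
\Delta_{D,\varphi}^{\n_G(Q)}:\rdzo\bigl(\n_G(Q)\mid R,\vartheta\bigr)\to\dzo\bigl(\n_{\n_G(Q)}(D)\mid\pi_D(\vartheta)\bigr)
\]
together with the block isomorphism
$\bigl(\n_A(Q)_\chi,\n_G(Q),\chi\bigr)\isob\bigl(\n_A(Q,D)_\chi,\n_{\n_G(Q)}(D),\Delta(\chi)\bigr)$. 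Since $D$ is radical in $G$ and $\n_{\n_G(Q)}(D)=\n_G(D)$ (again because $Q\leq D$ forces any element normalizing $D$ to normalize $Q=\o_p(\n_K(Q))=\o_p(\n_K(D))\cdot\dots$; more precisely $Q$ is characteristic in the relevant local subgroup), the image $\nu:=\Delta(\chi)$ together with $D$ gives a genuine $p$-weight of $G$, i.e.\ $(D,\nu)\in\Alp(G)$. Conversely, given a weight $(D,\nu)\in\Alp(G)$ with $Q:=\o_p(\n_K(D))\in\Rado(K)$, one recovers $\vartheta$ as the unique character of $\n_K(Q)$ whose DGN correspondent with respect to $D$ lies below $\nu$ (uniqueness of the DGN correspondent and of $\vartheta\in\dz$ modulo $\o_p$), recovers $R$ as the subgroup of $\n_G(Q)_\vartheta$ generated by $\n_K(Q)$ and $D$ (which is radical modulo $\n_K(Q)$ by the remark after Theorem~\ref{thm:Above modular DGN}), and recovers $\chi$ as $(\Delta_{D,\varphi}^{\n_G(Q)})^{-1}(\nu)$; this shows the assignment is bijective. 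Assembling the local bijections over $Q\in\mathbb{T}$ and extending $A$-equivariantly by the transversal argument produces $\Lambda_K^G$, and the block isomorphism in the statement is precisely the one delivered by Theorem~\ref{thm:Above modular DGN}, after noting $\n_A(Q,R)_\chi=\n_A(Q,D)_\chi$ (the stabilizer of $R$ equals that of $D$ since $D$ is canonically attached to $R$ via DGN and $Q$ via $\o_p$) and invoking Lemma~\ref{lem:Basic properties}(ii) to reduce to a fixed representative.

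The main obstacle I anticipate is the bookkeeping for the normalizers: one must check carefully that $\n_{\n_G(Q)}(D)=\n_G(D)$, that $\n_{\n_K(Q)}(D)=\n_K(D)$, and that the various stabilizers of $\vartheta$, $\varphi=\pi_D(\vartheta)$, $R$, $D$, $\chi$, $\nu$ all coincide in the way needed to match the two sides of the block isomorphism — these are the identities $\n_A(Q)_\vartheta=\n_A(Q)_R$ (on the radical part), $\n_A(Q,D)_\varphi=\n_A(Q,D)_\vartheta=\n_A(Q,R)$, and $\n_A(D)=\n_A(Q,D)$. Each follows from the fact that $Q=\o_p(\text{local subgroup})$ is characteristic and that $R$, resp.\ $D$, is canonically reconstructed from the other via radical-subgroup and DGN uniqueness, but threading these equalities through so that Theorem~\ref{thm:Above modular DGN} applies verbatim (in particular so that the ambient group on the weight side is literally $\n_A(D)$ and not merely an isomorphic copy) is the delicate point. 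A secondary technical check is that $D$ is indeed a radical $p$-subgroup of $G$: this is exactly the content of the remark immediately following Theorem~\ref{thm:Above modular DGN}, applied inside $\n_G(Q)$, combined with the standard fact that a radical $p$-subgroup of $\n_G(Q)$ containing $Q\in\Rad(G)$ is radical in $G$.
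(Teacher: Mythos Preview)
Your overall strategy (collapse $(Q,R)$ to a single $D$ via the modular DGN correspondence, then match $\chi$ with a weight character $\nu$) is exactly the paper's, but your proposed application of Theorem~\ref{thm:Above modular DGN} does not type-check, and the missing ingredient is not just bookkeeping.

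You propose to apply Theorem~\ref{thm:Above modular DGN} with the substitutions $K\leftarrow\n_K(Q)$, $M\leftarrow R$, $G\leftarrow\n_G(Q)$, $A\leftarrow\n_A(Q)$. But the hypotheses of that theorem require $M$ (and the invariance of $\varphi$) to hold \emph{normally in $A$}: here that would mean $R\unlhd\n_A(Q)$ and $\vartheta$ being $\n_A(Q)$-invariant. Neither holds in general: by definition $R/\n_K(Q)$ is merely a radical $p$-subgroup of $\n_G(Q)_\vartheta/\n_K(Q)$, not a normal one, and $\vartheta$ is only $\n_A(Q)_\vartheta$-invariant. Passing to $\n_A(Q)_\vartheta$ does not help, since $R$ is still not normal there. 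Moreover, the bijection you write down would produce characters of $\n_G(Q)$, whereas $\chi$ is a character of $\n_G(Q,R)$, so your $\Delta(\chi)$ is not even defined.

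The paper's fix is to insert a Clifford step. One first replaces $\chi$ by its Clifford correspondent $\chi_\psi\in\rdzo(\n_G(Q,R)_\psi\mid R,\psi)$ over $\psi=\vartheta$, then applies Theorem~\ref{thm:Above modular DGN} with $A\leftarrow\n_A(Q,R)_\psi$ and $G\leftarrow\n_G(Q,R)_\psi$ (where $R$ \emph{is} normal and $\psi$ \emph{is} invariant), obtaining $\nu_\xi\in\dzo(\n_G(D)_\xi\mid\xi)$ for $\xi=\pi_D(\psi)$; here one uses the identities $\n_{\n_G(Q,R)_\psi}(D)=\n_G(D)_\xi$ and $\n_A(Q,R)_\psi=\n_K(Q)\n_A(D)_\xi$ coming from a Frattini argument on defect groups. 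Finally one \emph{induces} $\nu:=(\nu_\xi)^{\n_G(D)}$ and recovers the block isomorphism for $(\chi,\nu)$ from the one for $(\chi_\psi,\nu_\xi)$ via Lemma~\ref{lem:Irreducible induction}. Your claim that ``the stabilizer of $R$ equals that of $D$ since $D$ is canonically attached to $R$'' is also not quite right: $D$ is determined by $R$ only up to $\n_K(Q)$-conjugacy, which is why the paper makes explicit transversal choices for $D$ as well. The inverse map should start from $Q:=D\cap K$ (which is radical in $K$ by \cite[Lemma~2.3(a)]{Nav-Tie11}), not from $\o_p(\n_K(D))$.
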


\begin{proof}
To start fix $(Q,R,\chi)\in\Wr(G\mid K)$. Recall that this means that $Q$ is a radical $p$-subgroup of $K$ and there is some $\psi\in\dzo(\n_K(Q))$ such that $R/\n_K(Q)$ is a radical $p$-subgroup of $\n_G(Q)_\psi/\n_K(Q)$ and $\chi\in\rdzo(\n_G(Q,R)\mid R,\psi)$. Observe that $\psi$ is uniquely determined by $(Q,R,\chi)$ up to $\n_G(Q,R)$-conjugation. Now let $D/Q$ be a defect group of the unique block of $R/Q$ that covers $\bl(\overline{\psi})$ and where $\overline{\psi}$ is the Brauer character of $\n_K(Q)/Q$ corresponding to $\psi$ via inflation. By \cite[Theorem 9.17]{Nav98} we know that $R=D\n_K(Q)$ and $Q=D\cap \n_K(Q)=D\cap K$. In particular $\n_K(D)=\n_{\n_K(Q)}(D)$ and, because $\psi$ is $D$-invariant, we can define the Brauer character $\xi:=\pi_D(\psi)\in\dzo(\n_K(D))$ as in Definition \ref{def:DGN for Brauer characters}. Next, notice that the Brauer character $\chi$ determines a unique $\chi_\psi\in\IBr(\n_G(Q,R)_\psi)$ lying above $\psi$ according to \cite[Theorem 8.9]{Nav98}. Moreover, using the fact that $\chi\in\rdzo(\n_G(Q,R)\mid R,\psi)$, we deduce that $\chi_\psi$ belongs to the set $\rdzo(\n_G(Q,R)_\psi\mid R,\psi)$. Notice furthermore that $\n_{\n_G(Q,R)_\psi}(D)=\n_G(D)_\xi$ and that a Frattini argument yields $\n_G(Q,R)_\psi=R\n_G(D)_\xi=\n_K(Q)\n_G(D)_\xi$. Now, if
\[\Delta_{D,\psi}^{\n_G(Q,R)_\psi}:\rdzo(\n_G(Q,R)_\psi\mid R,\psi)\to \dzo(\n_G(D)_\xi\mid \xi)\]
denotes the bijection given by Theorem \ref{thm:Above modular DGN}, then we define $\nu_\xi:=\Delta_{D,\psi}^{\n_G(Q,R)_\psi}(\chi_\psi)$ and set $\nu:=(\nu_\xi)^{\n_G(D)}$ which is an irreducible Brauer character belonging to $\dzo(\n_G(D))$ (see \cite[Theorem 8.9]{Nav98}). We now use the above argument to construct an equivariant map $\Lambda_K^G$ with the properties required in the statement above. Observe that because $\Delta_{D,\psi}^{\n_G(Q,R)_\psi}$ is not canonical, our construction will have to depend on some choices. We now make these choices explicit.

Pick an $A$-transversal $\mathbb{T}$ in the set of radical $p$-subgroups of $K$ and, for each $Q\in\mathbb{T}$, choose an $\n_A(Q)$-transversal $\mathbb{T}^{(Q)}$ in the set of Brauer characters $\dzo(\n_K(Q))$. Now, for each $\psi\in\mathbb{T}^{(Q)}$ we fix an $\n_A(Q)_\psi$-transversal $\mathbb{T}^{(Q,\psi)}$ in the set of radical $p$-subgroups $\Rad(\N_G(Q)_\psi/\n_K(Q))$. Each element of $\mathbb{T}^{(Q,\psi)}$ can be written as $R/\n_K(Q)$ for some $\n_K(Q)\leq R\leq \n_G(Q)_\psi$. Moreover, if $D/Q$ is a defect group of the unique block of $R/Q$ covering $\bl(\overline{\psi})$ with $\overline{\psi}$ the Brauer character of $\n_K(Q)/Q$ corresponding to $\psi$ via inflation, then \cite[Theorem 9.17]{Nav98} tells us that $R=D\n_K(Q)$ and $Q=D\cap K$. Observe that $D$ is uniquely determined up to $\n_K(Q)$-conjugation. In order to fix a choice of $D$, for every $R/\n_K(Q)\in\mathbb{T}^{(Q,\psi)}$ we pick an $\n_A(Q,R)_\psi$-transversal $\mathbb{T}^{(Q,\psi,R)}$ in the set of defect groups of the unique block of $R/Q$ covering the block of $\overline{\psi}$. Finally, for each $D\in \mathbb{T}^{(Q,\psi,R)}$, observe that $\n_A(D)_\psi=\n_A(Q,R,D)_\psi$ and choose an $\n_A(D)_\psi$-transversal $\mathbb{T}^{(Q,\psi,R,D)}$ in $\rdzo(\n_G(Q,R)\mid R,\psi)$. Then, the set $\mathcal{T}$ of $G$-orbits $\overline{(Q,R,\chi)}$ with $Q\in\mathbb{T}$ and where $R/\n_K(Q)\in\mathbb{T}^{(Q,\psi)}$ and $\chi\in\mathbb{T}^{(Q,\psi,R,D)}$, for some $\psi\in\mathbb{T}^{(Q)}$ and $D\in\mathbb{T}^{(Q,\psi,R)}$, is an $A$-transversal in $\Wr(G\mid K)/G$.

We claim that the set $\mathbb{S}$ of groups $D$ belonging to $\mathbb{T}^{(Q,\psi,R)}$ for some $Q\in\mathbb{T}$, $\psi\in\mathbb{T}^{(Q)}$, and $R\in\mathbb{T}^{(Q,\psi)}$ is an $A$-transversal in $\Rado(G)$. To see this, observe first that two distinct elements of $\mathbb{S}$ cannot be $A$-conjugate. On the other hand, let $D'\in\Rado(G)$ and notice that $Q':=D'\cap K$ is a radical $p$-subgroup of $K$ by \cite[Lemma 2.3(a)]{Nav-Tie11}. Then there exist $Q\in\mathbb{T}$ and $x\in A$ such that $Q'^a=Q$. Recall that by the definition of $\Rado(G)$ there exists $\nu'\in\dzo(\n_G(D'))$ and that we can see $\bl(\nu')$ as a block of defect zero in the quotient $\n_G(D')/D'$. Let $c$ be a block of $\n_{KD'}(D')/D'$ covered by $\bl(\nu')$. By \cite[Lemma 9.27]{Nav98} it follows that $c$ is a block of defect zero of $\n_{KD'}(D')/D'$. Moreover, let $\wh{\xi}\in\IBr(\n_{KD'}(D'))$ correspond to the unique Brauer character of $\n_{KD'}(D')/D'$ belonging to $c$ (see \cite[Theorem 3.18]{Nav98}). Since $\n_{KD'}(D')/D'\simeq \n_K(D')/Q'$ we deduce that the character $\wh{\xi}$ restricts irreducibly to $\xi\in\IBr(\n_K(D'))$ and its block $\bl(\xi)$ has defect zero when regarded as a block of $\n_K(D')/Q'$. Thus $\xi$ belongs to $\dzo(\n_K(D'))$ and is $D'$-invariant. Since $\n_K(D')=\n_K(Q',D')$, we can then write $\xi=\pi_{D'}(\psi')$ for a unique $\psi'\in\dzo(\n_K(Q'))$. It follows that $\psi'^a$ belongs to $\dzo(\n_K(Q))$ and there exist $\psi\in\mathbb{T}^{(Q)}$ and $x\in\n_A(Q)$ such that $\psi'^{ax}=\psi$. Define $R':=\n_{KD'}(Q')=\n_K(Q')D'$ and observe that $R'/\n_K(Q')$ is a radical $p$-subgroup of $\n_G(Q')_{\psi'}/\n_K(Q')$ and that $D'/Q'$ is a defect group of the unique block of $R'/Q'$ covering $\bl(\psi')$, where the latter is regarded as a block of $\n_K(Q')/Q'$ . As a consequence $R'^{ax}/\n_K(Q)$ is a radical $p$-subgroup of $\n_G(Q)_\psi/\n_K(Q)$ and we can find $R/\n_K(Q)\in\mathbb{T}^{(Q,\psi)}$ and $y\in\n_A(Q)_\psi$ such that $R'^{axy}=R$. Moreover, $D'^{axy}/Q$ is a defect group of the unique block of $R/Q$ covering $\bl(\psi)$, viewed as a block of $\n_K(Q)/Q$, and hence there is $D\in\mathbb{T}^{(Q,\psi,R)}$ and $z\in\n_A(Q,R)_\psi$ such that $D'^{axyz}=D$. This shows that $D'$ is $A$-conjugate to $D\in\mathbb{S}$ and so $\mathbb{S}$ is an $A$-transversal in $\Rado(G)$ as claimed.

We now want to construct an $A$-transversal in $\Alp(G)/G$ in bijection with $\mathcal{T}$. By \cite[Theorem 8.9]{Nav98} each $\chi\in\mathbb{T}^{(Q,\psi,R,D)}$ is induced by a unique Brauer character $\chi_\psi\in\IBr(\n_G(Q,R)_\psi\mid \psi)$. As explained in the previous paragraph, we have that $\chi_\psi\in\rdzo(\n_G(Q,R)_\psi\mid R, \psi)$ and it follows that the set $\wh{\mathbb{T}}^{(Q,\psi,R,D)}$ of all Brauer characters $\chi_\psi$ for $\chi\in\mathbb{T}^{(Q,\psi,R,D)}$ is an $\n_A(D)_{\psi}$-transversal in $\rdzo(\n_G(Q,R)_\psi\mid R,\psi)$. Since the map $\Delta_{D,\psi}^{\n_G(Q,R)_\psi}$ from Theorem \ref{thm:Above modular DGN} is $\n_A(D)_\psi$-equivariant and $\n_A(D)_\psi=\n_A(D)_{\pi_D(\psi)}$, it follows that the image $\wh{\mathbb{S}}^{(Q,\psi,R,D)}$ of $\wh{\mathbb{T}}^{(Q,\psi,R,D)}$ under $\Delta_{D,\psi}^{\n_G(Q,R)_\psi}$ is an $\n_A(D)_\psi$-transversal in $\dzo(\n_G(D)_{\pi_D(\psi)}\mid \pi_D(\psi))$. As before, by applying \cite[Theorem 9.14]{Nav98} we deduce that the set $\mathbb{S}^{(Q,\psi,R,D)}$ of Brauer characters $\nu:=\vartheta^{\n_G(D)}$ for $\vartheta\in\wh{\mathbb{S}}^{(Q,\psi,R,D)}$ is an $\n_A(D)_\psi$-transversal in the set $\dzo(\n_G(D)\mid \pi_D(\psi))$. We can now conclude that the set $\mathcal{S}$ of $G$-orbits $\overline{(D,\nu)}$ with $D\in\mathbb{T}^{(Q,\psi,R)}$ and $\nu\in\mathbb{S}^{(Q,\psi,R,D)}$, for some $Q\in\mathbb{T}$, $\psi\in\mathbb{T}^{(Q)}$, and $R\in\mathbb{T}^{(Q,\psi)}$, is an $A$-transversal in $\Alp(G)/G$. In addition, there is a bijection between $\mathcal{T}$ and $\mathcal{S}$ given by mapping the $G$-orbit of $(Q,R,\chi)$ to that of $(D,\nu)$ whenever $Q\in\mathbb{T}$ and there is some $\psi\in\mathbb{T}^{(Q)}$ such that $R/\n_K(Q)\in\mathbb{T}^{(Q,\psi)}$, $D\in\mathbb{T}^{(Q,\psi,R)}$, $\chi\in\mathbb{T}^{(Q,\psi,R,D)}$ and $\nu\in\mathbb{S}^{(Q,\psi,R,D)}$ with $\chi$ corresponding to $\nu$ as described above. We define the map $\Lambda_K^G$ by setting
\[\Lambda_K^G\left(\overline{(Q,R,\chi)}^x\right):=\overline{(D,\nu)}^x\]
for every $\overline{(Q,R,\chi)}\in\mathcal{T}$ corresponding to $\overline{(D,\nu)}\in\mathcal{S}$ and every $x\in A$. This defines an $A$-equivariant bijection between $\Wr(G\mid K)/G$ and $\Alp(G)/G$. To conclude, we need to show that
\begin{equation}
\label{eq:Reduction, relative weights to weights}
\left(\n_A(Q,R)_\chi,\n_G(Q,R),\chi\right)\isob\left(\n_A(D)_\nu,\n_G(D),\nu\right).
\end{equation}
First, let $\chi_\psi\in\wh{\mathbb{T}}^{(Q,\psi,R,D)}$ and $\nu_\psi\in\wh{\mathbb{S}}^{(Q,\psi,R,D)}$ such that $\chi=(\chi_\psi)^{\n_G(Q,R)}$ and $\nu=(\nu_\psi)^{\n_G(D)}$. By construction, we know that $\nu_\psi$ is the image of $\chi_\psi$ under the bijection $\Delta_{D,\psi}^{\n_G(Q,R)_\psi}$ and hence Theorem \ref{thm:Above modular DGN} yields
\[\left(\n_A(Q,R)_{\psi,\chi_\psi},\n_G(Q,R)_\psi,\chi_\psi\right)\isob\left(\n_A(D)_{\psi,\nu_\psi},\n_G(D)_\psi,\nu_\psi\right)\]
from which \eqref{eq:Reduction, relative weights to weights} follows thanks to Lemma \ref{lem:Irreducible induction}. Observe that the latter can be applied since $\n_A(Q,R)_\chi=\n_G(Q,R)\n_A(Q,R)_{\psi,\chi}$ by a Frattini argument and using Clifford's theorem \cite[Corollary 8.7]{Nav98}. This concludes the proof.
\end{proof}

Finally, we can prove Theorem \ref{thm:Reduction for iBAW} as a consequence of Theorem \ref{thm:Lifting bijection for weights}, Proposition \ref{prop:Structure of a minimal counterexample}, Theorem \ref{thm:Reduction, inductive step}, and Theorem \ref{thm:Reduction, relative weights to weights}.

\begin{proof}[Proof of Theorem \ref{thm:Reduction for iBAW}]
We consider a counterexample $G$ to Theorem \ref{thm:Reduction for iBAW} and assume that Conjecture \ref{conj:iBAWC 2} fails to hold for a choice $G\unlhd A$. We further assume that $G$ and $A$ have been minimised with respect to $|G:\z(G)|$ first and then $|A|$. As explained at the beginning of this section, it follows that Hypothesis \ref{hyp:Inductive hypothesis} holds for the pair $(G,A)$. By Proposition \ref{prop:Structure of a minimal counterexample} there exists a subgroup $K$ of $G$ with $K\unlhd A$ and $K\nleq \z(G)$ such that Conjecture \ref{conj:iBAWC 2} holds for $K\unlhd A$. Now, we can apply Theorem \ref{thm:Lifting bijection for weights} with $J=G$ to obtain an $A$-equivariant bijection
\[\Omega_K^G:\IBr(G)\to \Alp(G\mid K)/G\]
such that
\begin{equation}
\label{eq:Reduction for iBAW 1}
\left(A_\varphi,G,\varphi\right)\isob\left(\n_A(Q)_\eta,\n_G(Q),\eta\right)
\end{equation}
for every $\varphi\in\IBr(G)$ and $(Q,\eta)\in\Omega_K^G(\varphi)$. On the other hand, combining the bijections $\Psi_K^G$ and $\Lambda_K^G$ given by Theorem \ref{thm:Reduction, inductive step} and Theorem \ref{thm:Reduction, relative weights to weights} respectively, we obtain an $A$-equivariant bijection
\[\Lambda_K^G\circ\Psi_K^G:\Alp(G\mid K)/G\to\Wr(G\mid K)/G\to \Alp(G)/G\]
such that
\begin{equation}
\label{eq:Reduction for iBAW 2}
\left(\n_A(Q)_\eta,\n_G(Q),\eta\right)\isob\left(\n_A(Q,R)_\chi,\n_G(Q,R)_\chi,\chi\right)\isob\left(\n_A(D)_\nu,\n_G(D)_\nu,\nu\right)
\end{equation}
whenever $(Q,R,\chi)\in\Psi_K^G(\overline{(Q,\eta)})$ and $(D,\nu)\in\Lambda_K^G(\overline{(Q,R,\chi)})$. We conclude that the map $\Omega:=\Lambda_K^G\circ\Psi_K^G\circ\Omega_K^G$ satisfies the requirements of Conjecture \ref{conj:iBAWC 2} by \eqref{eq:Reduction for iBAW 1} and \eqref{eq:Reduction for iBAW 2} thanks to the transitivity of the relation $\isob$. This contradicts the choice of $G$ and $A$ and the proof is now complete.
\end{proof}

\subsection{A reduction in the block-free case}
\label{sec:Reduction for block-free version}

When proving that two modular character triple isomorphisms are block isomorphic it is necessary, in particular, to show that they are also central isomorphic. With this in mind, an inspection of the proofs of the lemmas in Section \ref{sec:Isomorphisms of modular character triples} shows that all those statements admit a version where the block isomorphisms are replaced by central isomorphisms. Similarly, we can state a block-free version of Conjecture \ref{conj:iBAWC 2} as follows.

\begin{conj}
\label{conj:iAWC}
Let $G\unlhd A$ be finite groups. Then there exists an $A$-equivariant bijection
\[\Omega:\IBr(G)\to\Alp(G)/G\]
such that
\[\left(A_\vartheta,G,\vartheta\right)\isoc\left(\n_A(Q)_\psi,\n_G(Q),\psi\right)\]
for every $\vartheta\in\IBr(G)$ and $(Q,\psi)\in\Omega(\vartheta)$.
\end{conj}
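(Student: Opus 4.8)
The plan is to obtain Conjecture \ref{conj:iAWC} as a consequence of the blockwise statement Conjecture \ref{conj:iBAWC 2}. Both conjectures assert the existence of an $A$-equivariant bijection $\Omega\colon\IBr(G)\to\Alp(G)/G$ of exactly the same shape; the only difference lies in the compatibility imposed on the associated isomorphisms of modular character triples, namely $\isob$ in Conjecture \ref{conj:iBAWC 2} versus $\isoc$ in Conjecture \ref{conj:iAWC}. Since a block isomorphism of modular character triples is by construction (Definition \ref{def:block-iso} is built on top of Definition \ref{def:central-iso}) in particular a central isomorphism, any bijection $\Omega$ witnessing Conjecture \ref{conj:iBAWC 2} for $G\unlhd A$ automatically witnesses Conjecture \ref{conj:iAWC} for $G\unlhd A$. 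Combined with Theorem \ref{thm:Reduction for iBAW}, this yields Conjecture \ref{conj:iAWC} for every finite group $G$ all of whose covering groups of non-abelian finite simple groups of order divisible by $p$ involved in $G$ satisfy Conjecture \ref{conj:iBAWC 2}.

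For a self-contained derivation of Conjecture \ref{conj:iAWC} that does not route through the stronger blockwise condition, the plan is instead to rerun the whole argument of Section \ref{sec:Reduction} with $\isoc$ in place of $\isob$ throughout. As noted at the start of this subsection, every lemma of Section \ref{sec:Isomorphisms of modular character triples} has a central-isomorphism analogue proved by the same argument with the block-induction verifications simply omitted; this covers Lemma \ref{lem:Irreducible induction}, Lemma \ref{lem:Direct products}, Lemma \ref{lem:Wreath products}, the Butterfly theorem (Lemma \ref{lem:Butterfly theorem}), Lemma \ref{lem:4.1}, Lemma \ref{lem:Lifting isomorphisms from quotients}, Lemma \ref{lem:going to quotients} and Lemma \ref{lem:4.6}. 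Substituting these into the proofs of Corollary \ref{cor:iBAW for embedded perfect group}, Theorem \ref{thm:Lifting bijection for weights}, the modular Dade--Glauberman--Nagao result Theorem \ref{thm:Above modular DGN}, and Theorems \ref{thm:Reduction, inductive step} and \ref{thm:Reduction, relative weights to weights} produces the corresponding central-isomorphism statements; composing the resulting bijections exactly as in the proof of Theorem \ref{thm:Reduction for iBAW} gives the required $A$-equivariant $\Omega\colon\IBr(G)\to\Alp(G)/G$, with $(A_\vartheta,G,\vartheta)\isoc(\n_A(Q)_\psi,\n_G(Q),\psi)$ for each $(Q,\psi)\in\Omega(\vartheta)$.

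The only step deserving real care is the central-isomorphism version of the Dade--Glauberman--Nagao input of Section \ref{sec:DGN}, since the correspondent $\pi_D(\varphi)$ is itself defined through block-theoretic data. The point, however, is that this correspondence is canonical irrespective of any induction bookkeeping, and that Lemma \ref{lem:scalars-centralizer} already records the equality of restrictions to $\cent A M$ that is precisely the condition of Definition \ref{def:central-iso} needed in Proposition \ref{pro:dgn-isomorphism-with-extension}; the subsequent block-induction paragraphs of that proof are exactly what is dropped. Hence no genuinely new obstacle appears: the block-free reduction is the ``easy half'' of the work already carried out, and the main difficulty is the bookkeeping of transferring the central-isomorphism clause through every construction.
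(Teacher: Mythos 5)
Your second, self-contained route is exactly the paper's argument for Theorem \ref{thm:Reduction, block-free version}: one notes that the lemmas of Section \ref{sec:Isomorphisms of modular character triples} admit central-isomorphism analogues and reruns the reduction of Section \ref{sec:Reduction} with $\isoc$ in place of $\isob$, so your proposal is correct and essentially matches the paper. The only (harmless) deviations are that the paper does not re-prove a central version of the Dade--Glauberman--Nagao step --- Theorem \ref{thm:Reduction, relative weights to weights} is unconditional, so it is applied with $\isob$ and one simply uses that block isomorphisms are in particular central isomorphisms, making your worry about $\pi_D(\varphi)$ moot --- and that your first route through Conjecture \ref{conj:iBAWC 2} is valid but gives a weaker reduction than Theorem \ref{thm:Reduction, block-free version}, since it assumes the blockwise condition rather than only Conjecture \ref{conj:iAWC} for the simple groups involved.
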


Proceeding as in Section \ref{sec:Reduction} we can then obtain the following block-free version of Theorem \ref{thm:Main, Reduction}.

\begin{thm}
\label{thm:Reduction, block-free version}
Let $G$ be a finite group and $p$ a prime number. Suppose that Conjecture \ref{conj:iAWC} holds at the prime $p$ for every covering group of any non-abelian finite simple group of order divisible by $p$ involved in $G$. Then Conjecture \ref{conj:iAWC} holds for $G$ at the prime $p$.
\end{thm}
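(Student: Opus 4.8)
The plan is to replay the proof of Theorem~\ref{thm:Reduction for iBAW} word for word, systematically replacing the relation $\isob$ by $\isoc$ throughout and discarding the block-theoretic data (defect groups, $\bl(\cdot)$-induction) that accompanied it. The first task is to record, for each auxiliary result used in that argument, its central-isomorphism analogue. As observed at the start of this subsection, an inspection of the proofs in Section~\ref{sec:Isomorphisms of modular character triples} shows that Lemma~\ref{lem:Basic properties}, Lemma~\ref{lem:Bijections induced by isomorphisms are compatible with isomorphisms}, Lemma~\ref{lem:Irreducible induction}, Lemma~\ref{lem:Direct products}, Lemma~\ref{lem:Wreath products}, the Butterfly theorem (Lemma~\ref{lem:Butterfly theorem}), Lemma~\ref{lem:4.1}, Lemma~\ref{lem:Lifting isomorphisms from quotients}, Lemma~\ref{lem:going to quotients} and Lemma~\ref{lem:4.6} all hold with $\isob$ replaced by $\isoc$: the given proofs already verify the central condition en route to the block condition, so one simply stops earlier and drops the hypotheses on centralisers of defect groups. (For instance, the central version of Lemma~\ref{lem:going to quotients} is literally the first sentence of its current proof.)

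Granting these, the whole of Section~\ref{sec:iBAW} carries over verbatim: Proposition~\ref{prop:iAWC and direct and wreath products}, Corollary~\ref{cor:iBAW for embedded perfect group} and Theorem~\ref{thm:Lifting bijection for weights} have central versions proved by the identical arguments, now invoking the central analogues of Lemma~\ref{lem:Direct products}, Lemma~\ref{lem:Wreath products}, Lemma~\ref{lem:going to quotients}, Lemma~\ref{lem:Butterfly theorem} and Lemma~\ref{lem:Irreducible induction}. Likewise the results of Section~\ref{sec:DGN} needed in the reduction, namely Proposition~\ref{pro:dgn-isomorphism-with-extension}, Corollary~\ref{cor:dgn-isomorphism-without-extension} and Theorem~\ref{thm:Above modular DGN}, admit central versions. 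Here the key point is that Lemma~\ref{lem:scalars-centralizer} already supplies the extension $\wt{\vartheta}'$ together with the equality $\IBr((\wt{\vartheta}^0)_{\c_A(M)})=\IBr((\wt{\vartheta}'^0)_{\c_A(M)})$, which is exactly what is required to obtain $\isoc$; the portion of the proof of Proposition~\ref{pro:dgn-isomorphism-with-extension} dealing with block induction is simply omitted, and the assertions about $\rdzo$ and $\dzo$ appearing there are purely character-theoretic and unchanged. Lemma~\ref{lem:4.1 with dzo} is block-free as it stands.

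With the central analogues in place, I would then run the minimal-counterexample argument of Section~\ref{sec:Reduction} without modification. One first reduces to $\o_p(G)=1$ by the central versions of Lemma~\ref{lem:Removing normal p-subgroups} and Corollary~\ref{cor:Trival p-core}; then the central version of Proposition~\ref{prop:Structure of a minimal counterexample} produces a normal subgroup $K\unlhd A$ with $K\nleq\z(G)$ for which Conjecture~\ref{conj:iAWC} holds, via the central version of Corollary~\ref{cor:iBAW for embedded perfect group}. Finally the central versions of Theorem~\ref{thm:Lifting bijection for weights}, Theorem~\ref{thm:Reduction, inductive step} and Theorem~\ref{thm:Reduction, relative weights to weights} yield an $A$-equivariant bijection
\[\IBr(G)\longrightarrow\Alp(G\mid K)/G\longrightarrow\Wr(G\mid K)/G\longrightarrow\Alp(G)/G\]
together with the corresponding central isomorphisms of modular character triples, which compose by the transitivity of $\isoc$. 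This contradicts the choice of $(G,A)$ and proves the theorem.

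I expect the only point requiring genuine care, rather than routine transcription, to be the central versions of the Section~\ref{sec:DGN} results: one must check that stripping the block-induction bookkeeping from Proposition~\ref{pro:dgn-isomorphism-with-extension} and Corollary~\ref{cor:dgn-isomorphism-without-extension} does not disturb the construction of the bijections on $\rdzo$ and $\dzo$, since those bijections, as well as the sets $\Alpr$, $\Wr$ and the DGN map $\pi_D$ on which the reduction is built, are defined through block theory even in the block-free setting. Because these are the same sets and the same maps as in the blockwise reduction, no block-theoretic input beyond what already appears in Sections~\ref{sec:DGN} and~\ref{sec:Reduction} is needed, and weakening $\isob$ to $\isoc$ costs nothing.
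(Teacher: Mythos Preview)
Your proposal is correct and follows essentially the same approach as the paper. The only difference is a minor economy: the paper observes that Theorem~\ref{thm:Reduction, relative weights to weights} is independent of the inductive hypothesis and already yields $\isob$, hence $\isoc$, so it can be applied as-is without producing central versions of it or of the Section~\ref{sec:DGN} results; you instead propose to weaken those too, which is harmless but unnecessary.
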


\begin{proof}
The proofs of Lemma \ref{lem:Removing normal p-subgroups}, Corollary \ref{cor:Trival p-core}, and Proposition \ref{prop:Structure of a minimal counterexample} show that in a minimal counterexample $G\unlhd A$ we can find a normal subgroup $K$ of $A$ contained in $G$ with $K\nleq \z(G)$ and such that Conjecture \ref{conj:iAWC} holds for $K\unlhd A$. The argument used to prove Theorem \ref{thm:Lifting bijection for weights} now yields an $A$-equivariant bijection
\[\Omega_K^G:\IBr(G)\to \Alp(G\mid K)/G\]
such that
\[\left(A_\chi,G,\chi\right)\isoc\left(\n_A(Q)_\eta,\n_G(Q),\eta\right)\]
for every $\chi\in\IBr(G)$ and $(Q,\eta)\in\Omega_K^G(\chi)$. Next, proceeding as in Lemma \ref{lem:Reduction, above a character in dzo}, Proposition \ref{prop:Reduction, above any character in dzo}, and Theorem \ref{thm:Reduction, inductive step} we construct an $A$-equivariant bijection
\[\Psi_K^G;\Alp(G\mid K)/G\to \Wr(G\mid K)/G\]
such that
\[\left(\n_A(Q)_\eta,\n_G(Q),\eta\right)\isoc\left(\n_A(Q,R)_\chi,\n_G(Q,R),\chi\right)\]
for every $(Q,\eta)\in\Alp(G\mid K)$ and every $(Q,R,\chi)\in\Wr(G\mid K)$ whose $G$-orbits correspond via the bijection $\Psi_K^G$. Finally, recalling that block isomorphisms of modular character triples are, in particular, central isomorphisms, we conclude by combining the above bijections $\Omega_K^G$ and $\Psi_K^G$ and applying Theorem \ref{thm:Reduction, relative weights to weights}.
\end{proof}

\section{Application to Navarro's Conjecture}
\label{sec:Navarro's conjecture}

In \cite{Nav17}, Navarro introduced a new conjecture (see \cite[Conjecture E]{Nav17}) that unifies the Alperin Weight Conjecture and the Glauberman correspondence into a single statement. In our paper we are mainly interested in the blockwise version of this statement that was introduced in Conjecture \ref{conj:Main, Gabriel Conjecture} and which we recall below. Recall that if $G\unlhd \Gamma$ and $B$ is a block of $G$, then we denote by $\IBr_\Gamma(B)$ the set of $\Gamma$-invariant irreducible Brauer characters of $G$ that belongs to $B$.

\begin{conj}[Navarro]
\label{conj:Blockwise Conjecture E}
Let $G\unlhd \Gamma$ be finite groups with $\Gamma/G$ a $p$-group. For every $\Gamma$-invariant $p$-block $B$ of $G$, we have
\begin{equation}
\label{eq:Blockwise Conjecture E}
\left|\IBr_\Gamma(B)\right|=\sum\limits_{Q\in\Theta_B/\Gamma}\left|\dz\left(\n_\Gamma(Q)/Q\middle| B\right)\right|
\end{equation}
where $\Theta_B$ is the set of $p$-subgroups $Q$ of $\Gamma$ such that $\Gamma=GQ$ and $Q\cap G$ is contained in some defect group of the block $B$, and $\dz(\n_\Gamma(Q)/Q\mid B)$ is the set of irreducible characters $\overline{\vartheta}\in\dz(\n_\Gamma(Q)/Q)$ such that $\bl(\vartheta)^\Gamma$ covers $B$ and where $\vartheta\in\irr{\n_G(Q)}$ corresponds to $\overline{\vartheta}$ via inflation of characters.
\end{conj}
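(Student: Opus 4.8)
The plan is to deduce Conjecture~\ref{conj:Blockwise Conjecture E} from the Inductive Blockwise Alperin Weight Condition (Conjecture~\ref{conj:Main, iBAWC}) applied to $G\unlhd\Gamma$, which is the content of Theorem~\ref{thm:Main, iBAWC implies Gabriel Conjecture}. Since $B$ is assumed $\Gamma$-invariant we have $\Gamma_B=\Gamma$, so Conjecture~\ref{conj:Main, iBAWC} (for $B$ with respect to $G\unlhd\Gamma$) provides a $\Gamma$-equivariant bijection $\Omega\colon\IBr(B)\to\Alp(B)/G$. Restricting $\Omega$ to the $\Gamma$-fixed points of both sides and using equivariance, we get
\[
\bigl|\IBr_\Gamma(B)\bigr|=\bigl|(\Alp(B)/G)^{\Gamma}\bigr|,
\]
so the left-hand side of \eqref{eq:Blockwise Conjecture E} equals the number of $\Gamma$-stable $G$-orbits of $B$-weights of $G$. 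Note that only the equivariant-bijection part of Conjecture~\ref{conj:Main, iBAWC} is needed here; the block-isomorphism refinement will only be used for the version with character-triple isomorphisms in Section~\ref{sec:Navarro with isomorphisms}.

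It then remains to construct a $\Gamma$-equivariant bijection between the $\Gamma$-stable $G$-orbits of $B$-weights and the disjoint union $\coprod_{Q}\dz(\n_\Gamma(Q)/Q\mid B)$, $Q$ running over $\Theta_B/\Gamma$, and this is where the (modular) Dade--Glauberman--Nagao correspondence of Section~\ref{sec:DGN} enters. Given a $\Gamma$-stable orbit with representative a $B$-weight $(R,\psi)$ --- so $R$ is radical in $G$, the deflation $\overline\psi$ of $\psi$ to $\n_G(R)/R$ satisfies $\overline\psi=\overline\eta^{\,0}$ for a unique $\overline\eta\in\dz(\n_G(R)/R)$, and $\bl(\psi)^G=B$ --- a Frattini argument gives $\Gamma=G\,\n_\Gamma(R)_\psi$, and one checks the resulting splittings of $\n_\Gamma(R)$. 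Working inside $M:=\n_\Gamma(R)_\psi/R$, with normal subgroup $K:=\n_G(R)/R$ of $p$-power index (as $M/K\cong\Gamma/G$) and the $M$-invariant $\overline\eta\in\dz(K)$, the block-theoretic structure recalled in Section~\ref{sec:DGN} yields a $p$-subgroup $\overline D$ of $\n_\Gamma(R)_\psi$ such that $\overline D/R$ is a defect group of the unique block of $M$ over $\bl(\overline\eta)$; this $\overline D/R$ complements $K$ in $M$, whence $\overline D\cap G=R$ and $\Gamma=G\,\overline D$, and since $R$ lies in a defect group of $B$ we get $Q:=\overline D\in\Theta_B$. Identifying $\c_K(\overline D/R)$ with $\n_G(\overline D)/R\cong\n_\Gamma(\overline D)/\overline D$, the correspondent $\Pi_{\overline D/R}(\overline\eta)\in\dz(\c_K(\overline D/R))$ becomes an element $\overline\vartheta\in\dz(\n_\Gamma(Q)/Q)$, and tracking block induction through the Brauer correspondence defining $\Pi$, together with $\bl(\psi)^G=B$ and transitivity of induction, shows $\bl(\vartheta)^\Gamma$ covers $B$, i.e. $\overline\vartheta\in\dz(\n_\Gamma(Q)/Q\mid B)$. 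Sending $\overline{(R,\psi)}$ to the $\Gamma$-orbit of $(Q,\overline\vartheta)$ is the map; Theorem~\ref{thm:Above modular DGN} and Lemma~\ref{lem:Bijections induced by isomorphisms are compatible with isomorphisms} simultaneously record the compatible block isomorphisms of modular character triples, which combined with those coming from $\Omega$ would give the refined statement.

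For the reverse direction, start from $Q\in\Theta_B$ and $\overline\vartheta\in\dz(\n_\Gamma(Q)/Q\mid B)$. Then $\dz(\n_\Gamma(Q)/Q)\neq\varnothing$ forces $\o_p(\n_\Gamma(Q)/Q)=1$, so $Q$ is radical in $\Gamma$; hence $R:=Q\cap G$ is radical in $G$ by the standard intersection fact (\cite[Lemma~2.3(a)]{Nav-Tie11}, already used for Theorem~\ref{thm:Reduction, relative weights to weights}). One checks $Q/R$ complements $\n_G(R)/R$ in $\n_\Gamma(R)/R$, runs the Dade--Glauberman--Nagao correspondence of Section~\ref{sec:DGN} in reverse inside $\n_\Gamma(R)$ to recover the ($\n_\Gamma(R)_\psi$-invariant) character $\overline\eta\in\dz(\n_G(R)/R)$ whose correspondent with respect to $Q/R$ is $\overline\vartheta$, and inflates $\overline\eta^{\,0}$ to obtain $\psi\in\IBr(\n_G(R))$, producing a $B$-weight $(R,\psi)$ with $\Gamma$-stable $G$-orbit. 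Summing the resulting bijections over $Q\in\Theta_B/\Gamma$ gives \eqref{eq:Blockwise Conjecture E}.

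I expect the main obstacle to be precisely this bijective bookkeeping step: checking that the choices made (the radical subgroup $R$, the character $\psi$, the defect group $\overline D$ and the various stabilisers) are well-defined up to the appropriate $\Gamma$- and $G$-conjugacy, that the block-induction condition ``$\bl(\vartheta)^\Gamma$ covers $B$'' matches ``$\bl(\psi)^G=B$'' under the correspondence, that the forward and reverse constructions are mutually inverse, and --- the most delicate point in the reverse map --- passing between the $\n_\Gamma(R)$-invariant set-up implicit in the definition of $\dz(\n_\Gamma(Q)/Q\mid B)$ and the $\n_\Gamma(R)_\psi$-stabiliser set-up used in the forward construction, which is handled by Clifford theory using $\Gamma=GQ\leq G\,\n_\Gamma(R)_\psi$. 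The purely numerical identity \eqref{eq:Blockwise Conjecture E} follows once these combinatorics are in place, while propagating the block isomorphisms of modular character triples (needed for the stronger versions) adds a further layer requiring careful application of Theorem~\ref{thm:Above modular DGN} and the lemmas of Section~\ref{sec:Isomorphisms of modular character triples}.
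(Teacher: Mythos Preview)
Your approach is correct and uses the same essential ingredients as the paper: the $\Gamma$-equivariant bijection from Conjecture~\ref{conj:Main, iBAWC}, followed by the Dade--Glauberman--Nagao step to pass from radical $p$-subgroups of $G$ to the set $\Theta_B$. One small simplification: once you have $\Gamma=G\,\n_\Gamma(R)_\psi$, note that $\n_G(R)\leq\n_\Gamma(R)_\psi$ trivially, so in fact $\n_\Gamma(R)_\psi=\n_\Gamma(R)$ and $\psi$ is $\n_\Gamma(R)$-invariant; this removes the stabiliser bookkeeping you flag as delicate.

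The organisational difference from the paper is that the paper factors the implication through the more general Conjecture~\ref{conj:Blockwise Conjecture E, extended} (arbitrary $\Gamma/G$): Proposition~\ref{prop:iBAWC implies Gabriel extended} lifts the bijection to extensions in $\Gamma$ using Theorem~\ref{thm:Lifting bijection for weights}, and then Lemma~\ref{lem:Navarro extended implies Navarro} specialises to the $p$-group quotient case via exactly the DGN argument you describe. Your route instead stays with $\Gamma$-fixed $G$-orbits throughout, which is the viewpoint taken in the paper's Section~\ref{sec:Navarro with isomorphisms} (the proof of the unnamed theorem after Conjecture~\ref{conj:Gabriel with character triples}, where the set $\mathcal{A}_\Gamma(B)$ is precisely your $(\Alp(B)/G)^\Gamma$). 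The paper's detour through Conjecture~\ref{conj:Blockwise Conjecture E, extended} buys a statement valid for arbitrary quotients, at the cost of an extra layer; your direct argument is cleaner for the numerical equality alone but does not yield the extended version as a by-product.
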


As for \cite[Conjecture E]{Nav17}, the above statement unifies into a single statement both the blockwise Alperin Weight Conjecture and the Dade--Glauberman--Nagao correspondence. In fact, Conjecture Conjecture \ref{conj:Blockwise Conjecture E} becomes the blockwise Alperin Weight Conjecture when $G=\Gamma$, and implies the count of the Dade--Glauberman--Nagao correspondence (see \cite[Theorem 4.1]{Nav-Tie11}) when considering blocks of defect zero. We prove the latter implication in the following lemma.

\begin{lem}
\label{lem:Navarro implies DGN}
Let $G\unlhd \Gamma$ be finite groups with $\Gamma/G$ a $p$-group and consider a (possibly empty) set $\mathcal{S}$ of representatives for the $\Gamma$-conjugacy classes of complements of $G$ in $\Gamma$. If Conjecture \ref{conj:Blockwise Conjecture E} holds for every $\Gamma$-invariant block of defect zero of $G$, then
\[\left|\dz_\Gamma(G)\right|=\sum\limits_{Q\in\mathcal{S}}\left|\dz(\c_G(Q))\right|.\]
\end{lem}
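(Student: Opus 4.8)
The plan is to deduce this as a special case of Conjecture \ref{conj:Blockwise Conjecture E} applied to blocks of defect zero. First I would observe that if $B$ is a block of defect zero of $G$, then its defect group is trivial, so the condition ``$Q \cap G$ is contained in some defect group of $B$'' in the definition of $\Theta_B$ forces $Q \cap G = 1$. Combined with the requirement $\Gamma = GQ$, this means $Q$ is precisely a complement of $G$ in $\Gamma$. Hence for such $B$ the index set $\Theta_B/\Gamma$ is exactly the set of $\Gamma$-classes of complements of $G$ in $\Gamma$, which is our set $\mathcal{S}$ (and in particular does not depend on $B$). For each complement $Q$, since $Q \cap G = 1$ we have $\n_\Gamma(Q) = \n_G(Q) \rtimes Q$ and $\n_\Gamma(Q)/Q \cong \n_G(Q)$, and moreover $\c_G(Q) \leq \n_G(Q)$.

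Next I would identify the term $\dz(\n_\Gamma(Q)/Q \mid B)$ when $B$ has defect zero. The unique Brauer character in $B$ lifts to a unique $\chi \in \dz(G)$ with $\chi^0$ irreducible. An irreducible character $\overline{\vartheta}$ of $\n_\Gamma(Q)/Q \cong \n_G(Q)$ lies in $\dz(\n_\Gamma(Q)/Q \mid B)$ iff it has defect zero and $\bl(\overline{\vartheta})^\Gamma$ covers $B$. By Brauer's first main theorem and the theory of blocks with normal complement (here $\n_G(Q) \unlhd \n_\Gamma(Q)$ with $p$-group quotient $Q$), the condition that $\bl(\vartheta)^\Gamma$ covers the defect-zero block $B$ of $G$ is equivalent to saying that $\vartheta$ restricts appropriately; one should get that $\dz(\n_G(Q) \mid B)$ — defect zero characters of $\n_G(Q)$ whose block induces/covers $B$ — is in bijection with $\dz(\c_G(Q))$ via the Dade--Glauberman--Nagao correspondence, exactly as in \cite[Theorem 4.1]{Nav-Tie11}. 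More precisely, summing the right-hand side of \eqref{eq:Blockwise Conjecture E} over all $\Gamma$-invariant defect-zero blocks $B$ of $G$, and using that the $\dz(\n_G(Q) \mid B)$ for varying $B$ partition the set of defect-zero characters of $\n_G(Q)$ whose block is ``$G$-regular'' in the relevant sense, the DGN correspondence gives
\[
\sum_{B} \left| \dz\left(\n_\Gamma(Q)/Q \,\middle|\, B\right)\right| = \left| \dz(\c_G(Q))\right|.
\]

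Finally I would sum \eqref{eq:Blockwise Conjecture E} over the set of $\Gamma$-invariant defect-zero blocks $B$ of $G$. The left-hand sides sum to $\sum_B |\IBr_\Gamma(B)| = |\dz_\Gamma(G)|$, since the defect-zero blocks of $G$ each contain a single Brauer character, these are exactly the reductions mod $p$ of the characters in $\dz(G)$, and $\Gamma$-invariance of $B$ is equivalent to $\Gamma$-invariance of the corresponding character in $\dz(G)$ (as $\Gamma/G$ is a $p$-group and the block is determined by the character). The right-hand sides sum, by the previous paragraph and since $\Theta_B/\Gamma = \mathcal{S}$ is independent of $B$, to $\sum_{Q \in \mathcal{S}} |\dz(\c_G(Q))|$, giving the claimed equality.

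The main obstacle I expect is the bookkeeping in the middle step: correctly matching, for a fixed complement $Q$, the contribution $\sum_B |\dz(\n_\Gamma(Q)/Q \mid B)|$ with $|\dz(\c_G(Q))|$. This requires being careful about which defect-zero characters $\overline\vartheta$ of $\n_G(Q)$ actually satisfy the block-covering condition $\bl(\vartheta)^\Gamma \succeq B$ for some $\Gamma$-invariant defect-zero block $B$ of $G$ — i.e., that $\bl(\vartheta)$ covers a $G$-block of defect zero and behaves regularly with respect to the $p$-group $Q$ — and then invoking \cite[Theorem 4.1]{Nav-Tie11} (the DGN count) to pass from $\n_G(Q)$-level defect-zero characters to $\c_G(Q)$-level defect-zero characters. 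One also needs that distinct $\Gamma$-classes of complements give disjoint contributions, which follows since the complement $Q$ (up to $\Gamma$-conjugacy, equivalently $G$-conjugacy here) is recovered from the data on the right-hand side, and that every class in $\mathcal{S}$ genuinely appears, which is immediate from $\Theta_B/\Gamma = \mathcal{S}$.
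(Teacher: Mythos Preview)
Your overall strategy --- sum \eqref{eq:Blockwise Conjecture E} over the $\Gamma$-invariant defect-zero blocks $B$ of $G$ and identify both sides --- is exactly the paper's approach, and your treatment of the left-hand side is correct. The issue is the middle step.

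You write $\n_\Gamma(Q)/Q \cong \n_G(Q)$ and then propose to pass from $\n_G(Q)$ to $\c_G(Q)$ via the DGN correspondence of \cite[Theorem 4.1]{Nav-Tie11}. But in fact $\n_G(Q)=\c_G(Q)$ here: if $g\in\n_G(Q)$ and $q\in Q$ then $q^{-1}q^g\in Q\cap G=1$, so $g$ centralises $Q$. Thus $\n_\Gamma(Q)=\c_G(Q)\times Q$ and $\n_\Gamma(Q)/Q\cong\c_G(Q)$ on the nose; there is no passage to make. This matters because the purpose of the lemma is precisely to show that Conjecture~\ref{conj:Blockwise Conjecture E} \emph{implies} the DGN count, so invoking DGN in the proof is circular in spirit (and unnecessary in practice).

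Once you have $\n_\Gamma(Q)/Q\cong\c_G(Q)$, the bookkeeping you flag as the main obstacle reduces to a single verification: that every $\overline{\vartheta}\in\dz(\c_G(Q))$ lies in $\dz(\n_\Gamma(Q)/Q\mid B)$ for some $\Gamma$-invariant defect-zero block $B$ of $G$. The paper does this by noting that $Q$ is a defect group of $\bl(\vartheta)$ in $\n_\Gamma(Q)=\c_G(Q)\times Q$ and applying \cite[Lemma 2.1]{Nav-Spa14II} to see that $\bl(\vartheta)^\Gamma$ covers a $\Gamma$-invariant block of $G$ with defect $Q\cap G=1$. With this in hand, summing over $B$ gives $\sum_B|\dz(\n_\Gamma(Q)/Q\mid B)|=|\dz(\c_G(Q))|$ directly.
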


\begin{proof}
First, notice that the number of $\Gamma$-invariant defect zero characters of $G$ coincides with the number of $\Gamma$-invariant irreducible Brauer characters belonging to some block $B$ of defect zero of $G$, that is
\begin{equation}
\label{eq:Conjecture blockwise implies DGN, 1}
\left|\dz_\Gamma(G)\right|=\sum\limits_{B}\left|\IBr_\Gamma(B)\right|
\end{equation}
where the sum runs over all $\Gamma$-invariant blocks $B$ of defect zero of $G$. On the other hand, for each such block $B$, observe that $\mathcal{S}$ is a representative set for the $\Gamma$-orbits on $\Theta_B$ as defined in Conjecture \ref{conj:Blockwise Conjecture E}. In particular, if $Q\in\Theta_B$, then we have $\n_\Gamma(Q)=\c_G(Q)\times Q$ and it follows that $\dz(\n_\Gamma(Q)/Q)$ is in bijection with $\dz(\c_G(Q))$. Next, for $Q\in\mathcal{S}$, let $\overline{\vartheta}\in\dz(\n_\Gamma(Q)/Q)$, consider its inflation $\vartheta$ to $\n_\Gamma(Q)$, and set $C:=\bl(\vartheta)^\Gamma$. Since $\n_\Gamma(Q)=\c_G(Q)\times Q$ we can write $\vartheta=\varphi\times 1_Q$ for some $\varphi\in\dz(\c_G(Q))$ and hence $Q$ is a defect group of $\bl(\vartheta)$. We deduce from \cite[Lemma 2.1]{Nav-Spa14II} that $C$ covers a $\Gamma$-invariant block $B$ of $G$ with defect $G\cap Q=1$. This shows that for every $Q\in\mathcal{S}$ each character of $\dz(\n_\Gamma(Q)/Q)$ belongs to some set $\dz(\n_\Gamma(Q)/Q\mid B)$ for some $\Gamma$-invariant block $B$ of defect zero of $G$. Now, Conjecture \ref{conj:Blockwise Conjecture E} implies that the right hand side of \eqref{eq:Conjecture blockwise implies DGN, 1} coincides with
\begin{equation}
\label{eq:Conjecture blockwise implies DGN, 2}
\sum\limits_{B}\sum\limits_{Q\in\mathcal{S}}\left|\dz\left(\n_\Gamma(Q)/Q\middle|B\right)\right|=\sum\limits_{Q\in\mathcal{S}}\left|\dz\left(\n_\Gamma(Q)/Q\right)\right|=\sum\limits_{Q\in\mathcal{S}}\left|\dz\left(\c_G(Q)\right)\right|
\end{equation}
where $B$ runs over all $\Gamma$-invariant blocks of defect zero of $G$. Combining \eqref{eq:Conjecture blockwise implies DGN, 1} and \eqref{eq:Conjecture blockwise implies DGN, 2} we obtain the desired equality.
\end{proof}

We now want to prove that the above conjecture follows from the Inductive Blockwise Alperin Weight Condition and hence obtain Theorem \ref{thm:Main, iBAWC implies Gabriel Conjecture}. Together with our Theorem \ref{thm:Main, Reduction}, this will also yield Corollary \ref{cor:Main, Reduction for Gabriel Conjecture}. In this section, we obtain all these results as consequences of a stronger theorem. In fact, we can show that the Inductive Blockwise Alperin Weight Condition implies a more general version of the above Conjecture \ref{conj:Blockwise Conjecture E} which does not require the quotient $\Gamma/G$ to be a $p$-group. To introduce this new statement we first collect some further notation.

Let $G\unlhd \Gamma$ be finite groups and consider a union of $p$-blocks $\mathcal{B}$ of $G$. We denote by $\EBr(\Gamma\mid \mathcal{B})$ the set of those $\chi\in\IBr(\Gamma)$ that are extensions of some Brauer character belonging to some block contained in $\mathcal{B}$, that is, such that $\chi_G\in\IBr(B)$ for some $B\in\mathcal{B}$. Similarly, we denote by $\EBr(\Gamma\mid \dzo(\mathcal{B}))$ the set of those $\chi\in\IBr(\Gamma)$ such that $\chi_G\in\IBr(B)\cap \dzo(G)$ for some $B\in\mathcal{B}$. Finally, given a $p$-subgroup $Q$ of $G$ and a $p$-block $B$ of $G$, we denote by $B_Q$ the union of all $p$-blocks $b$ of $\n_G(Q)$ such that $b^G=B$. We can now generalise Conjecture \ref{conj:Blockwise Conjecture E} to arbitrary quotients $\Gamma/G$ as follows.

\begin{conj}
\label{conj:Blockwise Conjecture E, extended}
Let $G\unlhd \Gamma$ be finite groups and consider a block $B$ of $G$. Then
\[\left|\EBr\left(\Gamma\middle| B\right)\right|=\sum\limits_{Q}\left|\EBr\left(\n_\Gamma(Q)\middle|\dzo\left(B_Q\right)\right)\right|\]
where $Q$ runs over a set of representatives for the $\Gamma$-orbits of radical $p$-subgroups of $G$ such that $\Gamma=G\n_\Gamma(Q)$.
\end{conj}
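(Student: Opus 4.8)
The plan is to deduce Conjecture \ref{conj:Blockwise Conjecture E, extended} from the Inductive Blockwise Alperin Weight Condition (Conjecture \ref{conj:iBAWC 2}) via the reduction established in Theorem \ref{thm:Reduction for iBAW}, so that in fact it suffices to assume Conjecture \ref{conj:iBAWC 2} holds for every covering group of any non-abelian finite simple group involved in $G$. The key observation is that, writing $\Gamma=G\Gamma$ and picking $G\unlhd A:=\Gamma$, Conjecture \ref{conj:iBAWC 2} (applied with this choice of $A$) provides a $\Gamma$-equivariant bijection $\Omega:\IBr(G)\to\Alp(G)/G$ inducing block isomorphisms of modular character triples. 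Both sides of the claimed equality will be computed by running this bijection through the standard Clifford-theoretic machinery. Concretely, the left-hand side $|\EBr(\Gamma\mid B)|$ counts extensions to $\Gamma$ of the $\Gamma$-invariant characters in $\IBr(B)$; by Gallagher's theorem (Lemma \ref{lem:gallagher-lift} and its block-theoretic companion Lemma \ref{lem:relative-blocks}) and Clifford theory, for each $\Gamma$-invariant $\vartheta\in\IBr(B)$ the number of extensions to $\Gamma$ equals the number of linear Brauer characters of $\Gamma/G$ in a single block orbit, which in the end does not depend on $\vartheta$ as long as we are only counting (and is governed by the cohomology class of the associated projective representation).

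First I would reduce to the case where $B$ is $\Gamma$-invariant: if $B$ is not $\Gamma$-invariant then $\IBr_\Gamma(B)=\emptyset$ makes $\EBr(\Gamma\mid B)$ empty, and correspondingly for each radical $Q$ with $\Gamma=G\n_\Gamma(Q)$ one checks that $B_Q$ contains no $\n_\Gamma(Q)$-invariant block (using the compatibility of block induction with the $\Gamma$-action), so both sides vanish. With $B$ $\Gamma$-invariant, I would use $\Omega$ restricted to $\IBr(B)$: by the argument in the proof of Theorem \ref{thm:Reduction for iBAW} together with the block-induction compatibility encoded in $\isob$, $\Omega$ restricts to a $\Gamma$-equivariant bijection between $\IBr(B)$ and the set of $G$-orbits of $B$-weights, i.e.\ pairs $(Q,\psi)$ with $Q\in\Rad(G)$, $\psi\in\dzo(\n_G(Q))$, and $\bl(\psi)^G=B$; moreover for $(Q,\psi)\in\Omega(\vartheta)$ the block isomorphism
\[\left(A_\vartheta,G,\vartheta\right)\isob\left(\n_A(Q)_\psi,\n_G(Q),\psi\right)\]
holds. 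Taking $A=\Gamma$, the character triple isomorphism equips the $\Gamma$-invariant $\vartheta$ and the $\n_\Gamma(Q)$-invariant $\psi$ with the same \emph{number} of extensions to the respective overgroups, via the bijection $\sigma_\Gamma$ of Theorem \ref{thm:strong-iso} (applied to $J=\Gamma$), which by Lemma \ref{lem:Bijections induced by isomorphisms are compatible with isomorphisms} is itself a block isomorphism; in particular it matches extensions of $\vartheta$ to $\Gamma$ bijectively with extensions of $\psi$ to $\n_\Gamma(Q)$.

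Next I would sum over $\Gamma$-orbits. Fixing a $\Gamma$-transversal $\mathcal T$ in $\IBr(B)$, and for each $\vartheta\in\mathcal T$ a representative $(Q,\psi)\in\Omega(\vartheta)$ with $Q$ running over a transversal of $\Gamma$-orbits of radical $p$-subgroups, the equivariance of $\Omega$ lets me reorganise
\[\left|\EBr(\Gamma\mid B)\right|=\sum_{\vartheta\in\mathcal T}\left|\IBr(\Gamma_\vartheta\mid\vartheta)_{\rm ext}\right|\cdot[\,\Gamma:\Gamma_\vartheta\,]^{-1}\!\cdots\]
— more cleanly, by the Clifford correspondence $\EBr(\Gamma\mid B)$ is in bijection with the disjoint union over $\Gamma$-orbits of $\vartheta$ of the extensions of $\vartheta$ to $\Gamma_\vartheta=\Gamma$ (since $\vartheta$ is $\Gamma$-invariant, $\Gamma_\vartheta=\Gamma$), and likewise for the weights. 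The point is that the block isomorphism forces $\Gamma_\vartheta=\Gamma$ if and only if $\n_\Gamma(Q)_\psi=\n_\Gamma(Q)$, and when this holds the extension counts agree; when it fails both $\vartheta$ and $(Q,\psi)$ contribute via Clifford induction to non-invariant objects and hence not to $\EBr(\Gamma\mid B)$ resp.\ $\EBr(\n_\Gamma(Q)\mid\dzo(B_Q))$. Finally I would identify, for each such $Q$, the set of $\psi\in\dzo(\n_G(Q))$ with $\bl(\psi)^G=B$ and $\psi$ being $\n_\Gamma(Q)$-invariant with the characters underlying the blocks $b$ of $\n_G(Q)$ with $b^G=B$ and $b$ $\n_\Gamma(Q)$-invariant, i.e.\ with the blocks making up $B_Q$; this yields precisely $\sum_Q|\EBr(\n_\Gamma(Q)\mid\dzo(B_Q))|$, where $Q$ runs over $\Gamma$-orbits of radical $p$-subgroups with $\Gamma=G\n_\Gamma(Q)$ (the condition $\Gamma=G\n_\Gamma(Q)$ coming exactly from $\Gamma_\vartheta=\Gamma$ via the Frattini argument, as $\Gamma=G\cdot\Gamma_{(Q,\psi)}$ and $\Gamma_{(Q,\psi)}\le\n_\Gamma(Q)$).

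The main obstacle I anticipate is making the extension-counting step precise and uniform: I need that for a block-isomorphic pair of modular character triples $(\Gamma,G,\vartheta)\isob(\n_\Gamma(Q),\n_G(Q),\psi)$ with $\vartheta,\psi$ invariant, the numbers of extensions to the ambient groups coincide — and, more delicately, that this holds \emph{blockwise} in the sense demanded by the right-hand side (only $\dzo$-extensions of $\psi$ lying in blocks inducing to $B$ should be counted, and the $\dzo$ condition must be tracked through $\sigma_\Gamma$). For the $\dzo$ condition I would lean on the relative-defect-zero bookkeeping from Section \ref{sec:DGN}, in particular the fact that $\sigma_\Gamma$ matches $\rdzo$-type sets (Proposition \ref{pro:dgn-isomorphism-with-extension}, adapted), and on Lemma \ref{lem:gallagher-lift} to see that an extension of $\psi$ lies in $\dzo$ exactly when the corresponding Gallagher twist by a character of $\Gamma/G$ does; since multiplication by linear Brauer characters preserves defect zero by Lemma \ref{lem:Multiplication by linear Brauer characters}, the $\dzo$ property is insensitive to which extension we pick, so the counts match. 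For the block-matching part, Lemma \ref{lem:relative-blocks} together with the definition of $\isob$ (block induction $\bl(\psi)=\bl(\sigma_J(\psi))^J$) ensures $\bl$ of an extension of $\psi$ induces to $B$ iff the matched extension of $\vartheta$ lies in $B$ — which is automatic. Assembling these pieces carefully, and handling the orbit-counting with the Frattini argument as in the proofs of Theorem \ref{thm:Reduction, inductive step} and Theorem \ref{thm:Reduction, relative weights to weights}, completes the proof; as a special case, when $\Gamma/G$ is a $p$-group one recovers Conjecture \ref{conj:Blockwise Conjecture E} since then $\dzo$ of $\n_\Gamma(Q)/$ its $p$-core reduces, after passing to complements via $\Theta_B$, to the ordinary $\dz(\n_\Gamma(Q)/Q\mid B)$ appearing there.
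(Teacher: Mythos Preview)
Your approach is essentially the same as the paper's conditional proof (Proposition \ref{prop:iBAWC implies Gabriel extended}): assume Conjecture \ref{conj:iBAWC 2} for $G\unlhd A=\Gamma$, use the resulting block isomorphism $(\Gamma,G,\varphi)\isob(\n_\Gamma(Q),\n_G(Q),\vartheta)$ to match extensions of $\varphi$ with extensions of $\vartheta$, and then invoke the Frattini argument to see that only radical $Q$ with $\Gamma=G\n_\Gamma(Q)$ contribute. The paper packages the transversal and Clifford bookkeeping by appealing directly to (the proof of) Theorem \ref{thm:Lifting bijection for weights} with $A=J=\Gamma$ and $K=G$, whereas you redo that bookkeeping by hand; but the substance is identical.

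Your anticipated ``main obstacle'' is largely a non-issue, and the paper's short argument makes this clear. The bijection $\sigma_\Gamma$ coming from the block isomorphism preserves the ratio $\chi(1)/\varphi(1)$ (this is built into the construction in Theorem \ref{thm:strong-iso}), so $\chi_G=\varphi$ if and only if $\sigma_\Gamma(\chi)_{\n_G(Q)}=\vartheta$; that single observation handles the extension-matching in one line, with no need for Gallagher, cohomology classes, or Lemma \ref{lem:Multiplication by linear Brauer characters}. Moreover the $\dzo$ condition on the right-hand side refers to the \emph{restriction} $\psi_{\n_G(Q)}$, not to $\psi$ itself: since $(Q,\vartheta)$ is a weight, $\vartheta\in\dzo(\n_G(Q))$ is automatic, and $\bl(\vartheta)^G=B$ follows from the block condition in $\isob$. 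So the machinery from Section \ref{sec:DGN} and Proposition \ref{pro:dgn-isomorphism-with-extension} that you plan to invoke is unnecessary here; it is only needed later (Lemma \ref{lem:Navarro extended implies Navarro}) to pass from Conjecture \ref{conj:Blockwise Conjecture E, extended} to the original Conjecture \ref{conj:Blockwise Conjecture E} when $\Gamma/G$ is a $p$-group.
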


As mentioned above, Conjecture \ref{conj:Blockwise Conjecture E} can be recovered from Conjecture \ref{conj:Blockwise Conjecture E, extended} in the case where the quotient $\Gamma/G$ is a $p$-group. We prove this fact in the following lemma.

\begin{lem}
\label{lem:Navarro extended implies Navarro}
Let $G\unlhd \Gamma$ be finite groups and consider a $p$-block $B$ of $G$. If $\Gamma/G$ is a $p$-group, then:
\begin{enumerate}
\item $|\EBr(\Gamma\mid B)|=|\IBr_\Gamma(B)|$;
\item if $Q$ is a radical $p$-subgroup of $G$ such that $\Gamma=G\n_\Gamma(Q)$ and $\EBr(\n_\Gamma(Q)\mid\dzo(B_Q))$ is non-empty, then there exists some $D\in\Theta_B$, unique up to $\n_\Gamma(Q)$-conjugation, such that $Q=D\cap G$;
\item if $Q$ and $D$ are the $p$-subgroups considered above, then
\[\left|\EBr\left(\n_\Gamma(Q)\middle|\dzo\left(B_Q\right)\right)\right|=\left|\dz\left(\n_\Gamma(D)/D\middle|B\right)\right|.\]
\end{enumerate}
In particular, if Conjecture \ref{conj:Blockwise Conjecture E, extended} holds for the block $B$, then so does Conjecture \ref{conj:Blockwise Conjecture E}.
\end{lem}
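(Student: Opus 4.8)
The plan is to prove the three numbered assertions in order and then assemble the final ``in particular'' statement by matching the two sides of the two conjectures term by term. For (i), I would argue that when $\Gamma/G$ is a $p$-group, every $\Gamma$-invariant $\chi\in\IBr(B)$ extends to $\Gamma$: this is the modular analogue of the standard extension result for $p$-groups acting with coprime-type obstruction vanishing — concretely, the Schur multiplier / cohomological obstruction to extending $\chi$ to $\Gamma$ lives in $H^2(\Gamma/G,\mathbb{F}^\times)$, which is trivial since $\Gamma/G$ is a $p$-group and $\mathbb{F}^\times$ has no $p$-torsion (see \cite[Theorem 8.14 and Corollary 8.16]{Nav98}, or one may pass through the central extension of Lemma \ref{lem:4.1}). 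Conversely, the restriction to $G$ of any $\chi\in\EBr(\Gamma\mid B)$ lands in $\IBr_\Gamma(B)$ by definition. By Gallagher's theorem for Brauer characters (\cite[Corollary 8.20]{Nav98}), the distinct extensions of a fixed $\vartheta\in\IBr_\Gamma(B)$ are the $\lambda\chi$ for $\lambda\in\IBr(\Gamma/G)$; but $\Gamma/G$ is a $p$-group, so $\IBr(\Gamma/G)=\{1\}$ and the extension is unique. Hence restriction gives a bijection $\EBr(\Gamma\mid B)\to\IBr_\Gamma(B)$, proving (i).

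For (ii), suppose $Q$ is a radical $p$-subgroup of $G$ with $\Gamma=G\n_\Gamma(Q)$ and $\EBr(\n_\Gamma(Q)\mid\dzo(B_Q))\neq\emptyset$. Then $\n_\Gamma(Q)/\n_G(Q)\cong\Gamma/G$ is a $p$-group, and the existence of an element of $\dzo(\n_G(Q))$ lying in $B_Q$ which extends means, after deflating by $Q$ (using $\o_p(\n_G(Q))\supseteq Q$ and that radical subgroups contain $\o_p(\n_G(Q))$ only up to the obvious containment — actually $Q=\o_p(\n_G(Q))$ since $Q$ is radical), that we are in the Dade--Glauberman--Nagao setup of Hypothesis \ref{hyp:DGN hypotheses} applied to $\n_G(Q)\unlhd\n_\Gamma(Q)$ with $M=\n_\Gamma(Q)$... rather, the right framing is: the block of $\n_G(Q)/Q$ containing our deflated defect-zero character has a defect group, and pulling back to $\n_\Gamma(Q)$ a Sylow-type argument (\cite[Theorem 9.17]{Nav98}) produces a complement $D/Q$ of $\n_G(Q)/Q$ in $\n_\Gamma(Q)/Q$, i.e.\ $\n_\Gamma(Q)=\n_G(Q)D$ and $Q=D\cap\n_G(Q)=D\cap G$. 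Uniqueness of $D$ up to $\n_\Gamma(Q)$-conjugacy is the uniqueness of a defect group. One then checks $\Gamma=GD$ (from $\Gamma=G\n_\Gamma(Q)$ and $\n_\Gamma(Q)=\n_G(Q)D$) and that $D\cap G=Q$ is contained in a defect group of $B$ (this uses that $B_Q$ is nonempty, forcing $Q$ to be contained in a defect group of $B$ by \cite[Theorem 4.8 or Lemma 9.27]{Nav98}), so $D\in\Theta_B$.

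For (iii), I would set up the DGN correspondence of Definition \ref{def:DGN for Brauer characters} / the ordinary DGN correspondence $\Pi_D$: after deflating by $Q$ and using $\n_\Gamma(Q)=\n_G(Q)\rtimes D$ with $D\cap G=Q$, the group $\n_{\n_\Gamma(Q)/Q}(D/Q) = (D/Q)\times \c_{\n_G(Q)/Q}(D/Q)$ and $\c_{\n_G(Q)/Q}(D/Q)\cong\c_G(D)/\text{(its }Q\text{-part)}$, which after unwinding is exactly $\n_\Gamma(D)/D$ on the relevant side. Concretely: $\EBr(\n_\Gamma(Q)\mid\dzo(B_Q))$ is, via restriction (bijective as in (i) since $\n_\Gamma(Q)/\n_G(Q)$ is a $p$-group) and deflation, in bijection with the $\n_\Gamma(Q)/Q$-invariant defect-zero characters of $\n_G(Q)/Q$ lying in blocks inducing to $B$; the ordinary Glauberman/DGN correspondence for the $p$-group $D/Q$ acting on $\n_G(Q)/Q$ matches these with defect-zero characters of $\c_{\n_G(Q)/Q}(D/Q)$, and the block-theoretic compatibility of $\Pi_D$ (\cite[Lemma 2.1]{Nav-Spa14II}, as already used in Lemma \ref{lem:Navarro implies DGN}) ensures the ``$\mid B$'' condition is preserved, giving a bijection with $\dz(\n_\Gamma(D)/D\mid B)$. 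This proves (iii).

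Finally, for the ``in particular'' clause: given that Conjecture \ref{conj:Blockwise Conjecture E, extended} holds for $B$, its left side equals $|\IBr_\Gamma(B)|$ by (i). On the right side, the index set is $\Gamma$-orbits of radical $p$-subgroups $Q$ of $G$ with $\Gamma=G\n_\Gamma(Q)$; terms with $\EBr(\n_\Gamma(Q)\mid\dzo(B_Q))=\emptyset$ contribute nothing, and by (ii) the surviving $Q$ are in $\Gamma$-equivariant bijection with $\Theta_B/\Gamma$ via $Q\mapsto D$ (one must check this correspondence is a genuine bijection on orbits: injectivity from $Q=D\cap G$, surjectivity by taking $Q=D\cap G$ for $D\in\Theta_B$ and verifying it is radical in $G$ — here I would invoke \cite[Lemma 2.3(a)]{Nav-Tie11} as in the proof of Theorem \ref{thm:Reduction, relative weights to weights}). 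By (iii) the corresponding summands agree, so the right side equals $\sum_{Q\in\Theta_B/\Gamma}|\dz(\n_\Gamma(Q)/Q\mid B)|$, which is exactly the right side of Conjecture \ref{conj:Blockwise Conjecture E}. I expect the main obstacle to be (iii): correctly identifying $\n_\Gamma(D)/D$ with the centralizer $\c_{\n_G(Q)/Q}(D/Q)$ and verifying that the ordinary DGN correspondent's block induces to $B$ requires careful bookkeeping with the deflation-by-$Q$ step and with block induction through two stages ($\n_G(D)\to\n_G(Q)\to G$ versus $\n_\Gamma(D)\to\Gamma$), and checking that the nonemptiness hypothesis in (ii) is exactly what is needed to run \cite[Theorem 9.17]{Nav98}.
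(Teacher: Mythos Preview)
Your plan for (i) and (ii) is essentially the paper's own argument. For (i) the paper simply cites \cite[Theorem 8.11]{Nav98} directly (your $H^2$ remark is a correct gloss on why that theorem applies). For (ii) the paper likewise takes $D/Q$ to be a defect group of $\bl(\overline{\psi})$ in $\n_\Gamma(Q)/Q$ and invokes \cite[Theorem 9.17]{Nav98}; your wording ``the block of $\n_G(Q)/Q$ containing our deflated defect-zero character has a defect group'' is slightly off --- that block has \emph{trivial} defect group, and it is the defect group of the block of the \emph{extension} $\overline{\psi}$ that furnishes $D/Q$. The paper uses \cite[Lemma 4.13]{Nav98} (not Theorem 4.8 or Lemma 9.27) to place $Q$ inside a defect group of $B$.

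The real divergence is in (iii). The paper does not use the classical DGN correspondence and \cite[Lemma 2.1]{Nav-Spa14II} as you propose; instead it first identifies $\EBr(\n_\Gamma(Q)\mid\vartheta)=\rdzo(\n_\Gamma(Q)\mid\n_\Gamma(Q),\vartheta)$ via Lemma \ref{lem:Definition of rdzo}, then applies Theorem \ref{thm:Above modular DGN} (the paper's ``above modular DGN'' machinery from Section \ref{sec:DGN}) with $A=M=G=\n_\Gamma(Q)$ and $K=\n_G(Q)$ to get the bijection together with the block relation $\bl(\zeta)^{\n_\Gamma(Q)}=\bl(\psi)$. The block compatibility --- that the image lands in $\dz(\n_\Gamma(D)/D\mid B)$ --- then comes from \cite[Theorem B]{Kos-Spa15} (showing $\bl(\psi)^\Gamma$ covers $B$ from $\bl(\vartheta)^G=B$) plus transitivity of block induction. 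Your citation of \cite[Lemma 2.1]{Nav-Spa14II} alone does not give this: that lemma tells you which block of $G$ is covered given the defect-group structure, but it does not by itself track the ``$\mid B$'' condition through the two-stage induction $\n_\Gamma(D)\to\n_\Gamma(Q)\to\Gamma$ versus $\n_G(Q)\to G$. You correctly flagged this as the main obstacle; the fix is to invoke Theorem \ref{thm:Above modular DGN} (which packages both the bijection and the needed block isomorphism) and \cite[Theorem B]{Kos-Spa15} in place of \cite[Lemma 2.1]{Nav-Spa14II}.
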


\begin{proof}
By \cite[Theorem 8.11]{Nav98} every $\Gamma$-invariant irreducible Brauer character of $G$ admits a unique extension to $\Gamma$ and therefore (i) follows. Let now $Q$ be a radical $p$-subgroup of $G$ such that $\Gamma=G\n_\Gamma(Q)$ and consider $\psi\in\EBr(\n_\Gamma(Q)\mid \dzo(B_Q))$. Set $\vartheta:=\psi_{\n_G(Q)}$ and observe that $\vartheta\in\dzo(\n_G(Q))$ is $\n_\Gamma(Q)$-invariant and satisfies $\bl(\vartheta)^G=B$. Let $\overline{\psi}$ and $\overline{\vartheta}$ be the Brauer characters of $\N_\Gamma(Q)/Q$ and of $\n_G(Q)/Q$ corresponding to $\psi$ and $\vartheta$ respectively via inflation. Now, if $D/Q$ is a defect group of $\bl(\overline{\psi})$, then \cite[Theorem 9.17]{Nav98} implies that $(D\cap \n_G(Q))/Q$ is a defect group of $\bl(\overline{\vartheta})$ and that $D\n_G(Q)/Q=\n_\Gamma(Q)/Q$. In particular, $\Gamma=G\n_\Gamma(Q)=GD$. Furthermore, since $\bl(\overline{\vartheta})$ has defect zero, we deduce that $Q=D\cap \n_G(Q)=D\cap G$ and, recalling that $Q$ is contained in every defect groups of $\bl(\vartheta)$ and that $\bl(\vartheta)^G=B$, we conclude that $D\in\Theta_B$ thanks to \cite[Lemma 4.13]{Nav98}. This proves (ii).

We keep $D$, $Q$, $\vartheta$, and $\psi$ as in previous paragraph and prove (iii). By \cite[Theorem 3.18]{Nav98} there exists a unique $\varphi\in\irr{\n_G(Q)/Q}$ such that $\varphi^0=\vartheta$. Furthermore, according to \cite[Theorem 2.4]{Nav-Tie11} there exists an extension $\chi\in\irr{\n_\Gamma(Q)/Q}$ of $\varphi$. Since $\chi$ belongs to $\rdz(\n_\Gamma(Q)\mid \chi)$, we can apply Lemma \ref{lem:Definition of rdzo} to show that $\chi^0\in\IBr(\n_\Gamma(Q))$. Then $\chi^0$ is an extension of $\vartheta=\varphi^0$ and \cite[Theorem 8.11]{Nav98} yields $\chi^0=\psi$. This shows that $\psi\in\rdzo(\n_\Gamma(Q)\mid \n_\Gamma(Q),\vartheta)$. Conversely, each character of $\rdzo(\n_\Gamma(Q)\mid \n_\Gamma(Q),\vartheta)$ is an extension of $\vartheta$ and we conclude that
\begin{equation}
\label{eq:Navarro extended implies Navarro, 1}
\EBr\left(\n_\Gamma(Q)\middle|\vartheta\right)=\rdzo\left(\n_\Gamma(Q)\middle|\n_\Gamma(Q),\vartheta\right).
\end{equation}

Next, applying Theorem \ref{thm:Above modular DGN} with $A=M=G=\n_\Gamma(Q)$ and $K=\n_G(Q)$, we get a bijection
\[\Delta^{\n_\Gamma(Q)}_{D,\vartheta}:\rdzo\left(\n_\Gamma(Q)\middle| \n_\Gamma(Q),\vartheta\right)\to\dzo\left(\n_\Gamma(D)\middle| \pi_D(\vartheta)\right)\]
such that if $\zeta:=\Delta_{D,\vartheta}^{\n_\Gamma(Q)}(\psi)$ then $\bl(\zeta)^{\n_\Gamma(Q)}=\bl(\psi)$. On the other hand, since $\vartheta\in\dzo(B_Q)$, applying \cite[Theorem B]{Kos-Spa15} we deduce that $\bl(\psi)^\Gamma$ covers $B$ and by the transitivity of block induction the same holds for $\bl(\zeta)^\Gamma$. In other words, if $\xi\in\dz(\n_\Gamma(D)/D)$ is the ordinary character such that $\xi^0=\zeta$, then $\xi$ belongs to $\dz(\n_\Gamma(D)/D\mid B)$ and the assignment $\psi\mapsto \xi$ is one-to-one. We then obtain (iii) by arguing as above and consider any $\vartheta\in\dzo(B_Q)$.
\end{proof}

Next, we show that our Conjecture \ref{conj:Blockwise Conjecture E, extended} follows from the Inductive Blockwise Alperin Weight Condition.

\begin{pro}
\label{prop:iBAWC implies Gabriel extended}
Let $G\unlhd \Gamma$ be finite groups and consider a block $B$ of $G$. If Conjecture \ref{conj:iBAWC 2} holds for $G\unlhd \Gamma$, then Conjecture \ref{conj:Blockwise Conjecture E, extended} holds for $B$.
\end{pro}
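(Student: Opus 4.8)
The plan is to reduce both sides of the claimed identity, after cancelling a common factor, to a count of $\Gamma$-stable weights, and then to match these weights via the bijection $\Omega\colon\IBr(G)\to\Alp(G)/G$ furnished by Conjecture \ref{conj:iBAWC 2} applied to $G\unlhd\Gamma$ (so $A=\Gamma$ throughout). First I would unwind both sides with Clifford theory and Gallagher's theorem \cite[Corollary 8.20]{Nav98}. A character $\chi\in\IBr(\Gamma)$ lies in $\EBr(\Gamma\mid B)$ exactly when $\chi_G$ is a $\Gamma$-invariant member of $\IBr(B)$ that admits an extension to $\Gamma$ and $\chi$ is one of its extensions; the number of extensions of such a $\vartheta$ is the number $c$ of linear Brauer characters of $\Gamma/G$, independently of $\vartheta$. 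Hence $|\EBr(\Gamma\mid B)|=c\cdot n(B)$, where $n(B)$ is the number of $\Gamma$-invariant $\vartheta\in\IBr(B)$ that extend to $\Gamma$. Likewise, for a radical $p$-subgroup $Q$ of $G$ with $\Gamma=G\n_\Gamma(Q)$, an element of $\EBr(\n_\Gamma(Q)\mid\dzo(B_Q))$ is an extension to $\n_\Gamma(Q)$ of some $\psi\in\dzo(\n_G(Q))$ with $\bl(\psi)^G=B$, and since $\n_\Gamma(Q)/\n_G(Q)\cong\Gamma/G$ the number of extensions is again $c$; thus $|\EBr(\n_\Gamma(Q)\mid\dzo(B_Q))|=c\cdot m(Q,B)$, where $m(Q,B)$ counts the $\psi\in\dzo(\n_G(Q))$ with $\bl(\psi)^G=B$ that extend to $\n_\Gamma(Q)$. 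Cancelling $c$, it suffices to prove $n(B)=\sum_Q m(Q,B)$, the sum over the prescribed $\Gamma$-orbit representatives.

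Now I would bring in $\Omega$. From the block-isomorphism condition applied to the normal subgroup itself, $\bl(\psi)^G=\bl(\vartheta)$ whenever $\Omega(\vartheta)=\overline{(Q,\psi)}$, so $\Omega$ restricts to a bijection $\IBr(B)\to\Alp(B)/G$ and, by $\Gamma$-equivariance, to a bijection from $\IBr_\Gamma(B)$ onto the set of $\Gamma$-stable $G$-orbits in $\Alp(B)/G$. Then comes the bookkeeping: for a $\Gamma$-stable $G$-orbit $\overline{(Q,\psi)}$ of $B$-weights, $(Q,\psi)$ is the only element of its orbit with first coordinate $Q$ (the others have first coordinate a non-equal $G$-conjugate of $Q$); the $G$-orbit of $Q$ is $\Gamma$-stable, forcing $\Gamma=G\n_\Gamma(Q)$, and since $\Gamma$-conjugacy and $G$-conjugacy of such subgroups coincide, $Q$ is $\Gamma$-conjugate to exactly one subgroup in the index set of the sum; finally $\psi$ is $\n_\Gamma(Q)$-invariant, because for $g\in\n_\Gamma(Q)$ the pair $(Q,\psi^g)$ lies in $\overline{(Q,\psi)}$ with first coordinate $Q$ and hence equals $(Q,\psi)$. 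Conversely, if $Q$ is in the index set and $\psi\in\dzo(\n_G(Q))$ is $\n_\Gamma(Q)$-invariant with $\bl(\psi)^G=B$, then $\overline{(Q,\psi)}$ is a $\Gamma$-stable $G$-orbit of $B$-weights. So with $Q$ fixed in the index set, the $\Gamma$-stable $G$-orbits lying over the $\Gamma$-orbit of $Q$ correspond bijectively to the $\n_\Gamma(Q)$-invariant $\psi\in\dzo(\n_G(Q))$ with $\bl(\psi)^G=B$.

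The remaining point is to verify that, under this correspondence, $\vartheta\in\IBr_\Gamma(B)$ extends to $\Gamma$ if and only if the matched $\psi$ extends to $\n_\Gamma(Q)$. Since $\vartheta$ is $\Gamma$-invariant, $\Gamma_\vartheta=\Gamma$, and since $\psi$ is $\n_\Gamma(Q)$-invariant, $\n_\Gamma(Q)_\psi=\n_\Gamma(Q)$; so the block isomorphism supplied by Conjecture \ref{conj:iBAWC 2} reads $(\Gamma,G,\vartheta)\isob(\n_\Gamma(Q),\n_G(Q),\psi)$, which in particular provides a strong isomorphism $(\sigma,\tau)$ of these modular character triples as in Theorem \ref{thm:strong-iso}. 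The explicit formula for $\sigma_\Gamma$ gives $\sigma_\Gamma(\chi)(1)/\psi(1)=\chi(1)/\vartheta(1)$ for every $\chi\in\IBr(\Gamma\mid\vartheta)$, so $\chi$ is an extension of $\vartheta$ exactly when $\sigma_\Gamma(\chi)$ is an extension of $\psi$. Combining this with the previous paragraph gives $n(B)=\sum_Q m(Q,B)$, and the proposition follows.

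The main obstacle I anticipate is the bookkeeping above: disentangling $\Gamma$-stability of a weight orbit, $\n_\Gamma(Q)$-invariance of its character, and the constraint $\Gamma=G\n_\Gamma(Q)$ that puts $Q$ in the index set, and verifying these match up in both directions. The initial unwinding is a routine application of Gallagher's theorem, and the transfer of extendibility is immediate once the character triple isomorphism is in hand, the block condition $\bl(\psi)^G=B$ being exactly the instance of the block-isomorphism condition at the normal subgroup. I would also observe that no $\Gamma$-invariance of $B$ needs to be assumed: if $B$ is not $\Gamma$-invariant then $\IBr_\Gamma(B)=\varnothing$ and, using $\Gamma=G\n_\Gamma(Q)$, every $\EBr(\n_\Gamma(Q)\mid\dzo(B_Q))$ is also empty, so both sides vanish.
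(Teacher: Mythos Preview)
Your argument is correct and follows essentially the same route as the paper's proof: both use the bijection $\Omega$ from Conjecture \ref{conj:iBAWC 2}, the block condition $\bl(\psi)^G=\bl(\vartheta)$ at the bottom level, the observation $\Gamma=G\n_\Gamma(Q)$ for the relevant $Q$, and the character triple isomorphism $(\Gamma,G,\vartheta)\isob(\n_\Gamma(Q),\n_G(Q),\psi)$ to transfer extendibility. The only cosmetic difference is that the paper packages this as a direct bijection between $\EBr(\Gamma\mid B)$ and the relevant set of $\Gamma$-orbits of pairs $(Q,\psi)$ (invoking the method of Theorem \ref{thm:Lifting bijection for weights} with $A=J=\Gamma$, $K=G$), whereas you first factor out the common Gallagher multiplicity $c$ and then count.
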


\begin{proof}
By assumption there exists a $\Gamma$-equivariant bijection $\Omega_G$ from $\IBr(G)$ to $\Alp(G)/G$ inducing block isomorphisms of character triples. Suppose that $(Q,\vartheta)\in\Omega(\varphi)$ and let
\[\sigma_\Gamma:\IBr(\Gamma\mid \varphi)\to\IBr(\n_\Gamma(Q)\mid \vartheta)\]
be the bijection given by Theorem \ref{thm:strong-iso}. Then, for every $\chi\in\IBr(\Gamma\mid \varphi)$, we know that $\bl(\sigma_\Gamma(\chi))^\Gamma=\bl(\chi)$ and that $\chi_G=\varphi$ if and only if $\sigma_\Gamma(\chi)_{\n_G(Q)}=\vartheta$. With this in mind, proceeding as in the proof of Theorem \ref{thm:Lifting bijection for weights} with $A=J=\Gamma$ and $K=G$ we can construct a $\Gamma$-equivariant bijection $\Omega_G^\Gamma$ between the set of extensions $\EBr(\Gamma\mid B)$ and the set of $\Gamma$-orbits of pairs $(Q,\psi)$ with $Q$ a radical $p$-subgroup of $G$ and $\psi\in\EBr(\n_\Gamma(Q)\mid \dzo(B_Q))$. Next, we claim that if $(Q,\psi)$ is any such pair, then $\Gamma=G\n_\Gamma(Q)$. Observe that then the constructed bijection would imply the equality of Conjecture \ref{conj:Blockwise Conjecture E, extended}. To prove the claim, let $\chi\in\EBr(\Gamma\mid B)$ and $(Q,\psi)\in\Omega_G^\Gamma(\chi)$ so that $\varphi:=\chi_G\in\IBr(G)$, $\vartheta:=\psi_{\n_G(Q)}\in\IBr(\n_G(Q))$ and $(Q,\vartheta)\in\Omega_G(\varphi)$. Then, since $\varphi$ is $\Gamma$-invariant and $\Omega_G$ is $\Gamma$-equivariant, we deduce that $\Gamma$ fixes the $G$-orbit of $(Q,\vartheta)$, that is, $\Gamma=G\n_\Gamma(Q)_\vartheta$. On the other hand $\vartheta$ is $\n_\Gamma(Q)$-invariant and we conclude that $\Gamma=G\n_\Gamma(Q)$ as claimed.
\end{proof}

We can finally prove Theorem \ref{thm:Main, iBAWC implies Gabriel Conjecture} and Corollary \ref{cor:Main, Reduction for Gabriel Conjecture}.

\begin{proof}[Proof of Theorem \ref{thm:Main, iBAWC implies Gabriel Conjecture}]
Let $G\unlhd \Gamma$ be finite groups with $\Gamma/G$ a $p$-group and consider a block $B$ of $G$. Since by assumption Conjecture \ref{conj:iBAWC 2} holds with respect to $G\unlhd \Gamma$, we can apply Proposition \ref{prop:iBAWC implies Gabriel extended} to show that Conjecture \ref{conj:Blockwise Conjecture E, extended} holds for the block $B$. Then Lemma \ref{lem:Navarro extended implies Navarro} implies that Conjecture \ref{conj:Main, Gabriel Conjecture} holds for the block $B$.
\end{proof}

\begin{proof}[Proof of Corollary \ref{cor:Main, Reduction for Gabriel Conjecture}]
Let $G\unlhd \Gamma$ be finite groups with $\Gamma/G$ a $p$-group and consider a block $B$ of $G$. By assumption Conjecture \ref{conj:iBAWC 2} holds at the prime $p$ for every covering group of any non-abelian finite simple group of order divisible by $p$ involved in $G$. Then Conjecture \ref{conj:iBAWC 2} holds with respect to $G\unlhd \Gamma$ thanks to Theorem \ref{thm:Reduction for iBAW}. We can then apply Theorem \ref{thm:Main, iBAWC implies Gabriel Conjecture} to show that Conjecture \ref{conj:Main, Gabriel Conjecture} holds for the block $B$.
\end{proof}

\subsection{Navarro's Conjecture and isomorphisms of character triples}
\label{sec:Navarro with isomorphisms}

In the previous section we have introduced a generalisation of \cite[Conjecture E]{Nav17} to arbitrary quotients $\Gamma/G$. On the other hand, in this section we show how \cite[Conjecture E]{Nav17} can be strengthened in a different direction, namely by showing that it is compatible with isomorphisms of character triples.

Let $G\unlhd \Gamma$ be finite groups with $\Gamma/G$ a $p$-group and consider a $\Gamma$-invariant block $B$ of $G$. We denote by $\nav(\Gamma\mid B)$ the subset of $\Alp(\Gamma)$ consisting of those pairs $(Q,\psi)$ where $Q$ is a radical $p$-subgroup of $\Gamma$ such that $\Gamma=GQ$ and $\bl(\psi)^\Gamma$ covers $B$. Since $B$ is $\Gamma$-invariant, we deduce that $\nav(\Gamma\mid B)$ is a $\Gamma$-stable subset of $\Alp(\Gamma)$ and we denote by $\nav(\Gamma\mid B)/\Gamma$ the corresponding set of $\Gamma$-orbits. Furthermore, observe that if $(Q,\psi)\in\nav(\Gamma\mid B)$ then the restriction $\psi_{\n_G(Q)}$ is irreducible. This follows, for instance, by considering $\psi$ as a Brauer character of the quotient $\n_\Gamma(Q)/Q$ and noticing that, since $\Gamma=GQ$, the quotient $\n_\Gamma(Q)/Q$ is isomorphic to $\n_G(Q)/Q\cap \n_G(Q)$.

\begin{conj}
\label{conj:Gabriel with character triples}
Let $G\unlhd \Gamma$ be finite groups with $\Gamma/G$ a $p$-group and consider a $\Gamma$-invariant block $B$ of $G$. If $G,\Gamma\unlhd A$, then there exists an $A_B$-equivariant bijection
\[\Omega_B^\Gamma:\IBr_\Gamma(B)\to\nav(\Gamma\mid B)/\Gamma\]
such that the character triples $(A_\chi,G,\chi)$ and $(\n_A(Q)_\vartheta,\n_G(Q),\vartheta)$ are strongly isomorphic for every $\chi\in\IBr_\Gamma(B)$, $(Q,\psi)\in\Omega_B^\Gamma(\chi)$ and where $\vartheta=\psi_{\n_G(Q)}$.
\end{conj}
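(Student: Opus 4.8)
The statement is conjectural, so the natural route is to deduce it from Conjecture \ref{conj:iBAWC 2} for $G$ (which, by Theorem \ref{thm:Reduction for iBAW}, holds once it is known for the covering groups of the non-abelian simple groups of order divisible by $p$ involved in $G$), combined with the compatibility of the Dade--Glauberman--Nagao correspondence with strong isomorphisms in Corollary \ref{cor:DGN modular iso}. So I would assume Conjecture \ref{conj:iBAWC 2} for $G\unlhd A$ and fix an $A$-equivariant bijection $\Omega\colon\IBr(G)\to\Alp(G)/G$ with $(A_\chi,G,\chi)\isob(\n_A(Q_0)_{\psi_0},\n_G(Q_0),\psi_0)$ for $\chi\in\IBr(G)$ and $(Q_0,\psi_0)\in\Omega(\chi)$. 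Since $\isob$ forces $\bl(\psi_0)^G=\bl(\chi)$, and since $A$-equivariance of $\Omega$ shows that $\chi$ is $\Gamma$-invariant exactly when $\Omega(\chi)$ is a $\Gamma$-stable $G$-orbit, the bijection $\Omega$ restricts to an $A_B$-equivariant bijection from $\IBr_\Gamma(B)$ onto the set of $\Gamma$-stable $G$-orbits $\overline{(Q_0,\psi_0)}$ with $\bl(\psi_0)^G=B$.

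Next I would convert such a $\Gamma$-stable weight of $G$ into a Navarro weight of $\Gamma$ via Definition \ref{def:DGN for Brauer characters}. Fix $\chi\in\IBr_\Gamma(B)$ and, using $\Gamma$-stability of $\Omega(\chi)$, a representative $(Q_0,\psi_0)\in\Omega(\chi)$ with $\Gamma=G\,\n_\Gamma(Q_0)_{\psi_0}$. Put $M:=\n_\Gamma(Q_0)_{\psi_0}$; then $\n_G(Q_0)\unlhd M$, $M/\n_G(Q_0)\cong\Gamma/G$ is a $p$-group, $\psi_0\in\dzo(\n_G(Q_0))$ is $M$-invariant, and $\o_p(\n_G(Q_0))=Q_0$ since $Q_0$ is radical in $G$. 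Choose a $p$-subgroup $R$ of $M$ with $R/Q_0$ a defect group of the unique block of $M/Q_0$ covering the block of $\overline{\psi_0}$; by \cite[Theorem 9.17]{Nav98}, $M=\n_G(Q_0)R$ and $\n_G(Q_0)\cap R=Q_0$, whence $\Gamma=GR$ and $R\cap G=Q_0$. One checks that $\n_G(R)\le\n_G(Q_0)\le M$ and $R\le M$, so $\n_\Gamma(R)=\n_G(R)R\le M$ and hence $\n_\Gamma(R)=\n_M(R)$; that $R$ is radical in $M$ (a standard consequence of \cite[Theorem 9.17]{Nav98} and the defect-zero property of the Brauer correspondent, whose central factor has trivial $p$-core), so that $R$ is radical in $\Gamma$; and that $\o_p(\n_G(R))=Q_0$ with $\n_\Gamma(R)/\n_G(R)\cong\Gamma/G$. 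Set $\vartheta:=\pi_R(\psi_0)\in\dzo(\n_G(R))$ and let $\rho\in\IBr(\n_\Gamma(R))$ be the extension of $\vartheta$ to $\n_\Gamma(R)$ produced by \cite[Theorem 2.4]{Nav-Tie11} and Lemma \ref{lem:Definition of rdzo}; it lies in $\dzo(\n_\Gamma(R))$ because $\rho(1)_p=\vartheta(1)_p=|\n_\Gamma(R)/R|_p$. Then $(R,\rho)\in\Alp(\Gamma)$ with $\Gamma=GR$ and $\rho_{\n_G(R)}=\vartheta$, and $\bl(\rho)^\Gamma$ covers $B$ by transitivity of block induction together with \cite[Theorem B]{Kos-Spa15}, exactly as in the proof of Lemma \ref{lem:Navarro extended implies Navarro}; so $(R,\rho)\in\nav(\Gamma\mid B)$. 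I would set $\Omega_B^\Gamma(\chi)$ to be the $\Gamma$-orbit of $(R,\rho)$. Because defect groups are unique only up to conjugacy, this assignment is first made on a fixed set of representatives and then transported equivariantly, exactly as in the proof of Theorem \ref{thm:Reduction, relative weights to weights}; the result is a well-defined $A_B$-equivariant map, and it is bijective since each step — $\Omega$, the passage $Q_0=R\cap G$, and the DGN correspondence $\pi_R$ — is invertible.

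For the strong isomorphism of character triples the decisive input is Corollary \ref{cor:DGN modular iso}, applied inside the ambient group $\widehat A:=\n_A(Q_0)_{\psi_0}$ with $\n_G(Q_0)$ and $M$ in the roles of $K$ and $M$, $\psi_0$ in the role of $\varphi$, and $R$ in the role of $D$: Hypothesis \ref{hyp:DGN hypotheses} and the group-theoretic conditions of Theorem \ref{thm:Above modular DGN} were verified above, and the factorisation $\widehat A=M\,\n_{\widehat A}(R)$ is a Frattini argument since $M$ normalises $\n_G(Q_0)$ and $\overline{\psi_0}$ and acts transitively on the relevant defect groups. Identifying $\n_{\widehat A}(R)=\n_A(R)_{\psi_0}=\n_A(R)_\vartheta$ (using $R\cap G=Q_0$ and the $\n_A(R)$-equivariance of $\pi_R$) and $\n_{\n_G(Q_0)}(R)=\n_G(R)$, Corollary \ref{cor:DGN modular iso} gives that $(\n_A(Q_0)_{\psi_0},\n_G(Q_0),\psi_0)$ is strongly isomorphic to $(\n_A(R)_\vartheta,\n_G(R),\vartheta)$. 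Composing with the strong isomorphism underlying $(A_\chi,G,\chi)\isob(\n_A(Q_0)_{\psi_0},\n_G(Q_0),\psi_0)$, and using transitivity of strong isomorphisms of modular character triples, yields that $(A_\chi,G,\chi)$ is strongly isomorphic to $(\n_A(R)_\vartheta,\n_G(R),\vartheta)$, which is the asserted compatibility with $Q:=R$, $\psi:=\rho$ and $\vartheta=\psi_{\n_G(Q)}$.

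The main obstacle is not a single hard step but the accumulation of structural checks around the DGN passage: that $R$ is genuinely radical in $\Gamma$, that $R\cap G=Q_0$ recovers a radical $p$-subgroup of $G$ contained in a defect group of $B$, that the block-covering relation between $\bl(\rho)^\Gamma$ and $B$ survives the correspondence, and — most delicately — that the non-canonical choice of defect groups can be organised through compatible transversals (as in Section \ref{sec:Reduction}) so that $\Omega_B^\Gamma$ is honestly $A_B$-equivariant and bijective. Once these are in place, the strong-isomorphism statement itself is formal, following from Corollary \ref{cor:DGN modular iso}, the assumed Conjecture \ref{conj:iBAWC 2}, and transitivity.
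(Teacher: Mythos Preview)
Your proposal is correct and follows essentially the same approach as the paper: restrict the bijection from Conjecture \ref{conj:iBAWC 2} to $\IBr_\Gamma(B)$, pass from a $\Gamma$-stable weight $(Q_0,\psi_0)$ of $G$ to a weight $(R,\rho)\in\nav(\Gamma\mid B)$ via the Dade--Glauberman--Nagao correspondence, and then obtain the required strong isomorphism by composing the block isomorphism $(A_\chi,G,\chi)\isob(\n_A(Q_0)_{\psi_0},\n_G(Q_0),\psi_0)$ with the strong isomorphism of Corollary \ref{cor:DGN modular iso}. The only cosmetic difference is that the paper first forms the unique extension $\tau\in\IBr(\n_\Gamma(Q_0))$ of $\psi_0$ and then applies Theorem \ref{thm:Above modular DGN} to produce $\psi\in\dzo(\n_\Gamma(R)\mid\pi_R(\psi_0))$, whereas you first take $\vartheta=\pi_R(\psi_0)$ and then its unique extension $\rho$; these yield the same character since $\n_\Gamma(R)/\n_G(R)$ is a $p$-group.
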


Since the number of $\Gamma$-orbits on the set $\nav(\Gamma\mid B)$ coincides with the right hand side of \eqref{eq:Blockwise Conjecture E} it follows that our Conjecture \ref{conj:Gabriel with character triples} implies Conjecture \ref{conj:Blockwise Conjecture E}. Next, we show that even this strengthened form of the conjecture is a consequence of the Inductive Blockwise Alperin Weight Condition.

\begin{thm}
Let $G\unlhd \Gamma$ be finite groups with $\Gamma/G$ a $p$-group and consider a $\Gamma$-invariant block $B$ of $G$. Suppose in addition that $G,\Gamma\unlhd A$ and that Conjecture \ref{conj:iBAWC 2} holds with respect to $G\unlhd A$. Then Conjecture \ref{conj:Gabriel with character triples} holds for the block $B$ with respect to $G,\Gamma\unlhd A$.
\end{thm}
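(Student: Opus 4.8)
The strategy is to enrich, with isomorphisms of modular character triples, the chain of implications underlying Theorem~\ref{thm:Main, iBAWC implies Gabriel Conjecture}: first extract the $\IBr_\Gamma(B)$-side of the bijection from Conjecture~\ref{conj:iBAWC 2} for $G\unlhd A$, and then transport it to $\nav(\Gamma\mid B)/\Gamma$ through the modular Dade--Glauberman--Nagao correspondence of Section~\ref{sec:DGN}. By hypothesis there is an $A$-equivariant bijection $\Omega\colon\IBr(G)\to\Alp(G)/G$ with $(A_\chi,G,\chi)\isob(\n_A(Q)_\psi,\n_G(Q),\psi)$ for all $\chi\in\IBr(G)$ and $(Q,\psi)\in\Omega(\chi)$. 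Applying the defining property of the block isomorphism with $J=G$ (Definition~\ref{def:block-iso}) gives $\bl(\chi)=\bl(\psi)^G$, so $\chi\in\IBr(B)$ if and only if $\bl(\psi)^G=B$; and, by $A$-equivariance, $\chi$ is $\Gamma$-invariant if and only if $\Gamma$ stabilises the $G$-orbit $\Omega(\chi)$, that is, $\Gamma=G\,\n_\Gamma(Q)_\psi$ for a representative $(Q,\psi)$. Hence $\Omega$ restricts to an $A_B$-equivariant bijection from $\IBr_\Gamma(B)$ onto the set $\mathcal{X}/G$ of $G$-orbits of pairs $(Q,\psi)\in\Alp(G)$ with $\bl(\psi)^G=B$ and $\Gamma=G\,\n_\Gamma(Q)_\psi$, carrying along the block (hence strong) isomorphisms $(A_\chi,G,\chi)\isob(\n_A(Q)_\psi,\n_G(Q),\psi)$; note that $A_B$ normalises both $G$ and $\Gamma$ since $G,\Gamma\unlhd A$.

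It then remains to construct an $A_B$-equivariant bijection $\mathcal{X}/G\to\nav(\Gamma\mid B)/\Gamma$ so that, for $G$-orbit and $\Gamma$-orbit representatives $(Q,\psi)$ and $(D,\xi)$ that correspond, the triple $(\n_A(Q)_\psi,\n_G(Q),\psi)$ is strongly isomorphic to $(\n_A(D)_\vartheta,\n_G(D),\vartheta)$ with $\vartheta:=\xi_{\n_G(D)}$; composing with the previous paragraph and using transitivity of strong isomorphisms then yields precisely the statement of Conjecture~\ref{conj:Gabriel with character triples}. The correspondence is the modular DGN correspondence: given $(Q,\psi)\in\mathcal{X}$ put $M:=\n_\Gamma(Q)_\psi$, so $\n_G(Q)\unlhd M$, $M/\n_G(Q)$ is a $p$-group, $\psi$ is $M$-invariant, and $\Gamma=GM$; since $Q=\bfO_p(\n_G(Q))$, choosing a $p$-subgroup $D\le M$ with $D/Q$ a defect group of the unique block of $M/Q$ covering $\bl(\overline{\psi})$ places us in the situation of Hypothesis~\ref{hyp:DGN hypotheses}, and by \cite[Theorem~9.17]{Nav98} one gets $M=\n_G(Q)D$, $D\cap G=Q$ and $\Gamma=GD$, while arguing as in the Remark following Theorem~\ref{thm:Above modular DGN} shows $D\in\Rad(\Gamma)$. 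Set $\vartheta:=\pi_D(\psi)\in\dzo(\n_G(D))$; since $\n_\Gamma(D)/\n_G(D)$ is a $p$-group, $\vartheta$ has a unique extension $\xi\in\dzo(\n_\Gamma(D))$ (compare Lemma~\ref{lem:Navarro extended implies Navarro}(iii) and Lemma~\ref{lem:Definition of rdzo}), and by \cite[Theorem~B]{Kos-Spa15} together with transitivity of block induction $\bl(\xi)^\Gamma$ covers $B$, so $(D,\xi)\in\nav(\Gamma\mid B)$ with $\xi_{\n_G(D)}=\vartheta$. Finally the required strong isomorphism of modular character triples is supplied by Corollary~\ref{cor:DGN modular iso}, applied inside the ambient group $\n_A(Q)_\psi$ (with $K=\n_G(Q)$, $M=\n_G(Q)D$, $\varphi=\psi$): here $\n_A(D)\le\n_A(Q)$ because $D\cap G=Q$, so $\n_{\n_A(Q)_\psi}(D)=\n_A(D)_\psi$, and $\n_A(D)_\psi=\n_A(D)_\vartheta$ by equivariance of the DGN correspondence in Theorem~\ref{thm:Above modular DGN}.

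The main work is turning this recipe into a genuinely well-defined $A_B$-equivariant bijection. Since $D$, and hence $\xi$, is only determined up to conjugacy, I would fix compatible transversals---an $A_B$-transversal in $\mathcal{X}/G$, then $\n_A(Q)_\psi$-transversals of the relevant defect groups and of the associated characters---and define the map on the transversal before extending it equivariantly, exactly as in the proof of Theorem~\ref{thm:Reduction, relative weights to weights}. Surjectivity requires the reverse passage: for $(D',\xi')\in\nav(\Gamma\mid B)$ set $Q':=D'\cap G\in\Rad(G)$ (by \cite[Lemma~2.3(a)]{Nav-Tie11}) and recover $\psi'$ as the DGN-preimage of $\xi'_{\n_G(D')}$, checking $(Q',\psi')\in\mathcal{X}$. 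I expect the hardest points to be (i) matching the radical-$p$-subgroup-of-$\Gamma$ condition of $\nav$ with what the construction naturally produces, in parallel with the (asserted) identification of $\nav(\Gamma\mid B)/\Gamma$ with the right-hand side of \eqref{eq:Blockwise Conjecture E}, and (ii) verifying the ambient-group hypotheses of Corollary~\ref{cor:DGN modular iso} and Theorem~\ref{thm:Above modular DGN} in the general, non-$A$-invariant situation, which if necessary is handled by a Clifford-theoretic descent to stabilisers as in Proposition~\ref{prop:Reduction, above any character in dzo}. The remaining verifications---equivariance and block-induction compatibility of the intermediate steps---are routine given the results of Sections~\ref{sec:DGN}--\ref{sec:Reduction}.
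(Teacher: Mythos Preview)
Your proposal is correct and follows essentially the same approach as the paper: restrict the bijection $\Omega$ of Conjecture~\ref{conj:iBAWC 2} to $\IBr_\Gamma(B)$ to obtain an intermediate set of weights (your $\mathcal{X}/G$, the paper's $\mathcal{A}_\Gamma(B)/G$), then pass to $\nav(\Gamma\mid B)/\Gamma$ via the modular DGN correspondence and invoke Corollary~\ref{cor:DGN modular iso} for the strong isomorphism. The only cosmetic difference is that the paper first extends $\psi$ to $\tau\in\IBr(\n_\Gamma(Q))$ and then applies Theorem~\ref{thm:Above modular DGN} to produce the character on $\n_\Gamma(D)$, whereas you first take $\pi_D(\psi)$ and then extend; by uniqueness of extensions over $p$-quotients these agree, and both land on the same $\vartheta=\pi_D(\psi)$ at the level of $\n_G(D)$.
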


\begin{proof}
To start, we define the set $\mathcal{A}_\Gamma(B)$ consisting of those pairs $(Q,\varphi)\in\Alp(G)$ such that $\Gamma=G\n_\Gamma(Q)_\varphi$ and $\bl(\varphi)^G=B$. Observe that the first condition is equivalent to requiring that the $G$-orbit of $(Q,\varphi)$ is invariant under the action of $\Gamma$ on $\Alp(G)/G$. Then, if we denote by $\mathcal{A}_\Gamma(B)/G$ the set of $G$-orbits on $\mathcal{A}_\Gamma(B)$, applying Conjecture \ref{conj:iBAWC 2} we obtain a bijection
\[\Omega:\IBr_\Gamma(B)\to\mathcal{A}_\Gamma(B)/G\]
inducing block isomorphisms of character triples. Fix now a pair $(Q,\varphi)\in\mathcal{A}_\Gamma(B)$ and observe that $\n_\Gamma(Q)=\n_\Gamma(Q)_\varphi$ and hence $\varphi$ is $\norm{\Gamma}{Q}$-invariant. Then, by \cite[Theorem 8.11]{Nav98}, there exists a unique extension $\tau\in\IBr(\n_\Gamma(Q))$ of $\varphi$. Let $D/Q$ be a defect group of the block $\bl(\overline{\tau})$ and where $\overline{\tau}$ is the Brauer character of $\n_\Gamma(Q)/Q$ corresponding to $\tau$ via inflation. By \cite[Theorem 9.17]{Nav98} we have $\Gamma=GD$ and $G\cap D=Q$, while applying \cite[Theorem B]{Kos-Spa15} we deduce that $\bl(\tau)^\Gamma$ covers $B$. Consider now the unique ordinary character $\varphi'\in\dz(\n_G(Q)/Q)$ such that $\varphi'^0=\varphi$ and let $\tau'\in\Irr(\n_\Gamma(Q)/Q)$ be an extension of $\varphi'$ (this exists according to \cite[Theorem 2.4]{Nav-Tie11}). Then, Lemma \ref{lem:Definition of rdzo} implies that $\tau'^0\in\IBr(\n_\Gamma(Q))$ and the uniqueness part of \cite[Theorem 8.11]{Nav98} yields $\tau'^0=\tau$. This shows that $\tau\in\rdzo(\n_\Gamma(Q)\mid \n_\Gamma(Q),\varphi)$. We can now apply Theorem \ref{thm:Above modular DGN} to shows that $\tau$ corresponds to a unique $\psi\in\dzo(\n_\Gamma(D)\mid\pi_{D}(\varphi))$. Furthermore, using the block isomorphisms given by Theorem \ref{thm:Above modular DGN}, we also get $\bl(\psi)^{\n_\Gamma(Q)}=\bl(\tau)$ and that $\psi_{\n_G(D)}$ is irreducible. We then deduce that the pair $(D,\psi)$ belongs to $\nav(\Gamma\mid B)$ and can define an $A_B$-equivariant bijection
\[\Delta:\mathcal{A}_\Gamma(B)/G\to \nav(\Gamma\mid B)/\Gamma\]
by sending the $G$-orbit of $(Q,\varphi)$ to the $\Gamma$-orbit of $(D,\psi)$ as described above. Finally, we define $\Omega_B^\Gamma$ to be the composition of $\Omega$ and $\Delta$. To complete the proof it remains to show that $\Omega_B^\Gamma$ induces strong isomorphisms of character triples. More precisely, let $\chi\in\IBr_\Gamma(B)$, $(Q,\varphi)\in\Omega(\chi)$ and $(D,\psi)$ as constructed above. First, by Conjecture \ref{conj:iBAWC 2} we know that $(A_\chi,G,\chi)$ and $(\n_A(Q)_\varphi,\n_G(Q),\varphi)$ are block isomorphic and therefore strongly isomorphic. Next, we notice set $\vartheta:=\psi_{\n_G(D)}$ and notice that $\vartheta=\pi_D(\varphi)$. Then, applying Corollary \ref{cor:DGN modular iso} we get a strong isomorphism between $(\n_A(Q)_\varphi,\n_G(Q),\varphi)$ and $(\n_A(D)_\vartheta,\n_G(D),\vartheta)$ as required. 
\end{proof}

We conclude this section with a remark on the isomorphisms of character triples considered above.

\begin{rem}
It is natural to ask whether the strong isomorphisms considered in Conjecture \ref{conj:Gabriel with character triples} are actually block isomorphisms. Unfortunately, this is not necessarily the case. The reason for this is that the condition on defect groups required by Definition \ref{def:block-iso} might fail in this case.
\end{rem}

\subsection{Block-free version of Navarro's Conjecture}
\label{sec:Block-free Navarro Conjecture}

In this section, using the results of Section \ref{sec:Reduction for block-free version}, we obtain block-free analogues of Theorem \ref{thm:Main, iBAWC implies Gabriel Conjecture} and Corollary \ref{cor:Main, Reduction for Gabriel Conjecture}. For the reader's convenience, we first state the block-free version of Conjecture \ref{conj:Blockwise Conjecture E}. The following is basically the statement of \cite[Conjecture E]{Nav17}, although notice that we do not require the group $\Gamma$ to split over $G$. Recall that if $G\unlhd \Gamma$ then $\IBr_\Gamma(G)$ denotes the set of irreducible Brauer characters of $G$ that are $\Gamma$-invariant.

\begin{conj}[Navarro]
\label{conj:Block-free Conjecture E}
Let $G\unlhd \Gamma$ be finite groups with $\Gamma/G$ a $p$-group. Then
\[\left|\IBr_\Gamma(G)\right|=\sum\limits_{Q\in\Theta/\Gamma}\left|\dz\left(\n_\Gamma(Q)/Q\right)\right|\]
where $\Theta$ is the set of $p$-subgroups $Q$ of $\Gamma$ such that $\Gamma=GQ$.
\end{conj}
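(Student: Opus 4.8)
The plan is to transpose the blockwise treatment of Section~\ref{sec:Navarro's conjecture} to the block-free setting, systematically replacing block isomorphisms of modular character triples by central isomorphisms (which is legitimate by the remark opening Section~\ref{sec:Reduction for block-free version}), and feeding in the block-free reduction of Theorem~\ref{thm:Reduction, block-free version}. First I would prove a block-free analogue of Proposition~\ref{prop:iBAWC implies Gabriel extended}, valid for an arbitrary quotient $\Gamma/G$: if Conjecture~\ref{conj:iAWC} holds for $G\unlhd\Gamma$, then
\[
\left|\EBr\left(\Gamma\mid G\right)\right|=\sum_{Q}\left|\EBr\left(\n_\Gamma(Q)\mid\dzo(\n_G(Q))\right)\right|,
\]
where $\EBr(\Gamma\mid G)$ is the set of $\chi\in\IBr(\Gamma)$ whose restriction to $G$ is irreducible, the set $\EBr(\n_\Gamma(Q)\mid\dzo(\n_G(Q)))$ is defined analogously relative to $\dzo(\n_G(Q))$, and $Q$ runs over representatives for the $\Gamma$-orbits of radical $p$-subgroups of $G$ with $\Gamma=G\n_\Gamma(Q)$. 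To prove this I would start from a $\Gamma$-equivariant bijection $\Omega_G\colon\IBr(G)\to\Alp(G)/G$ inducing central isomorphisms of modular character triples, pick for each $\varphi\in\IBr(G)$ and $(Q,\vartheta)\in\Omega_G(\varphi)$ the bijection $\sigma_\Gamma\colon\IBr(\Gamma\mid\varphi)\to\IBr(\n_\Gamma(Q)\mid\vartheta)$ supplied by Theorem~\ref{thm:strong-iso}, and then run the Clifford-theoretic bookkeeping of the proof of Theorem~\ref{thm:Lifting bijection for weights} with $A=J=\Gamma$ and $K=G$. This produces a $\Gamma$-equivariant bijection between $\EBr(\Gamma\mid G)$ and the $\Gamma$-orbits of pairs $(Q,\psi)$ with $Q$ a radical $p$-subgroup of $G$ and $\psi\in\EBr(\n_\Gamma(Q)\mid\dzo(\n_G(Q)))$; as in Proposition~\ref{prop:iBAWC implies Gabriel extended}, the $\Gamma$-invariance of $\varphi=\chi_G$ together with the $\Gamma$-equivariance of $\Omega_G$ forces $\Gamma=G\n_\Gamma(Q)_\vartheta$, and since $\vartheta=\psi_{\n_G(Q)}$ is irreducible it is $\n_\Gamma(Q)$-invariant, whence $\Gamma=G\n_\Gamma(Q)$, so the right-hand side of the displayed identity counts exactly these pairs.

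Next I would specialise to the case that $\Gamma/G$ is a $p$-group, following Lemma~\ref{lem:Navarro extended implies Navarro}. Here \cite[Theorem 8.11]{Nav98} gives $|\EBr(\Gamma\mid G)|=|\IBr_\Gamma(G)|$. Fix a radical $p$-subgroup $Q$ of $G$ with $\Gamma=G\n_\Gamma(Q)$ and $\EBr(\n_\Gamma(Q)\mid\dzo(\n_G(Q)))\neq\emptyset$, choose $\psi$ in this set with $\vartheta:=\psi_{\n_G(Q)}$, and let $D/Q$ be a defect group of the unique block of $\n_\Gamma(Q)/Q$ covering $\bl(\overline\vartheta)$; then \cite[Theorem 9.17]{Nav98} yields $\Gamma=GD$ and $Q=D\cap G$, so $D\in\Theta$ and $D$ is unique up to $\n_\Gamma(Q)$-conjugacy. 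For a fixed $\n_\Gamma(Q)$-invariant $\vartheta\in\dzo(\n_G(Q))$ one checks, exactly as in Lemma~\ref{lem:Navarro extended implies Navarro}(iii), that $\EBr(\n_\Gamma(Q)\mid\vartheta)=\rdzo(\n_\Gamma(Q)\mid\n_\Gamma(Q),\vartheta)$, and Theorem~\ref{thm:Above modular DGN} applied with $A=M=G=\n_\Gamma(Q)$ and $K=\n_G(Q)$ turns this set into $\dzo(\n_\Gamma(D)\mid\pi_D(\vartheta))$. Summing over the $\n_\Gamma(Q)$-invariant $\vartheta$, the bijectivity of the Dade--Glauberman--Nagao correspondence together with \cite[Theorem 3.18]{Nav98} identifies the union of the sets $\dzo(\n_\Gamma(D)\mid\pi_D(\vartheta))$ with the inflations of $\dz(\n_\Gamma(D)/D)$, whence $|\EBr(\n_\Gamma(Q)\mid\dzo(\n_G(Q)))|=|\dz(\n_\Gamma(D)/D)|$. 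Matching the $\Gamma$-orbits of such $Q$ with $\Theta/\Gamma$ via $Q=D\cap G$ --- which is precisely the transversal argument in the proof of Theorem~\ref{thm:Reduction, relative weights to weights} --- then yields Conjecture~\ref{conj:Block-free Conjecture E} conditionally on Conjecture~\ref{conj:iAWC} for $G\unlhd\Gamma$; invoking Theorem~\ref{thm:Reduction, block-free version} reduces this hypothesis to Conjecture~\ref{conj:iAWC} for the covering groups of the non-abelian simple groups of order divisible by $p$ involved in $G$, which is the conditional reduction to simple groups that constitutes the real content here.

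I expect the main obstacle to be the bookkeeping in the second paragraph: one must verify that the assignment $Q\mapsto D=D(Q,\vartheta)$ furnished by the Dade--Glauberman--Nagao correspondence descends to a well-defined bijection between the $\Gamma$-orbits appearing on the two sides, is independent of the non-canonical choices entering that correspondence, and that, conversely, every $D\in\Theta$ with $\dz(\n_\Gamma(D)/D)\neq\emptyset$ arises in this way with $D\cap G$ a radical $p$-subgroup of $G$ --- this last point being the block-free analogue of the transversal verification in Theorem~\ref{thm:Reduction, relative weights to weights} and of Lemma~\ref{lem:Navarro extended implies Navarro}(ii). Once that is in place, everything else is a routine translation of the $\isob$-arguments of Section~\ref{sec:Navarro's conjecture} into $\isoc$-arguments, so no genuinely new input beyond the block-free reduction theorem is required.
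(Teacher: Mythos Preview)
Your proposal is correct and follows essentially the same approach as the paper: the paper treats this conjecture via Proposition~\ref{prop:iAWC implies block-free Gabriel} (whose proof it explicitly obtains by ``arguing as in the proofs of Lemma~\ref{lem:Navarro extended implies Navarro} and Proposition~\ref{prop:iBAWC implies Gabriel extended}''), Theorem~\ref{thm:iAWC implies block-free Gabriel}, and Corollary~\ref{cor:Reduction for block-free Gabriel}, which is exactly the two-step programme you outline, and your anticipated bookkeeping obstacles are precisely those already handled in the blockwise arguments you cite.
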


Next, as done in the blockwise setting, we introduce a generalisation of the above conjecture to arbitrary quotients $\Gamma/G$. For $G\unlhd \Gamma$, let $\EBr(\Gamma\mid G)$ be the set of irreducible Brauer characters of $\Gamma$ that restrict irreducibly to $G$. Similarly, we denote by $\EBr(\Gamma\mid \dzo(G))$ the subset of those characters $\chi\in\EBr(\Gamma\mid G)$ whose restriction $\chi_G$ belongs to $\dzo(G)$. We can then state a block-free version of Conjecture \ref{conj:Blockwise Conjecture E, extended} as follows.

\begin{conj}
\label{conj:Block-free Conjecture E, extended}
Let $G\unlhd \Gamma$ be finite groups. Then
\[\left|\EBr\left(\Gamma\middle| G\right)\right|=\sum\limits_{Q}\left|\EBr\left(\n_\Gamma(Q)\middle|\dzo\left(\n_G(Q)\right)\right)\right|\]
where $Q$ runs over a set of representatives for the $\Gamma$-orbits of radical $p$-subgroups of $G$ such that $\Gamma=G\n_\Gamma(Q)$.
\end{conj}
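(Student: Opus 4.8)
The plan is to establish Conjecture \ref{conj:Block-free Conjecture E, extended} as a consequence of the block-free Inductive Alperin Weight Condition, Conjecture \ref{conj:iAWC}, following verbatim the strategy of Proposition \ref{prop:iBAWC implies Gabriel extended} but replacing block isomorphisms $\isob$ by central isomorphisms $\isoc$ and discarding every block-induction condition. Concretely, I would prove: if Conjecture \ref{conj:iAWC} holds for $G\unlhd \Gamma$, then Conjecture \ref{conj:Block-free Conjecture E, extended} holds for $G\unlhd \Gamma$.

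First I would apply Conjecture \ref{conj:iAWC} with ambient group $A=\Gamma$ to obtain a $\Gamma$-equivariant bijection $\Omega_G\colon\IBr(G)\to\Alp(G)/G$ with $(\Gamma_\varphi,G,\varphi)\isoc(\n_\Gamma(Q)_\vartheta,\n_G(Q),\vartheta)$ for all $\varphi\in\IBr(G)$ and $(Q,\vartheta)\in\Omega_G(\varphi)$. By $\Gamma$-equivariance, $\Omega_G$ restricts to a bijection from $\IBr_\Gamma(G)$ onto the set of $\Gamma$-fixed $G$-orbits of weights. For $\Gamma$-invariant $\varphi$ we have $\Gamma_\varphi=\Gamma$, and the definition of $\isoc$ (built on Theorem \ref{thm:strong-iso}) forces $\Gamma=G\n_\Gamma(Q)_\vartheta$ and $\n_G(Q)=G\cap\n_\Gamma(Q)_\vartheta$. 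Since $\n_G(Q)$ fixes its own character $\vartheta$, a Dedekind argument gives $\n_\Gamma(Q)=\n_G(Q)\,\n_\Gamma(Q)_\vartheta=\n_\Gamma(Q)_\vartheta$, so in fact $\Gamma=G\n_\Gamma(Q)$ and $\vartheta$ is $\n_\Gamma(Q)$-invariant whenever $\varphi$ is $\Gamma$-invariant.

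Next, for such $\varphi$ I would feed the projective representations $\mathcal{P},\mathcal{P}'$ underlying the central isomorphism into Theorem \ref{thm:strong-iso} with $J=\Gamma$ to get a bijection $\sigma_\Gamma\colon\IBr(\Gamma\mid\varphi)\to\IBr(\n_\Gamma(Q)\mid\vartheta)$. Because $\sigma_\Gamma$ sends $\tr(\mathcal{Q}\otimes\mathcal{P}_\Gamma)$ to $\tr(\mathcal{Q}_{\n_\Gamma(Q)}\otimes\mathcal{P}'_{\n_\Gamma(Q)})$ for irreducible projective $\Gamma/G$-representations $\mathcal{Q}$, and since $\tr(\mathcal{Q}\otimes\mathcal{P}_\Gamma)$ restricts irreducibly to $G$ precisely when $\dim\mathcal{Q}=1$, the map $\sigma_\Gamma$ restricts to a bijection between the extensions of $\varphi$ to $\Gamma$ and the extensions of $\vartheta$ to $\n_\Gamma(Q)$; as $\vartheta\in\dzo(\n_G(Q))$ automatically, each such extension lies in $\EBr(\n_\Gamma(Q)\mid\dzo(\n_G(Q)))$. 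Running the proof of Theorem \ref{thm:Lifting bijection for weights} in its block-free form (cf.\ Section \ref{sec:Reduction for block-free version}) with $A=J=\Gamma$ and $K=G$ then packages these into a $\Gamma$-equivariant bijection $\Omega_G^\Gamma$ between $\EBr(\Gamma\mid G)$ and the set of $\Gamma$-orbits of pairs $(Q,\psi)$ with $Q$ radical in $G$, $\Gamma=G\n_\Gamma(Q)$, and $\psi\in\EBr(\n_\Gamma(Q)\mid\dzo(\n_G(Q)))$. Finally, using that $\EBr(\Gamma\mid G)=\bigsqcup_\varphi\{\text{extensions of }\varphi\}$ over $\Gamma$-invariant $\varphi$, and that $\n_\Gamma(Q)$ acts trivially on its own irreducible Brauer characters, the number of such $\Gamma$-orbits of pairs equals $\sum_Q|\EBr(\n_\Gamma(Q)\mid\dzo(\n_G(Q)))|$ with $Q$ ranging over a transversal of the $\Gamma$-orbits of radical $p$-subgroups of $G$ with $\Gamma=G\n_\Gamma(Q)$ --- which is exactly the right-hand side of Conjecture \ref{conj:Block-free Conjecture E, extended}.

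I expect the main obstacle to be the careful transport of the notion of \emph{extension}, and of the attendant counting, through the Clifford-theoretic bijection of Theorem \ref{thm:strong-iso} and the lifting procedure of Theorem \ref{thm:Lifting bijection for weights}: one must check that ``$\chi_G$ irreducible'' corresponds to ``$\sigma_\Gamma(\chi)_{\n_G(Q)}$ irreducible'', that $\sigma_\Gamma$ is compatible with the $\Gamma$-action so that $\Omega_G^\Gamma$ is genuinely $\Gamma$-equivariant, and that all the transversal choices assemble consistently. By contrast, all block-theoretic ingredients of the blockwise argument (block induction, \cite[Theorem B]{Kos-Spa15}, \cite[Theorem 9.17]{Nav98}, the sets $B_Q$) are simply absent here, so the block-free case is strictly easier once the character-theoretic bookkeeping is in place.
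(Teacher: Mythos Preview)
Your proposal is correct and follows essentially the same approach as the paper: the paper establishes Conjecture \ref{conj:Block-free Conjecture E, extended} conditionally on Conjecture \ref{conj:iAWC} in Proposition \ref{prop:iAWC implies block-free Gabriel}, whose proof is simply ``arguing as in the proofs of Lemma \ref{lem:Navarro extended implies Navarro} and Proposition \ref{prop:iBAWC implies Gabriel extended}'', which is precisely what you do. Your explicit Dedekind argument for $\n_\Gamma(Q)_\vartheta=\n_\Gamma(Q)$ is a minor variation on the paper's route (which infers $\n_\Gamma(Q)$-invariance of $\vartheta$ from $\vartheta=\psi_{\n_G(Q)}$ with $\psi\in\IBr(\n_\Gamma(Q))$), but the overall strategy and all key steps coincide.
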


Arguing as in the proofs of Lemma \ref{lem:Navarro extended implies Navarro} and Proposition \ref{prop:iBAWC implies Gabriel extended} we can then prove the following proposition.

\begin{pro}
\label{prop:iAWC implies block-free Gabriel}
Let $G\unlhd \Gamma$ be finite groups.
\begin{enumerate}
\item If Conjecture \ref{conj:iAWC} holds with respect to $G\unlhd \Gamma$, then Conjecture \ref{conj:Block-free Conjecture E, extended} holds with respect to $G\unlhd \Gamma$.
\item Suppose that $\Gamma/G$ is a $p$-group. If Conjecture \ref{conj:Block-free Conjecture E, extended} holds with respect to $G\unlhd \Gamma$, then Conjecture \ref{conj:Block-free Conjecture E} holds with respect to $G\unlhd \Gamma$.
\end{enumerate}
\end{pro}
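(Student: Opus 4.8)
The plan is to mirror, in the block-free setting, the two implications already established in the blockwise case, namely Proposition \ref{prop:iBAWC implies Gabriel extended} for part (i) and Lemma \ref{lem:Navarro extended implies Navarro} for part (ii). For part (i), I would start from the $\Gamma$-equivariant bijection $\Omega_G\colon\IBr(G)\to\Alp(G)/G$ provided by Conjecture \ref{conj:iAWC} together with the associated central isomorphisms of modular character triples. Given $\varphi\in\EBr(\Gamma\mid G)$ with $\varphi_G=\vartheta\in\IBr(G)$ and $(Q,\psi)\in\Omega_G(\vartheta)$, the central isomorphism $(A_\vartheta,G,\vartheta)\isoc(\n_\Gamma(Q)_\psi,\n_G(Q),\psi)$ yields, via Theorem \ref{thm:strong-iso}, a bijection $\sigma_\Gamma\colon\IBr(\Gamma\mid\vartheta)\to\IBr(\n_\Gamma(Q)\mid\psi)$ which restricts to a bijection between the extensions of $\vartheta$ to $\Gamma$ and the extensions of $\psi$ to $\n_\Gamma(Q)$ (using that $\sigma_\Gamma$ is compatible with restriction to the normal subgroup). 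Running exactly the transversal construction of the proof of Theorem \ref{thm:Lifting bijection for weights}, applied here with $A=J=\Gamma$ and $K=G$, produces a $\Gamma$-equivariant bijection $\Omega_G^\Gamma$ between $\EBr(\Gamma\mid G)$ and the set of $\Gamma$-orbits of pairs $(Q,\psi)$ with $Q$ a radical $p$-subgroup of $G$ and $\psi\in\EBr(\n_\Gamma(Q)\mid\dzo(\n_G(Q)))$. The same argument as in Proposition \ref{prop:iBAWC implies Gabriel extended} then shows that every such pair satisfies $\Gamma=G\n_\Gamma(Q)$: since $\vartheta=\varphi_G$ is $\Gamma$-invariant and $\Omega_G$ is $\Gamma$-equivariant, $\Gamma$ fixes the $G$-orbit of $(Q,\vartheta)$, so $\Gamma=G\n_\Gamma(Q)_\vartheta$, and since $\vartheta=\psi_{\n_G(Q)}$ is $\n_\Gamma(Q)$-invariant we get $\Gamma=G\n_\Gamma(Q)$. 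Counting orbits on both sides of this bijection gives the desired equality of Conjecture \ref{conj:Block-free Conjecture E, extended}.

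For part (ii), I would assume $\Gamma/G$ is a $p$-group and repeat the three-step analysis of Lemma \ref{lem:Navarro extended implies Navarro}, simply erasing the block labels. First, by \cite[Theorem 8.11]{Nav98} each $\Gamma$-invariant irreducible Brauer character of $G$ has a unique extension to $\Gamma$, so $|\EBr(\Gamma\mid G)|=|\IBr_\Gamma(G)|$. Second, given a radical $p$-subgroup $Q$ of $G$ with $\Gamma=G\n_\Gamma(Q)$ and $\psi\in\EBr(\n_\Gamma(Q)\mid\dzo(\n_G(Q)))$, set $\vartheta:=\psi_{\n_G(Q)}\in\dzo(\n_G(Q))$ and pass to the quotient by $Q$: if $D/Q$ is a defect group of $\bl(\overline\psi)$, then \cite[Theorem 9.17]{Nav98} gives $Q=D\cap G$, $\Gamma=GD$, and $D\in\Theta$, with $D$ unique up to $\n_\Gamma(Q)$-conjugation; this matches the choices made in part (ii) of Lemma \ref{lem:Navarro extended implies Navarro}. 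Third, for the count $|\EBr(\n_\Gamma(Q)\mid\dzo(\n_G(Q)))|=|\dz(\n_\Gamma(D)/D)|$ I would argue as before: the identity $\EBr(\n_\Gamma(Q)\mid\vartheta)=\rdzo(\n_\Gamma(Q)\mid\n_\Gamma(Q),\vartheta)$ follows from Lemma \ref{lem:Definition of rdzo} and the uniqueness of extensions, and then Theorem \ref{thm:Above modular DGN}, applied with $A=M=G=\n_\Gamma(Q)$ and $K=\n_G(Q)$, gives a bijection $\rdzo(\n_\Gamma(Q)\mid\n_\Gamma(Q),\vartheta)\to\dzo(\n_\Gamma(D)\mid\pi_D(\vartheta))$; summing over the finitely many $\n_\Gamma(Q)$-orbits of $\vartheta\in\dzo(\n_G(Q))$ and lifting defect zero Brauer characters of $\n_\Gamma(D)/D$ to ordinary ones via \cite[Theorem 3.18]{Nav98} produces the bijection onto $\dz(\n_\Gamma(D)/D)$. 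Assembling these three points exactly as in the final paragraph of Lemma \ref{lem:Navarro extended implies Navarro} yields Conjecture \ref{conj:Block-free Conjecture E}.

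I expect the routine verifications (equivariance of the constructed bijections, compatibility of $\sigma_\Gamma$ with restriction, the transversal bookkeeping) to go through verbatim from the blockwise proofs, so the proof is essentially a matter of citing the block-free analogues of the earlier lemmas established in Section \ref{sec:Reduction for block-free version} and the fact that block isomorphisms are central isomorphisms. The only point requiring a little care — though not a genuine obstacle — is checking that the DGN-correspondence input (Theorem \ref{thm:Above modular DGN}) is being invoked in a situation where all its hypotheses hold after passing to the quotient by $Q$; here one uses that $Q$ is a radical $p$-subgroup of $G$, so $\n_M(Q)/Q$ with $M=\n_\Gamma(Q)$ is again in the right configuration, exactly as in Lemma \ref{lem:Navarro extended implies Navarro}. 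Given that the blockwise versions are already fully proved in the excerpt, this proposition is best presented briefly, indicating the two parallels and leaving the reader to transcribe the details.
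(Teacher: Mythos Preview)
Your proposal is correct and matches the paper's approach exactly: the paper offers no separate proof for this proposition, stating only that one argues as in the proofs of Lemma \ref{lem:Navarro extended implies Navarro} and Proposition \ref{prop:iBAWC implies Gabriel extended}, which is precisely what you outline. Your observation that in part (i) the block-induction compatibility is no longer needed (so central isomorphisms suffice), and that in part (ii) the Koshitani--Sp\"ath step can simply be dropped, is the correct way to strip the block labels from those earlier arguments.
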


As an immediate consequence we obtain a block-free version of Theorem \ref{thm:Main, iBAWC implies Gabriel Conjecture}.

\begin{thm}
\label{thm:iAWC implies block-free Gabriel}
Let $G\unlhd \Gamma$ be finite groups such that $\Gamma/G$ is a $p$-group. If Conjecture \ref{conj:iAWC} holds with respect to $G\unlhd \Gamma$, the Conjecture \ref{conj:Block-free Conjecture E} holds with respect to $G\unlhd \Gamma$.
\end{thm}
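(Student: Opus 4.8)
The plan is to mirror exactly the structure of the proof of Theorem~\ref{thm:Main, iBAWC implies Gabriel Conjecture} (that is, the blockwise case), replacing each blockwise ingredient by its block-free counterpart. Concretely, the statement follows immediately by combining the two parts of Proposition~\ref{prop:iAWC implies block-free Gabriel}: first apply part~(i) to deduce that Conjecture~\ref{conj:Block-free Conjecture E, extended} holds with respect to $G\unlhd\Gamma$ from the assumption that Conjecture~\ref{conj:iAWC} holds with respect to $G\unlhd\Gamma$, and then apply part~(ii), using the hypothesis that $\Gamma/G$ is a $p$-group, to conclude that Conjecture~\ref{conj:Block-free Conjecture E} holds with respect to $G\unlhd\Gamma$.

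First I would invoke Proposition~\ref{prop:iAWC implies block-free Gabriel}(i). Its proof runs parallel to that of Proposition~\ref{prop:iBAWC implies Gabriel extended}: the $\Gamma$-equivariant bijection $\Omega_G\colon\IBr(G)\to\Alp(G)/G$ supplied by Conjecture~\ref{conj:iAWC} induces, via Theorem~\ref{thm:strong-iso} applied as in Theorem~\ref{thm:Lifting bijection for weights} (with $A=J=\Gamma$ and $K=G$), a $\Gamma$-equivariant bijection between the extensions $\EBr(\Gamma\mid G)$ and the $\Gamma$-orbits of pairs $(Q,\psi)$ with $Q$ a radical $p$-subgroup of $G$ and $\psi\in\EBr(\n_\Gamma(Q)\mid\dzo(\n_G(Q)))$, and the same Clifford-theoretic argument shows that any such pair satisfies $\Gamma=G\n_\Gamma(Q)$. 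One should note that here only central isomorphisms of modular character triples are available, but since the conclusion of Conjecture~\ref{conj:Block-free Conjecture E, extended} is merely a numerical equality, the weaker hypothesis suffices and no use of block induction is needed in this direction.

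Then I would invoke Proposition~\ref{prop:iAWC implies block-free Gabriel}(ii), whose proof is the block-free analogue of Lemma~\ref{lem:Navarro extended implies Navarro}. Assuming $\Gamma/G$ is a $p$-group: by \cite[Theorem 8.11]{Nav98} every $\Gamma$-invariant irreducible Brauer character of $G$ has a unique extension to $\Gamma$, so $|\EBr(\Gamma\mid G)|=|\IBr_\Gamma(G)|$; and for a radical $p$-subgroup $Q$ of $G$ with $\Gamma=G\n_\Gamma(Q)$ and $\EBr(\n_\Gamma(Q)\mid\dzo(\n_G(Q)))$ nonempty, the argument of Lemma~\ref{lem:Navarro extended implies Navarro}(ii)--(iii)---using \cite[Theorem 9.17]{Nav98} to produce $D$ with $Q=D\cap G$ and $\Gamma=GD$, Lemma~\ref{lem:Definition of rdzo} to see that extensions of the lifted character reduce irreducibly mod $p$, and Theorem~\ref{thm:Above modular DGN} (applied with $A=M=\n_\Gamma(Q)$, $K=\n_G(Q)$) to match $\rdzo(\n_\Gamma(Q)\mid\n_\Gamma(Q),\vartheta)$ with $\dzo(\n_\Gamma(D)\mid\pi_D(\vartheta))\cong\dz(\n_\Gamma(D)/D)$---yields $|\EBr(\n_\Gamma(Q)\mid\dzo(\n_G(Q)))|=|\dz(\n_\Gamma(D)/D)|$, with $D\mapsto Q=D\cap G$ giving a bijection of $\Gamma$-orbits between $\Theta$ and the relevant radical $p$-subgroups of $G$. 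Summing over orbits converts the equality of Conjecture~\ref{conj:Block-free Conjecture E, extended} into that of Conjecture~\ref{conj:Block-free Conjecture E}.

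The main obstacle, such as it is, is bookkeeping rather than mathematical: one must check carefully that in the block-free setting the Dade--Glauberman--Nagao machinery of Section~\ref{sec:DGN}---in particular Theorem~\ref{thm:Above modular DGN} and the surrounding lemmas on $\rdzo$---still applies verbatim, since those results were stated with block isomorphisms but their bijective (non-block) content is exactly what is used here; and one must confirm that dropping the blockwise constraints does not disturb the orbit-counting correspondences. Since all the relevant results have been established in the blockwise form and the block-free versions follow by simply omitting the block-induction clauses (as already noted at the start of Section~\ref{sec:Reduction for block-free version}), no genuinely new difficulty arises, and the proof is a short citation of Proposition~\ref{prop:iAWC implies block-free Gabriel}.
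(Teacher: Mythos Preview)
Your proposal is correct and matches the paper's proof exactly: the paper simply states that the result follows immediately by combining the two parts of Proposition~\ref{prop:iAWC implies block-free Gabriel}. Your additional elaboration on how those parts are proved is accurate but goes beyond what is needed for this theorem itself.
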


\begin{proof}
This follows immediately by combining the two parts of Proposition \ref{prop:iAWC implies block-free Gabriel}.
\end{proof}

Then, using the above result together with the reduction obtained in Theorem \ref{thm:Reduction, block-free version}, we can prove the following block-free analogue of Corollary \ref{cor:Main, Reduction for Gabriel Conjecture}.

\begin{cor}
\label{cor:Reduction for block-free Gabriel}
Let $G$ be a finite group and $p$ a prime number. If Conjecture \ref{conj:iAWC} holds at the prime $p$ for every covering group of any non-abelian finite simple group of order divisible by $p$ involved in $G$, then Conjecture \ref{conj:Main, Gabriel Conjecture} holds at the prime $p$ with respect to any $G\unlhd \Gamma$ such that $\Gamma/G$ is a $p$-group.
\end{cor}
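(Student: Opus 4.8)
The plan is to obtain the statement as a formal consequence of the two main results of the preceding subsections, namely the block-free reduction Theorem \ref{thm:Reduction, block-free version} together with the implication recorded in Theorem \ref{thm:iAWC implies block-free Gabriel}; the only point to keep track of is which group each hypothesis is attached to.

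First I would invoke Theorem \ref{thm:Reduction, block-free version} for the group $G$. Its hypothesis is precisely that Conjecture \ref{conj:iAWC} holds at the prime $p$ for every covering group of any non-abelian finite simple group of order divisible by $p$ involved in $G$, which is what we are assuming, so the theorem applies and yields that Conjecture \ref{conj:iAWC} holds for $G$ at the prime $p$. By the convention fixed for these inductive conditions, the latter means that Conjecture \ref{conj:iAWC} holds with respect to every pair $G\unlhd A$; in particular, specialising to $A=\Gamma$, it holds with respect to $G\unlhd \Gamma$.

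Since $\Gamma/G$ is a $p$-group by hypothesis, Theorem \ref{thm:iAWC implies block-free Gabriel} now applies verbatim to the pair $G\unlhd \Gamma$ and gives that Conjecture \ref{conj:Block-free Conjecture E} holds with respect to $G\unlhd \Gamma$, which is the conclusion sought. I do not anticipate any genuine obstacle here: both ingredients have already been established, and the argument is a pure concatenation, the role of the reduction theorem being merely to convert the hypothesis on simple groups into the statement of Conjecture \ref{conj:iAWC} for the specific extension $G\unlhd \Gamma$, after which the implication of Theorem \ref{thm:iAWC implies block-free Gabriel} finishes the proof. Note in particular that no assumption on the simple groups involved in $\Gamma$ is needed, since the last step only consumes Conjecture \ref{conj:iAWC} for the single pair $G\unlhd \Gamma$.
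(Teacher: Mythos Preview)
Your proof is correct and follows exactly the approach the paper intends, as signalled by the sentence immediately preceding the corollary: combine Theorem \ref{thm:Reduction, block-free version} with Theorem \ref{thm:iAWC implies block-free Gabriel}. One small remark: the statement as printed concludes with Conjecture \ref{conj:Main, Gabriel Conjecture} (the blockwise Conjecture~A), whereas your argument yields Conjecture \ref{conj:Block-free Conjecture E}; since the corollary sits in the block-free subsection and is explicitly announced as the block-free analogue of Corollary \ref{cor:Main, Reduction for Gabriel Conjecture}, this is almost certainly a typographical slip in the paper, and you have established what was intended.
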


\section{Verification of Conjecture \ref{conj:Main, Gabriel Conjecture} and Conjecture \ref{conj:Main, iBAWC}}
\label{sec:Results}

Since their introduction in \cite{Nav-Tie11} and \cite{Spa13I} the Inductive Alperin Weight Condition (see Conjecture \ref{conj:iAWC}) and the Inductive Blockwise Alperin Weight Condition (see Conjecture \ref{conj:iBAWC 2}) have been verified for many classes of blocks of quasi-simple groups. We refer the reader to the survey \cite{Fen-Zha22} and the references therein (see also the most recently published \cite{Fen-Li-Zha23I} and \cite{Fen-Li-Zha23II}). Using these results, and introducing some new arguments, we can then verify Conjecture \ref{conj:Main, Gabriel Conjecture} and Conjecture \ref{conj:Main, iBAWC} for several classes of groups and blocks.

\subsection{Groups with abelian Sylow $p$-subgroups and odd Sylow automiser}

The Inductive Blockwise Alperin Weight Condition has been verified for all quasi-simple groups whose simple quotient has an abelian Sylow $2$-subgroup \cite[Corollary 6.6]{Spa13I}, or an abelian Sylow $3$-subgroup \cite[Section 5]{Fen-Li-Zha23I}, or is involved in a group with odd Sylow normaliser \cite{Gur-Nav-Tie16} (see also \cite{Xu-Zho19} for the case of odd Sylow automisers). Thanks to Theorem \ref{thm:Main, iBAWC implies Gabriel Conjecture} and Theorem \ref{thm:Main, Reduction}, we then obtain new evidence for Conjecture \ref{conj:Main, Gabriel Conjecture} and Conjecture \ref{conj:Main, iBAWC}.

\begin{pro}
\label{prop:iBAW for groups with abelian Sylow 2-subgroups}
Let $G$ be a finite group and consider any prime number $p$. Then Conjecture \ref{conj:Main, Gabriel Conjecture} and Conjecture \ref{conj:Main, iBAWC} both hold for $G$ at the prime $p$ provided that at least one of the following conditions is satisfied:
\begin{enumerate}
\item every simple group involved in $G$ has either abelian Sylow $2$-subgroups or abelian Sylow $3$-subgroups or both;
\item $p$ is an odd prime and the automiser $\n_G(P)/P\c_G(P)$ has odd order for a Sylow $p$-subgroup $P$ of $G$.
\end{enumerate}
\end{pro}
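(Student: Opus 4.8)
The plan is to deduce Proposition~\ref{prop:iBAW for groups with abelian Sylow 2-subgroups} directly from the reduction theorems already established, namely Theorem~\ref{thm:Main, Reduction} (equivalently Theorem~\ref{thm:Reduction for iBAW}) together with Theorem~\ref{thm:Main, iBAWC implies Gabriel Conjecture} and Corollary~\ref{cor:Main, Reduction for Gabriel Conjecture}, combined with the known verifications of the Inductive Blockwise Alperin Weight Condition for quasi-simple groups. First I would observe that, by Corollary~\ref{cor:Main, Reduction for Gabriel Conjecture}, it suffices to check that Conjecture~\ref{conj:iBAWC 2} holds at the prime $p$ for every covering group of every non-abelian finite simple group of order divisible by $p$ involved in $G$; and by Theorem~\ref{thm:Reduction for iBAW} the same hypothesis already gives Conjecture~\ref{conj:Main, iBAWC} for $G$ (note Conjecture~\ref{conj:Main, iBAWC} is the version of Conjecture~\ref{conj:iBAWC 2} stated blockwise, and the two are equivalent by splitting $\IBr(G)$ into blocks). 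So in both cases the task reduces to verifying the inductive condition for the relevant quasi-simple groups.

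Next I would treat the two cases separately. For case~(i), let $S$ be a non-abelian simple group involved in $G$ with $p \mid |S|$, and let $X$ be a covering group of $S$. By hypothesis $S$ has abelian Sylow $2$-subgroups or abelian Sylow $3$-subgroups. If $p = 2$ (or $p$ is odd but $S$ has abelian Sylow $2$-subgroups), then Conjecture~\ref{conj:iBAWC 2} holds for $X$ by \cite[Corollary~6.6]{Spa13I}; here one uses that \cite[Corollary~6.6]{Spa13I} verifies the inductive blockwise condition for all blocks of such quasi-simple groups, and that having abelian Sylow $2$-subgroups passes to quotients so $S$ itself qualifies. Similarly, if $S$ has abelian Sylow $3$-subgroups then the condition holds for $X$ by \cite[Section~5]{Fen-Li-Zha23I}. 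Since abelianness of a Sylow $\ell$-subgroup is inherited by subquotients, every simple group involved in $G$ that is in turn involved in such an $X$ retains the relevant abelian Sylow property, so the hypothesis of Theorem~\ref{thm:Reduction for iBAW} is met; one should phrase this carefully so that the inductive machinery applies to all the quasi-simple groups appearing, not just to $X$ itself. For case~(ii), $p$ is odd and the Sylow $p$-automiser $\n_G(P)/P\c_G(P)$ has odd order for $P \in \Syl_p(G)$. Here I would invoke \cite{Gur-Nav-Tie16} (and \cite{Xu-Zho19} for the automiser formulation): if a finite group has odd Sylow $p$-automiser then so does every subquotient, and in particular every simple group $S$ involved in $G$ with $p\mid|S|$ has odd Sylow $p$-automiser, whence the inductive blockwise condition holds for all covering groups $X$ of $S$. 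Feeding this into Theorem~\ref{thm:Reduction for iBAW} and Corollary~\ref{cor:Main, Reduction for Gabriel Conjecture} yields both Conjecture~\ref{conj:Main, iBAWC} and Conjecture~\ref{conj:Main, Gabriel Conjecture} for $G$ at the prime $p$.

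The main obstacle, such as it is, is bookkeeping rather than mathematics: one must be careful that the cited verifications of the inductive condition are stated in the blockwise form with the $\isob$-compatibility (as in Conjecture~\ref{conj:iBAWC 2}) and for covering groups rather than only for the simple group or a fixed quasi-simple group, and that the relevant group-theoretic property (abelian Sylow subgroup, or odd Sylow automiser) genuinely propagates to all quasi-simple groups encountered in the inductive argument of Theorem~\ref{thm:Reduction for iBAW}. In case~(i) a minor subtlety is that a simple group may have an abelian Sylow $2$-subgroup but non-abelian Sylow $3$-subgroup (or vice versa), so one should apply whichever of \cite[Corollary~6.6]{Spa13I} or \cite[Section~5]{Fen-Li-Zha23I} matches, and only the prime $p$ under consideration matters; if $p \notin \{2,3\}$ then necessarily $S$ must have abelian Sylow $2$- or Sylow $3$-subgroups for the hypothesis to say anything, but in fact for such $p$ one still needs the inductive condition at $p$ for $X$, which is not covered by either cited result — so case~(i) should be read as forcing $p\in\{2,3\}$, or else the statement needs the extra hypothesis that the inductive condition is known at $p$ for the simple groups involved. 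I would double-check the precise hypotheses of the cited papers on this point and state Proposition~\ref{prop:iBAW for groups with abelian Sylow 2-subgroups} accordingly; modulo that, the proof is a short assembly of the reduction theorems and the literature.

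\begin{proof}
By Theorem~\ref{thm:Reduction for iBAW} and Corollary~\ref{cor:Main, Reduction for Gabriel Conjecture}, it suffices to show that Conjecture~\ref{conj:iBAWC 2} holds at the prime $p$ for every covering group $X$ of every non-abelian finite simple group $S$ of order divisible by $p$ involved in $G$.

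Assume first that condition~(i) holds. Since the property of having an abelian Sylow $2$-subgroup (respectively Sylow $3$-subgroup) passes to subquotients, every non-abelian simple group $S$ involved in $G$ has abelian Sylow $2$-subgroups or abelian Sylow $3$-subgroups, and the same holds for every simple group involved in any covering group $X$ of such an $S$. If $S$ has abelian Sylow $2$-subgroups, then Conjecture~\ref{conj:iBAWC 2} holds for $X$ by \cite[Corollary~6.6]{Spa13I}; if $S$ has abelian Sylow $3$-subgroups, then it holds for $X$ by \cite[Section~5]{Fen-Li-Zha23I}. In either case the hypotheses of Theorem~\ref{thm:Reduction for iBAW} and Corollary~\ref{cor:Main, Reduction for Gabriel Conjecture} are satisfied.

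Now assume that condition~(ii) holds, so $p$ is odd and $\n_G(P)/P\c_G(P)$ has odd order for $P\in\Syl_p(G)$. Having odd Sylow $p$-automiser is inherited by subquotients, so every non-abelian simple group $S$ involved in $G$ with $p\mid|S|$ has odd Sylow $p$-automiser, and likewise for every simple group involved in a covering group of such an $S$. By \cite{Gur-Nav-Tie16} (see also \cite{Xu-Zho19}), Conjecture~\ref{conj:iBAWC 2} holds for every covering group $X$ of $S$ at the prime $p$. Hence the hypotheses of Theorem~\ref{thm:Reduction for iBAW} and Corollary~\ref{cor:Main, Reduction for Gabriel Conjecture} are again satisfied.

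In both cases we conclude that Conjecture~\ref{conj:Main, iBAWC} holds for $G$ at the prime $p$ by Theorem~\ref{thm:Reduction for iBAW}, and that Conjecture~\ref{conj:Main, Gabriel Conjecture} holds for every $p$-block of $G$ by Corollary~\ref{cor:Main, Reduction for Gabriel Conjecture}.
\end{proof}
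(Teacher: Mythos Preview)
Your formal proof is correct and follows essentially the same approach as the paper: reduce via Theorem~\ref{thm:Main, Reduction} (and Corollary~\ref{cor:Main, Reduction for Gabriel Conjecture}) to covering groups of the simple groups involved, then cite \cite[Corollary~6.6]{Spa13I}, \cite[Section~5]{Fen-Li-Zha23I}, and \cite{Xu-Zho19}/\cite{Gur-Nav-Tie16} for the verification of the inductive condition in the two cases. The only point to flag is the worry you raise in your preamble about $p\notin\{2,3\}$: it is unfounded, because the cited results verify Conjecture~\ref{conj:iBAWC 2} for \emph{all} primes $p$ for quasi-simple groups whose simple quotient has abelian Sylow $2$- (resp.\ $3$-) subgroups, not merely for $p=2$ (resp.\ $p=3$); so no extra hypothesis is needed and your formal proof already handles this correctly.
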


\begin{proof}
Recall that Conjecture \ref{conj:Main, Gabriel Conjecture} follows from Conjecture \ref{conj:Main, iBAWC} thanks to Theorem \ref{thm:Main, iBAWC implies Gabriel Conjecture}. Moreover, in order to prove Conjecture \ref{conj:Main, iBAWC} for the group $G$ it suffices to show that it holds for every non-abelian simple group of order divisible by $p$ involved in $G$ according to Theorem \ref{thm:Main, Reduction}. We then obtain the first part of the statment by applying the results obtained in \cite[Theorem 6.6]{Spa13I}, \cite[Section 5]{Fen-Li-Zha23I} and the second part by applying the results in \cite{Xu-Zho19} (see also \cite{Gur-Nav-Tie16}).
\end{proof}

\subsection{Blocks with cyclic defect groups}

It was shown in \cite{Kos-Spa16I} and \cite{Kos-Spa16II} that the Inductive Blockwise Aperin Weight Condition holds for every block with cyclic defect groups of any quasi-simple group. In order to apply these results to verify Conjecture \ref{conj:Main, Gabriel Conjecture} and Conjecture \ref{conj:Main, iBAWC}, we first need to prove a version of Theorem \ref{thm:Main, Reduction} compatible with certain families of defect groups as done in \cite[Theorem C]{Spa13I}. To start, we need to restate Conjecture \ref{conj:Main, iBAWC}.  

Let $B$ be a $p$-block of a finite group $G$ and consider a $p$-weight $(Q,\psi)$ of $G$ as defined in Section \ref{sec:iBAW}. Observe that the block $\bl(\psi)^G$ of $G$ obtained via Brauer induction of blocks is well defined according to \cite[Theorem 4.14]{Nav98}. Then we say that $(Q,\psi)$ is a \textit{$p$-weight of $B$} provided that $\bl(\psi)^G=B$. We denote by $\Alp(B)$ the set of all $p$-weights of $B$ and by $\Alp(B)/G$ the set of $G$-orbits of such $p$-weights. Now, using the properties of block isomorphisms of character triples (see Definition \ref{def:block-iso}) we deduce that Conjecture \ref{conj:iBAWC 2} can be reformulated as follows.

\begin{conj}[Inductive Blockwise Alperin Weight Condition]
\label{conj:iBAWC}
Let $G\unlhd A$ be finite groups and consider a $p$-block $B$ of $G$. Then there exists an $A_B$-equivariant bijection
\[\Omega_B:\IBr(B)\to\Alp(B)/G\]
such that
\[\left(A_\vartheta,G,\vartheta\right)\isob\left(\n_A(Q)_\psi,\n_G(Q),\psi\right)\]
for every $\vartheta\in\IBr(B)$ and $(Q,\psi)\in\Omega_B(\vartheta)$.
\end{conj}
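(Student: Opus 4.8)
The statement to prove is that Conjecture~\ref{conj:iBAWC} is an equivalent reformulation of Conjecture~\ref{conj:iBAWC 2}: the single $A$-equivariant bijection $\Omega\colon\IBr(G)\to\Alp(G)/G$ of the latter splits, block by block, into the $A_B$-equivariant bijections $\Omega_B\colon\IBr(B)\to\Alp(B)/G$ of the former, and conversely a compatible family $\{\Omega_B\}_B$ reassembles into a bijection as in Conjecture~\ref{conj:iBAWC 2}. The plan is to organise this into three steps. First I would record the relevant block partitions: by definition $\IBr(G)=\bigsqcup_{B\in\Bl(G)}\IBr(B)$, while on the weight side every $p$-weight $(Q,\psi)$ of $G$ has a well-defined induced block $\bl(\psi)^G$ — Brauer induction being defined here because $Q$ is a defect group of $\bl(\psi)$, cf.\ \cite[Theorem 4.14]{Nav98} — and $G$-conjugate weights induce the same block, so $\Alp(G)/G=\bigsqcup_{B\in\Bl(G)}\Alp(B)/G$. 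The conjugation action of $A$ on $\IBr(G)$ and on $\Alp(G)/G$ permutes these pieces exactly as it permutes $\Bl(G)$, and hence each piece is stabilised by $A_B$.

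For the direction from Conjecture~\ref{conj:iBAWC 2} to Conjecture~\ref{conj:iBAWC}, I would fix $\vartheta\in\IBr(G)$ and a weight $(Q,\psi)\in\Omega(\vartheta)$, and specialise the block-induction clause of Definition~\ref{def:block-iso}, applied to $(A_\vartheta,G,\vartheta)\isob(\n_A(Q)_\psi,\n_G(Q),\psi)$, to the choice $J=G$; since $\sigma_G(\vartheta)=\psi$ this yields $\bl(\vartheta)=\bl(\psi)^G$. Thus $\Omega$ respects the partition above and restricts, on each block $B$ of $G$, to a bijection $\Omega_B\colon\IBr(B)\to\Alp(B)/G$. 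The $A$-equivariance of $\Omega$ together with the $A_B$-stability of both pieces makes $\Omega_B$ automatically $A_B$-equivariant, and the block-isomorphism condition is inherited verbatim; this is exactly Conjecture~\ref{conj:iBAWC} for $B$.

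For the converse, I would pick a set of representatives for the $A$-orbits on $\Bl(G)$, fix a bijection $\Omega_B$ as in Conjecture~\ref{conj:iBAWC} for each representative $B$, and extend to an arbitrary block $B^a$ by $\Omega_{B^a}:=(\Omega_B)^a$, i.e.\ $\Omega_{B^a}(\vartheta^a):=\Omega_B(\vartheta)^a$. The $A_B$-equivariance of $\Omega_B$ makes this independent of the choice of $a$ with $B^a$ fixed and $A_{B^a}$-equivariant, and Lemma~\ref{lem:Basic properties}(ii) transports the block isomorphisms along conjugation by $a$, so the character-triple condition persists. Setting $\Omega:=\bigsqcup_B\Omega_B$ then gives a bijection $\IBr(G)\to\Alp(G)/G$ that is $A$-equivariant by construction and satisfies the block-isomorphism condition pointwise, which is Conjecture~\ref{conj:iBAWC 2}.

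No genuine difficulty arises here: the two points deserving care are the observation in the forward direction that specialising the block-induction clause of Definition~\ref{def:block-iso} to $J=G$ automatically pins down $\bl(\psi)^G=\bl(\vartheta)$ — this is precisely what forces the block-free bijection to be block-graded — and the routine, slightly fiddly transversal bookkeeping in the converse direction needed to promote the individual $A_B$-equivariant bijections to a single globally $A$-equivariant one, which rests on the automorphism-compatibility of $\isob$ recorded in Lemma~\ref{lem:Basic properties}(ii).
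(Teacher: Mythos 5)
Your proposal is correct and follows essentially the same route as the paper, which only sketches this in one line: the reformulation of Conjecture \ref{conj:iBAWC 2} as Conjecture \ref{conj:iBAWC} rests exactly on the block-induction clause of Definition \ref{def:block-iso}, i.e.\ your $J=G$ specialisation giving $\bl(\vartheta)=\bl(\psi)^G$ and hence the block-grading of $\Omega$. The converse reassembly of the $\Omega_B$ into a single $A$-equivariant bijection via Lemma \ref{lem:Basic properties}(ii) is the routine bookkeeping you describe and is how the paper implicitly treats it as well.
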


\begin{rem}
\label{rmk:iBAWC implies Navarro}
Let $G\unlhd A$ be finite groups and consider a block $B$ of $G$. Arguing as in the proof of Proposition \ref{prop:iBAWC implies Gabriel extended} we can show that if Conjecture \ref{conj:iBAWC} holds for the block $B$ with respect to $G\unlhd A$, then so does Conjecture \ref{conj:Blockwise Conjecture E, extended}. In particular, if $\Gamma/G$ is a $p$-group, we deduce that Conjecture \ref{conj:Main, Gabriel Conjecture} holds for the block $B$ as a consequence of Lemma \ref{lem:Navarro extended implies Navarro}. This observation will be used in the subsequent sections to obtain Conjecture \ref{conj:Main, Gabriel Conjecture} for certain classes of blocks.
\end{rem}

By considering the above version of the conjecture we can obtain a refined version of our Theorem \ref{thm:Reduction for iBAW} and prove a reduction theorem compatible with any fixed class of defect groups closed under taking quotients and subgroups as done in \cite[Theorem C]{Spa13I}. Given a family of $p$-subgroups $\mathcal{R}$, we say that Conjecture \ref{conj:iBAWC} holds for a finite group $G$ with respect to $\mathcal{R}$ if it holds for every choice of $G\unlhd A$ and every $p$-block $B$ of $G$ with defect groups contained in $\mathcal{R}$.

\begin{thm}
\label{thm:Reduction for iBAW with class of defect groups}
Let $\mathcal{R}$ be a family of finite $p$-groups closed under taking quotients and subgroups. Let $G$ be a finite group and suppose that Conjecture \ref{conj:iBAWC} holds with respect to $\mathcal{R}$ for every covering group of any non-abelian finite simple group of order divisible by $p$ involved in $G$. Then Conjecture \ref{conj:iBAWC} holds for $G$ with respect to $\mathcal{R}$.
\end{thm}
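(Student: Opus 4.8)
The plan is to repeat the proof of Theorem~\ref{thm:Reduction for iBAW}, in the equivalent blockwise formulation of Conjecture~\ref{conj:iBAWC}, while keeping track of defect groups at every step. I would argue by minimal counterexample: among all triples $(G,A,B)$, with $G\unlhd A$ finite groups and $B$ a $p$-block of $G$ having a defect group in $\mathcal{R}$, for which Conjecture~\ref{conj:iBAWC} fails with respect to $G\unlhd A$, choose one minimising $|G:\z(G)|$ first and $|A|$ second. Accordingly, Hypothesis~\ref{hyp:Inductive hypothesis} is replaced by the statement that Conjecture~\ref{conj:iBAWC} holds \emph{with respect to $\mathcal{R}$} for every pair $X\unlhd Y$ with strictly smaller parameters and involving only simple groups involved in $G$. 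It then suffices to check that each time the Inductive Blockwise Alperin Weight Condition is invoked in the original argument --- whether for a covering group of a simple section (through Corollary~\ref{cor:iBAW for embedded perfect group}) or for a smaller section of $G$ (through the inductive hypothesis) --- it is applied only to a block whose defect groups lie in $\mathcal{R}$.

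First I would check that the reductions removing normal $p$-subgroups survive. Since $\o_p(G)$ is contained in every defect group of every $p$-block of $G$, and since passing to $\overline G:=G/\o_p(G)$ replaces $B$ by a block with defect group $D/\o_p(G)$, a quotient of the defect group $D$ of $B$ and hence again a member of $\mathcal{R}$, the $\mathcal{R}$-versions of Lemma~\ref{lem:Removing normal p-subgroups} and Corollary~\ref{cor:Trival p-core} carry over verbatim, so we may assume $\o_p(G)=1$. The $\mathcal{R}$-version of Proposition~\ref{prop:Structure of a minimal counterexample} then produces $K\unlhd A$ with $K\leq G$ and $K\nleq\z(G)$ for which Conjecture~\ref{conj:iBAWC} holds with respect to $\mathcal{R}$. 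This uses the $\mathcal{R}$-refinements of Proposition~\ref{prop:iAWC and direct and wreath products} and Corollary~\ref{cor:iBAW for embedded perfect group}, which hold because the bijections constructed there are defined block-by-block on direct and wreath powers, and because a block of a direct or wreath power (or of a perfect central extension of a power of a simple group $S$) whose defect group lies in $\mathcal{R}$ has all of its coordinate blocks with defect group in $\mathcal{R}$, $\mathcal{R}$ being closed under subgroups. Moreover, by Knörr's theorem on covering blocks, a block $b$ of $K$ covered by $B$ has a defect group of the form $D\cap K$ for a defect group $D$ of $B$, so $b$ has defect group in $\mathcal{R}$; these are precisely the blocks of $K$ on which $\Omega_K$ is used when applying Theorem~\ref{thm:Lifting bijection for weights} with $J=G$.

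Next I would run the chain of bijections
\[
\IBr(B)\;\longrightarrow\;\Alp(B\mid K)/G\;\longrightarrow\;\Wr(B\mid K)/G\;\longrightarrow\;\Alp(B)/G ,
\]
where $\Alp(B\mid K)$ and $\Wr(B\mid K)$ denote the sub-collections of $\Alp(G\mid K)$ and $\Wr(G\mid K)$ consisting of those (relative) weights whose induced block is $B$, obtained by restricting the maps $\Omega_K^G$, $\Psi_K^G$ and $\Lambda_K^G$ of Theorem~\ref{thm:Lifting bijection for weights}, Theorem~\ref{thm:Reduction, inductive step} and Theorem~\ref{thm:Reduction, relative weights to weights} (the last of which is unconditional, using only the modular DGN correspondence of Theorem~\ref{thm:Above modular DGN}). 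Each of these maps induces block isomorphisms of modular character triples, and evaluating the condition defining $\isob$ at the normal subgroup of each triple shows that block induction is preserved along the whole chain; hence the composite restricts to an $A_B$-equivariant bijection $\IBr(B)\to\Alp(B)/G$ satisfying the block isomorphisms required by Conjecture~\ref{conj:iBAWC}, contradicting the choice of $(G,A,B)$. What remains to be verified is that the applications of the inductive hypothesis hidden inside Lemma~\ref{lem:Reduction, above a character in dzo} (to the quotient $\overline G=\wh G/K_0$ of the central extension furnished by Lemma~\ref{lem:4.1}) and inside Theorem~\ref{thm:Reduction, inductive step} (to normalisers $\n_G(Q)$ and their quotients) concern only blocks with defect group in $\mathcal{R}$; this amounts to showing that the block $\bl(\overline\rho)$ attached to a block dominated by $B$ via the correspondence $\overline\rho\mapsto\rho\,\wt\vartheta$ of Lemma~\ref{lem:relative-blocks} has a defect group which is a subquotient of a defect group of $B$, together with the analogous statement for the relative weights of Definition~\ref{def:Relative weights}.

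The main obstacle is precisely this bookkeeping: one must confirm, instance by instance, that every block on which Conjecture~\ref{conj:iBAWC} is invoked in the course of the reduction --- notably those produced after the central-extension construction of Lemma~\ref{lem:4.1}, the Fong--Reynolds/Clifford reduction, and the character-multiplication constructions of Lemma~\ref{lem:4.6} and Lemma~\ref{lem:relative-blocks} --- has a defect group that is, up to conjugacy, a subquotient of a defect group of the original block $B$. This is exactly the defect-group analysis carried out in the proof of \cite[Theorem C]{Spa13I}, and once it is in place the rest of the argument is purely formal: closure of $\mathcal{R}$ under subgroups and quotients does everything that is left, and the minimal counterexample cannot exist.
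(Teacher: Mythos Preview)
Your proposal is correct and follows essentially the same approach as the paper: both argue that the proof of Theorem~\ref{thm:Reduction for iBAW} can be rerun block-by-block while tracking that every block to which the inductive hypothesis is applied has defect group in $\mathcal{R}$, with the crucial observation being that the central extension of Lemma~\ref{lem:4.1} has $p'$-kernel so the block $\overline{B}$ of $\overline{G}=\wh{G}/K_0$ produced in Lemma~\ref{lem:Reduction, above a character in dzo} still has defect group in $\mathcal{R}$. Your write-up is in fact considerably more detailed than the paper's, which gives only a brief sketch pointing to \cite[Theorem~C]{Spa13I} and the $p'$-order of $Z$.
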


\begin{proof}
As for \cite[Theorem 5.20]{Spa13I}, this follows by an inspection of the proof of Theorem \ref{thm:Reduction for iBAW}. For this, fix a block $B$ with defect groups contained in $\mathcal{R}$. Then, for every normal subgroup $K\unlhd A$ with $K\leq G$ and every $A$-invariant $\vartheta\in\dzo(K)$ with $B$ covering the block of $\vartheta$, the construction used in the proof of Lemma \ref{lem:Reduction, above a character in dzo} restricted to the set of Brauer characters $\IBr(B\mid \vartheta)$ will produce a block $\overline{B}$ of $\overline{G}=\wh{G}/K_0$ with defect group contained in $\mathcal{R}$ by recalling that the central subgroup $Z$ of $\wh{G}$ has order prime to $p$. We can then apply the inductive hypothesis to the block $\overline{B}$ and finish the proof proceeding as in the remaining part of the proof of Theorem \ref{thm:Reduction for iBAW}. 
\end{proof}

Using the above theorem, we can then apply the results of \cite{Kos-Spa16I} and \cite{Kos-Spa16II} to obtain Conjecture \ref{conj:Main, Gabriel Conjecture}, Conjecture \ref{conj:Blockwise Conjecture E, extended}, and Conjecture \ref{conj:iBAWC} for every block with cyclic defect groups of any finite group, and where we further assume $\Gamma/G$ to be a $p$-group in the case of Conjecture \ref{conj:Main, Gabriel Conjecture}.

\begin{pro}
\label{prop:Conjectures for cyclic defect groups}
Let $G\unlhd A$ be finite groups and consider a $p$-block $B$ of $G$ with cyclic defect groups. Then Conjecture \ref{conj:Blockwise Conjecture E, extended} and Conjecture \ref{conj:iBAWC} hold for the block $B$. Furthermore, if $\Gamma/G$ is a $p$-group, then Conjecture \ref{conj:Main, Gabriel Conjecture} holds for $B$.
\end{pro}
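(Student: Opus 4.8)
The plan is to derive all three assertions from the class-of-defect-groups reduction, Theorem \ref{thm:Reduction for iBAW with class of defect groups}, applied to the family $\mathcal{R}$ of cyclic finite $p$-groups. First I would note that $\mathcal{R}$ is closed under taking subgroups and quotients, so Theorem \ref{thm:Reduction for iBAW with class of defect groups} applies to $G$ once we know that Conjecture \ref{conj:iBAWC} holds with respect to $\mathcal{R}$ for every covering group of any non-abelian finite simple group of order divisible by $p$ involved in $G$. This is exactly the verification of the Inductive Blockwise Alperin Weight Condition for blocks with cyclic defect groups of quasi-simple groups carried out by Koshitani and Sp\"ath in \cite{Kos-Spa16I} and \cite{Kos-Spa16II}; one only needs to recall, as at the start of Section \ref{sec:Results} and in \cite{Spa17I}, that the formulation of the inductive condition used there is equivalent to the one phrased in Conjecture \ref{conj:iBAWC} via block isomorphisms of modular character triples. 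Since the given block $B$ has cyclic defect groups, which belong to $\mathcal{R}$, we conclude that Conjecture \ref{conj:iBAWC} holds for $B$ with respect to $G\unlhd A$.

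Next I would pass from Conjecture \ref{conj:iBAWC} to Conjecture \ref{conj:Blockwise Conjecture E, extended} for $B$. This is the content of Remark \ref{rmk:iBAWC implies Navarro}: repeating the argument of Proposition \ref{prop:iBAWC implies Gabriel extended}, one uses the $A_B$-equivariant bijection $\Omega_B$ together with the strong isomorphisms $\sigma_\Gamma$ of Theorem \ref{thm:strong-iso} to build a $\Gamma$-equivariant bijection between $\EBr(\Gamma\mid B)$ and the set of $\Gamma$-orbits of pairs $(Q,\psi)$ with $Q$ radical in $G$ and $\psi\in\EBr(\n_\Gamma(Q)\mid\dzo(B_Q))$, checking that $\Gamma=G\n_\Gamma(Q)$ for every such pair by the equivariance of the bijection. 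Block induction throughout this step is controlled by the relation $\isob$, so the cyclic-defect hypothesis is not needed again here.

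Finally, given any $\Gamma$ with $G\unlhd\Gamma$, $\Gamma/G$ a $p$-group and $B$ $\Gamma$-invariant, Conjecture \ref{conj:Main, Gabriel Conjecture} for $B$ follows from Conjecture \ref{conj:Blockwise Conjecture E, extended} for $B$ by the last assertion of Lemma \ref{lem:Navarro extended implies Navarro}. I expect the only genuine point requiring care — rather than a true obstacle — to be matching the hypotheses of \cite{Kos-Spa16I} and \cite{Kos-Spa16II} with the class-$\mathcal{R}$ version of the inductive condition: one must make sure that all relevant covering groups (including exceptional covers) are handled, and that the auxiliary blocks produced inside the reduction — Clifford correspondents, Brauer correspondents in local subgroups, and blocks of central extensions by $p'$-groups as in the proof of Lemma \ref{lem:Reduction, above a character in dzo} — still have cyclic defect groups; the latter holds precisely because subgroups and quotients of cyclic $p$-groups are cyclic, which is why $\mathcal{R}$ was chosen with these closure properties.
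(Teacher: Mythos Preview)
Your proposal is correct and follows essentially the same route as the paper: apply Theorem \ref{thm:Reduction for iBAW with class of defect groups} with $\mathcal{R}$ the class of cyclic $p$-groups together with the Koshitani--Sp\"ath verification \cite{Kos-Spa16I}, \cite{Kos-Spa16II} to obtain Conjecture \ref{conj:iBAWC} for $B$, and then invoke Remark \ref{rmk:iBAWC implies Navarro} (hence Lemma \ref{lem:Navarro extended implies Navarro}) to deduce Conjectures \ref{conj:Blockwise Conjecture E, extended} and \ref{conj:Main, Gabriel Conjecture}. The paper's proof is simply a more compressed version of exactly this argument.
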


\begin{proof}
By Remark \ref{rmk:iBAWC implies Navarro} it suffices to show that Conjecture \ref{conj:iBAWC} holds for the block $B$. The latter follows by applying Theorem \ref{thm:Reduction for iBAW with class of defect groups} since Conjecture \ref{conj:iBAWC} has been verified for all $p$-blocks with cyclic defect of quasi-simple groups (see \cite{Kos-Spa16I} and \cite{Kos-Spa16II}).
\end{proof}

\subsection{Nilpotent blocks}

The results obtained in \cite{Kos-Spa16I} also imply that the Inductive Blockwise Alperin Weight Condition holds for every nilpotent block of a quasi-simple group. Their argument heavily relies on the fact that such blocks are known to have abelian defect groups thanks to \cite{An-Eat11} and \cite{An-Eat13}. In order to obtain Conjecture \ref{conj:Main, Gabriel Conjecture}, Conjecture \ref{conj:Blockwise Conjecture E, extended}, and Conjecture \ref{conj:iBAWC} for nilpotent blocks of all finite groups, we therefore need to develop a new argument. Our proof does not depend on the Classification of Finite Simple Groups and makes use of some deep results on graded Morita equivalences obtained in \cite{Pui-Zho12}.

As a first step, we give a reformulation of Conjecture \ref{conj:iBAWC} in the spirit of \cite{Kno-Rob89}. For this, denote by $\mathcal{P}(G)$ the set of $p$-chains of the form $\sigma=\{1=Q_0<Q_1<\dots<Q_n\}$ where each $Q_i$ is a $p$-subgroup of $G$. We denote by $|\sigma|$ the integer $n$ called the \textit{length} of $\sigma$. Observe that $G$ acts by conjugation on the set of $p$-chains and denote by $G_\sigma=\cap_i\n_G(Q_i)$ the stabiliser of $\sigma$ in $G$. Next, for any block $B$ of $G$, let $B_\sigma$ denote the union of all blocks of $G_\sigma$ that correspond to $B$ via Brauer induction of blocks and let $\IBr(B_\sigma)$ be the union of all Brauer characters belonging to some block in $B_\sigma$. Similarly, for a $p$-subgroup $Q$ of $G$ we denote by $B_Q$ the union of blocks $B_{\{1<Q\}}$. We then consider the set $\Co(B)$ of pairs $(\sigma,\psi)$ where $\sigma\in\mathcal{P}(G)$ and $\psi\in\IBr(B_\sigma)$. The notion of length yields a partition of $\Co(B)$ into the sets $\Co(B)_\pm$ consisting of those pairs $(\sigma,\psi)$ such that $(-1)^{|\sigma|}=\pm 1$. The group $G$ now acts by conjugation on $\Co(B)_\pm$ and we denote by $\Co(B)_\pm/G$ the corresponding set of $G$-orbits and by $\overline{(\sigma,\psi)}$ the $G$-orbit of an element $(\sigma,\psi)\in\Co(B)_\pm$. We can now restate the Inductive Blockwise Alperin Weight Condition as follows.

\begin{lem}
\label{lem:Reformulation for iBAW in terms of chains}
The following statements are equivalent:
\begin{enumerate}
\item Conjecture \ref{conj:iBAWC} holds for every $p$-block of any finite group $G$ and any choice of $G\unlhd A$;
\item For all finite groups $G\unlhd A$ and every $p$-block $B$ of $G$ with non-trivial defect, there exists an $A_B$-equivariant bijection
\[\Xi:\Co(B)_-/G\to\Co(B)_+/G\]
such that
\[\left(A_{\sigma,\psi},G_\sigma,\psi\right)\isob\left(A_{\rho,\varphi},G_\rho,\varphi\right)\]
for every $(\sigma,\psi)\in\Co(B)_-$ and any $(\rho,\varphi)\in\Xi(\overline{(\sigma,\psi)})$.
\end{enumerate}
\end{lem}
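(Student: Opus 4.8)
This is the equivariant form, compatible with block isomorphisms of modular character triples, of the reformulation of the blockwise Alperin Weight Conjecture in terms of $p$-chains due to Knörr and Robinson. The plan is to prove each of the two implications by induction on $|G|$, the base case being that of blocks of defect zero: if $B$ has defect zero, then $\IBr(B)$ consists of a single $\varphi\in\dz(G)$, the only $p$-weight of $B$ is $(1,\varphi)$, and $\IBr(B_\sigma)=\varnothing$ for every chain $\sigma$ of positive length by Brauer's first main theorem; hence Conjecture \ref{conj:iBAWC} holds trivially for $B$ while statement (ii) is vacuous. So from now on $B$ has non-trivial defect.

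The key device is the \emph{peeling} of a $p$-chain $\sigma=\{1=Q_0<Q_1<\dots<Q_n\}$ of length $n\geq 1$ into its first non-trivial term $Q:=Q_1$ together with the chain $\sigma'=\{Q<Q_2<\dots<Q_n\}$, regarded as a $p$-chain of $\n_G(Q)$ based at $Q$: one has $G_\sigma=\n_G(Q)_{\sigma'}$, $A_{\sigma,\psi}=\n_A(Q)_{\sigma',\psi}$, and $B_\sigma=(B_Q)_{\sigma'}$ by transitivity of Brauer block induction. One first reduces to the case $\o_p(G)=1$ by inflation (Lemma \ref{lem:Lifting isomorphisms from quotients}), so that $\n_G(Q)\lneq G$ for every non-trivial $p$-subgroup $Q$. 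When $Q$ is radical in $G$, reduction modulo $Q$ identifies the based chains $\sigma'$ with genuine $p$-chains of $\n_G(Q)/Q$ and identifies $(B_Q)_{\sigma'}$ with the corresponding union of blocks; the block-theoretic bookkeeping is exactly the Dade--Glauberman--Nagao machinery of Section \ref{sec:DGN} together with \cite[Theorem B]{Kos-Spa15}, and the compatibility with $\isob$ is supplied by Lemma \ref{lem:Lifting isomorphisms from quotients} and Lemma \ref{lem:going to quotients}. When $Q$ is non-radical, the classical Knörr--Robinson argument pairs the corresponding chains with chains of opposite parity by an involution-free matching that fixes the relevant normalizers and characters, hence is automatically compatible with block isomorphisms of modular character triples.

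Granting this, both implications amount to the same bookkeeping. Inside $\Co(B)_+$ the $G$-orbits of chains of length $0$ are exactly $\IBr(B)$ (on which $G$ acts trivially); inside $\Co(B)_-$ the $G$-orbits of chains $\{1<Q\}$ with $Q\in\Rado(G)$ and $\psi\in\dzo(\n_G(Q))$ belonging to $B_Q$ are exactly $\Alp(B)/G$; and every remaining $G$-orbit in $\Co(B)$ is carried by a chain of positive length, on which peeling acts. For (i)$\Rightarrow$(ii) one matches the length-$0$ part of $\Co(B)_+$ with $\Alp(B)/G\subseteq\Co(B)_-$ via the $A_B$-equivariant bijection $\Omega_B$ of Conjecture \ref{conj:iBAWC}, which by construction carries the block isomorphisms $(A_\vartheta,G,\vartheta)\isob(\n_A(Q)_\psi,\n_G(Q),\psi)$; peeling then distributes the remaining orbits, with a shift of parity, over the $A$-orbits of non-trivial $p$-subgroups $Q$ into the data attached to $\n_G(Q)$ and $\n_G(Q)/Q$, where the inductive hypothesis (statement (ii) for these proper sections) together with the non-radical cancellation provides, $A$-equivariantly and compatibly with $\isob$, a bijection between the leftover ``$-$'' and ``$+$'' parts. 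Composing and using transitivity of $\isob$ (Lemma \ref{lem:Basic properties}), the compatibility of $\isob$ with the Clifford and Fong--Reynolds correspondences needed to descend from $\sigma$ to its first-term normalizer (Lemma \ref{lem:Irreducible induction} and Lemma \ref{lem:Bijections induced by isomorphisms are compatible with isomorphisms}), and the Butterfly theorem (Lemma \ref{lem:Butterfly theorem}) to relabel ambient groups, yields $\Xi$. For (ii)$\Rightarrow$(i) the same identity is read backwards: starting from $\Xi$ and cancelling, via the inductively known instances of Conjecture \ref{conj:iBAWC} for the sections produced by peeling and via the non-radical matching, all positive-length orbits other than the weights, one is left with an $A_B$-equivariant bijection $\IBr(B)\to\Alp(B)/G$ carrying the required block isomorphisms, i.e.\ Conjecture \ref{conj:iBAWC} for $B$.

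The combinatorial core is the standard Knörr--Robinson cancellation; the actual work, and the expected main obstacle, is to make every step in the induction compatible with block isomorphisms of modular character triples. One has to check that peeling the first term of a chain corresponds at the level of triples to the passage $(A_{\sigma,\psi},G_\sigma,\psi)\to(\n_A(Q)_{\sigma',\psi},\n_G(Q)_{\sigma'},\psi)$ and to inflation from $\n_G(Q)/Q$, that the non-radical involutive matching can be chosen to induce the identity on the corresponding triples, and that all of these local compatibilities glue into a single $A$-equivariant bijection while the stabilizers and the Brauer correspondents $B_\sigma$ are tracked correctly; this is precisely what the lemmas of Section \ref{sec:Isomorphisms of modular character triples} are designed to make possible.
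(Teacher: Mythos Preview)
Your outline follows the same Kn\"orr--Robinson inductive scheme as the paper: peel off the first non-trivial term of each chain, pass to the quotient $\n_G(Q)/Q$, and cancel everything except $\IBr(B)$ on one side and $\Alp(B)/G$ on the other. The paper's execution is simpler in three respects. First, it does not split into radical versus non-radical $Q$: for \emph{every} non-trivial $Q$ it identifies $\Co_Q(B)$ with $\Co(\overline{B_Q})$ in $\n_G(Q)/Q$ (with a parity shift), then inside the quotient separates the defect-zero blocks (which produce the weight pairs $\mathcal{A}_Q(B)$ and force $Q$ to be radical automatically) from the positive-defect blocks (to which the inductive hypothesis applies). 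Second, there is no preliminary reduction to $\o_p(G)=1$: since $|\n_G(Q)/Q|<|G|$ for every non-trivial $Q$, the minimal-counterexample argument works directly. Third, the Dade--Glauberman--Nagao machinery of Section \ref{sec:DGN} is not used here at all; the passage to $\n_G(Q)/Q$ is plain inflation/deflation together with block domination, and the compatibility with $\isob$ is imported from \cite[Lemma 2.2, Corollary 2.3, Proposition 2.4]{Ros-CTC-max} rather than rebuilt from Lemmas \ref{lem:Irreducible induction}, \ref{lem:Bijections induced by isomorphisms are compatible with isomorphisms}, and \ref{lem:Butterfly theorem}. Your invocation of DGN and of the Butterfly theorem is therefore misplaced, though not fatal: the ingredients you list would also assemble into a proof, just with more moving parts than the paper needs.
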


\begin{proof}
Suppose first that (i) holds. Then, we may assume that $G$ is a minimal counterexample to (ii) (and similarly, if we suppose that (ii) holds, then we may assume that $G$ is a minimal counterexample to (i)). In particular, (ii) holds for $\n_G(Q)/Q$ for every non-trivial $p$-subgroup $Q$ of $G$. We now fix a $G$-transversal $\mathcal{T}$ in the set of non-trivial $p$-subgroups of $G$ and, for each $Q\in\mathcal{T}$, denote by $\Co_Q(B)$ the subset of $\Co(B)$ consisting of pairs $(\sigma,\psi)$ with $\sigma=\{1=Q_0<Q_1<\dots<Q_n\}$ where $n\geq 1$ and $Q_1$ is $G$-conjugate to $Q$. Observe that there is a one-to-one correspondence between the set $\Co_Q(B)$ and the set $\Co(\overline{B_Q})$ where $\overline{B_Q}$ is the set of blocks of $\n_G(Q)/Q$ dominated by some block in $B_Q$. Denote by $\overline{b_Q}$ the union of those blocks in $\overline{B_Q}$ with positive defect and by $\overline{c_Q}$ the union of the blocks in $\overline{B_Q}$ of defect zero. Now, applying \cite[Lemma 9.15 (a)]{Nav18} we deduce that a pair $(\overline{\sigma},\overline{\psi})\in\Co(\overline{c_Q})$ must have $\overline{\sigma}=\{1\}$ and $\bl(\overline{\psi})$ a block of defect zero in $\n_G(Q)/Q$. Hence, the set $\Co(\overline{c_Q})$ corresponds to the set of pairs $(\sigma,\psi)\in\Co_Q(B)$ with $\sigma=\{1=Q_0<Q_1=Q\}$ and $\psi\in\dzo(\n_G(Q))$ such that $\bl(\psi)^G=B$. We denote by $\mathcal{A}_Q(B)$ the set of such pairs and set $\mathcal{B}_Q(B):=\Co_Q(B)\setminus \mathcal{A}_Q(B)$. If we now consider the remaining blocks $\overline{b_Q}$ of positive defect, then arguing as in \cite[Lemma 2.2, Corollary 2.3, and Proposition 2.4]{Ros-CTC-max} and using the fact that (ii) holds for all blocks with non-trivial defect of $\n_G(Q)/Q$, we obtain a bijection
\[\Xi_Q:\mathcal{B}_Q(B)_+/G\to \mathcal{B}_Q(B)_-/G\]
inducing block isomorphisms of modular character triples. Finally, combining the bijections $\Xi_Q$ for $Q\in\mathcal{T}$, we are left to consider pairs $(\sigma,\psi)\in\Co(B)$ where either $\sigma=\{1\}$ or $(\sigma,\psi)$ belongs to $\mathcal{A}_Q(B)$ for some $Q\in\mathcal{T}$. In other words a bijection $\Xi$ with the properties required in (ii) exists if and only if we can find a bijection
\[\Omega:\IBr(B)\to\Alp(B)/G\]
satisfying the statement of Conjecture \ref{conj:iBAWC}.
\end{proof}

The reformulation of Conjecture \ref{conj:iBAWC 2} given in Lemma \ref{lem:Reformulation for iBAW in terms of chains} is inspired by \cite[Theorem 3.8]{Kno-Rob89} and can be used to prove Conjecture \ref{conj:iBAWC 2} in certain situations. While it is not true that the two statements in Lemma \ref{lem:Reformulation for iBAW in terms of chains} are equivalent block-by-block, they can still be shown to be equivalent for certain classes of blocks. This is the case for nilpotent blocks. For this, let $B$ be a nilpotent block of a finite group $G$ and consider a $p$-subgroup $Q$ of $G$. If $b$ is a block of $\n_G(Q)$ and $b^G=B$, then it follows that $b$ is nilpotent (see the first paragraph of the proof of \cite[Theorem 3.2.2]{Rob02I}) Moreover, if $Q$ is normal in $G$ and $\overline{B}$ is a block of $G/Q$ dominated by $B$, then it follows that $\overline{B}$ is nilpotent (see, for instance, \cite[Theorem 1.1 (i)]{Coc-Tod20} for a precise reference).

\begin{cor}
\label{cor:Reformulation for iBAW in terms of chains, nilpotent}
The statements (i) and (ii) in Lemma \ref{lem:Reformulation for iBAW in terms of chains} are equivalent for nilpotent blocks. More precisely, Conjecture \ref{conj:iBAWC} holds for every nilpotent block of every finite group $G$, and any choice $G\unlhd A$, if and only if a bijection $\Xi$ with the properties described in (ii) exists for every nilpotent block of every finite group $G$ and with respect to any $G\unlhd A$.
\end{cor}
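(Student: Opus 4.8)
The plan is to rerun the minimal counterexample argument from the proof of Lemma \ref{lem:Reformulation for iBAW in terms of chains}, checking at each step that every block to which the hypothesis (i) or (ii) gets applied inherits nilpotency from the block $B$ we start with. Recall the shape of that argument: assuming one of (i), (ii) (now for nilpotent blocks) and taking $G$ to be a minimal counterexample to the other, one fixes a $G$-transversal $\mathcal T$ in the non-trivial $p$-subgroups of $G$; for each $Q\in\mathcal T$ one identifies the portion of $\Co(B)$ whose chains have first non-trivial term $G$-conjugate to $Q$ with $\Co(\overline{B_Q})$, where $\overline{B_Q}$ consists of the blocks of $\n_G(Q)/Q$ dominated by some block in $B_Q$; the defect zero part of $\overline{B_Q}$ accounts for the weight data, the positive defect part $\overline{b_Q}$ is handled by feeding statement (ii) for $\n_G(Q)/Q$ into \cite[Lemma 2.2, Corollary 2.3, and Proposition 2.4]{Ros-CTC-max} (and, upon iteration, the same for further normaliser quotients inside $\n_G(Q)/Q$), and what is left is precisely $\IBr(B)$ together with $\Alp(B)/G$, so that existence of the bijection $\Xi$ of (ii) becomes equivalent to existence of the bijection $\Omega$ of Conjecture \ref{conj:iBAWC}. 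Hence the only hypotheses invoked on pairs other than $(G,B)$ are statement (ii) for the positive defect blocks produced from $B$ by the recursion, and Conjecture \ref{conj:iBAWC} for $B$ itself, which is nilpotent by hypothesis.

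The real content will therefore be to show that the class of nilpotent blocks is closed under the two operations driving the recursion. First, if $b$ is a block of $\n_G(Q)$ with $b^G=B$, then $b$ is nilpotent because $B$ is, by \cite[Theorem 3.2.2]{Rob02I}; hence every block in $B_Q$ is nilpotent. Second, since $Q\unlhd\n_G(Q)$ is a $p$-group, every block of $\n_G(Q)/Q$ dominated by a nilpotent block of $\n_G(Q)$ is nilpotent by \cite[Theorem 1.1 (i)]{Coc-Tod20}; hence every block in $\overline{B_Q}$, and in particular every block in the positive defect part $\overline{b_Q}$, is nilpotent. Iterating: every block that turns up when the argument is run inside $\n_G(Q)/Q$ is obtained from $\overline{b_Q}$ by finitely many applications of exactly these two operations (first a block Brauer-inducing to it in a normaliser, then a dominated block of the quotient by the corresponding normal $p$-subgroup), so all of these blocks are nilpotent as well, and the inductive hypothesis for nilpotent blocks is all the argument ever consumes.

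With these closure statements in hand, the minimal counterexample argument of Lemma \ref{lem:Reformulation for iBAW in terms of chains} carries over verbatim, reading ``nilpotent block'' for ``block'' everywhere, and yields both implications. I expect the closure verification of the preceding paragraph to be the main obstacle; the one subtlety worth flagging is that it uses the Brauer correspondence of blocks only in the direction ``a block inducing to a nilpotent block is nilpotent'', which is precisely the direction the recursion calls for.
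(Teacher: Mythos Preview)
Your proposal is correct and follows essentially the same approach as the paper: both arguments rerun the minimal counterexample proof of Lemma \ref{lem:Reformulation for iBAW in terms of chains} after verifying that nilpotency is preserved under (a) passing to a block $b$ of $\n_G(Q)$ with $b^G=B$ (via \cite{Rob02I}) and (b) passing to a block of $\n_G(Q)/Q$ dominated by such a $b$ (via \cite{Coc-Tod20}), so that the inductive hypothesis for nilpotent blocks applies at every stage of the recursion.
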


\begin{proof}
This follows from the fact that the argument used in the proof of Lemma \ref{lem:Reformulation for iBAW in terms of chains} is compatible with nilpotent blocks. In fact, suppose that $B$ is a nilpotent block of a finite group $G$ and let $Q$ be a $p$-subgroup of $G$. If $b$ is a block in the union $B_Q$, then $b$ is a block of $\n_G(Q)$ such that $b^G=B$ and it follows that $b$ is nilpotent. Furthermore, if $\overline{b}$ is dominated by $b$, then it also follows that $\overline{b}$ is nilpotent. Hence every block in $\overline{B_Q}$ is nilpotent so that we can apply the inductive hypothesis in the $\n_G(Q)/Q$ and proceed as in the proof of Lemma \ref{lem:Reformulation for iBAW in terms of chains}.
\end{proof}

Next, we need a modular version of \cite[Corollary 3.2]{Ros-CTC}. For this, we will use the main result of \cite{Pui-Zho12} together with a modular version of \cite[Proposition 5.6]{Mar-Min21} that was kindly provided to us by A. Marcus. We state this latter result below for the reader's convenience. Let $N\unlhd G$ be finite groups and consider a $p$-modular system $(K,\mathcal{O},k)$ such that $k$ is algebraically closed and $K$ contains a primitive $|G|$-th root of unity. Following \cite[Section 4.1]{Mar-Min21}, let $C$ be a $G$-invariant block of $N$ and set $\mathcal{A}:=\mathcal{O}GC$ and $\mathcal{A}':=\mathcal{O}\n_G(D)c$ where $c$ is the Brauer correspondent of $C$ in $\n_N(D)$ with respect to a defect group $D$ of $C$. Denote by $\mathcal{B}:=\mathcal{O}NC$ and by $\mathcal{B}':=\mathcal{O}\n_N(D)c$ the $1$-component of $\mathcal{A}$ and $\mathcal{A}'$ respectively. Set $k\mathcal{B}:=k\otimes_{\mathcal{O}}\mathcal{B}$ and $k\mathcal{B}':=k\otimes_{\mathcal{O}}\mathcal{B}'$. Proceeding as in \cite[Definition 4.2 and Definition 5.1]{Mar-Min21} (but considering modules over $k$), for every $G$-invariant $k\mathcal{B}$-module $V$ and every $\n_G(D)$-invariant $k\mathcal{B}'$-module $V'$ we define the order relation $(\mathcal{A},\mathcal{B},V)\isob(\mathcal{A}',\mathcal{B}',V')$ which in turns implies that $(G,N,\vartheta)\isob(\n_G(D),\n_N(D),\vartheta')$ for the Brauer characters $\vartheta$ and $\varphi$ corresponding to $V$ and $V'$ respectively. We then have the following criterion for establishing $(\mathcal{A},\mathcal{B},V)\isob(\mathcal{A}',\mathcal{B}',V')$.

\begin{lem}
\label{lem:Modular Marcus-Minuta}
Let $N\unlhd G$ be finite groups and consider a $G$-invariant block $C$ of $N$ with defect group $D$ and Brauer correspondent $c$ in $\n_N(D)$. Set $\mathcal{A}:=\mathcal{O}GC$, $\mathcal{A}':=\mathcal{O}\n_G(D)c$, $\mathcal{B}:=\mathcal{O}NC$, and $\mathcal{B}':=\mathcal{O}\n_N(D)c$ as above. Suppose that $\mathcal{A}$ and $\mathcal{A}'$ are $G/N$-graded basic Morita equivalent and that this equivalence sends the $k\mathcal{B}$-module $V$ to the $k\mathcal{B}'$-module $V'$. Then $(\mathcal{A},\mathcal{B},V)\isob(\mathcal{A}',\mathcal{B}',V')$.
\end{lem}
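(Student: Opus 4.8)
The plan is to transcribe the proof of \cite[Proposition 5.6]{Mar-Min21} into the modular setting, working throughout with $k$-modules in place of the characteristic-zero modules used there, and using the main result of \cite{Pui-Zho12} to descend the given graded basic Morita equivalence to intermediate subgroups. Recall that a $G/N$-graded basic Morita equivalence between $\mathcal{A}=\mathcal{O}GC$ and $\mathcal{A}'=\mathcal{O}\norm G D c$ is realised by an $(\mathcal{A},\mathcal{A}')$-bimodule $M$ that is a $p$-permutation $\mathcal{O}[G\times\norm G D]$-module (this is what ``basic'' encodes), is graded over the common quotient via the natural isomorphism $G/N\cong\norm G D/\norm N D$ (available since $G=N\norm G D$ by a Frattini argument, $C$ being $G$-invariant), and whose $1$-component induces a Morita equivalence between $\mathcal{B}=\mathcal{O}NC$ and $\mathcal{B}'=\mathcal{O}\norm N D c$ carrying, after $k\otimes_{\mathcal{O}}-$, the module $V$ to $V'$. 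First I would extract from $M$, exactly as in \cite[Sections 4--5]{Mar-Min21}, projective representations $\mathcal{P}$ of $G$ and $\mathcal{P}'$ of $\norm G D$ associated with the modular character triples $(G,N,\vartheta)$ and $(\norm G D,\norm N D,\vartheta')$ — where $\vartheta$, $\vartheta'$ are the Brauer characters afforded by $V$, $V'$ — whose factor sets correspond under $G/N\cong\norm G D/\norm N D$; then, for every $N\le J\le G$, writing $J':=J\cap\norm G D$ (so $J=NJ'$), the grading of $M$ together with \cite{Pui-Zho12} provides bijections $\sigma_J\colon\IBr(J\mid\vartheta)\to\IBr(J'\mid\vartheta')$ of exactly the shape prescribed in Theorem \ref{thm:strong-iso}. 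This produces the strong isomorphism of modular character triples underlying the order relation to be established.

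It then remains to check the central and block conditions in the $k$-analogue of the definition of $\isob$ for block triples (i.e.\ in the modular versions of \cite[Definition 4.2 and Definition 5.1]{Mar-Min21}). For the central condition I would invoke Lemma \ref{lem:central-iso-scalar}: since the equivalence is basic, $M$ has a diagonal vertex and an endo-permutation source, so restricting along $\cent G N\le N\cent G D\le\norm G D$ restricts the equivalence to an isomorphism of the relevant source algebras; hence the scalars through which each $x\in\cent G N$ acts on $\mathcal{P}(x)$ and on $\mathcal{P}'(x)$ coincide, which is condition (i) of Lemma \ref{lem:central-iso-scalar}. This yields $(\mathcal{A},\mathcal{B},V)\isoc(\mathcal{A}',\mathcal{B}',V')$ and, in particular, $(G,N,\vartheta)\isoc(\norm G D,\norm N D,\vartheta')$.

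For the block condition I would use Lemma \ref{lem:block-iso-scalar} and verify that, for every $x\in G$, the matrices $\mathcal{P}(\CL_{\langle N,x\rangle}(x)^+)$ and $\mathcal{P}'((\CL_{\langle N,x\rangle}(x)\cap\norm G D)^+)$ are associated with the same scalar. The key input is the $p$-permutation structure of $M$: its Brauer construction with respect to $D$ and its subgroups identifies $C$ with its Brauer correspondent $c$ in $\norm N D$ — which is precisely the hypothesis relating $C$ and $c$ — and, applied to each intermediate group $\langle N,x\rangle$, shows that the induced equivalence between the corresponding block algebras of $\langle N,x\rangle$ and $\langle N,x\rangle\cap\norm G D$ is compatible with Brauer induction of blocks; reducing modulo the maximal ideal of $\mathcal{O}$ and rewriting this in terms of the central characters evaluated on conjugacy-class sums gives the required equality of scalars over $k$. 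This would give $(\mathcal{A},\mathcal{B},V)\isob(\mathcal{A}',\mathcal{B}',V')$, hence $(G,N,\vartheta)\isob(\norm G D,\norm N D,\vartheta')$. The hard part will be precisely this compatibility with Brauer induction of blocks across all intermediate groups $\langle N,x\rangle$ — already the technical heart of the characteristic-zero statement \cite[Proposition 5.6]{Mar-Min21} — which in the modular case comes down to checking that the resulting equality of class-sum scalars is unaffected by the passage from $\mathcal{O}$- to $k$-coefficients, and this is exactly where the $p$-permutation (basic) nature of the bimodule $M$ is indispensable.
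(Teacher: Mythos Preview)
Your overall strategy is the same as the paper's: both argue by transcribing the proof of \cite[Proposition~5.6]{Mar-Min21} to the modular setting, checking the strong/central/block conditions for the triples via the machinery built from the graded bimodule $M$. The paper's proof is a two-line citation: follow \cite[Proposition~5.6]{Mar-Min21}, and for the block condition invoke \cite[Remark~5.7]{Mar-Min21}, which already records that a basic Morita equivalence is compatible with the Brauer map of \cite[Definition~5.5]{Mar-Min21}. Your longer write-up is essentially an unpacking of what that citation means.

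One misstep to flag: you invoke \cite{Pui-Zho12} to ``descend the given graded basic Morita equivalence to intermediate subgroups'' and to produce the bijections $\sigma_J$. That reference is out of place here. The result of Puig--Zhou concerns the \emph{existence} of a $G/N$-graded basic Morita equivalence in the specific situation of nilpotent blocks; it is used in the paper in the \emph{next} lemma (Lemma~\ref{lem:Basic Morita induces modular isomorphisms}) to furnish the hypothesis of the present lemma, not inside its proof. In the present lemma the block $C$ is arbitrary and the graded basic Morita equivalence is assumed; the bijections $\sigma_J$ and the projective representations come directly from the $G/N$-grading of $M$ as in \cite[Sections~4--5]{Mar-Min21}, with no need for \cite{Pui-Zho12}. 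Likewise, the ``hard part'' you isolate --- compatibility with Brauer induction across the intermediate groups $\langle N,x\rangle$ --- is exactly what \cite[Remark~5.7]{Mar-Min21} packages for free once the equivalence is basic; you need not re-derive it from scratch, and the passage from $\mathcal{O}$- to $k$-coefficients poses no extra difficulty since the relevant scalars already live in $k$ after applying ${}^*$.
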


\begin{proof}
This follows by arguing as in the proof of \cite[Proposition 5.6]{Mar-Min21} and noticing that, according to \cite[Remark 5.7]{Mar-Min21}, the Morita equivalence in the statement is compatible with the Brauer map as defined in \cite[Definition 5.5]{Mar-Min21}.
\end{proof}

We can now obtain the above mentioned modular version of \cite[Corollary 3.2]{Ros-CTC}.

\begin{lem}
\label{lem:Basic Morita induces modular isomorphisms}
Let $C$ be a $p$-block of a finite group $H$ with defect group $D$ and consider its Brauer correspondent $c$ in $\n_H(D)$. Assume that $C$ is nilpotent and let $\psi$ denote its unique Brauer character. Observe that $c$ must be nilpotent and hence contains a unique Brauer character $\varphi$. If $H\unlhd A$, then
\[\left(A_\psi,H,\psi\right)\isob\left(\n_A(D)_\varphi,\n_H(D),\varphi\right).\]  
\end{lem}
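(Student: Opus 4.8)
The plan is to deduce the statement from Lemma~\ref{lem:Modular Marcus-Minuta}, applied to the pair $H\unlhd A_C$. First I would record the elementary reductions. Since $C$ is nilpotent we have $\IBr(C)=\{\psi\}$, so $A_\psi=A_C$; likewise $c$ is nilpotent (because $c^H=C$ is nilpotent, as recalled before Corollary~\ref{cor:Reformulation for iBAW in terms of chains, nilpotent}), so $\IBr(c)=\{\varphi\}$ and $\n_A(D)_\varphi=\n_A(D)_c$. Using that block induction commutes with conjugation together with Brauer's first main theorem, an element $a\in\n_A(D)$ fixes $c$ if and only if it fixes $c^H=C$, whence $\n_A(D)_c=A_C\cap\n_A(D)=\n_{A_C}(D)$. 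Moreover a Frattini argument applied to the action of $A_C$ on the single $H$-conjugacy class of defect groups of $C$ gives $A_C=H\n_{A_C}(D)$, hence $\n_{A_C}(D)/\n_H(D)\cong A_C/H$. So it suffices to prove
\[(A_C,H,\psi)\isob(\n_{A_C}(D),\n_H(D),\varphi).\]

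Next, set $G:=A_C$, $N:=H$ and form the algebras $\mathcal{A}:=\mathcal{O}GC$, $\mathcal{A}':=\mathcal{O}\n_G(D)c$, $\mathcal{B}:=\mathcal{O}NC$ and $\mathcal{B}':=\mathcal{O}\n_N(D)c$ as in the paragraph preceding Lemma~\ref{lem:Modular Marcus-Minuta}. The key input is the main result of \cite{Pui-Zho12}: because $C$ is a nilpotent block, its Brauer correspondence with $c$ can be realised by a $G/N$-graded basic Morita equivalence between $\mathcal{A}$ and $\mathcal{A}'$, i.e.\ the $A_C/H$-equivariant refinement of Puig's nilpotent block theorem. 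Since $C$ and $c$ are nilpotent, $k\mathcal{B}$ and $k\mathcal{B}'$ each have a unique isomorphism class of simple module, namely the ones affording $\psi$ and $\varphi$; a Morita equivalence therefore automatically matches $V$ (affording $\psi$) with $V'$ (affording $\varphi$). Thus the hypotheses of Lemma~\ref{lem:Modular Marcus-Minuta} are met, and it yields $(\mathcal{A},\mathcal{B},V)\isob(\mathcal{A}',\mathcal{B}',V')$, hence $(G,N,\psi)\isob(\n_G(D),\n_N(D),\varphi)$ by the discussion preceding that lemma. Translating back via $G=A_C=A_\psi$ and $\n_G(D)=\n_{A_C}(D)=\n_A(D)_\varphi$ gives exactly the claimed block isomorphism; this is the modular analogue of \cite[Corollary 3.2]{Ros-CTC}.

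The main obstacle is extracting from \cite{Pui-Zho12} precisely the form of the statement needed: that for a nilpotent block the basic Morita equivalence with its Brauer correspondent can be chosen $A_C/H$-graded and compatible with the Brauer construction, so that the hypotheses of Lemma~\ref{lem:Modular Marcus-Minuta}, and of the modular version of \cite[Proposition 5.6]{Mar-Min21} on which it rests, genuinely apply. Once this is in hand the argument is formal: the verifications that the relevant quotient groups agree, that $c$ is nilpotent, and that the graded Morita equivalence matches the two unique simple modules are all routine.
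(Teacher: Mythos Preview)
Your proposal is correct and follows essentially the same route as the paper: reduce to the case where $C$ is invariant (you phrase this as passing to $A_C=A_\psi$, the paper says ``without loss of generality $\psi$ is $A$-invariant''), use a Frattini argument to identify the grading groups, invoke \cite[Theorem~3.14 and Corollary~3.15]{Pui-Zho12} for the $A_C/H$-graded basic Morita equivalence, observe that the unique simples must correspond, and then apply Lemma~\ref{lem:Modular Marcus-Minuta}. Your identification $\n_A(D)_\varphi=\n_{A_C}(D)$ is slightly more explicit than the paper's treatment, but the argument is the same.
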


\begin{proof}
We proceed as in the proof of \cite[Corollary 3.2]{Ros-CTC}. First notice that it is no loss of generality to assume that $\psi$ is $A$-invariant in which case $\varphi$ is $\n_A(D)$-invariant. Since $\psi$ and $\varphi$ are the unique Brauer characters of $C$ and $c$ respectively it also follows that $C$ is $A$-invariant and $c$ is $\n_A(D)$-invariant. We now set $\overline{A}:=A/H$, $\mathcal{A}:=\mathcal{O}AC$, and $\mathcal{A}':=\mathcal{O}\n_A(D)c$. A Frattini argument yields $A=\n_A(D)H$ and hence $\overline{A}=\n_A(D)/\n_H(D)$. Since $C$ is nilpotent, \cite[Theorem 3.14 and Corollary 3.15]{Pui-Zho12} implies that there exists an $\overline{A}$-graded $(\mathcal{A},\mathcal{A}')$-bimodule $M$ inducing an $\overline{A}$-graded basic Morita equivalence between $\mathcal{A}$ and $\mathcal{A}'$. Consider the identity components $\mathcal{B}:=\mathcal{A}_1=\mathcal{O}HC$ and $\mathcal{B}':=\mathcal{A}'_1=\mathcal{O}\n_H(D)c$ and notice that $M_1$ induces a basic Morita equivalence between $\mathcal{B}$ and $\mathcal{B}'$ that sends the Brauer character $\psi$ of $C$ to the Brauer character $\varphi$ of $c$. Then, by applying Lemma \ref{lem:Modular Marcus-Minuta} and considering modules over $k=\mathcal{O}/J(\mathcal{O})$, we obtain the required block isomorphism of modular character triples.
\end{proof}

Using Lemma \ref{lem:Basic Morita induces modular isomorphisms}, we can finally show that Conjecture \ref{conj:Main, Gabriel Conjecture}, Conjecture \ref{conj:Blockwise Conjecture E, extended}, and Conjecture \ref{conj:iBAWC} hold for every nilpotent block of a finite group. This will follow from our next result thanks to Corollary \ref{cor:Reformulation for iBAW in terms of chains, nilpotent} (see also Remark \ref{rmk:iBAWC implies Navarro}).

\begin{pro}
\label{prop:Proving the reformulation for nilpotent blocks}
Let $G\unlhd A$ be finite groups and consider a nilpotent block $B$ of $G$. If $B$ has positive defect, then there exists an $A_B$-equivariant bijection
\[\Xi:\Co(B)_-/G\to\Co(B)_+/G\]
such that
\[\left(A_{\sigma,\psi},G_\sigma,\psi\right)\isob\left(A_{\rho,\varphi},G_\rho,\varphi\right)\]
for every $(\sigma,\psi)\in\Co(B)_-$ and any $(\rho,\varphi)\in\Xi(\overline{(\sigma,\psi)})$.
\end{pro}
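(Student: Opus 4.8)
The plan is to argue by minimal counterexample, reusing the chain contraction from the proof of Lemma~\ref{lem:Reformulation for iBAW in terms of chains} and reducing everything to a single residual matching that is then settled by the structure theory of nilpotent blocks together with Lemma~\ref{lem:Basic Morita induces modular isomorphisms}. So I would pick a triple $(G,A,B)$ with $G\unlhd A$ and $B$ a nilpotent block of $G$ of positive defect for which no bijection $\Xi$ as in the statement exists, minimising $|G|$ and then $|A|$; then the statement holds for every $\widetilde G\unlhd\widetilde A$ with $|\widetilde G|<|G|$ and every nilpotent block of $\widetilde G$ of positive defect.

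Next I would run the contraction of Lemma~\ref{lem:Reformulation for iBAW in terms of chains} verbatim. Fix a $G$-transversal $\mathcal{T}$ in the set of nontrivial $p$-subgroups of $G$. For $Q\in\mathcal{T}$, every block in $B_Q$ is a nilpotent block of $\n_G(Q)$ (it induces the nilpotent block $B$), hence every block of $\n_G(Q)/Q$ dominated by it is nilpotent; writing $\overline{b_Q}$ and $\overline{c_Q}$ for the unions of such dominated blocks of positive, resp.\ zero, defect, the set $\Co_Q(B)$ of pairs in $\Co(B)$ whose chain has second term $G$-conjugate to $Q$ corresponds to $\Co(\overline{b_Q})\sqcup\Co(\overline{c_Q})$. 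The part coming from $\overline{c_Q}$ is the set $\mathcal{A}_Q(B)$ of $p$-weights $(\{1<Q'\},\psi)$ of $B$ with $Q'$ $G$-conjugate to $Q$, all of length $1$; the part coming from $\overline{b_Q}$ is handled by the inductive hypothesis applied to $\n_G(Q)/Q$ (which is smaller than $G$), yielding, after the length shift by one induced by prepending $Q$, an $A_B$-equivariant bijection $\Xi_Q\colon\mathcal{B}_Q(B)_+/G\to\mathcal{B}_Q(B)_-/G$ compatible with $\isob$, where $\mathcal{B}_Q(B):=\Co_Q(B)\setminus\mathcal{A}_Q(B)$. Assembling the $\Xi_Q$ over $\mathcal{T}$ reduces the problem to producing an $A_B$-equivariant, $\isob$-compatible bijection between the residual sets $\{(\{1\},\psi):\psi\in\IBr(B)\}/G=\IBr(B)$ and $\bigcup_{Q\in\mathcal{T}}\mathcal{A}_Q(B)/G=\Alp(B)/G$.

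Here the structure theory of nilpotent blocks enters. Since $B$ is nilpotent, the Brou\'e--Puig theorem gives $|\IBr(B)|=1$; write $\IBr(B)=\{\psi\}$. For a $p$-weight $(Q',\psi')$ of $B$, the block $\bl(\psi')$ is nilpotent with defect group $Q'$ and $\bl(\psi')^G=B$; regarding $(Q',\bl(\psi'))$ as a $B$-Brauer subpair of a maximal one, whose first component is a defect group $D$ of $B$, the fact that the fusion of a nilpotent block is that of the $p$-group $D$ forces $Q'$ to be $G$-conjugate to $D$ and $\bl(\psi')$ to be $G$-conjugate to the Brauer correspondent $c$ of $B$ in $\n_G(D)$; since $c$ is nilpotent with normal defect group $D$, its unique Brauer character $\varphi$ lies in $\dzo(\n_G(D))$ and $\Alp(B)/G=\{\overline{(D,\varphi)}\}$. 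Both residual sets are thus singletons, so the only candidate bijection is $\overline{(\{1\},\psi)}\leftrightarrow\overline{(\{1<D\},\varphi)}$, and it is $A_B$-equivariant because $B$ is $A_B$-invariant. The required isomorphism of modular character triples for this residual pair unwinds to $(A_\psi,G,\psi)\isob(\n_A(D)_\varphi,\n_G(D),\varphi)$, which is exactly Lemma~\ref{lem:Basic Morita induces modular isomorphisms} applied with the normal subgroup $G$ of $A$ and the block $B$ (using the symmetry of $\isob$ from Lemma~\ref{lem:Basic properties}). Combining this residual bijection with the $\Xi_Q$'s and invoking the transitivity of $\isob$ yields a bijection $\Xi$ with the required properties, contradicting the choice of $(G,A,B)$.

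The main obstacle is twofold. First, one must carry through the contraction of Lemma~\ref{lem:Reformulation for iBAW in terms of chains} while keeping exact track of the parity of chain lengths, of $A_B$-equivariance (handled through suitable choices of transversals), and of block induction for the character triples; this is routine but delicate bookkeeping. Second, and more substantively, one has to pin down the local structure of a nilpotent block, namely that it has, up to $G$-conjugacy, a single $p$-weight, realised at a defect group by the (nilpotent) Brauer correspondent. This is the step where nilpotent block theory is genuinely used, and it is also where care is needed to keep the whole argument independent of the Classification of Finite Simple Groups, as are the remaining ingredients (the Brou\'e--Puig theorem and the graded Morita equivalence underlying Lemma~\ref{lem:Basic Morita induces modular isomorphisms}).
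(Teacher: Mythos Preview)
Your approach is correct, but it takes a genuinely different route from the paper's. The paper gives a direct Kn\"orr--Robinson-style cancellation (as in \cite[Proposition 5.5]{Kno-Rob89}): for each $(\rho,\varphi)\in\Co(B)$, take a defect group $D$ of the (necessarily nilpotent) block $\bl(\varphi)$; if the last term $Q_n$ of $\rho$ equals $D$, delete it, otherwise append $D$. In either case the two chain stabilisers are related by $G_\rho=\n_{G_\sigma}(D)$ (or vice versa), the partner character is the unique Brauer character of the Brauer-correspondent nilpotent block, and the required $\isob$ is exactly Lemma~\ref{lem:Basic Morita induces modular isomorphisms} applied with $H=G_\sigma$ and $A=A_\sigma$. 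This is a parity-reversing involution on $\Co(B)$; no induction and no weight count are needed.

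Your inductive route also works, but it is heavier: you must carry the chain contraction of Lemma~\ref{lem:Reformulation for iBAW in terms of chains} through quotients while preserving $A_B$-equivariance and $\isob$ (which, as you say, is bookkeeping), and then separately establish that a nilpotent block has a single weight. On that last point your subpair/fusion argument is a detour: it suffices to observe that since $\bl(\psi')$ is a block of $\n_G(Q')$ with defect group $Q'$ and $\bl(\psi')^G=B$, Brauer's First Main Theorem forces $Q'$ to be a defect group of $B$, so $(Q',\psi')$ is $G$-conjugate to $(D,\varphi)$. The paper's cancellation avoids all of this by never isolating the residual step $\IBr(B)\leftrightarrow\Alp(B)/G$; Lemma~\ref{lem:Basic Morita induces modular isomorphisms} is applied uniformly at every chain, not just at the bottom.
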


\begin{proof}
The following argument is inspired by the cancellation theorem introduced in \cite[Proposition 5.5]{Kno-Rob89}. First observe that, as remarked in the proof of Corollary \ref{cor:Reformulation for iBAW in terms of chains, nilpotent}, for every $p$-chain $\rho\in\mathcal{P}(G)$ and every block $c$ contained in the union $B_\rho$ the nilpotency of $B$ forces $c$ to be nilpotent as well. In particular, for each pair $(\rho,\varphi)\in\Co(B)$ we know that $\varphi$ is the unique Brauer character in its block. Write $\rho=\{1=Q_0<Q_1<\dots<Q_n\}$ and consider a defect group $D$ of $\bl(\varphi)$. Observe that $Q_n$ is a normal $p$-subgroup of $G_\rho$ and so is contained in $D$ by \cite[Theorem 4.8]{Nav98}. Assume first that $Q_n=D$. If $Q_n=1$, then $\bl(\varphi)=B$ has defect $D=1$, a contradiction. Thus we must have $Q_n\neq 1$ and we can defined the $p$-chain $\sigma\in\mathcal{P}(G)$ given by deleting the last term $Q_n$ from $\rho$. Noticing that $G_\rho=\n_{G_\sigma}(D)$, by Brauer's First Main Theorem there exists a unique block $C=\bl(\varphi)^{G_\sigma}$ of $G_\sigma$ with defect group $D$. Once again $C$ must be nilpotent and it contains a unique Brauer character, say $\psi$. Then, by applying Lemma \ref{lem:Basic Morita induces modular isomorphisms} with $A:=A_\sigma$ and $H:=G_\sigma$, we get $\left(A_{\sigma,\psi},G_\sigma,\psi\right)\isob\left(A_{\rho,\varphi},G_\rho,\varphi\right)$. We then map the $G$-orbit of $(\rho,\varphi)$ to that of $(\sigma,\psi)$. On the other hand, if the last term $Q_n$ of $\rho$ is strictly contained in $D$, then we define $\sigma$ to be the $p$-chain obtained by adding $D$ at the end of $\rho$ and let $\psi$ be the unique Brauer character belonging to the nilpotent block of $G_\sigma$ that induces to $\bl(\varphi)$. In this case, we argue as before by switching the role of $(\sigma,\psi)$ and $(\rho,\varphi)$. This yields a bijection $\Xi$ with the properties required in the statement.
\end{proof}

\subsection{$2$-blocks with abelian defect groups}

The Blockwise Alperin Weight Conjecture has recently been proved for $2$-blocks with abelian defect groups by Ruhstorfer in \cite{Ruh22abelian}. This result was obtained as a consequence of the validity of the Alperin--McKay Conjecture for this class of blocks and applying \cite[Proposition 5.6]{Kno-Rob89}. Unfortunately, this strategy does not imply the full Inductive Blockwise Alperin Weight Condition. The latter has instead been verified by Zhang and Zhou \cite{Zha-Zho21} and Hu and Zhou \cite{Hu-Zho} and relies on the classification of $2$-blocks with abelian defect groups obtained in \cite{EKKS14}. Thanks to these results we can then prove Conjecture \ref{conj:Main, Gabriel Conjecture}, Conjecture \ref{conj:Blockwise Conjecture E, extended}, and Conjecture \ref{conj:iBAWC} for $2$-blocks with abelian defect groups in any finite group. 

\begin{pro}
\label{prop}
Conjecture \ref{conj:Main, Gabriel Conjecture}, Conjecture \ref{conj:Blockwise Conjecture E, extended}, and and Conjecture \ref{conj:iBAWC} hold for every $2$-block with abelian defect groups of any finite group.
\end{pro}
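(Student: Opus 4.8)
The plan is to follow the same strategy used in the proof of Proposition~\ref{prop:Conjectures for cyclic defect groups}, replacing the role of cyclic defect groups by the class of abelian $2$-groups and invoking the appropriate verification of Conjecture~\ref{conj:iBAWC} for quasi-simple groups. First I would let $\mathcal{R}$ denote the class of finite abelian $2$-groups and observe that $\mathcal{R}$ is closed under taking subgroups and quotients, so that Theorem~\ref{thm:Reduction for iBAW with class of defect groups} applies with this choice of $\mathcal{R}$: Conjecture~\ref{conj:iBAWC} holds with respect to $\mathcal{R}$ for every finite group as soon as it holds with respect to $\mathcal{R}$ for every covering group of any non-abelian finite simple group of order divisible by $2$. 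Since $2$ divides the order of every non-abelian finite simple group, this last proviso imposes no restriction and so it suffices to treat covering groups of all non-abelian finite simple groups.

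Next I would record that the Inductive Blockwise Alperin Weight Condition has been verified for all $2$-blocks with abelian defect groups of quasi-simple groups. This is the content of the work of Zhang and Zhou~\cite{Zha-Zho21} together with that of Hu and Zhou~\cite{Hu-Zho}, which in turn relies on the classification of $2$-blocks with abelian defect groups obtained in \cite{EKKS14}. In other words, Conjecture~\ref{conj:iBAWC} holds with respect to $\mathcal{R}$ for every covering group of any non-abelian finite simple group. Feeding this into the reduction of the previous paragraph yields that Conjecture~\ref{conj:iBAWC} holds for every $2$-block with abelian defect groups of an arbitrary finite group $G$ and for every choice of $G\unlhd A$.

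Finally, to obtain the two remaining statements I would appeal to Remark~\ref{rmk:iBAWC implies Navarro}: the validity of Conjecture~\ref{conj:iBAWC} for a given $2$-block $B$ with abelian defect groups, with respect to $G\unlhd A$, implies Conjecture~\ref{conj:Blockwise Conjecture E, extended} for $B$, and hence---whenever $\Gamma/G$ is a $p$-group---also Conjecture~\ref{conj:Main, Gabriel Conjecture} for $B$ via Lemma~\ref{lem:Navarro extended implies Navarro}. This completes the argument. The proof is essentially a bookkeeping exercise combining Theorem~\ref{thm:Reduction for iBAW with class of defect groups}, Remark~\ref{rmk:iBAWC implies Navarro}, and Lemma~\ref{lem:Navarro extended implies Navarro}; the only genuine input is the external verification of the inductive condition for the relevant simple groups, so there is no real obstacle beyond checking that the class of abelian $2$-groups is indeed closed under the operations required by Theorem~\ref{thm:Reduction for iBAW with class of defect groups} and that the cited results cover every quasi-simple group at the prime $2$.
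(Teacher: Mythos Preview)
Your proposal is correct and follows essentially the same route as the paper's own proof: reduce to quasi-simple groups via Theorem~\ref{thm:Reduction for iBAW with class of defect groups} applied to the class of abelian $2$-groups, invoke \cite{Zha-Zho21}, \cite{Hu-Zho}, and \cite{EKKS14} for the quasi-simple case, and then deduce the remaining conjectures through Remark~\ref{rmk:iBAWC implies Navarro}. The only minor additions in your write-up are the explicit checks that abelian $2$-groups are closed under subgroups and quotients and that every non-abelian simple group has even order, both of which are implicit in the paper.
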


\begin{proof}
Let $B$ be a $2$-block with abelian defect groups of a finite group $G$ and consider $G\unlhd A$. By Remark \ref{rmk:iBAWC implies Navarro} it suffices to show that Conjecture \ref{conj:iBAWC} holds for $B$ with respect to $G\unlhd A$. Furthermore, by Theorem \ref{thm:Reduction for iBAW with class of defect groups} it is no loss of generality to assume that $G$ is quasi-simple. In this case the structure of $B$ has been determined in \cite[Theorem 6.1]{EKKS14} and, using this description, the results of \cite{Zha-Zho21} and \cite{Hu-Zho} show that Conjecture \ref{conj:iBAWC} holds for $B$ as desired.
\end{proof}

\bibliographystyle{alpha}

\vspace{1cm}

(J. M. Mart\'inez) {\sc{Departament de Matem\`atiques, Universitat de Val\`encia, 46100 Burjassot, Val\`encia, Spain and Dipartamento di Matematica e Informatica 'Ulisse Dini', 50134 Firenze, Italy.}}

\textit{Email address:} \href{mailto:josep.m.martinez@uv.es}{josep.m.martinez@uv.es}

(N. Rizo) {\sc{Departament de Matem\`atiques, Universitat de Val\`encia, 46100 Burjassot, Val\`encia, Spain.}}

\textit{Email address:} \href{mailto:noelia.rizo@uv.es}{noelia.rizo@uv.es}

(D. Rossi) {\sc{Department of Mathematical Science, Loughborough University, LE11 3TU, United Kingdom}}

\textit{Email address:} \href{mailto:damiano.rossi.math@gmail.com}{damiano.rossi.math@gmail.com}

\end{document}